\documentclass[reqno]{amsart}

\usepackage[centertags]{amsmath}
\usepackage[colorlinks]{hyperref}
\usepackage{amsfonts}
\usepackage{MnSymbol}
\usepackage{amsthm}
\usepackage{newlfont}
\usepackage{amscd}
\usepackage{amsmath}
\usepackage{enumerate}
\usepackage[all,2cell]{xy}
\UseAllTwocells
\input xy
\xyoption{2cell}
\xyoption{all}
\usepackage{verbatim}
\usepackage{eucal}
\usepackage{xpatch}
\usepackage{graphicx}



%

\newcommand{\pbc}[1][dr]{\save*!/#1-1.8pc/#1:(-1,1)@^{|-}\restore}



\newcommand{\NN}{\mathbb{N}}
\newcommand{\GG}{\mathfrak{G}}

\newcommand{\ZZ}{\mathbb{Z}}
\newcommand{\LL}{\mathbb{L}}
\newcommand{\RR}{\mathbb{R}}
\newcommand{\BB}{\mathbb{B}}
\newcommand{\EE}{\mathbb{E}}

\def\A{\mathcal{A}}
\def\B{\mathcal{B}}

\def\C{\mathcal{C}}
\def\D{\mathcal{D}}
\def\G{\mathcal{G}}

\def\F{\mathcal{F}}
\def\SS{\mathcal{S}}
\def\M{\mathcal{M}}
\def\N{\mathcal{N}}

\def\I{\mathcal{I}}

\def\T{\mathcal{T}}
\def\P{\mathcal{P}}

\def\R{\mathcal{R}}
\renewcommand{\L}{\mathcal{L}}
\def\U{\mathcal{U}}

\def\O{\mathcal{O}}

\def\wtl{\widetilde}

\def\E{\mathcal{E}}
\def\bS{\mathbf{S}}
\DeclareMathOperator\bbS{\mathbb{S}}
\def\fC{\mathfrak{C}}
\def\rN{\mathrm{N}}
\def\rH{\mathrm{H}}
\def\K{\mathcal{K}}


\def\alp{{\alpha}}

\def\gam{{\gamma}}

\def\eps{{\varepsilon}}

\def\lam{{\lambda}}

\def\Om{{\Omega}}

\def\Del{{\Delta}}
\def\Sig{{\Sigma}}

\def\vphi{\varphi}


\newcommand{\HSwarrow}{\kern0.05ex\vcenter{\hbox{\Huge\ensuremath{\Swarrow}}}\kern0.05ex}
\newcommand{\hSwarrow}{\kern0.05ex\vcenter{\hbox{\huge\ensuremath{\Swarrow}}}\kern0.05ex}
\newcommand{\LLSwarrow}{\kern0.05ex\vcenter{\hbox{\LARGE\ensuremath{\Swarrow}}}\kern0.05ex}
\newcommand{\LSwarrow}{\kern0.05ex\vcenter{\hbox{\Large\ensuremath{\Swarrow}}}\kern0.05ex}

\newcommand{\HSearrow}{\kern0.05ex\vcenter{\hbox{\Huge\ensuremath{\Searrow}}}\kern0.05ex}
\newcommand{\hSearrow}{\kern0.05ex\vcenter{\hbox{\huge\ensuremath{\Searrow}}}\kern0.05ex}
\newcommand{\LLSearrow}{\kern0.05ex\vcenter{\hbox{\LARGE\ensuremath{\Searrow}}}\kern0.05ex}
\newcommand{\LSearrow}{\kern0.05ex\vcenter{\hbox{\Large\ensuremath{\Searrow}}}\kern0.05ex}

\newcommand{\HDownarrow}{\kern0.05ex\vcenter{\hbox{\Huge\ensuremath{\Downarrow}}}\kern0.05ex}
\newcommand{\hDownarrow}{\kern0.05ex\vcenter{\hbox{\huge\ensuremath{\Downarrow}}}\kern0.05ex}
\newcommand{\LLDownarrow}{\kern0.05ex\vcenter{\hbox{\LARGE\ensuremath{\Downarrow}}}\kern0.05ex}
\newcommand{\LDownarrow}{\kern0.05ex\vcenter{\hbox{\Large\ensuremath{\Downarrow}}}\kern0.05ex}

\newcommand{\HUparrow}{\kern0.05ex\vcenter{\hbox{\Huge\ensuremath{\Uparrow}}}\kern0.05ex}
\newcommand{\hUparrow}{\kern0.05ex\vcenter{\hbox{\huge\ensuremath{\Uparrow}}}\kern0.05ex}
\newcommand{\LLUparrow}{\kern0.05ex\vcenter{\hbox{\LARGE\ensuremath{\Uparrow}}}\kern0.05ex}
\newcommand{\LUparrow}{\kern0.05ex\vcenter{\hbox{\Large\ensuremath{\Uparrow}}}\kern0.05ex}

\newtheorem{thm}{Theorem}[subsection]
\newtheorem{cor}[thm]{Corollary}
\newtheorem{lem}[thm]{Lemma}
\newtheorem{pro}[thm]{Proposition}

\makeatletter
\@addtoreset{thm}{section}
\makeatother

\numberwithin{thm}{subsection}
\numberwithin{equation}{subsection}

\theoremstyle{definition}
\newtheorem{define}[thm]{Definition}

\newtheorem{example}[thm]{Example}

\newtheorem{defn}[thm]{Definition}

\theoremstyle{remark}
\newtheorem{rem}[thm]{Remark}

\DeclareMathOperator{\holim}{holim}

\DeclareMathOperator{\colim}{colim}
\DeclareMathOperator{\Ker}{Ker}

\DeclareMathOperator{\Id}{Id}
\DeclareMathOperator{\id}{id}
\DeclareFontFamily{OT1}{pzc}{}
\DeclareFontShape{OT1}{pzc}{m}{it}{<-> s * [1.10] pzcmi7t}{}
\DeclareMathAlphabet{\mathpzc}{OT1}{pzc}{m}{it}
\DeclareMathOperator{\cof}{cof}
\DeclareMathOperator{\fib}{fib}
\DeclareMathOperator{\ad}{ad}

\DeclareMathOperator{\Alg}{Alg}

\DeclareMathOperator{\LMod}{LMod}

\DeclareMathOperator{\ModCat}{ModCat}
\DeclareMathOperator{\sGr}{sGr}

\DeclareMathOperator{\diag}{diag}
\DeclareMathOperator{\op}{op}
\DeclareMathOperator{\Map}{Map}
\DeclareMathOperator{\red}{red}

\DeclareMathOperator{\df}{def}
\DeclareMathOperator{\Set}{Set}

\DeclareMathOperator{\Cat}{Cat}

\DeclareMathOperator{\CAlg}{CAlg}
\DeclareMathOperator{\Mod}{Mod}

\DeclareMathOperator{\Fun}{Fun}

\DeclareMathOperator{\Un}{Un}
\DeclareMathOperator{\Ob}{Ob}
\DeclareMathOperator{\Sp}{Sp}
\DeclareMathOperator{\Ho}{Ho}

\DeclareMathOperator{\ev}{ev}

\DeclareMathOperator{\dom}{dom}

\DeclareMathOperator{\Ab}{Ab}

\DeclareMathOperator{\Tw}{Tw}
\DeclareMathOperator{\der}{h}
\DeclareMathOperator{\BiMod}{BiMod}
\DeclareMathOperator{\aug}{aug}

\DeclareMathOperator{\St}{St}
\DeclareMathOperator{\Spectra}{Spectra}
\DeclareMathOperator{\Reedy}{Reedy}

\DeclareMathOperator{\cosk}{cosk}

\DeclareMathOperator{\Idem}{Idem}
\DeclareMathOperator{\cone}{cone}
\DeclareMathOperator{\hocofib}{hocofib}
\DeclareMathOperator{\cov}{cov}
\DeclareMathOperator{\cod}{cod}

\DeclareMathOperator{\cls}{cl}

\def\x{\overset}

\def\Hom{\textrm{Hom}}
\def\End{\textrm{End}}

\def\inv{\textup{inv}}



\newcommand{\tgpd}{\kern0.05ex\vcenter{\hbox{\footnotesize\ensuremath{2}}}\kern0.05ex\mathcal{G}pd} 



\def\lrar{\longrightarrow}
\def\llar{\longleftarrow}
\def\hrar{\hookrightarrow}

\newcommand{\adj}{\mathrel{\substack{\longrightarrow \\[-.6ex] \x{\upvdash}{\longleftarrow}}}}


\def\ovl{\overline}

\def\bar{\overline}
\def\uline{\underline}



\title{The abstract cotangent complex and Quillen cohomology of enriched categories}

\author{Yonatan Harpaz}
\author{Joost Nuiten}
\author{Matan Prasma}

\date{}
\begin{document}

\begin{abstract}
In his fundamental work, Quillen developed the theory of the cotangent complex as a universal abelian derived invariant, and used it to define and study a canonical form of cohomology, encompassing many known cohomology theories. Additional cohomology theories, such as generalized cohomology of spaces and topological Andr\'e-Quillen cohomology, can be accommodated by considering a spectral version of the cotangent complex. Recent work of Lurie established a comprehensive $\infty$-categorical analogue of the cotangent complex formalism using stabilization of $\infty$-categories. In this paper we study the spectral cotangent complex while working in Quillen's model categorical setting. Our main result gives new and explicit computations of the cotangent complex and Quillen cohomology of enriched categories. For this we make essential use of previous work, which identifies the tangent categories of operadic algebras in unstable model categories. In particular, we present the cotangent complex of an $\infty$-category as a spectrum valued functor on its twisted arrow category, and consider the associated obstruction theory in some examples of interest.
\end{abstract}

\maketitle

\tableofcontents

\section{Introduction}
The use of cohomological methods to study objects and maps pervades many areas in mathematics. Although cohomology groups come in various forms, they share many abstract properties and structures, which can be organized by means of abstract homological algebra. In his seminal book~\cite{Qui67}, Quillen pioneered a way of not just reorganizing homological algebra, but also extending it outside the additive realm. This allows one, in particular, to study cohomology groups, the objects one takes cohomology of, and even the \textbf{operation} of taking cohomology itself, all on a single footing.
In Quillen's approach, the homology of an object is obtained by \textbf{deriving} its abelianization.  
More precisely, given a model category $\M$, one may consider the category $\Ab(\M)$ of \textbf{abelian group objects} in $\M$, namely, objects $M \in \M$ equipped with maps $u: \ast_{\M} \lrar M$, $m: M \times M \lrar M$ and $\inv: M \lrar M$ satisfying (diagramatically) all the axioms of an abelian group. Under suitable conditions, the category $\Ab(\M)$ carries a model structure so that the free-forgetful adjunction
$$ \F: \M \adj \Ab(\M): \U $$
is a Quillen adjunction. Given an abelian group object $M \in \Ab(\M)$ and an integer $n\geq 0$, Quillen defined the $n$'th cohomology group of $X$ with coefficients in $M$ by the formula
$$ \rH^n_Q(X,M) \x{\df}{=} \pi_0\Map^{\der}_{\Ab(\M)}(\LL\F(X),\Sig^nM). $$
In particular, all Quillen cohomology groups of $X$ are simple invariants of the derived abelianization $\LL\F(X)$ of $X$: one just takes homotopy classes of maps into the linear object $M$. 

The universality of the constructions $X \mapsto \F(X)$ is best understood on the categorical level.  
Recall that a locally presentable category $\C$ is called \textbf{additive} if it is tensored over the category $\Ab$ of abelian groups. In this case the tensoring is essentially unique and induces a natural enrichment of $\C$ in $\Ab$. If $\D$ is a locally presentable category then the category $\Ab(\D)$ of abelian group objects in $\D$ is additive and the free abelian group functor $\D \lrar \Ab(\D)$ exhibits $\Ab(\D)$ as universal among the additive categories receiving a colimit preserving functor from $\D$. 
In this sense the free abelian group functor (and consequently the classical notion of Quillen cohomology which is based on it) is uniquely determined by our notion of what an additive category is, which in turn is completely determined by its universal example - the category of abelian groups.

In the case of simplicial sets, $\LL\F(X)$ is given by the free simplicial abelian group $\ZZ X$ generated from $X$. The classical Dold-Thom theorem then shows that Quillen cohomology reproduces ordinary cohomology and more generally, ordinary cohomology with coefficients in a simplicial abelian group.
The quest for more refined invariants has led to the axiomatization of \textbf{generalized} homology and cohomology theories, and to their classification via the notion of \textbf{spectra}. The passage from ordinary cohomology to generalized cohomology therefore highlights spectra as a natural extension of the notion of ``linearity'' provided by simplicial abelian groups. 
Indeed, all generalized cohomology groups of a space $X$ are now determined not by $\ZZ X$, but by the free spectrum $\Sig^{\infty}_+X$ generated from $X$, also known as the \textbf{suspension spectrum} of $X$. 

The passage from simplicial abelian groups to spectra is a substantial one, even from a homotopy-theoretic point of view. Indeed, in homotopy theory the notion of a spectrum is preceded by the natural notion of an \textbf{$\EE_\infty$-group}, obtained by interpreting the axioms of an abelian group not strictly, but up to coherent homotopy. It then turns out that specifying an $\EE_\infty$-group structure on a given space $X_0$ is equivalent to specifying, for every $n \geq 1$, an $(n-1)$-connected space $X_n$, together with a weak equivalence $X_{n-1} \x{\simeq}{\lrar} \Om X_n$. Such a datum is also known as a \textbf{connective spectrum}, and naturally extends to the general notion of a spectrum by removing the connectivity conditions on $X_n$. This passage from connective spectra (or $\EE_\infty$-groups) to spectra should be thought of as an extra linearization step that is possible in a homotopical setting, turning additivity into \textbf{stability}. Using stability as the fundamental form of linearity is the starting point for the theory of \textbf{Goodwillie calculus}, which extends the notion of stability to give meaningful analogues to higher order approximations, derivatives and Taylor series for functors between $\infty$-categories. 

The universality of the constructions $X \mapsto \Sig^{\infty}_+X$ can be understood in a way analogous to that of $X \mapsto \ZZ X$  
by working in a higher categorical setting. To this end we may organize the collection of spectra into an $\infty$-category $\Sp$, which is presentable and symmetric monoidal. Applying the above logic we may replace the notion of an additive category by that of a presentable $\infty$-category which is tensored over $\Sp$, which coincides with the notion of being a \textbf{stable presentable $\infty$-category}. Given a presentable $\infty$-category $\D$, there exists a universal stable presentable $\infty$-category $\Sp(\D)$ receiving a colimit preserving functor $\Sig^{\infty}_+:\D \lrar \Sp(\D)$. One may realize $\Sp(\D)$ as the $\infty$-category of \textbf{spectrum objects} in $\D$, also known as the \textbf{stabilization} of $\D$.


A key insight of stable homotopy theory is that the various favorable formal properties of cohomology theories are essentially a consequence of the stability of the $\infty$-category of spectra.
It is hence worthwhile, when studying a particular presentable $\infty$-category $\D$, to look for invariants 
which take values in a stable $\infty$-category, i.e.\ ``linear invariants''. If we restrict attention to linear invariants that preserve colimits (such as generalized cohomology theories), then there is a universal such invariant. This invariant takes values in the stabilization $\Sp(\D)$, and associates to an object $X \in \D$ its suspension spectrum $\Sig^{\infty}_+X \in \Sp(\D)$. Classical Quillen cohomology can then be replaced by an analogous construction where $\F(X)$ is replaced by $\Sig^\infty_+X$ and the coefficients are taken in spectrum objects of $\D$. In the case of spaces, this definition captures all generalized cohomology theories.

One may use the above construction to obtain even more refined invariants. Given an object $X \in \D$, if $X$ admits a map $f: X \lrar Y$ then it can naturally be considered as an object in $\D_{/Y}$. We may then obtain a more refined linear invariant by sending $X$ to its corresponding suspension spectrum in $\Sp(\D_{/Y})$. If we just have the object $X$ itself, there is a universal choice for $f$, namely the identity map $\Id: X \lrar X$. The corresponding suspension spectrum $L_X := \Sig^{\infty}_+ X \in \Sp(\D_{/X})$ is a linear invariant which determines all the others mentioned above. This invariant is called the \textbf{cotangent complex} of $X$. Given the cotangent complex $L_X$, we may obtain invariants which live in the $\infty$-category of spectra by taking a coefficient object $M \in \Sp(\D_{/X})$ and considering the mapping spectrum $\Map(L_X,M) \in \Sp$. The homotopy groups of these spectra form a natural generalization of classical Quillen cohomology groups. When $X$ is a space, this generalization corresponds to \textbf{twisted generalized cohomology} (see e.g.~\cite[\S 20]{MS06}), i.e.\ allowing coefficients in a local system of spectra on $X$. When $X$ is an $\EE_\infty$-ring spectrum this form of cohomology is also known as \textbf{topological Andr\'e-Quillen cohomology} (see~\cite{BM}), and plays a key role in deformation theory (see~\cite{Lur16}).

An abstract cotangent complex formalism was developed in the $\infty$-categorical context in~\cite[\S 7.3]{Lur14}. With a geometric analogy in mind, if we consider objects $Z \lrar X$ of $\D_{/X}$ as paths in $\D$, then we may consider spectrum objects in $\D_{/X}$ as ``infinitesimal paths'', or ``tangent vectors'' at $X$. As in~\cite{Lur14}, we will consequently refer to $\Sp(\D_{/X})$ as the \textbf{tangent $\infty$-category} at $X$, and denote it by $\T_X\D$. Just like the tangent space is a linear object, we may consider $\T_X\D$ as a linear categorical object, being a stable $\infty$-category.  
This analogy is helpful in many of the contexts in which linearization plays a significant role. Furthermore, it is often useful to assemble the various tangent categories into a global object, which is known as the \textbf{tangent bundle} of $\D$. The collection of functors $\Sig^{\infty}_+: \D_{/X} \lrar \T_X\D$ can then be assembled into a single functor $\Sig^{\infty}_{\int}: \D \lrar \T\D$, yielding convenient setting for studying the Quillen cohomology of several objects simultaneously.

This is the second in a series of papers dedicated to the cotangent complex formalism and its applications.  
In the previous papers~\cite{part0} and~\cite{part1} we studied tangent categories and tangent bundles in a model categorical setting. 
The main result of~\cite{part1} was a comparison theorem, which identified the tangent model categories of operadic algebras with tangent model categories of modules. When the algebras take values in a stable model category this reproduces the comparison appearing in~\cite[Theorem 7.3.4.13]{Lur14}.

The goal of the present paper is two-fold. Our first objective is to develop more of the cotangent complex formalism in the model categorical setting. This is done in the first part of the paper and includes, in particular, the relative counterparts of the cotangent complex and Quillen cohomology and their role in \textbf{obstruction theory}. We consider three classical cases - spaces, simplicial groups, and algebras over dg-operads - and analyze them with these tools. In particular, we obtain a description of the cotangent complex of a simplicial group, which, to the knowledge of the authors, does not appear in the literature. We finish the first part by formulating a general \textbf{Hurewicz principle}, which ties together various cases where necessary and sufficient conditions for a map to be an equivalence can be formulated using cohomology. The second part of the present paper is dedicated to the study of the cotangent complex and Quillen cohomology of \textbf{enriched categories}, and makes an essential use of the unstable comparison theorem of~\cite{part1}. Our main result can be formulated as follows:
\begin{thm}[{\ref{c:comp-lifts},~\ref{c:conceptual}}]\label{t:main-intro}
Let $\bS$ be a sufficiently nice symmetric monoidal model category and let $\T\bS \lrar \bS$ be the tangent bundle of $\bS$. Let $\Cat_{\bS}$ be the model category of $\bS$-enriched categories. Then for every fibrant $\bS$-enriched category $\C$ we have:
\begin{enumerate}
\item
The tangent model category $\T_{\C}\Cat_{\bS}$ is naturally Quillen equivalent to the model category $\Fun^{\bS}_{/\bS}(\C^{\op} \otimes \C,\T\bS)$ consisting of the $\bS$-enriched functors $\C^{\op} \otimes \C \lrar \T\bS$ which sit above the mapping space functor $\Map_{\C}:\C^{\op} \otimes \C \lrar \bS$.
\item
Under this equivalence, the cotangent complex of $\C$ corresponds to the desuspension of the composite functor $\Sig^{\infty}_{\int} \circ \Map_{\C}:\C^{\op} \otimes \C \lrar \bS \lrar \T\bS$.
\end{enumerate}
\end{thm}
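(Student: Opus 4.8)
The plan is to bootstrap the theorem from the comparison theorem of~\cite{part1} for operadic algebras, exploiting the fact that $\bS$-enriched categories on a \emph{fixed} object set are exactly algebras over a colored operad.

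\smallskip
\noindent\emph{Step 1: reduction to a fixed object set.} Write $W:=\Ob(\C)$. I would first argue that $\T_{\C}\Cat_{\bS}$ depends only on the full subcategory $\Cat^{W}_{\bS}\subseteq\Cat_{\bS}$ of $\bS$-enriched categories with object set $W$ and identity-on-objects functors: the object set is a discrete invariant — the functor $\Ob\colon\Cat_{\bS}\to\Set$ preserves the limits relevant to forming spectrum objects, and $\Set$ admits no nonzero spectrum objects — so every spectrum object of $(\Cat_{\bS})_{/\C}$ is supported on $W$, whence $\T_{\C}\Cat_{\bS}\simeq\T_{\C}\Cat^{W}_{\bS}$. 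This is the step I expect to require the most care: one must reconcile the Dwyer--Kan model structure on $\Cat_{\bS}$, whose weak equivalences are not identity-on-objects, with the model structure on the fibre $\Cat^{W}_{\bS}$, passing if necessary to an object-set-preserving cofibrant replacement of $\C$ and checking homotopy invariance; the ``sufficiently nice'' hypothesis on $\bS$ is used here and below to guarantee that all the relevant model structures exist and that the operad appearing is admissible.

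\smallskip
\noindent\emph{Step 2: the operadic description.} Next I would identify $\Cat^{W}_{\bS}$ with $\Alg_{\mathcal{O}_{W}}(\bS)$, the category of $\bS$-algebras over the $(W\times W)$-colored operad $\mathcal{O}_{W}$ whose set of $n$-ary operations $(x_{1},y_{1}),\dots,(x_{n},y_{n})\to(x,y)$ is a point when the colors can be concatenated into a path from $x$ to $y$ and is empty otherwise (the associative operad colored by $W$). Under this identification $\C$ is an $\mathcal{O}_{W}$-algebra, and the comparison theorem of~\cite{part1}, in its tangent-bundle form, supplies a Quillen equivalence between $\T_{\C}\Alg_{\mathcal{O}_{W}}(\bS)$ and the model category of operadic $\C$-modules valued in the tangent bundle $\T\bS$, lying over the $\mathcal{O}_{W}$-algebra $\C$. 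It then remains to recognise such modules: an $\mathcal{O}_{W}$-module $M$ over $\C$ is a family $\{M(x,y)\}_{x,y\in W}$ equipped with coherently associative and unital actions $\Map_{\C}(y,z)\otimes M(x,y)\to M(x,z)$ and $M(x,y)\otimes\Map_{\C}(w,x)\to M(w,y)$, i.e.\ exactly an $\bS$-enriched functor $\C^{\op}\otimes\C\to\bS$ sending $(x,y)$ to $M(x,y)$ — and the module $\C$ itself is the functor $\Map_{\C}$. Transporting to $\T\bS$-valued modules and matching the projective-type model structures then yields part~(1): $\T_{\C}\Cat_{\bS}\simeq\Fun^{\bS}_{/\bS}(\C^{\op}\otimes\C,\T\bS)$, with the module side sitting over $\Map_{\C}$.

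\smallskip
\noindent\emph{Step 3: the cotangent complex.} By definition $L_{\C}=\Sig^{\infty}_{+}(\id_{\C})$ is the suspension spectrum of the terminal object of $(\Cat_{\bS})_{/\C}$. I would run this terminal object through the description of $\Sig^{\infty}_{+}$ for operadic algebras developed in~\cite{part1} and in the first part of this paper: under the equivalence of Step~2, $\Sig^{\infty}_{+}$ corresponds on the module side to the derived functor of operadic indecomposables, computable via the two-sided bar construction $\C\simeq\bigl|\mathrm{Bar}_{\bullet}(\mathrm{Free}_{\mathcal{O}_{W}},\mathrm{Free}_{\mathcal{O}_{W}},\C)\bigr|$; applying the colimit-preserving functor $\Sig^{\infty}_{+}$, using that it carries free $\mathcal{O}_{W}$-algebras to free modules on fibrewise suspension spectra, and resumming identifies $L_{\C}$ with the desuspension $\Sig^{-1}\bigl(\Sig^{\infty}_{\int}\circ\Map_{\C}\bigr)$ inside $\Fun^{\bS}_{/\bS}(\C^{\op}\otimes\C,\T\bS)$ — the desuspension being legitimate because the fibres of $\T\bS\to\bS$ are stable, and the shift by one being the colored-operad avatar of the degree shift in the cotangent complex of an associative algebra. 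Once Step~1 and the verification of the hypotheses of~\cite{part1} are in place, Steps~2 and~3 are essentially a matter of carefully tracing the bar resolution through the dictionary of part~(1) and pinning down the single desuspension.
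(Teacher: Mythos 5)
Your overall strategy — reduce to operadic algebras on a fixed object set, invoke the comparison theorem of~\cite{part1}, then compute — is the same global route the paper takes, but both of the nontrivial steps have gaps that in the paper correspond to the real mathematical content.

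\textbf{Step 1.} The heuristic that ``$\Ob: \Cat_{\bS}\to\Set$ preserves the relevant limits and $\Set$ has no nonzero spectrum objects'' gestures at the right intuition but does not discharge the actual difficulty: objects of $(\Cat_{\bS})_{\C//\C}$ may have arbitrarily many new objects over $\C$, and the adjunction $\L^{\C}\dashv\R^{\C}$ from $\P_{\C}$-algebras to $(\Cat_{\bS})_{\C/}$ is only derived \emph{fully faithful}, not an equivalence — its counit on $\C\to\D\to\C$ is an equivalence only when $\D$ is reached essentially surjectively. What the paper must show (Proposition~\ref{p:cat}) is that this counit becomes an equivalence \emph{after one loop}, via Lemma~\ref{l:pullback}, and that $\Cat_{\bS}$ is differentiable (Corollary~\ref{c:diff}, requiring the analysis of homotopy pullbacks in Lemma~\ref{l:local-pb} and closure of DK-equivalences under sequential colimits). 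An ``object-set-preserving cofibrant replacement of $\C$'' does not address this, since the issue is about objects downstream of $\C$ in the over-under category, not about $\C$ itself.

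\textbf{Step 3.} Here there is a genuine error of principle, not just an incomplete argument. The two suspension-spectrum functors that you are tacitly identifying do not agree: the pointification used to form $L_{\C}$ as an object of $\T_{\C}\Cat_{\bS}$ involves $\C\coprod\C$ with the coproduct taken in $\Cat_{\bS}$ (object set $\Ob(\C)\sqcup\Ob(\C)$), while the pointification on the operadic side involves $\R\C\coprod\R\C$ with the coproduct taken in $\Alg_{\mathcal{O}_{W}}$ (object set $W$). Consequently, running the bar resolution of $\C$ through the fixed-object-set dictionary computes the \emph{relative} cotangent complex $L_{\C/O}$ (where $O\hookrightarrow\C$ is the inclusion of the discrete subcategory on the objects), not the absolute one $L_{\C}$. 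And $L_{\C/O}$ is \emph{not} $\bigl(\Sig^{\infty}_{\int}\circ\Map_{\C}\bigr)[-1]$: by Corollary~\ref{c:relative-2} it is the fiber of $\LL\vphi_{!}(L_{\Map_{O}})\to L_{\Map_{\C}}$ — in the one-object case this is precisely the cofiber sequence $L_{A}\to\bar A\otimes\bar A\to\bar A$ of Corollary~\ref{c:famous-cofiber}, which is a fiber, not a desuspension. The desuspension in part~(2) is created entirely by the contribution of $L_{O}$, i.e.\ by allowing objects to vary, and the paper's Proposition~\ref{p:conceptual} captures it by showing that the map from the free $\bS$-category on an endomorphism to the free $\bS$-category on a \emph{self-equivalence} (a model for $\Sigma(\ast\coprod\ast)$) is $(-1)$-cotruncated. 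That step depends crucially on the \emph{invertibility hypothesis} on $\bS$, which your proposal never invokes. Asserting that the shift is ``the colored-operad avatar of the degree shift in the cotangent complex of an associative algebra'' is circular — in the fixed-object operadic world there is no such shift, and you must supply a separate argument producing it.
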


When $\bS$ is stable the situation becomes simpler:
\begin{cor}[{\ref{c:comp-stable},~\ref{c:cotangent-stable}}]\label{c:comp-stable-intro}
Let $\bS$ be as in Theorem~\ref{t:main-intro} and assume in addition that $\bS$ is stable. Then for every fibrant $\bS$-enriched category $\C$ the tangent model category $\T_{\C}\Cat_{\bS}$ is naturally Quillen equivalent to the model category of enriched functors $\C^{\op} \otimes \C \lrar \bS$.
Under this equivalence, the cotangent complex of $\C$ corresponds to the desuspension of the mapping space functor $\Map_{\C}:\C^{\op} \otimes \C \lrar \bS $.
\end{cor}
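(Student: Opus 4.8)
We deduce this from Theorem~\ref{t:main-intro} by specializing the tangent bundle $\T\bS \lrar \bS$ to the case of a stable base. The plan is as follows. First I would establish that when $\bS$ is stable, the tangent bundle is \emph{trivial} in the sense that it is Quillen equivalent, as a model category over $\bS$, to the first projection $\bS \times \bS \lrar \bS$, in such a way that the integrated suspension spectrum functor $\Sig^{\infty}_{\int} : \bS \lrar \T\bS$ is identified with the functor $Y \mapsto (Y, Y)$. Granting this, substituting into Theorem~\ref{t:main-intro}(1) gives
$$\T_{\C}\Cat_{\bS} \;\simeq\; \Fun^{\bS}_{/\bS}(\C^{\op}\otimes\C, \T\bS) \;\simeq\; \Fun^{\bS}_{/\bS}(\C^{\op}\otimes\C, \bS\times\bS) \;\simeq\; \Fun^{\bS}(\C^{\op}\otimes\C, \bS),$$
the last step because a $\bS$-enriched functor into $\bS\times\bS$ lying over $\Map_{\C}$ for the first projection is precisely the data of an arbitrary $\bS$-enriched functor into the second factor; one must check this is an equivalence of model categories (matching, say, the projective model structures on both sides), which is essentially formal. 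For part (2), Theorem~\ref{t:main-intro}(2) says $L_{\C}$ corresponds to the desuspension of $\Sig^{\infty}_{\int}\circ\Map_{\C}$; under the identification above this composite becomes $(c,d)\mapsto(\Map_{\C}(c,d),\Map_{\C}(c,d))$, and desuspending in the fiber direction of $\T\bS$ --- which only affects the second coordinate --- yields $(c,d) \mapsto (\Map_{\C}(c,d), \Sig^{-1}\Map_{\C}(c,d))$, i.e.\ the desuspension of $\Map_{\C}$ in $\Fun^{\bS}(\C^{\op}\otimes\C,\bS)$.

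The heart of the argument is the triviality of the tangent bundle of a stable model category. Fiberwise, $\T_Y\bS = \Sp(\bS_{/Y})$, and the point is that the pointed-object model category $(\bS_{/Y})_{*}$ is already stable when $\bS$ is: a pointed object of $\bS_{/Y}$ is a section $s\colon Y \to Z$ of a map $p\colon Z\to Y$, and in a stable model category such a section splits off its fiber, so that $Z \simeq Y \oplus \fib(p)$ functorially; thus the functor $(Z\xrightarrow{p} Y, s)\mapsto \fib(p)$, with inverse $V\mapsto (Y\oplus V\to Y)$, is a Quillen equivalence $(\bS_{/Y})_{*}\simeq\bS$, which is moreover exact, so $(\bS_{/Y})_{*}$ is stable and the prespectrum-to-$\Omega$-spectrum localization computing $\Sp(\bS_{/Y})$ is already a Quillen equivalence. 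I would then assemble these fiberwise identifications over $Y\in\bS$ using the explicit construction of $\T\bS$ from~\cite{part1} as a (localized) category of fiberwise prespectra over the target fibration $\bS^{[1]}\to\bS$, to obtain the global Quillen equivalence $\T\bS\simeq\bS\times\bS$ over $\bS$. Finally, to pin down $\Sig^{\infty}_{\int}$, note that $\Sig^{\infty}_{+}\id_Y$ is, in $(\bS_{/Y})_{*}$, the object $(\id_Y\sqcup\id_Y)$ equipped with a coprojection as section, i.e.\ the fold map $\nabla\colon Y\sqcup Y\to Y$; its fiber, computed via the split cofiber sequence of $\nabla$ in the stable category $\bS$, is $Y$, so $\Sig^{\infty}_{\int}(Y) = (Y,Y)$ as claimed.

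The main obstacle I anticipate is the passage from the fiberwise statement to a \emph{model-categorical}, global one: one must verify that the splitting $Z\simeq Y\oplus\fib(p)$ is natural enough in $Y$ to induce a Quillen equivalence of the full tangent bundles (and not merely of the underlying $\infty$-categories or of each fiber separately), and that this equivalence is compatible with the $\bS$-enriched functor model structures appearing in Theorem~\ref{t:main-intro}, so that the chain of equivalences above is genuinely a chain of Quillen equivalences. This is where the concrete description of $\T\bS$ in~\cite{part1} --- rather than any abstract universal property --- has to be used; everything else (the vacuousness of the ``over $\bS$'' condition, the bookkeeping of the single desuspension) is routine once the triviality of $\T\bS$ is in hand.
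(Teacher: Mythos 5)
Your route is, at the structural level, the same as the paper's: both arguments pass through the identification of $\T_{\C}\Cat_{\bS}$ with a tangent category of enriched functors into $\bS$ (Theorem~\ref{t:comp-cat}, or equivalently its reformulation as Corollary~\ref{c:comp-lifts}), and then reduce the stable case to the assertion that the tangent bundle of a stable model category is ``trivial'' --- in the paper this assertion is invoked in the packaged form of Corollary~\ref{c:stable-functor-tangent}, cited from~\cite[Cor.\ 3.4.4, 3.4.6]{part1}, whereas you attempt to re-derive it from scratch via the fiberwise identification $\bS_{Y//Y}\simeq\bS$ and an assembly over $Y\in\bS$. The bookkeeping that you perform afterwards --- that a lift of $\Map_{\C}$ against $\bS\times\bS\to\bS$ is just an arbitrary functor into the second factor, and that desuspension in the fibre direction turns $\Sig^{\infty}_{\int}\circ\Map_{\C}=(\Map_{\C},\Map_{\C})$ into $\Map_{\C}[-1]$ --- is correct and matches the paper's chain $L_{\C}\leftrightarrow L_{\Map_{\C}}[-1]\leftrightarrow\Map_{\C}[-1]$ via Proposition~\ref{p:conceptual} and Corollary~\ref{c:stable-functor-tangent}.

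The one genuine gap is exactly the one you flag: promoting the homotopy-level splitting $Z\simeq Y\oplus\fib(p)$ to a Quillen equivalence $(\bS_{/Y})_{*}\simeq\bS$, and then assembling these fibrewise equivalences into a Quillen equivalence $\T\bS\simeq\bS\times\bS$ over $\bS$ compatible with the projective model structures on $\Fun^{\bS}_{/\bS}(\C^{\op}\otimes\C,\T\bS)$ used in Theorem~\ref{t:main-intro}. This is nontrivial because $\T\bS$ here is defined as a localization of $(\NN\times\NN)_{*}$-diagrams rather than as an honest Grothendieck construction (cf.\ the remark following the definition of $\Sig^{\infty}_{\int}$), and because the choice of kernel vs.\ cokernel as the inverse to $V\mapsto Y\oplus V$ matters for which adjoint is Quillen. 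These are precisely the issues handled in~\cite{part1} and absorbed into Corollary~\ref{c:stable-functor-tangent}, so your argument is sound once you allow that citation; as a standalone re-derivation it stops short of a proof at the step you yourself identify.
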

Applying Corollary~\ref{c:comp-stable-intro} to the case where $\bS$ is the category of chain complexes over a field one obtains an identification of Quillen cohomology of a dg-category $\C$ with the corresponding \textbf{Hochschild cohomology}, up to a shift. To the knowledge of the authors this precise identification is new (see \S\ref{s:previous} for a more detailed discussion of the relation to existing results).

When $\bS$ is the category of simplicial sets, $\Cat_{\bS}$ is a model for the theory of $\infty$-categories. In this case, the computation of Theorem~\ref{t:main-intro} simplifies in a different way:
\begin{thm}[{see Corollary~\ref{c:twisted-arrow}}]\label{t:main-intro-2}
Let $\C$ be an $\infty$-category. Then the tangent $\infty$-category $\T_{\C}\Cat_{\infty}$ is equivalent to the $\infty$-category of functors
$$ \Tw(\C) \lrar \Sp $$
from the twisted arrow category of $\C$ to spectra. Under this equivalence, the cotangent complex $L_{\C}$ corresponds to the constant functor whose value is the desuspension of the sphere spectrum. More generally, if $f:\C \lrar \D$ is a map of $\infty$-categories then the functor $\Tw(\D) \lrar \Sp$ corresponding to $\Sig^{\infty}_+f \in \T_\D\Cat_{\infty}$ can be identified with $\Tw(f)_!\mathbb{S}[-1]$, i.e. with the left Kan extension of the shifted constant sphere spectrum diagram from $\Tw(\C)$ to $\Tw(\D)$.
\end{thm}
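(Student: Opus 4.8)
The plan is to deduce Theorem~\ref{t:main-intro-2} as a specialization of Theorem~\ref{t:main-intro} to the case $\bS = \sset$ with the Kan--Quillen model structure, using the fact that $\Cat_{\sset}$ is then a model for $\Cat_\infty$. The first step is to recall that the tangent bundle of spaces, $\T\sset \lrar \sset$, presents the classical fact that $\T_X\mathcal{S} \simeq \Fun(X,\Sp)$: the stabilization of $\mathcal{S}_{/X}$ is the $\infty$-category of local systems of spectra on $X$, and the global functor $\Sig^\infty_{\int}\colon \sset \lrar \T\sset$ sends $X$ to the constant diagram on the sphere spectrum $\mathbb{S} \in \Fun(X,\Sp)$. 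Plugging this into Theorem~\ref{t:main-intro}(1), the model category $\Fun^{\bS}_{/\bS}(\C^{\op}\otimes\C, \T\bS)$ of enriched functors lying over $\Map_{\C}$ becomes, at the level of $\infty$-categories, the $\infty$-category of pairs consisting of a functor $F\colon \C^{\op}\times\C \lrar \mathcal{S}$ together with a lift of $F$ to spectrum-valued local systems, where $F$ is forced to be $\Map_{\C}$. Unwinding, a functor $\C^{\op}\times\C \lrar \mathcal{S}$ together with, for each $(c,d)$, a local system of spectra on $\Map_{\C}(c,d)$, compatibly in $(c,d)$, is precisely the data of a single functor out of the \emph{total space} of the $\mathcal{S}$-valued functor $\Map_{\C}$ — and the Grothendieck construction / unstraightening of $\Map_{\C}\colon \C^{\op}\times\C \lrar \mathcal{S}$ is exactly the twisted arrow category $\Tw(\C)$. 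This is the conceptual heart of the argument: identifying $\int_{\C^{\op}\times\C}\Map_{\C}$ with $\Tw(\C)$.

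Concretely, I would carry out the following steps. First, recall (or cite) the description of $\Tw(\C)$ as the unstraightening of the mapping-space functor $\Map_{\C}\colon \C^{\op}\times\C \lrar \mathcal{S}$; this is standard (e.g. via the fact that the projection $\Tw(\C) \lrar \C^{\op}\times\C$ is a left fibration classified by $\Map_{\C}$). Second, combine this with the general principle that for a space-valued functor $G\colon \mathcal{E} \lrar \mathcal{S}$ with unstraightening $p\colon \widetilde{\mathcal{E}} \lrar \mathcal{E}$, the $\infty$-category of "$G$-twisted spectrum-valued local systems on $\mathcal{E}$", i.e. sections of the $\Sp$-valued family obtained by post-composing $G$ with $\T\mathcal{S} \simeq \int_{X}\Fun(X,\Sp)$, is equivalent to $\Fun(\widetilde{\mathcal{E}},\Sp)$. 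Applied to $\mathcal{E} = \C^{\op}\times\C$ and $G = \Map_{\C}$, this yields $\T_\C\Cat_{\sset} \simeq \Fun(\Tw(\C),\Sp)$, proving the first assertion. Third, for the cotangent complex: by Theorem~\ref{t:main-intro}(2), $L_\C$ corresponds to the desuspension of $\Sig^\infty_{\int}\circ\Map_{\C}$, which under the above dictionary is the "fiberwise sphere spectrum" over the total space $\Tw(\C)$ — that is, the constant functor $\Tw(\C) \lrar \Sp$ with value $\mathbb{S}[-1]$. This gives the second assertion.

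For the relative statement, given $f\colon \C \lrar \D$, I would use that $\Sig^\infty_+ f \in \T_\D\Cat_\infty$ is computed by applying the pushforward (left adjoint to restriction along $f$) to $L_\C$ transported into $\T_\D\Cat_\infty$; on the functor-category models, the change-of-base functor $\T_\C\Cat_\infty \lrar \T_\D\Cat_\infty$ induced by $f$ corresponds, under the identifications with $\Fun(\Tw(-),\Sp)$, to left Kan extension along the induced map $\Tw(f)\colon \Tw(\C) \lrar \Tw(\D)$. Since $L_\C$ is the constant diagram $\mathbb{S}[-1]$, its image is $\Tw(f)_!\mathbb{S}[-1]$, as claimed. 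The main point to verify carefully here is the compatibility of the cobase-change functor on tangent categories (which on the algebra side is induced by the map of enriched categories $\C \to \D$ and the associated adjunction on enriched functor categories) with left Kan extension along $\Tw(f)$; this requires tracking through the Quillen equivalence of Theorem~\ref{t:main-intro} and the functoriality of unstraightening, and is where I expect the only real bookkeeping to lie. Everything else is a matter of assembling already-established facts: the comparison theorem of Theorem~\ref{t:main-intro}, the identification of $\T\mathcal{S}$ with parametrized spectra, and the standard description of $\Tw(\C)$ as a left fibration over $\C^{\op}\times\C$.
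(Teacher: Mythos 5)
Your proposal is correct, and it isolates the same key insight as the paper's proof: the twisted arrow category $\Tw(\C)$ is the unstraightening of the mapping-space functor $\Map_\C\colon\C^{\op}\times\C\to\SS$, so that spectrum-valued data parametrized by $\Map_\C$ unwinds to a single functor out of $\Tw(\C)$. The route you take is a reorganization of the paper's. You stabilize first --- passing to the lift category $\Fun^{\bS}_{/\bS}(\C^{\op}\otimes\C,\T\bS)$ of Corollary~\ref{c:comp-lifts} --- and then appeal to the general principle that lifts of $G\colon\mathcal{E}\to\SS$ along $\T\SS\to\SS$ form $\Fun\bigl(\textstyle\int G,\Sp\bigr)$. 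The paper instead stabilizes last: it replaces $\Fun(\C^{\op}\times\C,\bS)_{/\Map_\C}$ by the covariant model structure $(\Set_{\Del})^{\cov}_{/\Tw(\rN\C)}$ via straightening/unstraightening, and only then takes spectrum objects, landing in $\Fun(\Tw(\rN\C),\Sp)$. These two orders commute, since $\Fun^{\bS}_{/\bS}(\C^{\op}\otimes\C,\T\bS)$ and $\T_{\Map_\C}\Fun^{\bS}(\C^{\op}\otimes\C,\bS)$ are the same model category by Remark~\ref{r:global-tensored}. The one caution about your version is that the sections-of-$\T\SS$ principle you invoke, while true, is itself proved by essentially the same chain of covariant-model-structure equivalences the paper runs in the special case $G=\Map_\C$; so it encapsulates rather than bypasses the work, and if you cannot cite it you would be reproving the paper's chain. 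Your treatment of $L_\C$ via Theorem~\ref{t:main-intro}(2) and of $\Sig^\infty_+f$ via cobase change matches the paper exactly; the compatibility you correctly single out as the remaining bookkeeping --- that cobase change on tangent categories agrees with left Kan extension along $\Tw(f)$ after unstraightening --- is precisely what Corollary~\ref{c:enhanced} together with the commutative square of Quillen adjunctions relating $\fC(\gamma)_!$, $\gamma_!$ and $\varphi_!$ verifies.
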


\begin{cor}[{see Corollary~\ref{c:twisted-arrow-quillen}}]\label{c:twisted-arrow-quillen-intro}
Let $\F: \Tw(\C) \lrar \Spectra$ be a functor and let $M_{\F} \in \T_\C\Cat_\infty$ be the corresponding spectrum object under the equivalence of Theorem~\ref{t:main-intro-2}. Then the Quillen cohomology group $\rH^n_Q(\C;M_{\F})$ is naturally isomorphic to the $(-n-1)$'th homotopy group of the spectrum $\lim\F$. 
\end{cor}

In particular, if $\C$ is a discrete category and $\F$ is a diagram of Eilenberg-MacLane spectra corresponding to a functor $\F': \Tw(\C) \lrar \Ab$, then the Quillen cohomology group $\rH^n_Q(\C;M_{\F})$ is naturally isomorphic to the $(n+1)$'th derived functor $\lim^{n+1}\F'$.

Recall that a map of simplicial sets is said to be \textbf{coinitial} if its opposite is cofinal. Theorem~\ref{t:main-intro-2} then yields the following sufficient condition for a map of $\infty$-categories to induce an isomorphism on Quillen cohomology:
if $f:\C \lrar \D$ is a map of $\infty$-categories such that the induced map $\Tw(\C) \lrar \Tw(\D)$ is coinitial, then $f$ induces an equivalence on Quillen cohomology with arbitrary coefficients.
Combined with the Hurewicz principle discussed in \S\ref{s:white}, This corollary 
implies that if $f: \C \lrar \D$ is a map of $\infty$-categories such that $\Tw(\C) \lrar \Tw(\D)$ is coinitial then $f$ is an equivalence if and only if it induces an equivalence on homotopy $(2,1)$-categories $\Ho_{\leq 2}(\C) \lrar \Ho_{\leq 2}(\D)$. Examples and applications to classical questions such as detecting equivalences and splitting homotopy idempotents are described in~\S\ref{s:simp-categories}. 

The results of this paper are used in subsequent work~\cite{part3} to study the Quillen cohomology of $(\infty,2)$-categories. This relies in an essential way on the fact that Theorem~\ref{t:main-intro} can handle rather general forms of enrichment. In particular, we obtain an analogous result to~\ref{t:main-intro-2} by replacing the twisted arrow category with a suitable $(\infty,2)$-categorical analogue, which we call the \textbf{twisted $2$-cell category}. This can be used to establish an obstruction theory as described in \S\ref{s:white} in the setting of $(\infty,2)$-categories.


\subsection{Relation to other work}\label{s:previous}
Using its classical definition via abelian group objects (see \S\ref{s:quillen}), Quillen cohomology of simplicial categories was studied by Dwyer-Kan-Smith (\cite{DKS86}) and Dwyer-Kan (\cite{DK88}). In those papers, the authors work in the category $\Cat^O_\Del$ of simplicial categories with a \textbf{fixed set of objects $O$}. In particular, they identify abelian group objects in $(\Cat^O_\Del)_{/\C}$ in explicit terms, which, in the language of the current paper can be reformulated as an equivalence between abelian group in $(\Cat^O_\Del)_{/\C}$ and functors from the twisted arrow category of $\rN(\C)$ to simplicial abelian groups. When $\C$ is a discrete category the cohomology theory associated to functors on the twisted arrow category is also known as \textbf{Baues-Wirsching cohomology}, see~\cite{BW85}. The relation between this cohomology and the (classical) Quillen cohomology of $\C$ as a category with a fixed set of objects (\cite{DKS86}) was identified by Baues and Blanc in~\cite{BB11} in the context of obstructions to realizations of $\Pi$-algebras. Similar results in the case of algebraic theories with a fixed set of objects were established in~\cite{JP05}.

However, in contrast to Corollary~\ref{c:twisted-arrow-quillen-intro}, in the above setting the classical Quillen cohomology of $\C$ with coefficients in such a diagram of simplicial abelian groups will \textbf{not} be given by the corresponding homotopy limit of $\F$. This is due to the fact that the (classical) cotangent complex of $\C \in \Cat^O_\Del$ is not a constant diagram. Instead, there is a long exact sequence (cf. Corollary~\ref{c:famous-cofiber}) relating the classical Quillen cohomology of $\C$ as an object of $\Cat^O_\Del$ and the homotopy groups of the homotopy limit of $\F$. As can be deduced from Corollary~\ref{c:twisted-arrow-quillen-intro} and Proposition~\ref{p:rigid}, the latter are in fact the classical Quillen cohomology groups of $\C$ as an object of $\Cat_\Del$. 

We note that the analogue of Theorem~\ref{t:main-intro-2} in the setting of abelian group objects is \textbf{false} when one does not fix the set of objects (see also Remark~\ref{r:DK}).
In particular, the clean identification given in Theorem~\ref{t:main-intro-2} actually \textbf{requires} working with the spectral version of Quillen cohomology, and to the knowledge of the authors, was not known before. We also note that varying the set of objects is essential in many contexts. For example, as described in \S\ref{s:white}, Quillen cohomology is specifically suited to support an obstruction theory against a certain class of maps, known as small extensions. In the case of $\infty$-categories, many small extensions of interest require changing the set of objects, see, e.g., Example~\ref{e:joost-2}.

A similar phenomenon holds in the case of dg-categories. In this case the associated Hochschild cohomology was studied by many authors (see~\cite{To07},~\cite{Kel}), and is related to (classical) Quillen cohomology once one fixes the set of objects. 
However, for the same reason as above the relation between Hochschild cohomology and the corresponding classical Quillen cohomology is not an identification, but rather a long exact sequence (see~\cite{Tab90}). A direct identification between Hochschild cohomology and Quillen cohomology as steaming from Corollary~\ref{c:comp-stable-intro} only holds when one works with \textbf{spectral} Quillen cohomology and when one does not fix the set of objects. To the knowledge of the authors, this result is new.

\subsection*{Acknowledgements}
The authors would like to thank Tomer Schlank for useful discussions and for suggesting the idea of using the twisted arrow category. While working on this paper the first author was supported by the FSMP. The second author was supported by NWO. The third author was supported by an ERC grant 669655. 

\section{The abstract cotangent complex formalism}

The goal of this section is to study the cotangent complex formalism and Quillen cohomology in the model categorical framework described in~\cite{part0}. We begin by recalling and elaborating this basic setup in~\S\ref{s:tangent}. In \S\ref{s:quillen} we consider the spectral version of Quillen cohomology and study its basic properties. The next three sections are devoted to classical examples, namely spaces, simplicial groups and algebras over dg-operads. In particular, we obtain a description of the cotangent complex of a simplicial group which appears to be new, though most likely well-known to experts. For algebras over dg-operads we unwind the connection between Quillen cohomology and operadic K\"ahler differentials, and briefly discuss the relation with deformation theory. Finally, in \S\ref{s:white} we propose a general Hurewicz principle based on the cotangent complex, which ties together various results on cohomological criteria for a map to be an equivalence.

\subsection{Tangent categories and the cotangent complex}\label{s:tangent}
In this section we recall from~\cite{part0} the formation of tangent bundles and the cotangent complex in the model-categorical setting. Recall that in order to study tangent categories of model categories, one needs a method for associating to a model category a universal stable model category, usually referred to as its \textbf{stabilization}. Given a pointed model category $\M$, there are various ways to realize its stablization as a certain model category of spectrum objects in $\M$ (see~\cite{Hov}). However, most of these constructions require $\M$ to come equipped with a point-set model for the suspension-loop adjunction (in the form of a Quillen adjunction), which is lacking in some cases of interest, e.g., the case of enriched categories which we will study in \S\ref{s:enriched}. As an alternative, the following model category of spectrum objects was developed in \cite{part0}, based on ideas of Heller (\cite{Hel}) and Lurie (\cite{Lur06}): for a nice pointed model category $\M$ we consider the left Bousfield localization $\Sp(\M)$ of the category of $(\NN\times \NN)$-diagrams in $\M$ whose fibrant objects are those diagrams $X: \NN\times\NN\lrar \M$ for which $X_{m, n}$ is weakly contractible when $m\neq n$ and for which each diagonal square
\begin{equation}\label{e:square_nn}
\xymatrix@R=1.5pc@C=1.5pc{
X_{n, n}\ar[r]\ar[d] & X_{n, n+1}\ar[d]\\
X_{n+1, n}\ar[r] & X_{n+1, n+1}
}
\end{equation}
is homotopy Cartesian. In this case the diagonal squares determine equivalences $X_{n,n} \x{\simeq}{\lrar} \Om X_{n+1,n+1}$, and so we may view fibrant objects of $\Sp(\M)$ as $\Om$-spectrum objects. The existence of this left Bousfield localization requires some assumptions on $\M$, for example, being combinatorial and left proper. In this case there is a canonical Quillen adjunction
$$ \Sig^{\infty}: \M \adj \Sp(\M): \Om^{\infty} $$
where $\Om^{\infty}$ sends an $(\NN \times \NN)$-diagram $X_{\bullet\bullet}$ to $X_{0,0}$ and $\Sig^{\infty}$ sends an object $X$ to the constant $(\NN \times \NN)$-diagram with value $X$. While $\Sig^{\infty}X$ may not resemble the classical notion of a suspension spectrum, it can be replaced by one in an essentially unique way, up to a stable equivalence (see~\cite[Remark 2.3.4]{part0}). The flexibility of not having to choose a specific suspension spectrum model has its advantages and will be exploited, for example, in~\S\ref{s:cotangent}. 

When $\M$ is not pointed one stabilizes $\M$ by first forming its \textbf{pointification} $\M_{\ast} := \M_{\ast/}$, endowed with its induced model structure, 
and then forming the above mentioned model category of spectrum objects in $\M_{\ast}$. We then denote by $\Sig^{\infty}_+: \M \adj \Sp(\M_{\ast}): \Om^{\infty}_+$ the composition of Quillen adjunctions
$$ \xymatrix{
\Sig^{\infty}_+:\M \ar@<1ex>[r]^-{(-) \coprod \ast} & \M_{\ast} \ar@<1ex>[l]^-{\U} \ar@<1ex>[r]^-{\Sig^{\infty}} & \Sp(\M_{\ast}) \ar@<1ex>[l]^-{\Om^{\infty}}: \Om^{\infty}_+ \\
}.$$ 
When $\M$ is a left proper combinatorial model category and $A\in \M$ is an object, the pointification of $\M_{/A}$  
is given by the (combinatorial, left proper) model category $\M_{A//A}:=\left(\M_{/A}\right)_{\id_A/}$ of objects in $\M$ over-under $A$, endowed with its induced model structure. The stabilization of $\M_{/A}$ is then formed by taking the model category of spectrum objects in $\M_{A//A}$ as above. 
\begin{define}
Let $\M$ be a left proper combinatorial model category. We will denote the resulting stabilization of $\M_{/A}$ by
$$ \T_A\M \x{\df}{=} \Sp(\M_{A//A}) $$
and refer to its as the \textbf{tangent model category} to $\M$ at $A$.
\end{define}
\begin{define}[{cf.~\cite[\S 7.3]{Lur14}}]
Let $\M$ be a left proper combinatorial model category. We will denote by
$$ L_A = \LL\Sig^{\infty}_+(A) \in \T_A\M $$
the derived suspension spectrum of $A$ and will refer to $L_A$ as the \textbf{cotangent complex} of $A$. Given a map $f:A \lrar B$ we will denote by 
$$ L_{B/A} = \hocofib\left[\LL\Sig^{\infty}_+(f) \lrar L_B\right] $$ 
the homotopy cofiber of the induced map $\LL\Sig^{\infty}_+(f) \lrar L_B$ in $\T_B\M$. The object $L_{B/A}$ is known as the \textbf{relative cotangent complex} of the map $f$. We note that when $f$ is an equivalence the relative cotangent complex is a weak zero object, while if $A$ is weakly initial we have $L_{B/A} \simeq L_B$.
\end{define}
Recall that any model category $\M$ (and in fact any relative category) has a canonically associated $\infty$-category $\M_\infty$, obtained by formally inverting the weak equivalences of $\M$. By~\cite[Proposition 3.3.2]{part0} the $\infty$-category associated to the model category $\T_A\M$ is equivalent to the \textbf{tangent $\infty$-category} $\T_A(\M_\infty)$, in the sense of~\cite[\S 7.3]{Lur14}, at least if $A$ is fibrant or if $\M$ is right proper (so that $\M_{/A}$ models the slice $\infty$-category $(\M_\infty)_{/A}$). In this case, the cotangent complex defined above agrees with the one studied in~\cite[\S 7.3]{Lur14}.

If $\C$ is a presentable $\infty$-category, then the functor $\C \lrar \Cat_{\infty}$ sending $A \in \C$ to $\T_A\C$ classifies a (co)Cartesian fibration $\T\C\lrar \C$ known as the \textbf{tangent bundle} of $\C$. 
A simple variation of the above model-categorical constructions can be used to give a model for the tangent bundle of a model category $\M$ as well, which furthermore enjoys the type of favorable formal properties one might expect (see \cite{part0}). 
More precisely, if $(\NN \times \NN)_\ast$ denotes the category obtained from $\NN \times \NN$ by \textbf{freely adding a zero object} and $\M$ is a left proper combinatorial model category, then one can define $\T\M$ as a left Bousfield localization of the Reedy model category $\M^{(\NN \times \NN)_{\ast}}_{\Reedy}$, where a Reedy fibrant object $X: (\NN \times \NN)_\ast\lrar \M$ is fibrant in $\T\M$ if and only if the map $X_{n,m} \lrar X_{\ast}$ is a weak equivalence for every $n \neq m$ and the square~\eqref{e:square_nn} is homotopy Cartesian for every $n \geq 0$. 

The projection $\ev_*: \T\M \lrar \M$ is then a (co)Cartesian fibration which exhibits $\T\M$ as a \textbf{relative model category} over $\M$ in the sense of~\cite{HP}: $\T\M$ has relative limits and colimits over $\M$ and factorization (resp.\ lifting) problems in $\T\M$ with a solution in $\M$ admit a compatible solution in $\T\M$. In particular, it follows that the projection is a left and right Quillen functor and that each fiber is a model category. When $A\in \M$ is a fibrant object, the fiber $(\T\M)_A$ can be identified with the tangent model category $\T_A\M$. Furthermore, the underlying map of $\infty$-categories $\T\M_\infty \lrar \M_{\infty}$ exhibits $\T\M_{\infty}$ as the tangent bundle of $\M_\infty$ (see~\cite[Proposition 3.25]{part0}). 
We have a natural Quillen adjunction over $\M$ of the form
$$\xymatrix{\M^{[1]}\ar@<1ex>[rr]^{\Sigma^{\infty}_{\M}}\ar_{\cod}[dr] && \T\M\ar@<1ex>[ll]_{\upvdash}^{\Om^{\infty}_{\M}}\ar^{\pi}[dl]\\ & \M &}$$ 
where $\Om^{\infty}_\M$ is the restriction along the arrow $(0,0) \lrar \ast$ of $(\NN \times \NN)_\ast$. Composing with the Quillen adjunction $\xymatrix{\M \ar@<1ex>[r]^{\diag} & \M^{[1]} \ar@<1ex>[l]^{\dom}_{\upvdash}}$ given by the diagonal and domain functors yields a Quillen adjunction 
$$\xymatrix{\Sig^{\infty}_{\int}:\M\ar@<1ex>[r] & \T\M\ar@<1ex>[l]_-{\upvdash}}:\Om^{\infty}_{\int} .$$ 
When $A$ is fibrant we may identify $\Sig^{\infty}_{\int}(A)$ with the cotangent complex of $A$ lying in the fiber $\T_{A}\M \subseteq \T\M$. We refer to the derived functor of $\Sig^{\infty}_{\int}$ as the \textbf{global cotangent complex} functor of $\M$.
%
%

It is often the case that $\M$ is tensored and cotensored over a symmetric monoidal (SM) model category $\bS$. In favorable cases (e.g., when $\bS$ is tractable, see~\cite[\S 3.2]{part0}), the tangent bundle $\T\M$ inherits this structure. This is particularly useful when discussing tangent bundles of functor categories. Indeed, if $\I$ is a small $\bS$-enriched category then we may consider the category of $\bS$-enriched functors $\Fun^{\bS}(\I,\M)$ with the projective model structure. The tangent bundle of $\Fun^{\bS}(\I,\M)$ can then be identified as follows:

\begin{pro}[{\cite[Proposition 2.2.1]{part1}}]\label{c:tangent-functor}
Let $\M$ be a left proper combinatorial model category. There is a natural equivalence of categories
\begin{equation}\label{e:tangent}
\vcenter{\xymatrix{
\T\Fun^{\bS}(\I,\M) \ar^{\simeq}[rr]\ar[rd] && \Fun^{\bS}(\I,\T\M)\ar[dl] \\
& \Fun^{\bS}(\I,\M) & \\
}}
\end{equation}
identifying the tangent bundle model structure on the left with the projective model structure on the right. Here the left diagonal functor is the tangent projection of $\Fun^{\bS}(\I,\M)$ and the right diagonal functor is given by post-composing with the tangent projection of $\M$. 
\end{pro}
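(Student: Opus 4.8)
The plan is to reduce the statement to the special case $\I = \ast$ (i.e.\ to the construction of $\T\M$ itself) by exploiting the compatibility of the integral model structure with pointwise constructions. First I would recall the description of $\T\M$ from the preceding remark as a left Bousfield localization of the Reedy model structure on $\Fun\big((\NN\times\NN)_\ast,\M\big)$, and similarly identify $\T\Fun^{\bS}(\I,\M)$ with a left Bousfield localization of the Reedy model structure on $\Fun\big((\NN\times\NN)_\ast,\Fun^{\bS}(\I,\M)\big)$. Since $\I$ is $\bS$-enriched and $(\NN\times\NN)_\ast$ is an ordinary (discrete) category, there is a canonical isomorphism of categories
$$
\Fun\big((\NN\times\NN)_\ast,\Fun^{\bS}(\I,\M)\big)\;\cong\;\Fun^{\bS}\big(\I,\Fun((\NN\times\NN)_\ast,\M)\big),
$$
coming from the fact that $\bS$-enriched functors out of $\I$ and ordinary functors out of a discrete category commute (one may see this by using that $\Fun^{\bS}(\I,-)$ is a limit-type construction computed objectwise on the enriched hom-objects, which ordinary-diagram categories preserve). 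Under this isomorphism the Reedy weak equivalences and fibrations on the left correspond, objectwise in $(\NN\times\NN)_\ast$, to the projective ones on the right — because both are detected pointwise at the objects of $(\NN\times\NN)_\ast$ and, on the right, the projective structure on $\Fun^{\bS}(\I,-)$ is in turn detected pointwise on $\I$.

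Next I would check that the left Bousfield localizations match up. The localizing maps defining $\T\M \subset \Fun((\NN\times\NN)_\ast,\M)^{\Reedy}$ are the maps that force the off-diagonal entries to be contractible and the diagonal squares to be homotopy Cartesian; after passing to $\I$-diagrams, the corresponding localization of $\Fun^{\bS}(\I,\Fun((\NN\times\NN)_\ast,\M))^{\proj}$ is generated by the images of these maps tensored with the representable $\bS$-functors $\I(i,-)$, for $i \in \I$. One then observes that an $\I$-diagram of $(\NN\times\NN)_\ast$-diagrams is local with respect to this generating set if and only if it is pointwise (in $\I$) local in $\T\M$ — this is exactly the statement that the projective localization of a diagram category is computed objectwise, which follows from the adjunction between restriction along $\{i\}\hookrightarrow\I$ and the enriched left Kan extension. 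Combining the two identifications gives the claimed equivalence of categories fitting in the triangle~\eqref{e:tangent}, and matches the integral (= localized Reedy) model structure on the left with the projective model structure on $\Fun^{\bS}(\I,\T\M)$ on the right; commutativity of the triangle is immediate since on both sides the structure map is ``evaluate at the freely adjoined zero object of $(\NN\times\NN)_\ast$'', which is natural in $\I$-diagrams.

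The main obstacle I anticipate is the bookkeeping around left Bousfield localization in the enriched-diagram setting: one must know that the relevant localizations exist (this needs combinatoriality and left properness, which are hypothesized and are preserved by passing to projective model structures on $\bS$-enriched functor categories under the tractability assumptions recalled in the text), and — more delicately — that localizing and then taking $\I$-diagrams agrees with taking $\I$-diagrams and then localizing. The clean way to handle this is to phrase everything in terms of fibrant objects: a projectively fibrant $\I$-diagram valued in $\T\M$ is exactly a projectively fibrant $\I$-diagram of $(\NN\times\NN)_\ast$-diagrams each of whose values is a fibrant object of $\T\M$ (an $\Om$-spectrum object), and this is an objectwise condition, so it is detected by the same set of local-object conditions on either side. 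Since this is precisely Proposition~3.4.1 of~\cite{part1}, I would ultimately cite that result; the sketch above records the argument I would reconstruct if it were not already available.
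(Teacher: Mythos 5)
The paper does not actually prove this proposition; it is quoted verbatim from~\cite[Proposition 3.4.1]{part1}, so there is no in-paper argument to compare against. That said, your reconstruction follows the natural route suggested by the remark immediately preceding the statement: transpose $(\NN\times\NN)_\ast$-diagrams past $\I$-diagrams via the canonical isomorphism of categories, identify the Reedy-of-projective model structure with the projective-of-Reedy one, and then match the two left Bousfield localizations by checking that their fibrant objects coincide (a pointwise condition over $\I$), which together with equality of cofibrations forces equality of the localized model structures. One imprecision worth flagging: you write that the Reedy weak equivalences \emph{and fibrations} are ``detected pointwise at the objects of $(\NN\times\NN)_\ast$''; Reedy weak equivalences are objectwise, but Reedy fibrations are not --- they are conditions on relative matching maps. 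What actually makes the interchange go through is that the matching objects are limits, and evaluation at an object of $\I$ preserves limits on $\Fun^{\bS}(\I,\M)$, so the relative matching map is an objectwise fibration in $\Fun^{\bS}(\I,\M)$ if and only if at each object of $\I$ one has a Reedy fibration in $\Fun((\NN\times\NN)_\ast,\M)$. Also, $(\NN\times\NN)_\ast$ is an \emph{ordinary} (unenriched) category but not a \emph{discrete} one. With those corrections the sketch is sound, and it is the argument one would expect to find in the cited reference.
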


\begin{rem}\label{r:global-tensored}
Since the equivalence~\ref{e:tangent} is an equivalence over $\Fun^{\bS}(\I,\M)$, associated to every $\F: \I \lrar \M$ is an equivalence of categories
\begin{equation}\label{e:equiv-functor}
\T_\F\Fun^{\bS}(\I,\M)) \x{\cong}{\lrar} \Fun^{\bS}_{/\M}(\I,\T\M)
\end{equation}
where $\Fun^{\bS}_{/\M}(\I,\T\M)$ denotes the category of $\bS$-enriched lifts
$$ \xymatrix{
& \T\M \ar^{\pi}[d] \\
\I \ar@{-->}[ur] \ar_-{\F}[r] & \M. \\
}$$
By transport of structure one obtains a natural model structure on $\Fun^{\bS}_{/\M}(\I,\T\M)$, which coincides with the corresponding projective model structure (i.e., where weak equivalences and fibrations are defined objectwise).
\end{rem}

\begin{rem}\label{r:functor-cotangent}
Under the identification of Corollary~\ref{c:tangent-functor} the global cotangent functor
$$ \Sig^{\infty}_{\int}: \Fun^{\bS}(\I,\M) \lrar \T\Fun^{\bS}(\I,\M) \cong \Fun^{\bS}(\I,\T\M) $$
is simply given by post-composing with the global cotangent complex of $\M$. In particular, the cotangent complex $L_\F$ of a functor $\F: \I \lrar \M$, when considered as an object of $\Fun^{\bS}_{/\M}(\I,\T\M)$ is simply given by the composition $\I \x{\F}{\lrar} \M \x{\Sig^{\infty}_{\int}}{\lrar} \T\M$.
\end{rem}

When $\M$ is stable the situation becomes even simpler:
\begin{cor}[{\cite[Corollary 2.2.4, Corollary 2.2.6]{part1}}]\label{c:stable-functor-tangent}
Let $\M$ be a left proper combinatorial stable model category tensored over a tractable SM model category $\bS$ and let $\F: \I \lrar \M$ be an $\bS$-enriched functor. Assume either that $\M$ is right proper or that $\F$ is fibrant. Then the tangent model category $\T_\F\Fun^{\bS}(\I,\M)$ is Quillen equivalent to $\Fun^{\bS}(\I,\M)$. Under this equivalence the cotangent complex $L_\F \in \T_\F\Fun^{\bS}(\I,\M)$ of $\F$ maps to $\F$ itself.
\end{cor}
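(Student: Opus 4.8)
The plan is to reduce the statement, via Proposition~\ref{c:tangent-functor} and Remark~\ref{r:global-tensored}, to the stable case of the identification of the tangent bundle of $\M$, and then to bootstrap everything from the single observation that the stabilization of a stable model category recovers that model category. As the corollary is quoted from~\cite{part1}, I will freely cite the needed inputs from there.

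\emph{Step 1: the tangent bundle of a stable $\M$.} First I would note that for a stable, left proper, combinatorial $\M$ the canonical adjunction $\Sig^{\infty}\colon\M\adj\Sp(\M)\colon\Om^{\infty}$ is a Quillen equivalence, since on associated $\infty$-categories it exhibits $\Sp(\M)_\infty$ as the stabilization of $\M_\infty$, which already equals $\M_\infty$. Next, for each $A\in\M$ the over-under category $\M_{A//A}=(\M_{/A})_{\id_A/}$ is again stable, because its homotopy category is carried by the homotopy fiber functor $(A\to X\to A)\mapsto\hofib(X\to A)$, with quasi-inverse $Y\mapsto(A\to A\times Y\to A)$, into an equivalence with $\Ho(\M)$. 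Hence $\T_A\M=\Sp(\M_{A//A})\simeq\M_{A//A}\simeq\M$. The key point is to make this uniform in $A$: under these identifications the transition functor $f^{\Sp}_!\colon\T_A\M\to\T_B\M$ attached to a map $f\colon A\to B$ becomes canonically equivalent to $\id_\M$, because $f_!$ carries $(A\to A\times Y\to A)$ to $(B\to(A\times Y)\cup_A B\to B)$ and the pushout $(A\times Y)\cup_A B$ is canonically $B\times Y$ once finite coproducts agree with finite products. Thus the classifying functor $A\mapsto\T_A\M$ is naturally equivalent to the constant functor with value $\M$, so $\T\M=\int_{A\in\M}\T_A\M$ is Quillen equivalent over $\M$ to the first projection $\M\times\M\to\M$, compatibly with the $\bS$-tensoring. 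This is~\cite[Corollary 3.4.4]{part1}.

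\emph{Step 2: conclusion and the cotangent complex.} Combining Step~1 with Proposition~\ref{c:tangent-functor} and Remark~\ref{r:global-tensored}, and using that a relative Quillen equivalence over $\M$ induces one on the categories of $\bS$-enriched lifts with their projective model structures, we obtain
$$
\T_\F\Fun^{\bS}(\I,\M)\;\cong\;\Fun^{\bS}_{/\M}(\I,\T\M)\;\simeq\;\Fun^{\bS}_{/\M}(\I,\M\times\M)\;\cong\;\Fun^{\bS}(\I,\M),
$$
where the last isomorphism sends a lift of $\F$ along the first projection to its second component; the hypothesis that $\M$ be right proper or $\F$ fibrant enters here in its usual role, ensuring the slice constructions are homotopically meaningful. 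For the cotangent complex, Remark~\ref{r:functor-cotangent} presents $L_\F$ as $\I\xrightarrow{\F}\M\xrightarrow{\Sig^{\infty}_{\int}}\T\M$, so it suffices to identify $\Sig^{\infty}_{\int}\colon\M\to\T\M$ with the diagonal $A\mapsto(A,A)$ under $\T\M\simeq\M\times\M$. Unwinding $\Sig^{\infty}_{\int}=\Sig^{\infty}_{\M}\circ\diag$, the disjoint-basepoint functor carries $\id_A\colon A\to A$ to $(A\xrightarrow{\iota_1}A\coprod A\xrightarrow{\nabla}A)\in\M_{A//A}$, whose homotopy fiber over $A$ is $A$ (via the anti-diagonal, using $A\coprod A\simeq A\times A$); since $\Sig^{\infty}$ is a Quillen equivalence on the stable $\M_{A//A}$, the resulting suspension spectrum again corresponds to $A$ under $\T_A\M\simeq\M$. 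Hence $L_\F$ corresponds to $\F$.

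I expect the main obstacle to be Step~1, and in particular the globalization: promoting the pointwise Quillen equivalences $\M_{A//A}\simeq\M$ to an equivalence of $\bS$-enriched model fibrations over $\M$, which forces one to realize the pushout computation $(A\times Y)\cup_A B\simeq B\times Y$ compatibly with the point-set models and with the tensoring — the technical content imported from~\cite[\S 3.4]{part1}. The cotangent-complex bookkeeping is then short; the one subtlety there is to confirm that the ``$+$'' in $\Sig^{\infty}_{+}$ contributes no shift once the homotopy fiber identification is applied.
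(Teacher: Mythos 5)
Your proposal is essentially correct and matches the natural route one would take to prove the cited results of~\cite{part1}. You correctly decompose the argument into two pieces: (a) the identification, over $\M$, of the tangent bundle $\T\M$ of a stable $\M$ with the first projection $\M\times\M\to\M$, by showing each $\M_{A//A}\simeq\M$ via the kernel/free adjunction and observing that the strict commutation $f_!\circ L_A=L_B$ (using that $(A\amalg Y)\amalg_A B\cong B\amalg Y$) collapses the transition functors; and (b) the reduction to this case via Proposition~\ref{c:tangent-functor} and Remark~\ref{r:global-tensored}. Your cotangent-complex bookkeeping is also right: $\Sig^{\infty}_{+}(\id_A)$ has underlying object $(A\to A\amalg A\to A)$ whose kernel is the anti-diagonal copy of $A$, so under $\T\M\simeq\M\times\M$ the global cotangent functor $\Sig^{\infty}_{\int}$ is the diagonal, whence $L_\F$ goes to $\F$ with no shift.

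Two small remarks. First, for the strict compatibility of the fibre identifications with the transition functors you should use the coproduct model $Y\mapsto(A\to A\amalg Y\to A)$ rather than $A\times Y$; the two agree up to natural weak equivalence by semi-additivity of a stable $\M$, but only the coproduct gives an on-the-nose commuting square $f_!\circ L_A=L_B$, which is what one actually needs to compare the Grothendieck constructions as model fibrations. Second, your sentence about the role of the right-properness/fibrancy hypothesis is left vague (``its usual role''); the precise point, consistent with the discussion around~\cite[Proposition 2.2.5]{part1} in \S\ref{s:tangent}, is that one of these conditions is what guarantees that $\Fun^{\bS}(\I,\M)_{/\F}$ is a genuine model for the slice $\infty$-category, so that the Quillen equivalence you produce has the asserted $\infty$-categorical meaning and the derived $\Sig^{\infty}_{+}$ computes the correct cotangent complex of $\F$. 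Neither of these affects the validity of the plan, but both would need to be said explicitly in a full write-up.
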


\subsection{Spectral Quillen cohomology}\label{s:quillen}
The classical work of Quillen \cite{Qui67} gives a way to define cohomology groups for objects in general model categories, using a derived construction of abelianization. These groups are now known as \textbf{Quillen cohomology groups}. In this section we consider Quillen cohomology and its basic properties in the setting of spectrum objects, rather than abelian group objects, and compare the two approaches. We will follow the ideas of \cite[\S 7.3] {Lur14}, using model categories instead of $\infty$-categories. In particular, our description will use the model for spectrum objects discussed in the previous section.

Let us begin by recalling the original construction of cohomology groups due to Quillen. Given a model category $\M$ and an object $X$, consider the category $\Ab(\M_{/X})$ of \textbf{abelian group objects} in $\M_{/X}$. In favorable cases, the slice model structure on $\M_{/X}$ can be transferred to $\Ab(\M_{/X})$ along the free-forgetful adjunction
$$ \F: \M_{/X} \adj \Ab(\M_{/X}): \U. $$
Given an abelian group object $M \in \Ab(\M_{/X})$, and a (possibly negative) integer $n$, Quillen defines the $n$'th cohomology group of $X$ with coefficients in $M$ by the formula
$$ \rH^n_{\cls,Q}(X;M) \x{\df}{=} \pi_0\Map^{\der}_{\Ab(\M_{/X})}(\Sig^k\LL\F(X),\Sig^mM) $$
where $k,m$ are non-negative integers such that $m-k = n$. To make sure that these cohomology groups are well-defined and well-behaved, Quillen imposes certain homotopical conditions on the model category $\Ab(\M_{/X})$, including, in particular, the assumption that for every abelian group object $M \in \Ab(\M_{/X})$ the canonical map $M\lrar \Om\Sig M$ is a weak equivalence (this property of $\Ab(\M_{/X})$ is referred to as being \textbf{linear} in~\cite{Sch97}). Equivalently, this means that the suspension functor associated to $\Ab(\M_{/X})$ is derived fully-faithful.
 
If we consider stabilization as a refined form of abelianization, we may attempt to define Quillen cohomology using the stabilization $\Sp(\M_{X//X})$ instead of the abelianization $\Ab(\M_{/X}) = \Ab(\M_{X//X})$. Several arguments can be made in favor of this choice:
\begin{enumerate}[(1)]
\item
Except in special cases, it is not easy to check whether the induced model structure on abelian group objects exists. Furthermore, even in cases where the transferred model structure does exist, the association $\M \mapsto \Ab(\M)$ is not invariant under Quillen equivalences, and may generate unpredictable results. For example, if $\Cat$ is the category of small categories equipped with the Thomason model structure (\cite{Tho80}) then $\Cat$ is Quillen equivalent to the model category of simplicial sets with the Kan-Quillen model structure. However, the underlying category of any abelian group object in $\Cat$ is automatically a groupoid, and hence its underlying space is $2$-truncated, while abelian group objects in simplicial sets can have non-trivial homotopy groups in any dimension.
\item
As we will see below, under the conditions assumed by Quillen, classical Quillen cohomology can always be recovered as a special case of the definition below, by restricting to coefficients of a particular type. The resulting type of coefficients is natural in some cases, but is very unnatural in others. 
\item
In the case of the category $\SS$ of simplicial sets, classical Quillen cohomology corresponds to taking ordinary cohomology with local coefficients, while spectral Quillen cohomology allows for an arbitrary local system of spectra as coefficients. In particular, spectral Quillen cohomology of spaces subsumes all \textbf{generalized} cohomology theories as well as their twisted versions, and can be considered as a universal cohomology theory for spaces.
\end{enumerate}

Motivated by the above considerations, we now come to the main definition:
\begin{defn}\label{d:Quillen-coh}
Let $\M$ be a left proper combinatorial model category and let $X$ be a fibrant object. For $n \in \ZZ$ we define the \textbf{$n$'th (spectral) Quillen cohomology} group of $X$ with coefficients in $M \in \T_X\M = \Sp(\M_{X//X})$ by the formula
$$ \rH_Q^n(X;M):=\pi_0 \Map^{\der}(L_X,\Sig^n M)$$
where $\Map^{\der}(L_X, \Sig^nM)$ is the (derived) mapping space. Similarly, if $f:A \lrar X$ is a map in $\M$, we define the \textbf{relative $n$'th Quillen cohomology} group of $X$ with coefficients in $M\in \T_X\M$ by the formula
$$ \rH_Q^n(X,A;M):=\pi_0 \Map^{\der}(L_{X/A},\Sig^n M) .$$  
where $L_{X/A}$ is the relative cotangent complex of the map $f$.
\end{defn} 

The formation of Quillen cohomology is contravariantly functorial. More explicitly, given a map $f: X \lrar Y$ in $\M$, we have a commutative square of right Quillen functors
$$ \xymatrix{
\Sp(\M_{Y//Y}) \ar_{\Om^{\infty}_+}[d]\ar^{f^*}[r] & \Sp(\M_{X//X}) \ar^{\Om^{\infty}_+}[d] \\
\M_{/Y} \ar^{f^*}[r] & \M_{/X} \\
}$$
For a fibrant $\Om$-spectrum $M \in \Sp(\M_{Y//Y})$ we then get an induced map
$$ \Map^{\der}_{\M_{/Y}}(Y,\Om^{\infty}_+(M[n]))\lrar \Map^{\der}_{\M_{/X}}(X,f^*\Om^{\infty}_+(M[n])) =  \Map^{\der}_{\M_{/X}}(X,\Om^{\infty}_+(f^*M[n])) $$
and hence a map
$$ f^*: \rH^n_Q(Y;M) \lrar \rH^n_Q(X;f^*M) $$
on Quillen cohomology groups.

Let us now explain the relation between Definition~\ref{d:Quillen-coh} and Quillen's classical definition described above. Let $\M$ and $X$ be such that the transferred model structure on $\Ab(\M_{/X})$ exists and assume in addition that the stable model structures on $\Sp(\M_{X//X})$ and $\Sp(\Ab(\M_{/X}))$ exist, so that we have a commutative diagram of Quillen adjunctions
\begin{equation} \label{e:EM}\vcenter{\xymatrix{
\M_{/X} \ar@<1ex>^-{\F}[r]\ar@<1ex>^-{\Sig^{\infty}_+}[d] & \Ab(\M_{/X}) \ar@<1ex>^-{\Sig^{\infty}_+}[d]\ar@<1ex>^-{\U}_-{\upvdash}[l] \\
\Sp(\M_{X//X}) \ar@<1ex>^-{\F_{\Sp}}[r] \ar@<1ex>^-{\Om^{\infty}_{+}}_-{\vdash}[u] & \Sp(\Ab(\M_{/X})). \ar@<1ex>^-{\U_{\Sp}}_-{\upvdash}[l] \ar@<1ex>^-{\Om^{\infty}_{+}}_-{\vdash}[u]
}}\end{equation}
\begin{defn}\label{d:EM}
Let $M \in \Ab(\M_{/X})$ be an abelian group object. We will denote by $\rH M := \U_{\Sp}\LL\Sig^{\infty}_+M$ the image of the suspension spectrum of $\M$ in $\Sp(\M_{X//X})$ under the forgetful functor $\U_{\Sp}: \Sp(\Ab(\M_{/X})) \lrar \Sp(\M_{X//X})$. We will refer to $\rH M$ as the \textbf{Eilenberg-MacLane spectrum} associated to $M$.
\end{defn}

\begin{pro}\label{p:rigid}
Under the assumptions above, if for each $M$ in $\Ab(\M_{/X})$, the unit map $M\lrar \Om\Sig M$ is an equivalence (see \cite[5.2]{Qui67}), then for every object $M \in \Ab(\M_{/X})$ there is a canonical isomorphism of groups
$$
\rH_{\cls, Q}^n(X, M) \cong \rH_Q^n(X, \rH M)
$$

\end{pro}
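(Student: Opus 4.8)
The plan is to move the derived mapping space that computes $\rH^n_Q(X;\rH M)$ across the square~\eqref{e:EM} by a chain of adjunctions, landing in $\Sp(\Ab(\M_{/X}))$, and then to invoke the linearity hypothesis on $\Ab(\M_{/X})$ to recognise the resulting \emph{spectral} mapping space as a \emph{finite-stage} mapping space in $\Ab(\M_{/X})$ --- which is precisely Quillen's classical definition. Throughout, $X$ also denotes the terminal object $\id_X$ of $\M_{/X}$, so that $L_X=\LL\Sig^\infty_+(X)$ and $\LL\F(X)$ is as in Quillen's formula.

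First I would use the Quillen adjunction $\F_{\Sp}\dashv\U_{\Sp}$: both are functors between \emph{stable} model categories, so $\LL\F_{\Sp}$ and $\RR\U_{\Sp}$ are exact and commute with $\Sig^n$, and combining the derived adjunction $(\LL\F_{\Sp},\RR\U_{\Sp})$ with $\rH M=\U_{\Sp}\LL\Sig^\infty_+M$ gives a natural equivalence
$$\Map^{\der}_{\Sp(\M_{X//X})}\big(L_X,\ \Sig^n\rH M\big)\ \simeq\ \Map^{\der}_{\Sp(\Ab(\M_{/X}))}\big(\LL\F_{\Sp}L_X,\ \Sig^n\LL\Sig^\infty_+M\big).$$
Next I would identify $\LL\F_{\Sp}L_X\simeq\LL\Sig^\infty_+\LL\F(X)$. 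The square~\eqref{e:EM} commutes up to natural isomorphism already at the point-set level: $\Sig^\infty$ is ``constant diagram'' on both columns, while the reduced free abelian group functor $\M_{X//X}\lrar\Ab(\M_{/X})$ precomposed with ``add a disjoint section'' is, by uniqueness of left adjoints, the free abelian group functor $\F$. As all four functors are left Quillen (hence preserve cofibrant objects), the square of total left derived functors commutes too; evaluating the resulting natural equivalence $\LL\F_{\Sp}\circ\LL\Sig^\infty_+\simeq\LL\Sig^\infty_+\circ\LL\F$ at $X$ gives the claim, so the mapping space above becomes $\Map^{\der}_{\Sp(\Ab(\M_{/X}))}\big(\LL\Sig^\infty_+\LL\F(X),\ \Sig^n\LL\Sig^\infty_+M\big)$.

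It remains to evaluate this last mapping space, and this is where the hypothesis is used. Since $\Ab(\M_{/X})$ is pointed (the trivial abelian group object is a zero object), its pointification is itself and $\LL\Sig^\infty_+$ agrees with $\LL\Sig^\infty$ on it; and since $N\lrar\Om\Sig N$ is an equivalence for every abelian group object $N$, the derived suspension functor of $\Ab(\M_{/X})$ is fully faithful, so $N\lrar\Om^k\Sig^k N$ is an equivalence for all $N$ and all $k\geq 0$. Unwinding the model of $\Sp(\Ab(\M_{/X}))$ from~\cite{part1}, whose fibrant objects are $\Om$-spectrum diagrams, one finds that the diagonal entries of a fibrant model of $\Sig^n\LL\Sig^\infty N$ are equivalent to $\Sig^{m+n}N$ once $m+n\geq 0$, and that full faithfulness of $\Sig$ makes a homotopy class of maps out of $\LL\Sig^\infty\LL\F(X)$ be determined by its diagonal stage $k$ as soon as $k\geq 0$ and $k+n\geq 0$, independently of $k$. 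This yields, for any such $k$, a canonical isomorphism
$$\pi_0\Map^{\der}_{\Sp(\Ab(\M_{/X}))}\big(\LL\Sig^\infty\LL\F(X),\ \Sig^n\LL\Sig^\infty M\big)\ \cong\ \pi_0\Map^{\der}_{\Ab(\M_{/X})}\big(\Sig^k\LL\F(X),\ \Sig^{k+n}M\big),$$
and the right-hand side is $\rH^n_{\cls,Q}(X;M)$ by definition (take $m=k+n$). Every identification above is natural in $M$ and is an isomorphism of abelian groups, since all mapping spaces involved are grouplike; composing them proves the proposition.

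The genuinely non-formal point is this last step: making precise, inside the left Bousfield localization defining $\Sp(\Ab(\M_{/X}))$, that for a linear model category the finite stages of a suspension prespectrum already compute the stable mapping spaces --- equivalently, that a pointed presentable $\infty$-category with fully faithful suspension embeds in its stabilization with $\Map_{\Sp}(\Sig^\infty A,\Sig^n\Sig^\infty B)\simeq\Map(\Sig^k A,\Sig^{k+n}B)$ for $k\gg 0$, which can be extracted from the explicit suspension-spectrum models of~\cite{part1} or by passing to associated $\infty$-categories. This is the only place the hypothesis ``$N\to\Om\Sig N$ is an equivalence'' enters; Steps~1 and~2 are pure adjunction bookkeeping, the sole mild subtlety there being the point-set commutativity of~\eqref{e:EM}.
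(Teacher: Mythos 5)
Your proof is correct and uses essentially the same ingredients as the paper's proof — the adjunction $\F_{\Sp}\dashv\U_{\Sp}$, the commutativity of the square~\eqref{e:EM}, and the derived full faithfulness of $\Sig^{\infty}_+$ on $\Ab(\M_{/X})$ coming from the linearity hypothesis — merely running the chain of equivalences in the opposite direction (from the spectral side back to the classical one rather than vice versa). The only presentational difference is that the paper states the fully faithfulness of $\Sig^{\infty}_+:\Ab(\M_{/X})\lrar\Sp(\Ab(\M_{/X}))$ as a clean intermediate consequence of linearity and applies it directly, whereas you unwind the same fact by hand from the $\Om$-spectrum model of \cite{part1}; both are valid.
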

\begin{proof}
Since the suspension functor on $\Ab(\M_{/X})$ is derived fully faithful, it follows that $\Sig^{\infty}_+: \Ab(\M_{/X}) \lrar \Sp(\Ab(\M_{/X}))$ is derived fully-faithful. Given an object $M \in \Ab(\M_{/X})$ we obtain a weak equivalence
\begin{align*}
 \Map^{\der}_{\Ab(\M_{/X})}(\LL\F(X),M) &\simeq \Map^{\der}_{\Sp(\Ab(\M_{/X}))}(\Sig^{\infty}_+\LL\F(X),\Sig^{\infty}_+M) \\
 & \simeq \Map^{\der}_{\Sp(\Ab(\M_{/X}))}(\LL \F_{\Sp}(L_X),\Sig^{\infty}_+M) \simeq \Map^{\der}_{\Sp(\M_{X//X})}(L_X,\rH M) 
\end{align*}
as desired.
\end{proof}

\begin{rem}
In light of the above result, we will abuse terminology and henceforth refer to spectral Quillen cohomology simply as Quillen cohomology.
\end{rem}



Quillen cohomology is a natural host for obstructions to the existence of lifts against a certain class of maps, known as \textbf{small extensions}. We recall here the main definitions in the setting of spectral Quillen cohomology. A more elaborate discussion of the obstruction theoretic aspect will be worked out in \S\ref{s:white}.

Let $\M$ be a left proper combinatorial model category and let $X$ be a fibrant-cofibrant object. Let $M \in \Sp(\M_{X//X})$ be an $\Om$-spectrum object over $X$. By adjunction, we may represent classes in $\rH^n_Q(X,M)$ by maps $\alp: X \lrar \Om^{\infty}_+(M[n])$ over $X$, and two such maps represent the same element in $\rH^n_Q(X,M)$ if and only if there exists a homotopy between them in $\M_{/X}$. The trivial class in $\rH^n_Q(X,M)$ is represented by the image $s_0: X=\Om^\infty_+(0)\lrar \Om^\infty_+(M[n])$ of the zero map $0 \lrar M[n]$ under $\Om^\infty_+$. We shall therefore refer to $s_0$ as the \textbf{$0$-section} of $\Om^{\infty}_+(M[n]) \lrar X$. 
In practice it is often useful to work with a homotopical variant of the notion of a $0$-section. 

\begin{define}
If $f: 0'\lrar M[n]$ is a map in $\Sp(\M_{X//X})$ whose domain is a weak zero object, we will call the induced map $s_0'=\Om^\infty_+(f): \Om^\infty_+(0')\lrar \Om^\infty_+(M[n])$ a \textbf{weak $0$-section} of $\Om^{\infty}_+(M[n]) \lrar X$.
\end{define}
In particular, taking $0'\lrar M[n]$ to be a fibration, we can always work with weak $0$-sections that are fibrations.

\begin{define}\label{d:small}
Let $\M$ be a left proper combinatorial model category, $X \in \M$ a fibrant object and $M \in \T_X\M = \Sp(\M_{X//X})$ a fibrant $\Om$-spectrum object. For any $f: Y\lrar X$ in $\M_{/X}$ and any map $\alp: Y \lrar \Om^{\infty}_+(M[1])$ in $\M_{/X}$, we will say that a square in $\M_{/X}$ of the form
\begin{equation}\label{e:small}\vcenter{
\xymatrix{
Y_\alp \ar[r]\ar_{p_\alp}[d] & \Om^\infty_+(0') \ar^{s_0'}[d] \\
Y \ar^-{\alp}[r] & \Om^{\infty}_+(M[1]) \\
}}
\end{equation}
exhibits $Y_\alpha$ as a \textbf{small extension} of $Y$ by $M$ if it is homotopy Cartesian and $s_0'$ is a weak $0$-section. In this case we will also say that $p_\alp$ is the small extension associated to $\alp$.
\end{define}
The map $\alpha: Y\lrar \Om^\infty_+M[1]$ over $X$ gives rise to an element $[\alpha]$ in the group $\pi_0\Map^{\der}_{/X}(Y, \Om^\infty_+M[1])$, which can be identified with
\begin{align*}
\pi_0\Map^{\der}_{/X}(Y, \Om^\infty_+M[1]) &\cong \pi_0\Map^{\der}_{\T_X\M}(f_!L_Y, M[1]) \\
& \cong \pi_0\Map^{\der}_{\T_Y\M}(L_Y, f^*M[1]) = \rH^1_Q(Y; f^*M).
\end{align*}
We will say that the small extension $p_\alp$ is \textbf{classified} by the resulting element $[\alp]$ in the first Quillen cohomology group $\rH^1_Q(Y;f^*M)$ of $Y$ with values in the base change of $M$ along $f: Y\lrar X$. The case where the map $f: Y\lrar X$ is a weak equivalence with cofibrant domain (e.g. $f=\id_X$ when $X$ is fibrant-cofibrant) is of particular importance. In that case, all classes in $\rH^1_Q(X;M)$ can be realized by maps $\alpha: Y\lrar \Om^\infty_+M[1]$ over $X$, and therefore classify small extensions of $Y$ by $M$.

We consider the small extension $p_\alp$ as a geometric incarnation of the Quillen cohomology class $[\alp]$. However, one should note that in general the class $[\alp]$ cannot be reconstructed from the map $p_\alp$ alone, with the following notable exception:

\begin{example}\label{e:trivial-1}
If $\alp: Y \lrar \Om^{\infty}_+(M[1])$ factors (over $X$) through a weak $0$-section then $Y_\alp$ is weakly equivalent to the homotopy fiber product $Y \times^h_{\Om^{\infty}_+(f^*M[1])} Y \simeq \Om\Om^{\infty}_+(f^*M[1]) \simeq \Om^{\infty}_+(f^*M)$ and $p_\alp$ is equivalent to the canonical map $\Om^{\infty}_+(f^*M) \lrar Y$. In this case we will say that $p_\alp$ is a \textbf{split small extension}. In particular, a split small extension admits a section up to homotopy. This is actually a necessary and sufficient condition: if $p_\alp$ admits a section up to homotopy then $\alp$ factors up to homotopy through $s_0'$, and hence factors honestly through some weak $0$-section.
\end{example}

\begin{rem}\label{r:rigid}
The above definition of small extension can also be formulated in the classical setup of Quillen cohomology. Suppose $[\alpha]\in \rH^1_{\cls, Q}(X, M)$ is a class in the first classical Quillen cohomology group of $X$ with coefficients in $M\in \Ab(\M_{/X})$. The corresponding class in spectral Quillen cohomology classifies an extension of (a suitable cofibrant model of) $X$ by $\rH M$, where $\rH M$ is the Eilenberg-MacLane spectrum of $M$ (see Definition~\ref{d:EM}). Since the map $M\lrar \Om\Sig M$ is an equivalence, the object $\Om^\infty \rH M$ is equivalent to the underlying object of $M$ in $\M$. The corresponding small extension then coincides with the small extension that is associated to the class $[\alpha]$ in classical Quillen cohomology. 
\end{rem}

\begin{rem}\label{r:base-change}
In the setting of Definition~\ref{d:small}, assume that $Y$, $Y_\alp$ and $\Om^\infty_+(0')$ are fibrant in $\M_{/X}$ and that the square~\ref{e:small} exhibits $Y_\alp$ as a small extension of $Y$ by the fibrant $\Om$-spectrum $M \in \T_X\M = \Sp(\M_{X//X})$. If $g:Z \lrar X$ is a map then the square
\begin{equation}\label{e:small-2}
\vcenter{\xymatrix{
Z'_\alp := Z \times_X Y_\alp \ar[r]\ar[d] & \hspace{0pt} Z \times_X \Om^\infty_+(0') = \Om^\infty_+(g^*0')\ar@<-1ex>[d]   \\
Z':=Z\times_X Y \ar^-{g^*\alp}[r] & \Om^{\infty}_+(g^*M[1]) \\
}}
\end{equation}
exhibits $Z'_\alp$ as the small extension of $Z'$ by $g^*M$ which is classified by $g^*[\alp] \in \rH^1_Q(Z;g^*f^*M)$.
\end{rem}

We finish this subsection with the following result, identifying relative Quillen cohomology as the absolute Quillen cohomology in coslice categories. Given a map $f:A \lrar X$, we have an equivalence of categories $(\M_{A/})_{f//f}\cong \M_{X//X}$ identifying the slice-coslice model structures on both sides. Both the cotangent complex of $X$, viewed as an object in $\M$, and of $X$, viewed as an object in $\M_{A/}$, can therefore be viewed as objects of the same model category $\T_X\M$.
\begin{pro}\label{p:relative}
Let $\M$ be a left proper combinatorial model category and let $f: A \lrar X$ be a map in $\M$. Then the relative cotangent complex $L_{X/A} \in \T_X\M$ is naturally weakly equivalent to the (absolute) cotangent complex of $X$, considered as an object of the coslice model category $\M_{A/}$.
\end{pro}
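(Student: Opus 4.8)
The plan is to reduce the statement to the fact that the two cotangent complexes in question are both derived suspension spectra of the same object, computed in categories whose stabilizations are canonically identified. First I would make precise the equivalence of model categories $(\M_{A/})_{f//f} \cong \M_{X//X}$ alluded to in the paragraph preceding the proposition: an object of $(\M_{A/})_{f//f}$ is an object of $\M$ sitting under $A$, then under $X$, then over $X$ (in $\M_{A/}$), and forgetting the intermediate structure in the evident way gives an object of $\M$ under $X$ and over $X$; conversely a factorization $X \to Y \to X$ in $\M$ equips $Y$ with a map from $A$ via $f$, and one checks the data match up. This is a strict isomorphism of categories, and since the model structures on both sides are induced from $\M$ (slice and coslice constructions stack, and the order of slicing and coslicing can be interchanged when the maps are compatible), it is an isomorphism of model categories. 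Passing to spectrum objects, one gets an induced isomorphism $\T_X(\M_{A/}) = \Sp\big((\M_{A/})_{f//f}\big) \cong \Sp(\M_{X//X}) = \T_X\M$.

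Next I would trace through where the two cotangent complexes live. The absolute cotangent complex of $X$, viewed in $\M_{A/}$, is $\LL\Sig^\infty_+(X)$ where now the basepoint/disjoint-basepoint is taken in $\M_{A/}$: explicitly it is the derived suspension spectrum of the object $(A \to X) \in \M_{A/}$ in $\Sp\big((\M_{A/})_{(A\to X)//(A\to X)}\big)$. Under the identification above, $(\M_{A/})_{(A\to X)//(A\to X)} \cong \M_{X//X}$, so this is an object of $\T_X\M$. The relative cotangent complex $L_{X/A}$, on the other hand, is defined as $\hocofib\big[\LL\Sig^\infty_+(f) \to L_X\big]$ in $\T_X\M$, where $\LL\Sig^\infty_+(f)$ is the image of $L_A \in \T_A\M$ under $f^{\Sp}_!$. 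So the content of the proposition is that forming the homotopy cofiber of $\LL\Sig^\infty_+(f) \to L_X$ in $\T_X\M$ is the same as forming the suspension spectrum of $X$ relative to $A$ in the coslice category.

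The key step is then to identify $\LL\Sig^\infty_+(X \text{ in } \M_{A/})$ with that homotopy cofiber. I would argue as follows. The pointification operation used to define $\Sig^\infty_+$ involves adjoining a disjoint basepoint: $(-)\coprod \ast$ in $\M$ versus $(-)\coprod_A A = (-)$ relative to $A$ — more precisely, $\Sig^\infty_+$ on $\M$ factors through $\M_\ast$, while $\Sig^\infty_+$ on $\M_{A/}$ factors through $(\M_{A/})_\ast = (\M_{A/})_{A/} = \M_{A/}$ itself since $A = \ast_{\M_{A/}}$. So the disjoint basepoint in $\M_{A/}$ is ``$A$ glued on'', which in $\M$-terms corresponds to the coproduct with $A$ rather than with the initial object. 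On suspension spectra this difference is exactly the cofiber of the map induced by $A \to X$: there is a homotopy cofiber sequence $\Sig^\infty_+(A \to X \text{ forgotten to } \M) \leftarrow \Sig^\infty_+(\text{initial}) \to \ldots$; cleaner is to use that for the inclusion $A \to X$ in $\M$, the commutative square of pointification functors yields, after stabilizing, a cofiber sequence $\LL\Sig^\infty_+(f) \to L_X \to \LL\Sig^\infty_+(X \in \M_{A/})$ in $\T_X\M$. I expect the main obstacle to be precisely this compatibility: verifying that the two "free pointed object" functors (adjoining $\ast$ vs. adjoining $A$) are related by a homotopy pushout square that is preserved by $\Sig^\infty$, and hence that the resulting cofiber sequence of spectrum objects is the one defining $L_{X/A}$. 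This is where one must be careful about derived functors, cofibrant replacements in the over-under categories, and the fact that $f^{\Sp}_!$ is a left Quillen functor so commutes with homotopy cofibers. Once that square is in hand, comparing it with the definition of $L_{X/A}$ as a homotopy cofiber and invoking the universal property of the cofiber gives the natural weak equivalence $L_{X/A} \simeq \LL\Sig^\infty_+(X \in \M_{A/})$, and naturality in $f$ follows from naturality of all the constructions involved.
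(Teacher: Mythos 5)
Your approach is essentially the paper's. The ``main obstacle'' you flag---comparing the two pointifications via a homotopy pushout preserved by $\Sig^\infty$---is resolved there by writing down the explicit homotopy pushout square in $\M_{X//X}$ with vertices $A \coprod X$, $X \coprod X$, $X$, and $X \coprod_A X$ (after replacing $f$ by a cofibration and using left properness), then applying $\Sig^\infty: \M_{X//X} \lrar \T_X\M$ and using that $X$ is the zero object of $\M_{X//X}$ to read off the cofiber sequence $\Sig^{\infty}_+(f) \lrar \Sig^{\infty}_+(X) \lrar \Sig^{\infty}(X\coprod_A X)$.
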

\begin{proof}
Since $\M$ is left proper we may assume without loss of generality that $f$ is a cofibration. We consider the pushout square 
\begin{equation}\label{e:AX}
\vcenter{\xymatrix{
A \coprod X \ar[r]\ar[d] & X \coprod X \ar[d] \\
X \ar[r] & X \coprod_{A} X \\
}}
\end{equation}
as a pushout square in $\M_{X//X}$, where the maps to $X$ are the obvious ones and the map $X \lrar A \coprod X$ is the inclusion of the second factor. Since we assumed $f$ to be a cofibration it follows that the top horizontal map is a cofibration and since $\M$ is left proper we may conclude that~\eqref{e:AX} is a homotopy pushout square as well. Applying the functor $\Sig^{\infty}: \M_{X//X} \lrar \T_X\M$ and using the fact that $X$ is a zero object in $\M_{X//X}$ we obtain a homotopy cofiber sequence in $\T_X\M$ of the form
$$ \Sig^{\infty}_+(f) \lrar \Sig^{\infty}_+(X) \lrar \Sig^{\infty}(X\textstyle\coprod_A X). $$
Since the relative cotangent complex $L_{X/A}$ is defined to be the homotopy cofiber of the map $\Sig^{\infty}_+(f) \lrar \Sig^{\infty}_+(X)$ we now obtain a natural equivalence
$ L_{X/A} \simeq \Sig^\infty(X\coprod_A X)$. Since $X \coprod_A X$ can be identified with the coproduct, in $\M_{A/}$, of $X$ with itself we may consider $\Sig^\infty(X\coprod_A X)$ as a model for the cotangent complex of $X \in \M_{A/}$, and so the desired result follows.
\end{proof}

\begin{cor}\label{c:relative}
The relative Quillen cohomology of $X$ over $A$ with coefficients in $M \in \T_X\M$ is isomorphic to the Quillen cohomology of $X\in \M_{A/}$ with coefficients in $M$.
\end{cor}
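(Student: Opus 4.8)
The plan is to read this off Proposition~\ref{p:relative} once the ambient tangent categories and the coefficients have been matched up. First I would recall that for a map $f: A \lrar X$ the equivalence of model categories $(\M_{A/})_{f//f} \cong \M_{X//X}$ (identifying the slice-coslice model structures, as noted just before Proposition~\ref{p:relative}) passes to spectrum objects, giving an identification $\T_X(\M_{A/}) = \Sp\big((\M_{A/})_{f//f}\big) \cong \Sp(\M_{X//X}) = \T_X\M$. Under this identification a coefficient object $M \in \T_X\M$ corresponds to itself, so the two mapping spaces that compute the two sides of the asserted isomorphism are taken in one and the same model category and have the same target, namely the fibrant $\Om$-spectrum $\Sig^n M$.

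Next I would spell out the two definitions. By Definition~\ref{d:Quillen-coh} the relative group is $\rH_Q^n(X,A;M) = \pi_0 \Map^{\der}(L_{X/A}, \Sig^n M)$, while the Quillen cohomology of $X$ regarded as an object of $\M_{A/}$ is $\pi_0 \Map^{\der}(L_X, \Sig^n M)$ with $L_X = \LL\Sig^{\infty}_+(X)$ now formed inside $\M_{A/}$ (note that $X$ fibrant in $\M$ is fibrant in $\M_{A/}$, so Definition~\ref{d:Quillen-coh} applies there as well). Proposition~\ref{p:relative} supplies a natural weak equivalence between these two cotangent complexes in $\T_X\M$; applying the contravariant derived mapping space functor $\Map^{\der}(-, \Sig^n M)$ turns it into a weak equivalence of spaces, and passing to $\pi_0$ yields the isomorphism of groups. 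Naturality of the equivalence in Proposition~\ref{p:relative} makes the resulting isomorphism compatible with the $\Sig$-action, so in fact one obtains an isomorphism of graded abelian groups, natural in $M$.

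There is essentially no obstacle here: all the work sits in Proposition~\ref{p:relative}, and the corollary merely repackages it by mapping out of the equivalence $L_{X/A} \simeq L_X$ into $\Sig^n M$. The only points worth a line of care are that the equivalence of Proposition~\ref{p:relative} takes place in the model category $\T_X\M$ itself — it is asserted to be a natural weak equivalence, not merely an isomorphism in a homotopy category — which is exactly what legitimizes applying $\Map^{\der}(-, \Sig^n M)$ and concluding on the nose, and that the identification of $\T_X(\M_{A/})$ with $\T_X\M$ is genuinely the identity on coefficients, so that no twist is introduced along the way.
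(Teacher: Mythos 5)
Your proposal is correct and takes exactly the route the paper intends: the corollary is an immediate consequence of Proposition~\ref{p:relative} together with the identification $\T_X(\M_{A/}) \cong \T_X\M$ stated just before it, and the paper gives no further proof beyond this. Applying $\Map^{\der}(-,\Sig^n M)$ to the weak equivalence $L_{X/A}\simeq L_X$ and taking $\pi_0$ is precisely the intended deduction.
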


\subsection{Spaces and parametrized spectra}\label{s:spaces}

In this section we will discuss the notions of the cotangent complex and Quillen cohomology when $\M = \SS$ is the category of simplicial sets, endowed with the Kan-Quillen model structure. The underlying $\infty$-category $\SS_{\infty}$ of $\SS$ is a model for the $\infty$-category of spaces, and we will consequently refer to objects in $\SS$ simply as spaces. Given a space $X$, the slice model category $\SS_{/X}$ is Quillen equivalent to the category of simplicial functors $\fC[X] \lrar \SS$ out of its associated simplicial category, endowed with the projective model structure (see e.g.,~\cite[\S 2]{Lur09}). 
By Corollary~\ref{c:stable-functor-tangent} it follows that $\Sp(\SS_{/X})$ is Quillen equivalent to $\Fun(\fC[X],\Sp(\SS_\ast))$, whose underlying $\infty$-category is equivalent to the $\infty$-category $\Fun(X,\Sp(\SS_\ast)_{\infty})$ of functors from $X$ to spectra by~\cite{Lur09}. We may thus consider $\T_X\SS$ as a model for the theory of \textbf{parametrized spectra over $X$} (see~\cite{MS06},~\cite{ABG11}). More concretely, an $\Om$-spectrum $X \lrar Z_{\bullet\bullet} \lrar X$ in $\Sp(\SS_{X//X})$ encodes the data of a family of $\Om$-spectra parametrized by $X$, where the $\Om$-spectrum associated to the point $x \in X$ is the $\Om$-spectrum $Z_{\bullet\bullet} \times_X \{x\}$. By~\cite[Remark 2.4.7]{part0} the cotangent complex $\Sigma^\infty_+X = \Sigma^{\infty}\left(X \coprod X\right)$ is stably equivalent to the $\Om$-spectrum $Z_{\bullet\bullet}$ given by 
$$ Z_{n,n} \simeq \colim_j\Om^j\Sig^{j+n}\left(X \coprod X\right) \simeq \colim_j \Om^j(X\times S^{j+n}) \simeq X \times \colim_j\Om^jS^{j+n}\simeq X\times \bbS_{n, n} $$
and thus corresponds to the \textbf{constant sphere spectrum} over $X$.

Given a parametrized spectrum $M \in \T_X\SS$, the $n$'th Quillen cohomology 
$$ \rH^n_Q(X;M) = \pi_0\Map^{\der}_{\T_X\SS}(L_X,M) \simeq \pi_0\Map^{\der}_{\SS_{/X}}(X,\Om^{\infty}_+(\Sig^n M)) $$ 
is the set of homotopy classes of sections of the fibration $\Om^{\infty}_+(\Sig^nM) \lrar X$. In particular, for negative $n$'s we may identify the corresponding Quillen cohomology groups with the homotopy groups of the space of sections $\Map^{\der}_{\SS_{/X}}(X,\Om^{\infty}_+M)$. 

For every fibrant $\Om$-spectrum $M$ the object $\Om^{\infty}_+M \in \SS_{/X}$ carries the structure of an \textbf{$\EE_{\infty}$-group object} in $\SS_{/X}$. The notion of a \textbf{small extension} $X_{\alp} \lrar X$ of $X$ by $M$  (see Definition~\ref{d:small}) then corresponds to the notion of a \textbf{torsor} under $\Om^{\infty}_+M$. 
When $M$ is a constant parametrized spectrum with value $M_0 \in \Sp(\SS_\ast)$ the associated $\EE_{\infty}$-group object splits as a product $\Om^{\infty}_+M =X\times\Om^{\infty}_+M_0$. In this case we may identify small extensions by $M$ with \textbf{principal fibrations} $X_\alp \lrar X$ with structure group $\Om^{\infty}_+M_0$ and identify the corresponding Quillen cohomology class with the classifying map $X \lrar B\Om^{\infty}_+M_0 \simeq \Om^{\infty}_+(M_0[1])$.

A case of special interest of Quillen cohomology and small extensions is the case where $M$ comes from a local system of abelian groups. More precisely, let $A: \Pi_1(X)\lrar \Ab$ be a local system of abelian groups on $X$ and let $n \geq 0$ be a non-negative integer. The association $x \mapsto K(A(x),n)$ determines a functor from the fundamental groupoid $\Pi_1(X)$ of $X$ to the category $\Ab(\SS)$ of simplicial abelian groups. Alternatively, we may consider $K(A(-),n)$ as a (fibrant) abelian group object in the functor category $\SS^{\Pi_1(X)}$ (endowed with the projective model structure). Applying the relative nerve construction of~\cite[Definition 3.2.5]{Lur09} (which is a right Quillen functor) we obtain an abelian group object $\ovl{K}_{\Pi_1(X)}(A,n)$ in the category $\SS_{/\rN(\Pi_1(X))}$ of simplicial sets over the nerve of $\Pi_1(X)$. Pulling back along the map $X \lrar \rN\Pi_1(X)$ we obtain an abelian group object
$$ \ovl{K}_X(A,n) \in \Ab(\SS_{/X}) .$$
We may now associate to $\ovl{K}_X(A,n)$ its Eilenberg-Maclane spectrum $H\ovl{K}_{X}(A,n) \in \T_X\SS$ (see Definition~\ref{d:EM}), which, as a family of spectra, is the family which associates to $x \in X$ the Eilenberg-Maclane spectrum corresponding to $K(A(x),n)$. The (spectral) Quillen cohomology groups of this parametrized spectrum coincide with the classical Quillen cohomology groups of the abelian group object $\ovl{K}_X(A,n)$ by Proposition~\ref{p:rigid}, and are given by the ordinary cohomology groups of $X$ with coefficients in $A$ (with a degree shift by $n$).

We also note that if $p:Y \lrar X$ is a fibration whose homotopy fibers have non-trivial homotopy groups only in a single dimension $n \geq 2$ then $Y$ is a small extension of $X$ by the Eilenberg-MacLane object $\rH \ovl{K}(A,n) \in \T_X\SS$, where $A$ is the local system of abelian groups on $X$ associating to a point $x \in X$ the $n$-th homotopy group of the homotopy fiber of $p$ over $x$. 

\subsection{Simplicial groups and equivariant spectra}\label{s:groups}

In this section we will discuss the notions of the cotangent complex and Quillen cohomology when $\M = \sGr$ is the category of simplicial groups, endowed with the model structure transferred from $\SS$ along the free-forgetful adjunction. To begin, given a group $G$, we would like to describe the tangent model category $\T_G(\sGr) = \Sp(\sGr_{G//G})$ in reasonably concrete terms. For this, it will be convenient to use the Quillen equivalence between simplicial groups and \textbf{reduced simplicial sets}. Recall that a reduced simplicial set is simply a simplicial set with a single vertex. The category $\SS^0$ of reduced simplicial sets can be endowed with a model structure in which cofibrations are the monomorphisms and weak equivalences are the weak equivalences of the underlying simplicial sets (see~\cite[\textrm{VI}.6.2]{GJ}). One then has a Quillen equivalence (see~\cite[$\mathrm{V}.6.3$]{GJ})
$$ \xymatrix@=13pt{\SS^0\ar[rr]<1ex>^(0.5){\GG} &&  \sGr \ar[ll]<1ex>_(0.5){\upvdash}^(0.5){\ovl{W}}} $$
where $\GG$ is the Kan loop group functor and $\ovl{W}G$ is a suitable reduced model for the \textbf{classiying space} of $G$. Furthermore, for each simplicial group $G$ there exists a natural Quillen equivalence between the slice model structure on $\SS_{/\ovl{W}G}$ and the functor category $\SS^{\BB G}$ from the one object simplicial groupoid $\BB G$ with automorphism group $G$ into $\SS$ (\cite{DDK}).

Since $\SS^0$ and $\sGr$ are both left proper model categories (see~\cite[Lemma 2.7]{Ber08}) and every object in $\sGr$ is fibrant the adjunction
$$ \SS^0_{\bar{W}G//\bar{W}G} \adj \sGr_{G//G}  $$
is a Quillen equivalence 
and thus induces a Quillen equivalence on model categories of spectrum objects. To compute the stabilization of $\sGr_{G//G}$, it will therefore suffice to compute the stabilization of $\SS^0_{\bar{W}G//\bar{W}G}$. 
This, in turn, can be done by comparing reduced spaces with pointed spaces. Indeed, we have a Quillen adjunction
$\L:\SS^0 \adj \SS_*:\R$
where $\L$ associates to a reduced simplicial set itself endowed with its unique base point and $\R$ is the \textbf{reduction} functor which associates to a pointed simplicial set the sub-simplicial set spanned by the $n$-simplices supported on $\ast$.

\begin{lem}\label{l:loop}
For any reduced space $X\in\SS^0$, a pointed fibrant space $Y\in\SS_*$ and a map $\L X\lrar Y$ of pointed spaces, the counit map $\L\R \lrar \id$ induces an equivalence of homotopy pullbacks $$\L X\times_{\L\R Y}^h \L X\lrar \L X\times^h_{Y} \L X.$$
\end{lem}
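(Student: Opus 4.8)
The plan is to reduce the claim to a statement about $X$ itself rather than $\L X$, and then to a connectivity estimate for the fiber of the counit. First I would unpack what the counit $\L\R Y \lrar Y$ looks like: for a pointed fibrant space $Y$, the subspace $\R Y$ of simplices supported on the basepoint is exactly the $0$-connected cover (it is the largest reduced sub-simplicial set, which contains all simplices whose vertices are all the basepoint, hence captures $\pi_n$ for $n \geq 1$ and kills $\pi_0$). So $\L\R Y \lrar Y$ induces an isomorphism on $\pi_n$ for $n \geq 1$ and realizes $\L\R Y$ as the connected component of the basepoint — in particular it is a monomorphism, and the homotopy fiber over the basepoint is trivial while over other components it is empty. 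The key point is that the map $\L X \lrar Y$ factors through this connected component: since $X$ is reduced, $\L X$ is connected, so its image in $Y$ lands in $\R Y \subseteq Y$, i.e. the map $\L X \lrar Y$ lifts (uniquely) to a map $\L X \lrar \L\R Y$.

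Given this, both homotopy pullbacks can be computed inside the connected component $Y_0 := \L\R Y$. On the left we have $\L X \times^h_{\L\R Y} \L X$ computed directly. On the right, $\L X \times^h_Y \L X$: since both copies of $\L X$ map through $Y_0 \hookrightarrow Y$ and $Y_0$ is a union of connected components of $Y$ (hence $Y_0 \hookrightarrow Y$ is, up to homotopy, the inclusion of a summand and in particular a homotopy monomorphism onto its image), the homotopy pullback over $Y$ agrees with the homotopy pullback over $Y_0$. Concretely, the comparison map $\L X \times^h_{Y_0} \L X \lrar \L X \times^h_Y \L X$ is an equivalence because the square
\[
\xymatrix{
Y_0 \ar[r]\ar[d] & Y_0 \ar[d] \\
Y_0 \ar[r] & Y
}
\]
is homotopy Cartesian — the horizontal homotopy fibers over a point of $Y_0$ are both contractible (they are the homotopy fiber of $\id_{Y_0}$ and of $Y_0 \hookrightarrow Y$ over a point in the image, respectively, and the latter is contractible as $Y_0$ is a full component) — and homotopy pullbacks compose. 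Chasing this through identifies $\L X \times^h_Y \L X$ with $\L X \times^h_{Y_0} \L X = \L X \times^h_{\L\R Y} \L X$, which is exactly the counit-induced map in the statement, so that map is an equivalence.

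The main obstacle, and the step I would be most careful about, is the second paragraph: justifying that the homotopy pullback over $Y$ may be replaced by the homotopy pullback over the component $\L\R Y$. This is intuitively obvious from the "base change along a component inclusion" principle, but to make it airtight one should verify that $\L\R Y \lrar Y$ is genuinely a homotopy monomorphism — equivalently that $\L\R Y \lrar \L\R Y \times^h_Y \L\R Y$ is an equivalence — which follows from $\R Y$ being a union of connected components of $Y$ (here fibrancy of $Y$ is used so that $\pi_0$ and the decomposition into components are homotopy-invariant). Once that is in hand, the equivalence of the two homotopy pullbacks is formal from the pasting law for homotopy Cartesian squares. I expect the rest — the factorization $\L X \lrar Y$ through $\L\R Y$ using connectedness of $X$, and the identification of $\R Y$ with the basepoint component — to be routine.
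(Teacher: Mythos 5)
Your approach matches the paper's: both proofs reduce to the observation that the counit $\L\R Y \lrar Y$ is a weak equivalence onto the basepoint component of $Y$, and that both copies of $\L X$ already map into that component (since $\L X$ is connected/has one vertex), so the homotopy pullback over $Y$ can be replaced by the one over the component.

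There is, however, one inaccuracy you should be careful about, because as written it is not correct: you repeatedly assert that $\R Y$ \emph{is} the basepoint component (``exactly the $0$-connected cover,'' ``a union of connected components of $Y$''). This is false as a statement about simplicial sets. For fibrant $Y$, the reduction $\R Y$ (simplices all of whose vertices are the basepoint) is a proper subcomplex of the path component $Y_0$ (simplices all of whose vertices are merely \emph{connected} to the basepoint), and only the \emph{inclusion} $\R Y \hookrightarrow Y_0$ is a weak equivalence. The paper separates these two steps explicitly: first reduce the homotopy pullback from $Y$ to $Y_0$ using that $\L X$ is connected and $Y_0 \hookrightarrow Y$ is the inclusion of a genuine summand; then use that $\L\R Y \lrar Y_0$ is a weak equivalence (this is where fibrancy of $Y$ enters). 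Your conclusion that $\L\R Y \lrar Y$ is a homotopy monomorphism is correct, but the reason is that it factors as a weak equivalence followed by a summand inclusion --- not that $\R Y$ itself is a summand. If you phrase the factorization $\L\R Y \x{\simeq}{\lrar} Y_0 \hookrightarrow Y$ explicitly, the rest of your argument goes through verbatim.
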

\begin{proof}
Let $Y_0\lrar Y$ be the inclusion of the path component of the basepoint of $Y$. Since $\L X$ is a connected space, it follows immediately that the map
$\L X\times^h_{Y_0} \L X\lrar \L X\times_Y^h \L X$ is a weak equivalence. The result now follows from the fact that the counit map factors as $\L\R Y\lrar Y_0\lrar Y$, where the first map is a weak equivalence.
\end{proof}

Let us now consider the Quillen adjunction
$$ (-)_+ : \SS \adj \SS_{\ast}: \U $$
where $X_+ = X \coprod \ast$ is the free pointed space generated from $X$ and 
$\U$ is the functor which forgets the base point. In particular, the composition $\iota = \U \circ \L: \SS^0 \lrar \SS$ is the natural inclusion. Given a simplicial group $G$ let us denote by $BG = \iota\bar{W}G$ and $BG_\ast = \L\bar{W}G$, so that $BG_{\ast}$ is a pointed version of the classifying space of $G$ and $BG$ is obtained from $BG_{\ast}$ by forgetting the base point. The following corollary provides a computation of the tangent category of $\sGr$:

\begin{cor}\label{c:compute}
Let $G$ and $X$ be as above. Then we have a diagram of Quillen adjunctions  
\begin{equation}\label{e:nice}
\vcenter{\xymatrix{
\T_G\sGr  \ar@<-1ex>[r]\ar@<1ex>[d]_-\dashv & \T_{\bar{W}G}\SS^0 \ar@<1ex>[r]^-{\simeq}\ar@<1ex>[d]_-\dashv \ar@<-1ex>[l]^-\upvdash_-{\simeq} & \T_{BG_{\ast}}\SS_* \ar@<1ex>[l]_-\upvdash\ar@<-1ex>[r]^-\upvdash\ar@<1ex>[d]_-\dashv & \T_{BG}\SS\ar@<1ex>[d]_-\dashv\ar@<-1ex>[l]_-\simeq \ar@<1ex>[r]^-\simeq & \T_{\BB G}\SS_{\ast} \ar@<1ex>[l]_-\upvdash\ar@<1ex>[d]\\
\sGr_{/G} \ar@<-1ex>[r]^-\upvdash \ar@<1ex>[u]^-{\Sigma^\infty_+}& \SS^0_{/\bar{W}G} \ar@<1ex>[r]^-\L \ar@<1ex>[u]^-{\Sigma^\infty_+} \ar@<-1ex>[l]_-{\simeq} & (\SS_*)_{/BG_{\ast}} \ar@<1ex>[l]_-\upvdash^-\R \ar@<-1ex>[r]^-\upvdash_-{\U}\ar@<1ex>[u]^-{\Sigma^\infty_+} & \SS_{/BG} \ar@<1ex>[u]^-{\Sigma^\infty_+}\ar@<-1ex>[l]_-{(-)\coprod \ast} \ar@<1ex>[r]^-\simeq & (\SS^{\BB G})_{\ast}\ar@<1ex>[u]^-{\Sigma^\infty_+}_-\dashv \ar@<1ex>[l]_-\upvdash
}}
\end{equation}
in which the adjunctions marked with $\simeq$ are Quillen equivalences.
\end{cor}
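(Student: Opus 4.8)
The plan is to build the diagram~\eqref{e:nice} by assembling it from pieces, each of which is either a Quillen adjunction that we already have at the level of slice (or over-under) categories, or an induced adjunction on stabilizations. The bottom row of~\eqref{e:nice} is essentially given: reading from left to right it consists of the Quillen equivalence $\SS^0_{\bar{W}G//\bar{W}G}\adj \sGr_{G//G}$ obtained from~\cite[Lemma 3.1.7]{part1} (using that $\SS^0$ and $\sGr$ are proper), the adjunction $\L\dashv\R$ between reduced and pointed spaces passed to the appropriate over-under slices, the adjunction $(-)\coprod\ast\dashv\U$ passed to slices over $BG$ versus $BG_{\ast}$, and finally the Quillen equivalence $\SS_{/BG}\adj (\SS^{\BB G})_\ast$ coming from the comparison of~\cite{DDK} between $\SS_{/\bar{W}G}$ and $\SS^{\BB G}$ (note $BG=\iota\bar{W}G$, and passing to pointed objects over the terminal object gives the over-under/pointified version). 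First I would carefully identify the pointification in each case: the pointification of $\M_{/A}$ is $\M_{A//A}$, so each functor in the bottom row is the slice of a Quillen adjunction between the ambient model categories, restricted to the relevant over-under categories; that each of these restrictions is again a Quillen adjunction (and a Quillen equivalence when the original is, by~\cite[Lemma 3.1.7]{part1}) is the routine bookkeeping step.

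Next I would apply the stabilization functor $\Sp(-)$ to the entire bottom row. By construction of $\Sp$ as a left Bousfield localization of a diagram category, a Quillen adjunction $\M\adj\N$ between left proper combinatorial model categories induces a Quillen adjunction $\Sp(\M)\adj\Sp(\N)$; I would cite~\cite{part1} for this, and for the fact that a Quillen equivalence induces a Quillen equivalence on spectrum objects (this is where the $\simeq$-labels on the top row come from, matching those on the bottom). The vertical $\Sigma^\infty_+\dashv\Omega^\infty_+$ adjunctions are the canonical ones attached to each over-under model category, and commutativity of each square (up to the natural transformations implicit in a diagram of Quillen adjunctions) is exactly the naturality of the construction $\M\mapsto\Sp(\M_{\ast})$ with respect to left Quillen functors, again established in~\cite{part1}. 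So the content of the corollary is really the conjunction of three inputs: properness of $\SS^0$, $\sGr$, and $\SS$ (so the Bousfield localizations and the slice comparisons exist); the three external Quillen (equivalence) adjunctions $\GG\dashv\bar{W}$ via~\cite{part1}, $\L\dashv\R$, and the $\BB G$-comparison of~\cite{DDK}; and the functoriality of stabilization.

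The one genuinely non-formal point — and the step I expect to be the main obstacle — is the middle Quillen equivalence $\T_{BG_\ast}\SS_\ast\simeq\T_{BG}\SS$, i.e.\ that forgetting the basepoint and then restabilizing does not change the tangent category. This is \emph{not} immediate from abstract nonsense, because $(-)_+\dashv\U$ is not a Quillen equivalence on the nose; what saves us is that we are working in the over-under category $(\SS_\ast)_{BG_\ast//BG_\ast}$ versus $\SS_{BG//BG}$, where the basepoint data becomes redundant once one has a section. Concretely, I would argue that the counit of the adjunction, after passing to over-under categories over the connected space $BG$, becomes a weak equivalence on the relevant homotopy pullbacks — this is precisely the role of Lemma~\ref{l:loop} (and the analogous statement for $\L\dashv\R$), whose point is that $\L\R Y\to Y$ induces an equivalence on homotopy pullbacks $\L X\times^h_{(-)}\L X$ because $\L X$ is connected. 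I would therefore invoke Lemma~\ref{l:loop} to see that the comparison of suspension spectra $\Sigma^\infty(BG_\ast\coprod_{BG_\ast}BG_\ast)\to\Sigma^\infty(\,?\,)$ is a stable equivalence, and deduce that the induced Quillen adjunction on stabilizations is a Quillen equivalence. Assembling these labelled equivalences along the row, and noting that the composite of Quillen equivalences is a Quillen equivalence, yields the diagram~\eqref{e:nice} with the asserted $\simeq$-markings.
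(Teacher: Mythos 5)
There is a genuine gap, and it concerns exactly the part of the diagram you yourself flag as "the one genuinely non-formal point" — only you have misidentified \emph{which} of the two middle adjunctions is non-formal.

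Your reasoning that the top-row $\simeq$-labels come from promoting bottom-row Quillen equivalences via functoriality of $\Sp(-)$ does not cover the statement: the bottom row has $\simeq$ only on the first and last adjunctions (between $\sGr_{/G}$ and $\SS^0_{/\bar{W}G}$, and between $\SS_{/BG}$ and $(\SS^{\BB G})_\ast$), whereas the top row asserts that \emph{all four} adjunctions are Quillen equivalences. The two new $\simeq$'s are the real content of the corollary. For the third adjunction, $\T_{BG_\ast}\SS_\ast \adj \T_{BG}\SS$, your first instinct — that basepoints become redundant in the over-under category over $BG$ — is exactly right and is in fact all that is needed: the pointifications $(\SS_\ast)_{BG_\ast//BG_\ast}$ and $\SS_{BG//BG}$ are \emph{isomorphic categories}, so the induced adjunction on spectrum objects is literally an equivalence of categories, and Lemma~\ref{l:loop} plays no role here. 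You then abandon this correct insight and instead try to invoke Lemma~\ref{l:loop} for this step, which does not apply: that lemma is about the counit $\L\R Y \to Y$ of the reduction adjunction, not about $(-)_+\dashv\U$.

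Where Lemma~\ref{l:loop} \emph{is} needed is the second adjunction, $\T_{\bar{W}G}\SS^0 \adj \T_{BG_\ast}\SS_\ast$, which your proof dismisses as formally promoted from a bottom-row Quillen equivalence. But $\L\dashv\R$ between $\SS^0_{/\bar{W}G}$ and $(\SS_\ast)_{/BG_\ast}$ is \emph{not} a Quillen equivalence (the counit $\L\R Y\to Y$ fails to be an equivalence whenever $Y$ is disconnected), so no amount of functoriality of stabilization gives you the top-row claim for free. This is precisely where the paper's actual argument kicks in: Lemma~\ref{l:loop} shows that the derived counit becomes an equivalence after a single loop, and then a criterion (\cite[Corollary 2.4.9]{part1}) for when such a "finitely-many-loops" equivalence promotes to a Quillen equivalence of stabilizations does the rest. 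Without addressing this adjunction directly, the proof is incomplete, and the step you did try to make non-trivial (adjunction three) is in fact the easiest one in the diagram.
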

\begin{proof}
The two extreme Quillen equivalences are the ones discussed above, where we have identified $\Sp((\SS^{\BB G})_{\ast}) \cong (\Sp(\SS_\ast))^{\BB G}$ using Remark~\ref{r:global-tensored}. The second top Quillen pair is a Quillen equivalence by Lemma~\ref{l:loop} and~\cite[Corollary 2.4.9]{part0}. Finally, the third top Quillen pair is an equivalence of categories, since the adjunction $(\SS_*)_{/BG_{\ast}}\adj \SS_{/BG}$ induces an equivalence of categories upon pointification. 
\end{proof}

Composing all the equivalences of $\infty$-categories arising from Corollary~\ref{c:compute}, one may identify the tangent $\infty$-category $\T_G(\sGr_\infty) = \Sp(\sGr_{G//G})_\infty$ with the $\infty$-category of functors from (the coherent nerve of) $\BB G$ to spectra, i.e., with (naive) \textbf{$G$-equivariant spectra}, or just $G$-spectra for brevity.

\begin{rem}\label{r:central}
Let $M \in \T_G\sGr$ be an $\Om$-spectrum object whose image in $\Sp(\SS_{\ast})^{\BB G}$ under the equivalences of Corollary \ref{c:compute} is a $G$-spectrum $M' \in \Sp(\SS_\ast)$. Unwinding the definitions, we see that the simplicial group $\Om^{\infty}_+(M)$ is the semi-direct product $H \rtimes G$, where $H$ is the Kan loop group of $\Om^{\infty}_+(M') \in \SS_{\ast}$, with $G$-action induced from the $G$-action on $\Om^\infty_+(M')$. We note that $H$ is not necessarily an abelian group, but can be promoted from a group to an $\EE_\infty$-group. We also note that \textbf{small extensions} of $G$ by $M$ correspond to not-necessarily-split simplicial group extensions $\ovl{G} \lrar G$ with kernel $H$. When $M'$ is a trivial $G$-spectrum, this can be considered as an $\EE_\infty$-analogue of the notion of a \textbf{central extension}. 
\end{rem}

Given a simplicial group $G$, we would like to compute the $G$-spectrum corresponding to the cotangent complex $L_G = \LL \Sig^{\infty}_+(G)$. Since $\sGr_{/G} \adj \SS^0_{/\bar{W}G}$ is a Quillen equivalence it follows that the image of $L_G$ in $\T_{\bar{W}G}\SS^0$ is equivalent to the cotangent complex of $\bar{W}G \in \SS^0_{/\bar{W}G}$. By naturality of $\Sig^{\infty}_+$, we find that the image of $L_G$ in $\T_{BG_{\ast}}\SS_{\ast}$ is weakly equivalent to the cotangent complex of the pointed space $BG_{\ast} = \L\bar{W}G$. Let $BG = \iota\bar{W}G$ be, as above, the underlying space of $BG_{\ast}$. Identifying $\T_{BG_{\ast}}\SS_\ast \cong \T_{BG}\SS$ we may consider the cotangent complex of $X_{\ast}$ as an object of $\T_{BG}\SS$. Proposition~\ref{p:relative} then yields a homotopy cofiber sequence $\T_{BG}\SS$ of the form 
\begin{equation}\label{e:cofiber-2}
\Sig^\infty_+(x_0)\lrar \Sig^\infty_+(BG)\lrar \Sig^\infty_+(BG_{\ast})
\end{equation}
where $x_0$ denotes the corresponding object $x_0: \ast \lrar BG$ of $\SS_{/BG}$. 
In order to translate~\ref{e:cofiber-2} to a homotopy cofiber sequence of naive $G$-spectra consider the triangle of Quillen adjunctions 
$$\xymatrix@R=3.5em@C=3.5em{
\SS_{/BG}\ar@<0.9ex>[rr]\ar@<0.6ex>[dr]^(0.5){i^*} & & \SS^{\mathbb{B}G}\ar@<0.9ex>[ll]_-{\upvdash}\ar@<0.6ex>[dl]^-{i^*}\\ &\SS\ar@<0.6ex>[ur]^(0.5){i_{!}}\ar@<0.6ex>[ul]^-{i_{!}}}$$
in which the top horizontal Quillen pair is a Quillen equivalence, the right copy of $i_!$ sends a space $K$ to the free $G$-space on $K$, while the left copy of $i_!$ sends it to $K\lrar \ast \x{x_0}{\lrar} BG$. Passing to stabilizations, we obtain a triangle of Quillen adjunctions
$$\xymatrix@R=2.5em@C=2.5em{
\T_{BG}\SS\ar@<0.9ex>[rr]^-{}\ar@<0.7ex>[dr]^(0.65){i^*_{\Sp}} & & \Sp(\SS_*)^{\mathbb{B}G}\ar@<0.9ex>[ll]_-{\upvdash}^-{}\ar@<0.7ex>[dl]^-{i^*_{\Sp}}\\ &\Sp(\SS_*)\ar@<0.7ex>[ur]^(0.35){i_{!}^{\Sp}}\ar@<0.7ex>[ul]^-{i_{!}^{\Sp}}}$$
in which the horizontal adjunction is the Quillen equivalence between naive $G$-spectra and spectra parametrized by $X$ appearing in Corollary~\ref{c:compute}. By abuse of notation let us write $L_G$ for the image of $L_G$ in $\Sp(\SS_*)^{\mathbb{B}G}$. Sending the sequence~\eqref{e:cofiber-2} to its image in $\Sp(\SS_*)^{\mathbb{B}G}$ we now obtain a homotopy cofiber sequence
\begin{equation}\label{e:cofiber-3}
\xymatrix{
\mathbb{S}[G] \ar[r] & \mathbb{S}\ar[r] & L_G
}\end{equation}
where $\mathbb{S}$ is the sphere spectrum with trivial $G$-action and $\mathbb{S}[G]$ is the free $G$-spectrum on the sphere spectrum.
Indeed, $\Sigma^\infty_+(BG)$ is simply given by the constant sphere spectrum over $BG$, which maps to the sphere spectrum $\mathbb{S}$ with trivial $G$-action, while the commutativity of the above diagram shows that the object $\Sigma^\infty_+(x_0) = \Sig^{\infty}_+(i_!(\ast)) \in \T_{BG}\SS$ maps to $i_!^{\Sp}(\Sig^{\infty}_+(\ast)) = i_!^{\Sp}(\mathbb{S})$, which is the free $G$-spectrum on $\mathbb{S}$.

\begin{rem}
Let $G$ be a discrete group and $M$ a $G$-module. We may then associate to $M$ its corresponding Eilenberg-Maclane $G$-spectrum $\rH M$, and consider the Quillen cohomology groups of $G$ with coefficients in $M$. The homotopy cofiber sequence~\eqref{e:cofiber-3} shows that these Quillen cohomology groups are related to ordinary group cohomology via a long exact sequence. More precisely, since $\Sp(\SS_{\ast})^{\BB G}$ is naturally enriched in $\Sp(\SS_{\ast})$, mapping the cofiber sequence \eqref{e:cofiber-3} into $\rH M$ yields a homotopy fiber sequence of \textbf{spectra}
$$
\uline{\Map}^{\der}(L_G, \rH M)\lrar \uline{\Map}^{\der}(\mathbb{S}, \rH M) \lrar \uline{\Map}^{\der}(\mathbb{S}[G], \rH M)\simeq \rH M
$$
where $\uline{\Map}^{\der}$ denotes the derived $\Sp(\SS_{\ast})$-enriched mapping space functor. We may identify the homotopy groups of the middle spectrum with the cohomology groups of the space $BG$ with values in the local system $M$, while the right most map is induced by restriction along $\{x_0\} \hrar BG$. 
In particular, while the usual group cohomology $\rH^\bullet(G, M)$ is just the cohomology $\rH^\bullet(BG, M)$ of the classifying space $BG$, the Quillen cohomology groups $\rH^\bullet_Q(G, M)$ correspond to the \textbf{reduced} cohomology groups $\tilde{\rH}^\bullet(BG, M)$ of $BG$ as a pointed space.
\end{rem}

\subsection{Algebras over dg-operads and their modules}

Let $\C(\ZZ)$ denote the model category of unbounded chain complexes over the integers, endowed with the projective model structure. The tensor product of complexes endows $\C(\ZZ)$ with a symmetric monoidal structure which is compatible with the model structure. In particular, $\C(\ZZ)$ is a symmetric monoidal (SM) model category.

\begin{defn}
A \textbf{dg-model category} is a model category $\M$ which is tensored and cotensored over $\C(\ZZ)$.
\end{defn}

\begin{example}
Let $\A$ be a Grothendieck abelian category $\A$. Then the model category
$\C(\A)$ of unbounded chain complexes in $\A$ equipped with the injective model structure is a dg-model category.
\end{example}

\begin{example}
Let $\D$ be small dg-category (i.e., a category enriched in $\C(\ZZ)$). Then the functor category $\C(\ZZ)^{\D}$ endowed with the projective model structure is a dg-model category. Similarly, any left $\C(\ZZ)$-enriched left Bousfield localization of $\C(\ZZ)^{\D}$ is a dg-model category.
\end{example}

\begin{rem}\label{r:semi-additive}
Let $\M$ be a dg-model category. Then the underlying model category of $\M$ is tensored over $\ZZ$. It is then a classical fact that $\M$ is semi-additive, i.e., $\M$ has a zero object and the ``identity matrix'' map
$$ \I_{X_1,...,X_n}: X_1 \coprod ... \coprod X_n \lrar X_1 \times ... \times X_1 $$
is an isomorphism for every $X_1,...,X_n \in \M$. In this case, we will denote the zero object by $0$ and the (co)product by $\oplus$. 
\end{rem}

\begin{lem}
Let $\M$ be a dg-model category. Then $\M$ is stable.
\end{lem}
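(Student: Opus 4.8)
The plan is to verify that the suspension functor on $\M$ is a Quillen equivalence, which is one of the standard characterizations of a stable model category. Concretely, it suffices to show that for every object $X \in \M$ the canonical natural transformation $X \lrar \Om\Sig X$ is a weak equivalence, where $\Sig$ and $\Om$ denote the derived suspension and loop functors of the pointed model category $\M$ (which is automatically pointed by Remark~\ref{r:semi-additive}, since a zero object exists). Equivalently, one may show directly that a square in $\M$ is a homotopy pushout if and only if it is a homotopy pullback.

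First I would exploit the tensoring of $\M$ over $\C(\ZZ)$. For a cofibrant object $X$, the suspension $\Sig X$ is computed as the homotopy pushout of $0 \leftarrow X \rightarrow 0$, which — using the tensoring — is modeled by $S^1 \otimes X$, where $S^1$ denotes a suitable cofibrant model of the shift of $\ZZ$ (i.e.\ a representative of the generator of $\rH_1$ of the circle in $\C(\ZZ)$; concretely one can take the mapping cone of $\id_{\ZZ}$, which is acyclic, and observe that $\Sig X = \cone(\id_X)/X \simeq X[1]$ after cofibrant replacement). Since $\C(\ZZ)$ is itself stable, the object $S^1 \in \C(\ZZ)$ is invertible with respect to $\otimes$ up to weak equivalence, with inverse $S^{-1} = \ZZ[-1]$. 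The key point is then that $\Sig X \simeq S^1 \otimes X$ and $\Om X \simeq S^{-1} \otimes X = \hom(S^1, X)$ (the cotensor), so that the unit $X \lrar \Om\Sig X$ is identified, after deriving, with the map $X \lrar \hom(S^1, S^1 \otimes X)$ induced by the (derived) invertibility of $S^1$ in $\C(\ZZ)$, hence is a weak equivalence on all cofibrant-fibrant objects.

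The step I expect to be the main obstacle is making precise the comparison between the model-categorical suspension $\Sig X$ (a homotopy pushout in $\M$) and the tensor $S^1 \otimes X$, including the dualizable/invertibility bookkeeping, since the tensoring $- \otimes -: \C(\ZZ) \times \M \lrar \M$ is only a left Quillen bifunctor and one must pass to cofibrant replacements and check the relevant derived compatibilities (e.g.\ that $S^0 = \ZZ \otimes X \simeq X$, that $\otimes$ takes homotopy pushouts in the first variable to homotopy pushouts, and that the derived tensor with the weak equivalence $S^1 \otimes S^{-1} \simeq S^0$ yields the unit/counit identities). Once this dictionary is in place, stability follows formally: deriving the equivalence $S^1 \otimes (-)$ on $\M$ gives an equivalence on homotopy categories with inverse $S^{-1} \otimes (-)$, and this is exactly the statement that $\Sig: \Ho(\M_\ast) \lrar \Ho(\M_\ast)$ is an equivalence, i.e.\ $\M$ is stable. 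Alternatively, and perhaps more cleanly, one can argue via the Dold–Kan–type observation that $\M$ being tensored over $\C(\ZZ)$ makes it enriched in $\C(\ZZ)$, and an $\Ho(\C(\ZZ))$-enriched homotopy category in which finite products and coproducts agree (Remark~\ref{r:semi-additive}) is automatically triangulated with shift given by $S^1 \otimes (-)$; I would likely present the tensoring argument as the main line and mention this as a remark.
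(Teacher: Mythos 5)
Your proposal is correct and follows essentially the same route as the paper: both use the $\C(\ZZ)$-tensoring to observe that $\ZZ[1]$ is $\otimes$-invertible (so that $\ZZ[1] \otimes (-)$ is a left Quillen equivalence) and then identify $\ZZ[1] \otimes (-)$ with the suspension functor by tensoring the cofibration sequence $\ZZ \lrar \cone(\id_\ZZ) \lrar \ZZ[1]$ with a cofibrant object of $\M$. The comparison you flag as the ``main obstacle,'' namely $\Sig X \simeq \ZZ[1] \otimes X$, is handled in the paper precisely by this cone argument — the acyclic middle term $\cone(\id_\ZZ) \otimes X$ is a weak zero object since $\cone(\id_\ZZ)$ is a cofibrant replacement of $0$ — which is the parenthetical you sketch.
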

\begin{proof}
By Remark~\ref{r:semi-additive}, $\M$ is strictly pointed. It therefore suffices to show that the suspension functor is an equivalence.
For each $n \in \ZZ$, let $\ovl{\ZZ}[n] \in \C(\ZZ)$ be the chain complex which is the group $\ZZ$ concentrated in degree $n$. Then $\ovl{\ZZ}[n]$ is cofibrant and the functor $\ovl{\ZZ}[n] \otimes (-): \M \lrar \M$ is a left Quillen functor. Since $\ovl{\ZZ}[n] \otimes \ovl{\ZZ}[-n] \simeq \ovl{\ZZ}[0]$ is the unit of $\C(\ZZ)$ it follows that tensoring with each $\ovl{\ZZ}[n]$ is a left Quillen equivalence. We now claim that tensoring with $\ovl{\ZZ}[1]$ is a model for the suspension functor. Indeed, there is a natural cofiber sequence $\ovl{\ZZ}[0] \lrar \ovl{\ZZ}[0,1] \lrar \ovl{\ZZ}[1]$ where $\ovl{\ZZ}[0,1]$ is the complex $[\ZZ \lrar \ZZ]$ concentrated in degrees $1$ and $0$. In particular, the map $\ovl{\ZZ}[0] \lrar \ovl{\ZZ}[0,1]$ is a cofibration between cofibrant objects and the cofiber sequence is a homotopy cofiber sequence. It then follows that for each cofibrant $X \in \M$ the induced sequence $X \lrar \ovl{\ZZ}[0,1] \otimes X \lrar \ovl{\ZZ}[1]\otimes X$ is a homotopy cofiber sequence and $\ovl{\ZZ}[0,1] \otimes X \cong 0 \otimes X$ is a weak $0$-object (see Remark~\ref{r:semi-additive}). This homotopy cofiber sequence then naturally exhibits $\ovl{\ZZ}[1] \otimes X$ as the suspension of $X$ and so the desired result follows.
\end{proof}

\begin{rem}
Although every dg-model category is stable, there are stable model categories that do not admit a Quillen equivalent dg-model category. Indeed, the homotopy category of any dg-model category is tensored over $\Ho(\C(\ZZ))$ as a triangulated category, while an argument of Schwede shows that the homotopy category of \textbf{spectra} does not admit such a tensor structure (see~\cite[Proposition 1, Proposition 4]{Sch08}).
\end{rem}

Let us now consider a combinatorial dg-model category $\M$, an admissible and $\Sig$-cofibrant colored (symmetric) operad $\P$ and a fibrant-cofibrant $\P$-algebra $A \in \Alg_\P := \Alg_\P(\M)$. If the stable model structure on $\T_A\Alg_\P = \Sp((\Alg_\P)_{A//A})$ exists, then by~\cite[Corollary 4.2.6]{part1} and~\cite[Remark 4.2.3]{part1} it is Quillen equivalent to the model category $\Mod^\P_A$ of $A$-modules in $\M$. Even when the stable model structure on $\T_A\Alg_\P$ does not exist, one may still identify $\Mod^\P_A$ as a model for the $\infty$-categorical tangent category $\T_A((\Alg_\P)_\infty)$ (see~\cite[\S 4.3]{part1}). Consequently, one may attempt to define the cotangent complex and Quillen cohomology of a $\P$-algebra while working directly with $\Mod^\P_A$, without explicit reference to the stabilization process. Classically, the notions of the cotangent complex and the associated cohomology theory were indeed developed in this way (typically when $\M$ is the category of chain complexes over a field) using suitable operadic analogues of \textbf{K\"ahler differentials} and \textbf{square-zero extensions} (see~\cite{Hin97},~\cite{GH00},~\cite{Mil11}). In this section we will unwind the relation between the abstract definition of the cotangent complex and the concrete one using K\"ahler differentials in the setting of $\P$-algebras taking values in a dg-model category $\M$. 

\begin{defn}
Let $\M$ be a symmetric monoidal dg-model category and let $\P$ be a (symmetric, colored) operad in $\M$. The \textbf{reduction} $\P_{\red}$ of $\P$ is the operad (with the same set of colors) which agrees with $\P$ in all arities $\geq 1$ and has 0-objects in arity 0. In particular, $\P_{\red}$-algebras are just \textbf{non-unital} $\P$-algebras. We denote by $\eta: \P_{\red} \lrar \P$ the natural inclusion.
\end{defn}

\begin{lem}\label{l:reduced}
Let $\M$ be a symmetric monoidal dg-model category and let $\P$ be an admissible operad in $\M$. Then the map $\eta: \P_{\red}\lrar \P$ induces an equivalence between categories of augmented algebras
$$\xymatrix{
\eta_!^{\aug}: \Alg_{\P_{\red}} = \Alg_{\P_{\red}}^{\aug}\ar@<1ex>[r] & 
\Alg_\P^{\aug} \ar@<1ex>[l]_-{\upvdash} : {\eta_{\aug}^*}
}$$
which identifies the model structures on both sides.
\end{lem}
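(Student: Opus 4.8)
The plan is to unwind the definitions of both sides and exhibit the adjunction $(\eta_!^{\aug}, \eta_{\aug}^*)$ explicitly, then check it is an equivalence of categories compatible with the model structures. First I would recall that an augmented $\P$-algebra is a $\P$-algebra $A$ together with a map $A \lrar \mathbf{1}$ to the terminal (equivalently, since $\M$ is semi-additive by Remark~\ref{r:semi-additive}, the zero) $\P$-algebra, and that the restriction functor $\eta^*: \Alg_\P \lrar \Alg_{\P_{\red}}$ has a left adjoint $\eta_!$ by general operadic theory (left Kan extension along $\eta$, using that $\M$ is presentable so the relevant colimits exist). Restricting to augmented objects, $\eta_{\aug}^*$ is simply $\eta^*$ applied to the slice, and $\eta_!^{\aug}$ is the induced functor on slice categories (note that $\eta^*$ of the terminal $\P$-algebra is the terminal $\P_{\red}$-algebra, so there is no need to relabel the augmentation).

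The heart of the argument is the claim that on augmented algebras the unit and counit of $(\eta_!, \eta^*)$ are isomorphisms. I would argue this objectwise. Given an augmented $\P$-algebra $A \lrar \mathbf{1}$, write $\overline{A} = \ker(A \lrar \mathbf{1})$ for its augmentation ideal, using semi-additivity of $\M$ to split $A \cong \overline{A} \oplus \mathbf{1}$ as objects of $\M$ (the unit $\mathbf{1} \lrar A$ splits the augmentation). The operadic structure maps on $A$ of arity $\geq 1$ restrict to $\overline{A}$, giving $\overline{A}$ the structure of a $\P_{\red}$-algebra, and the arity-$0$ structure map of $A$ (the unit) is exactly the data encoded by the isomorphism $A \cong \overline{A}\oplus\mathbf{1}$; this assignment $A \mapsto \overline{A}$ is inverse to $\eta_!^{\aug}$ up to natural isomorphism. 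Concretely, I expect to check directly that $\eta_!^{\aug}$ is fully faithful and essentially surjective: a map of augmented $\P$-algebras is determined by its restriction to augmentation ideals (both are determined on the $\overline{A}$ summand, and the $\mathbf{1}$ summand is rigid), and every augmented $\P$-algebra arises as the ``unitalization'' $\eta_!^{\aug}(\overline{A}) = \overline{A} \oplus \mathbf{1}$ of its augmentation ideal. Equivalently, one shows the forgetful functor $\Alg_\P^{\aug} \lrar \M$ factoring through $\Alg_{\P_{\red}}$ and then to $\M$ creates everything in sight; the splitting $A \simeq \overline A \oplus \mathbf 1$ is the free-forgetful-monadic comparison made explicit for the arity-$0$ operation.

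Finally, for the model-categorical statement, since $\eta_!^{\aug}$ is an equivalence of categories it automatically transports the model structure; what needs checking is that the model structure on $\Alg_{\P_{\red}}$ and on $\Alg_\P^{\aug}$ are \emph{both} the ones transferred from $\M$ (projectively, along the respective free-forgetful adjunctions, which exist since $\P$ and $\P_{\red}$ are admissible — this is where admissibility is used), and that the equivalence intertwines the forgetful functors to $\M$ up to natural isomorphism. The latter is immediate from the description above: the underlying object of $\eta_!^{\aug}(B)$ is $B \oplus \mathbf 1$ and that of $\overline{A}$ is the kernel summand, and since the slice model structure on $\Alg_\P^{\aug} = (\Alg_\P)_{/\mathbf 1}$ has weak equivalences and fibrations created in $\Alg_\P$, hence in $\M$ on underlying objects, the equivalence preserves and reflects weak equivalences, fibrations, and cofibrations. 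The main obstacle I anticipate is being careful about what ``augmented'' means at arity $0$ and making the identification $\eta^*(\text{terminal }\P\text{-algebra}) = \text{terminal }\P_{\red}\text{-algebra}$ precise, together with verifying that the arity-$0$ operation of a $\P$-algebra is \emph{freely} determined by an augmentation — i.e.\ that the category of augmented $\P$-algebras really is equivalent to $\P_{\red}$-algebras rather than merely receiving a functor from it — which is exactly the content that makes the adjunction an equivalence and relies on the semi-additivity of the dg-setting.
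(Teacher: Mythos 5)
Your mathematical spine — the kernel functor and the ``unitalization'' $B \mapsto B \oplus \P_0$ as mutual inverses via semi-additivity — matches the paper's, but there are two genuine problems with the setup.

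First, you identify an augmented $\P$-algebra as one equipped with a map to the \emph{terminal} (equivalently, zero) $\P$-algebra. This is not the right notion: in this paper ``augmented'' means over-under $\P_0 = \P(0)$, the \emph{initial} $\P$-algebra (compare the formulas $\P_0 \to A \to \P_0$ in the paper's proof, and the use of $\eta^{\aug}_!$ landing in $(\Alg_\P)_{A//A}$ in Definition~\ref{d:sqz}). For the unital associative operad over $k$, the initial algebra is $k$ while the terminal one is $0$; an augmentation onto $0$ is no data, whereas a retraction onto $k$ is the classical notion. Since $\P_0$ is initial in $\Alg_\P$, the unit $\P_0 \to A$ is automatic, and specifying a retraction $A \to \P_0$ is the whole content. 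Your later invocation of ``the unit $\mathbf 1 \to A$ splits the augmentation'' shows you really mean $\P_0$, but as written the conflation with the terminal object is a conceptual error, and if taken literally it would make $\Alg_{\P}^{\aug}$ just $\Alg_\P$.

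Second, and more substantially, your model-structure argument presupposes that the transferred model structure on $\Alg_{\P_{\red}}$ exists ``since $\P$ \emph{and} $\P_{\red}$ are admissible.'' But the admissibility of $\P_{\red}$ is not a hypothesis of the lemma — only $\P$ is assumed admissible, and the paper establishes admissibility of $\P_{\red}$ \emph{as a byproduct} of the argument. The correct route is one-directional: the over-under model structure on $(\Alg_\P)_{\P_0//\P_0}$ exists because $\P$ is admissible; transport it across the already-established categorical equivalence; and then verify that the result is the transferred structure on $\Alg_{\P_{\red}}$ by checking that $\eta^{\aug}_!$ preserves (trivial) fibrations — which holds because the underlying map of $\eta^{\aug}_!(B \to B')$ is $\P_0 \oplus B \to \P_0 \oplus B'$ and $\oplus$ is the product in the semi-additive category $\M$. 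As written, your argument has a circular dependence on a statement it is supposed to prove.
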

\begin{proof}
The functor $\eta^*_{\aug}$ sends an augmented $\P$-algebra $\P_{0}\lrar A\lrar \P_{0}$ to $A\times_{\P_{0}} 0$ while $\eta^{\aug}_!$ sends a $\P_{\red}$-algebra $B$ to $\P_{0}\lrar \P_{0}\oplus B\lrar \P_{0}$. The fact that $\M$ is semi-additive (see Remark~\ref{r:semi-additive}) implies that these two functors are mutual inverses. The description of $\eta^{\aug}_!$ and the fact that $\oplus$ is the product in $\M$ show that $\eta^{\aug}_!$ preserves (trivial) fibrations, which are just (trivial) fibrations in the base category $\M$. Since $\eta_{\aug}^*$ clearly preserves (trivial) fibrations, it follows that the model structures on both sides are identified (in particular, $\P_{\red}$ is admissible as well).
\end{proof}

If $\P$ is an operad in $\M$ and $A$ is a $\P$-algebra, then there exists an operad $\P^A$ (with the same set of colors), called the \textbf{enveloping operad} of $\P$, such that $\P^A$-algebras are the same as $\P$-algebras under $A$ (see~e.g.~\cite{BM07}, \cite[\S 3.1]{part1}). We recall as well the operad $\P^A_1$ whose $1$-ary operations are the same as those of $\P^A$ and which has 0-objects in all other arities. In particular, algebras over $\P^A_1$ are the same as $A$-modules in $\M$.

When $\M$ is a dg-model category we may consider the natural inclusion $\eta:\P^A_{\red} = (\P^A)_{\red} \lrar \P^A$, as well as a collapse map $\theta: \P^A_{\red}\lrar \P^A_1$ sending all non-unary operations to zero. While $\eta_!: \Alg^{\P^A_{\red}} \lrar \Alg^{\P^A}_{\aug}$ is generically a left Quillen functor, Lemma~\ref{l:reduced} implies that when $\M$ is a dg-model category $\eta_!$ is furthermore a right Quillen functor. This enables one to make the following definition:
\begin{define}\label{d:sqz}
Let $\M$ be a symmetric monoidal dg-model category, $\P$ an admissible operad in $\M$ and $A$ a $\P$-algebra such that $\P^A_1$ is admissible. We define the \textbf{square-zero extension functor} $(-)\rtimes A: \Mod^{\P}_A \lrar (\Alg_{\P})_{/A}$ to be the composition of right Quillen functors
$$ (-)\rtimes A:\Mod^{\P}_A =\Alg_{\P^A_1}\x{\theta^*}{\lrar} \Alg_{\P^A_{\red}} \x{\eta^{\aug}_!}{\lrar} (\Alg_{\P})_{A//A} \lrar (\Alg_{\P})_{/A} .$$
We will refer to its left adjoint $\Om^{/A}: (\Alg_{\P})_{/A} \lrar \Mod^{\P}_A$ as the \textbf{functor of K\"ahler differentials} and to $\Om_A \x{\df}{=} \Om^{/A}(\id_A)$ as the \textbf{module of K\"ahler differentials of $A$}. 
\end{define}

Unwinding the definitions, we can identify $M\rtimes A \in \Alg^{\P}_{/A}$ with the $\P$-algebra $M\oplus A$ equipped with the \textbf{square-zero} $\P$-algebra structure, which is determined by the property that for every collection of elements $w_1,...,w_n,w_\ast$ in the color set $W$, the structure map
\begin{equation}\label{e:sqz}\vcenter{\xymatrix@C=1.2pc{
\P(w_1,...,w_n;w_\ast)\otimes (M(w_1) \oplus A(w_1))\otimes ... \otimes (M(w_n) \oplus A(w_n)) 
 \ar[r] & M(w_\ast) \oplus A(w_{\ast})
}}\end{equation}
vanish on summands of the left hand side which have more than one factor from $M$, and are given by the $A$-module structure of $\M$ on the other components.

We are now in a position to relate the abstract cotangent complex to the notions of K\"ahler differentials and square-zero extensions: 
\begin{pro}\label{p:cotangentvskaehler}
Let $\M$ be a symmetric monoidal combinatorial dg-model category and $\P$ a $\Sig$-cofibrant operad in $\M$. Let $A \in \Alg_\P$ be a fibrant-cofibrant algebra. Assuming that all the involved model structures exist, there is a commuting diagram of right Quillen functors
\begin{equation}\label{d:operadiccotangentadditive}\vcenter{\xymatrix@C=3pc{
\T_A\Alg_{\P}\ar[d]_{\Omega_+^\infty} & \T_0\Mod^{\P}_A\ar[l]^-\sim_-{\Sp((-)\rtimes A)}\ar[d]_\sim^{\Omega_+^\infty}\\
(\Alg_\P)_{/A} & \Mod^{\P}_A\ar[l]^-{(-)\rtimes A}
}}\end{equation}
in which the right adjoints marked by $\sim$ are right Quillen equivalences.
\end{pro}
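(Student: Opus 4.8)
The plan is to assemble the square~\eqref{d:operadiccotangentadditive} from three ingredients: the stability of $\Mod^{\P}_A$, the functoriality of the stabilization construction under suitable right Quillen functors, and the operadic comparison theorem of~\cite{part1}. I would start with the right-hand vertical map. The category $\Mod^{\P}_A = \Alg_{\P^A_1}(\M)$ of $A$-modules in the dg-model category $\M$ inherits the tensoring and cotensoring over $\C(\ZZ)$ from $\M$ (these being $\M$-internal and interacting with the monoidal structure of $\M$), so $\Mod^{\P}_A$ is itself a dg-model category and hence stable. Consequently $\T_0\Mod^{\P}_A = \Sp((\Mod^{\P}_A)_{0//0})$; since $0$ is a zero object, the pointification $(\Mod^{\P}_A)_{0//0}$ is canonically the category $\Mod^{\P}_A$ itself, and stability makes the canonical adjunction $\Sig^{\infty}\dashv\Om^{\infty}$ between $\Mod^{\P}_A$ and its stabilization a Quillen equivalence (this is Corollary~\ref{c:stable-functor-tangent} for the trivial indexing category, or the corresponding statement of~\cite{part1}). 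This exhibits the right vertical $\Om^{\infty}_+\colon \T_0\Mod^{\P}_A \to \Mod^{\P}_A$ as a right Quillen equivalence.

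For the top horizontal functor, write $R\colon \Mod^{\P}_A \to (\Alg_{\P})_{A//A}$ for the composite $\eta^{\aug}_!\circ\theta^*$ of Definition~\ref{d:sqz}, so that $(-)\rtimes A$ is $R$ followed by the forgetful functor $(\Alg_{\P})_{A//A}\to(\Alg_{\P})_{/A}$. By Definition~\ref{d:sqz}, $R$ is a right Quillen functor, so in particular it preserves homotopy pullbacks of fibrant objects; moreover it sends the zero object to $0\rtimes A = A$, the zero object of $(\Alg_{\P})_{A//A}$, and it preserves weak contractibility, since $M\simeq 0$ forces $M\rtimes A = M\oplus A\simeq A$. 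Hence the levelwise extension of $R$ to $(\NN\times\NN)$-diagrams carries $\Om$-spectra to $\Om$-spectra, and therefore --- by exactly the mechanism through which the over-under adjunction~\eqref{e:over-under} was seen to induce an adjunction on stabilizations, cf.~\cite{part1} --- $R$ descends to a right Quillen functor $\Sp(R)\colon \T_0\Mod^{\P}_A = \Sp(\Mod^{\P}_A)\to\Sp((\Alg_{\P})_{A//A}) = \T_A\Alg_{\P}$. This $\Sp(R)$ is the functor denoted $\Sp((-)\rtimes A)$ in the statement. The square then commutes on the nose: both functors $\Om^{\infty}_+$ are given by evaluating the underlying $(\NN\times\NN)$-diagram at $(0,0)$ and then forgetting the (co)augmentation, whereas $\Sp(R)$ applies $R$ levelwise, so $\Om^{\infty}_+\circ\Sp(R)$ is ``apply $R$ at level $(0,0)$, then forget'', which is precisely $((-)\rtimes A)\circ\Om^{\infty}_+$.

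It remains to see that $\Sp(R)$ --- equivalently, the stabilized K\"ahler differentials functor arising as its left adjoint --- is a Quillen equivalence, and this is where the real content lies. This is the operadic comparison theorem: by~\cite[Corollary 4.3.6]{part1} together with~\cite[Remark 4.3.3]{part1}, $\T_A\Alg_{\P}$ is Quillen equivalent to $\Mod^{\P}_A$, hence (by the first paragraph) to $\T_0\Mod^{\P}_A = \Sp(\Mod^{\P}_A)$. The obstacle is to confirm that this abstract identification is the one carried by the concrete functor $R$ of Definition~\ref{d:sqz}: this comes down to unwinding the enveloping-operad description in~\cite{part1} and comparing the two equivalences as right adjoints of left Quillen functors out of $\T_A\Alg_{\P}$ (equivalently, by comparing what their left adjoints do to the cotangent complex), and to checking that the standing hypotheses --- $\P$ being $\Sig$-cofibrant, $A$ fibrant-cofibrant, and $\P^A_1$ admissible, all subsumed under ``all the involved model structures exist'' --- are exactly those under which both~\cite[Corollary 4.3.6]{part1} and Definition~\ref{d:sqz} are available. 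Once the top and right maps are known to be right Quillen equivalences, the diagram~\eqref{d:operadiccotangentadditive} is complete, and the left vertical $\Om^{\infty}_+$ then realizes the cotangent-complex formalism in terms of K\"ahler differentials and square-zero extensions.
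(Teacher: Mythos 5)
Your proposal correctly assembles the easy parts of the diagram --- the right vertical equivalence follows from stability of $\Mod^{\P}_A$, and the square commutes by naturality of the stabilization construction applied levelwise --- but it leaves a genuine gap at the decisive step, and you even name it yourself: ``the obstacle is to confirm that this abstract identification is the one carried by the concrete functor $R$.'' After flagging this you merely gesture at ``unwinding the enveloping-operad description'' and ``comparing the two equivalences as right adjoints,'' without carrying out either. As written, you have established that $\T_A\Alg_{\P}$ is \emph{abstractly} Quillen equivalent to $\T_0\Mod^{\P}_A$ via \cite[Corollary 4.3.6]{part1}, and that $\Sp((-)\rtimes A)$ is \emph{some} right Quillen functor between them; nothing in the argument forces these to coincide, so the conclusion that $\Sp((-)\rtimes A)$ is an equivalence has not been proved.

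The paper closes exactly this gap by a different and more economical route. Rather than invoking the abstract equivalence and then trying to match it against $\Sp((-)\rtimes A)$, it introduces the kernel functor $\K\colon(\Alg_{\P})_{A//A}\to \Mod^{\P}_A$ (forget to $(\Mod^{\P}_A)_{A//A}$, then take the kernel), cites \cite[Corollary 4.3.4]{part1} --- which asserts directly that $\K_{\Sp}$ is a right Quillen equivalence once $\P^A$ is $\Sig$-cofibrant, here guaranteed by \cite[Proposition 2.3]{BM09}, and $\M$ is stable --- and then observes that $(-)\rtimes A$ is a strict right inverse of $\K$. Consequently $\K_{\Sp}\circ\Sp((-)\rtimes A)=\id$, and a right Quillen functor that is a section of a right Quillen equivalence is itself a right Quillen equivalence. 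This ``section trick'' is the content you were missing: it converts the identification problem into a trivial algebraic cancellation, so that no unwinding of the enveloping operad or comparison of cotangent complexes is ever needed. To repair your write-up, replace the hand-wave in the third paragraph with this argument.
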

\begin{proof}
The existence of the commuting square follows from the naturality of $\Sp(-)$, see~\cite{part0}, and the left vertical right Quillen functor is a right Quillen equivalence since $\Mod^\P_A$ is stable, see~\cite[Corollary 3.3.3]{part0}. To show that the top horizontal right Quillen functor is an equivalence let $\K: (\Alg_{\P})_{A//A} \lrar \Mod^{\P}_A$ be the composition of the forgetful functor $(\Alg_{\P})_{A//A} \lrar (\Mod^\P_A)_{A//A}$ and the kernel functor $\ker:(\Mod^\P_A)_{A//A} \lrar \Mod^\P_A$. Since $\P$ is $\Sig$-cofibrant and $A$ is cofibrant it follows that $\P^A$ is $\Sig$-cofibrant (see~\cite[Proposition 2.3]{BM09}), and since $\M$ is stable~\cite[Corollary 4.2.6]{part1} implies that $\K$ induces a right Quillen equivalence 
$$ \K_{\Sp}: \T_A\Alg_\P = \Sp((\Alg_{\P})_{A//A}) \lrar \Sp(\Mod^\P_A) = \T_0\Mod^\P_A .$$ 
Since the square-zero extension functor $(-) \rtimes A: \Mod^{\P}_A\lrar (\Alg_{\P})_{A//A}$ is right inverse to $\K$ it follows that the $\Sp((-)\rtimes A)$ is a right Quillen equivalence, as desired.
\end{proof}

\begin{cor}\label{c:kahler}
Let $\M, \P$ and $A$ be as in Proposition~\ref{p:cotangentvskaehler}. Then the image in $\Mod^{\P}_A$ of the cotangent complex $L_A = \LL\Sig^{\infty}_+(A) \in \T_A\Alg_{\P}$ of $A$ under the equivalence $\T_A\Alg_{\P} \simeq \T_0\Mod^{\P}_A \simeq \Mod^{\P}_A$ is weakly equivalent to the derived module of K\"ahler differentials $\LL\Om^{/A}(A) \in \Mod^{\P}_A$. 
\end{cor}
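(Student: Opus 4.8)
The plan is to deduce Corollary~\ref{c:kahler} directly from Proposition~\ref{p:cotangentvskaehler} together with the naturality of the suspension spectrum construction $\Sig^\infty_+$. Concretely, I want to track the cotangent complex $L_A = \LL\Sig^\infty_+(A)$ across the horizontal equivalence in diagram~\eqref{d:operadiccotangentadditive} and identify the result with $\LL\Om^{/A}(A)$.

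First I would recall that by Remark~\ref{r:functor-cotangent} — more precisely by the general naturality of $\Sig^\infty_+$ under left Quillen functors, as in~\cite{part1} — the square~\eqref{d:operadiccotangentadditive} of right Quillen functors has a corresponding square of left adjoints which commutes up to natural weak equivalence on cofibrant objects. The left adjoint of the top horizontal map $\Sp((-)\rtimes A)$ is the stabilized K\"ahler differentials functor, which (after derivation, using that $\Mod^\P_A$ is already stable so that $\Om^\infty_+: \T_0\Mod^\P_A \to \Mod^\P_A$ is a Quillen equivalence and $L$ of the identity on the $\Mod$ side is the module itself, cf. Corollary~\ref{c:stable-functor-tangent}) I want to identify with $\LL\Om^{/A}$. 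The left adjoint of the left vertical map $\Om^\infty_+: \T_A\Alg_\P \to (\Alg_\P)_{/A}$ is $\Sig^\infty_+$, whose derived functor applied to $\id_A$ is precisely $L_A$ by definition.

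The key steps, in order, are: (i) pass from the commuting square of right Quillen functors to the corresponding (up to homotopy) commuting square of derived left adjoints; (ii) evaluate both composites on the object $\id_A \in (\Alg_\P)_{/A}$; (iii) along the top-then-left route, obtain $\Sp((-)\rtimes A)^{\mathrm{left}} \circ \LL\Sig^\infty_+(A)$, i.e.\ the image of $L_A$ under the equivalence $\T_A\Alg_\P \simeq \T_0\Mod^\P_A \simeq \Mod^\P_A$; (iv) along the left-then-top route, obtain $\LL\Om^{/A}(\id_A) = \LL\Om^{/A}(A)$, using that the left adjoint of $(-)\rtimes A$ on the nose is the K\"ahler differentials functor $\Om^{/A}$ by Definition~\ref{d:sqz}, and that its left derived functor computes $\Om_A$ up to weak equivalence; (v) conclude by commutativity of the square of derived left adjoints. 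Since $A$ is assumed fibrant-cofibrant, there are no cofibrancy obstructions to evaluating the derived functors at $A = \id_A$ directly, so all the naturality statements apply cleanly.

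The main obstacle I anticipate is purely bookkeeping: one must be careful that the passage from the square of right Quillen functors to the square of derived left adjoints really does commute up to coherent natural weak equivalence — this is where the naturality machinery of~\cite{part1} for $\Sp(-)$ (and the fact that $\Om^\infty_+$ is a Quillen equivalence on the stable side, so that inverting it is harmless) does all the work. A secondary point to verify is the precise identification, at the level of left adjoints, of the bottom horizontal composite's left adjoint with $\Om^{/A}$, but this is immediate from Definition~\ref{d:sqz}, which defines $\Om^{/A}$ exactly as that left adjoint. Once these two points are in place the corollary follows formally, so the proof should be short.
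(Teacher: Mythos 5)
Your argument is correct and is precisely the proof the paper leaves implicit: it states Corollary~\ref{c:kahler} with no proof immediately after Proposition~\ref{p:cotangentvskaehler}, and the intended deduction is exactly to pass from the commuting square of right Quillen functors in diagram~\eqref{d:operadiccotangentadditive} to the commuting (up to natural isomorphism) square of derived left adjoints, evaluate on the cofibrant object $\id_A$, and cancel $\RR\Om^\infty_+ \circ \LL\Sig^\infty_+ \simeq \id$ on the stable $\Mod^\P_A$ side using Corollary~\ref{c:stable-functor-tangent}. Two small cosmetic remarks: the pointer to Remark~\ref{r:functor-cotangent} is not the right reference (that remark concerns functor categories; what you actually want, and then invoke, is the naturality of $\Sp(-)$ from~\cite{part1} together with the elementary fact that a strictly commuting square of right Quillen functors yields a square of left adjoints commuting up to natural isomorphism); and in step (iv) you should make explicit that the left-then-up composite produces $\LL\Sig^\infty_+\bigl(\LL\Om^{/A}(A)\bigr) \in \T_0\Mod^\P_A$, which is then sent back to $\LL\Om^{/A}(A) \in \Mod^\P_A$ by $\RR\Om^\infty_+$, rather than landing on $\LL\Om^{/A}(A)$ directly. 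Neither affects the validity of the argument.
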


\begin{rem}
If $M$ is an $A$-module, then Corollary~\ref{c:kahler} implies that the Quillen cohomology groups of $A$ with coefficients in $M$ are given by
$$ \rH^n_Q(A;M) = \pi_0\Map^{\der}_{\Mod^{\P}_A}(\LL\Om^{/A}(A), M[n]) .$$
These groups are also known as \textbf{operadic Andr\'e-Quillen cohomology groups}.
\end{rem}

Let us now discuss the functor $\Om^{/A}$ in further detail. By definition, this functor is given in general by the composition of left adjoints
$$ (\Alg_{\P})_{/A} \x{\coprod A}{\lrar} (\Alg_{\P})_{A//A} \x{\eta^*_{\aug}}{\lrar} \Alg_{\P^A_{\red}} \x{\theta_!}{\lrar} \Mod^{\P}_A $$
which is somewhat inexplicit. Given a map $f: B \lrar A$ of $\P$-algebras the commutative diagram of left Quillen functors
$$ \xymatrix@C=2pc{
(\Alg_{\P})_{/B} \ar^{f_!}[r]\ar_{\Om^{/B}}[d] & (\Alg_{\P})_{/A} \ar^{\Om^{/A}}[d] \\
\Mod^\P_B \ar_{(-) \otimes_B A}[r] & \Mod^{\P}_A \\
}$$
yields a natural isomorphism $\Om^{/A}(B) \cong \Om^{/B}(B) \otimes_B A$. It is hence sufficient, in principle, to compute the module of K\"ahler differentials $\Om_A= \Om^{/A}(A)$ for a $\P$-algebra $A$. 

Let $\F: \M \lrar \Alg_\P$ be the free algebra functor. When $B = \F(V)$ is a free $\P$-algebra and the map $\F(V) \lrar A$ is induced by a map $V \lrar A$ in $\M$, then for every $A$-module $M$ we have natural identifications
$$
\Hom_{\Mod^{\P}_A}(\Om^{/A}(B), M)\cong \Hom_{(\Alg_\P)_{/A}}(B,M \rtimes A) \cong \Hom_{\M_{/A}}(V,M \oplus A) \cong \Hom_{\M}(V,M).
$$
It follows that $\Om^{/A}(B) = \Om^{/A}(\F(V))$ is the free $A$-module generated by $V$. This allows one to compute $\Om_A$ by applying the functor $\Om^{/A}$ to the coequalizer diagram

\begin{equation}\label{e:coeq}
\xymatrix{
\F(\F(A))\ar@<0.5ex>[r] \ar@<-0.5ex>[r] & 
\F(A) \ar[r] & A \\
}
\end{equation}
yielding a description of $\Om_A$ by generators and relations. Similarly, to compute the derived counterpart $\LL\Om^{/A}(A)$ one may often represent $A$ as the geometric realization of a simplicial diagram consisting of free algebras (using the comonadic bar resolution, for example), yielding a description of $\Om_A$ as a geometric realization of free $A$-modules. We refer the reader to~\cite[\S 1.2]{Mil11} for further details in the case where $\M$ is the category of chain complexes over a field of characteristic $0$.

\begin{example}
Let $\P$ be a single colored operad in abelian groups and let $A$ be a $\P$-algebra in $\Ab$ (here we may consider $\Ab$ as equipped with the trivial model structure). Applying $\Om^{/A}$ to the coequalizer sequence~\ref{e:coeq} and unwinding the definitions we obtain a description of $\Om_A$ as the free $A$-module generated by the underlying abelian group of $A$ modulo the \textbf{Leibniz relations}.  
More explicitly, we may identify $\Om_{A}$ with the $A$-module generated by the formal elements $d(a)$ for $a \in A$, modulo the relations
$$ d(a + b) = d(a) + d(b) $$
$$ d(f(a_1,...,a_n))= \sum_{i=1}^{n}f(a_1,...,a_{i-1},d(a_i),a_{i+1},...,a_n) $$
for every $a_1,...,a_n \in A$ and $n$-ary operation $f \in \P(n)$.
\end{example}

\begin{example}
Let $\M$ be the model category of chain complexes over a field of characteristic $0$ and let $\P$ be the \textbf{commutative} operad. In this case, the module $\Om_A$ of K\"ahler differentials of a $\P$-algebra $A$ can be explicitly tracked down as follows. First note that the image of $\Id_A \in (\Alg_{\P})_{/A}$ in $(\Alg_{\P})_{A//A}$ is given by $A \lrar A \otimes A \lrar A$. The functor $\eta^*_{\aug}$ sends this object to the kernel $I \subseteq A \otimes A$ of the multiplication map $A \otimes A \lrar A$, considered as a non-unital commutative dg-algebra. Finally, the functor $\theta_!$ sends $I$ to $I/I^2$, yielding the classical description of the module of K\"ahler differentials.
\end{example}

We shall now restrict our attention to the case where $\M = \C(k)$ is the dg-model category of unbounded chain complexes over a field $k$ of characteristic $0$, equipped with the projective model structure. Let $\P$ be a cofibrant single-colored operad in $\C(k)$. In this case $\Alg_\P := \Alg_\P(\C(k))$ is a left proper model category by the results of~\cite{fresse} (see~\cite[Remark 4.1.2]{part1}) and so we may form its tangent bundle $\T\Alg_\P$ as in \S\ref{s:tangent}. Furthermore, since $\Alg_\P$ is also right proper, \cite[Theorem 3.1.9]{part0} implies that the projection $\T\Alg_\P \lrar \Alg_{\P}$ is a \textbf{model fibration} in the sense of~\cite{HP}. On the other hand, by Corollary~\ref{c:kahler} the fiber of this model fibration over a $\P$-algebra $A$ is Quillen equivalent to $\Mod^\P_A$. This suggests that we might model the tangent bundle of $\Alg_\P$ by the Grothendieck construction of the various categories of modules.
In what follows we will explain how this can be done using the machinery of~\cite{HP}. This will also allow one to present the global cotangent complex functor (see~\S\ref{s:tangent}) in terms of K\"ahler differentials. 

Recall that a map $f\colon A\lrar B$ of $\P$-algebras induces a Quillen adjunction
$$\xymatrix{
f_!\colon \Mod^{\P}_A \ar@<1ex>[r] & 
\Mod^{\P}_B \ar@<1ex>[l]_-{\upvdash} \colon f^*.
}$$
The association $A \mapsto \Mod^{\P}_A$ thus determines a functor $\Mod^\P:\Alg_{\P} \lrar \ModCat$ from $\P$-algebras to model categories. The Grothendieck construction of this functor is the category $\int_A \Mod^{\P}_A$ whose objects are pairs $(A, M)$ where $A$ is a $\P$-algebra and $M$ is an $A$-module. Morphisms are given by pairs $(f,\vphi)$ where $f\colon A \lrar B$ is a map of $\P$-algebras and $\vphi: M \lrar f^*N$ is a map of $A$-modules (equivalently, the latter map can be encoded in terms of its adjoint $\vphi^{\ad}: f_!M\lrar N$).

The resulting category of $\P$-algebras and modules over them admits a simple description in terms of operads: to the operad $\P$ one may associate a 2-colored operad $\M\P$ whose category of algebras $\Alg_{\M\P}(\C(k))$ is naturally isomorphic to $\int_A\Mod^{\P}_A$ (see, e.g. \cite[\S 5]{hin-rectification}). By~\cite[Theorem 2.6.1]{hin-rectification} and~\cite[Example 2.5.4]{hin-rectification} the 2-colored operad $\M\P$ is admissible over $\C(k)$. In particular, in the notation above, we have that a map $(f,\vphi): (A,M) \lrar (B,N)$ is a (trivial) fibration if and only if $f$ is a (trivial) fibration and $\vphi^{\ad}: M \lrar f^*N$ is a (trivial) fibration.  

\begin{pro}
Let $\P$ be a cofibrant single-colored operad in $\C(k)$. Then the canonical projection $\int_{A \in \Alg_\P} \Mod^\P_A \lrar \Alg_\P$ is a \textbf{model fibration} in the sense of~\cite[\S 5]{HP}.
\end{pro}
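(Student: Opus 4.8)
The plan is to verify the hypotheses of the model fibration criterion from~\cite[\S 5]{HP}. Recall that a model fibration is a functor $p\colon \mathcal{E} \lrar \mathcal{B}$ which is a Grothendieck bifibration, where each fiber $\mathcal{E}_B$ carries a model structure, each pushforward $f_!$ is a left Quillen functor (equivalently each $f^*$ is right Quillen), and the whole package satisfies a suitable compatibility ensuring that the total category $\mathcal{E}$ admits an ``integral'' model structure in which a map $(f,\varphi)$ is a (trivial) fibration precisely when $f$ is one in $\mathcal{B}$ and $\varphi^{\ad}$ is one in the target fiber. For the projection $\int_{A}\Mod^\P_A \lrar \Alg_\P$, the base model structure on $\Alg_\P$ exists because $\P$ is admissible (it is $\Sig$-cofibrant, in fact cofibrant, so~\cite{BM09}-type results apply over $\C(k)$), and the fiberwise model structures on $\Mod^\P_A = \Alg_{\P^A_1}(\C(k))$ exist because $\P^A$ — and hence $\P^A_1$ — is $\Sig$-cofibrant when $A$ is cofibrant, and over the field $k$ of characteristic $0$ one in fact has admissibility for \emph{all} $A$, not just cofibrant ones, by Fresse's results~\cite{fresse}. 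Every restriction functor $f^*\colon \Mod^\P_B \lrar \Mod^\P_A$ preserves fibrations and trivial fibrations since these are detected on underlying complexes, so each $f_!$ is left Quillen.

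The key input that makes all of this come together cleanly — and which is the reason for restricting to $\M = \C(k)$ with $k$ a characteristic-$0$ field and to a single-colored operad — is the identification of $\int_A \Mod^\P_A$ with the category of algebras $\Alg_{\M\P}(\C(k))$ over the $2$-colored operad $\M\P$, together with the admissibility of $\M\P$ over $\C(k)$, which is exactly~\cite[Theorem 2.6.1]{hin-rectification} combined with~\cite[Example 2.5.4]{hin-rectification} as recalled just above. So the first step is: transport the model structure on $\Alg_{\M\P}(\C(k))$ across the isomorphism $\Alg_{\M\P}(\C(k)) \cong \int_A \Mod^\P_A$, obtaining a model structure on the total category in which $(f,\varphi)$ is a (trivial) fibration iff $f$ is a (trivial) fibration of $\P$-algebras and $\varphi^{\ad}\colon M \lrar f^*N$ is a (trivial) fibration of $A$-modules — this is precisely the characterization quoted immediately before the proposition.

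The second step is to check that this model structure is the one produced by, or at least satisfies the axioms of, the integral construction of~\cite[\S 5]{HP} relative to the functor $\Mod^\P\colon \Alg_\P \lrar \ModCat$. Here one verifies: (i) the base $\Alg_\P$ and each fiber $\Mod^\P_A$ are, say, combinatorial (hence the smallness hypotheses of~\cite{HP} hold) — this follows since $\C(k)$ is combinatorial and admissibility gives transferred combinatorial model structures; (ii) properness or whatever mild side condition~\cite[\S 5]{HP} imposes on the fibers and base — over $\C(k)$, which is stable hence proper, and with transferred structures along forgetful functors to $\C(k)$, the fibers $\Mod^\P_A$ are proper, and likewise $\Alg_\P$; (iii) the compatibility between the fibration-characterization coming from $\M\P$ and the one demanded by~\cite{HP} — but these literally coincide by the displayed description of fibrations in $\Alg_{\M\P}$, so no further work is needed. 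Finally one notes the functor is a Grothendieck bifibration: it is a Grothendieck opfibration via $f_!$ and a Grothendieck fibration via $f^*$, with $(f_!, f^*)$ an adjunction, which is exactly the bifibration structure underlying the $2$-colored operad presentation.

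The main obstacle — or rather the only place where genuine content beyond bookkeeping enters — is confirming that the model structure obtained by transfer from $\Alg_{\M\P}(\C(k))$ agrees on the nose with the integral model structure of~\cite[\S 5]{HP}, rather than merely being Quillen equivalent to it. Since a model structure is determined by its cofibrations and its weak equivalences (or by its fibrations and trivial fibrations), and both model structures have the \emph{same} fibrations and trivial fibrations by the characterization above, this agreement is automatic once one knows both structures exist; so the real task reduces to citing existence of the integral model structure from~\cite{HP} under hypotheses that hold here. Thus the proof is essentially an assembly of~\cite[Theorem 2.6.1]{hin-rectification}, the properness/combinatoriality of $\C(k)$, and the general machinery of~\cite[\S 5]{HP}, with the characteristic-$0$ single-colored hypothesis entering only to guarantee admissibility of $\M\P$ and of all the $\P^A_1$ without cofibrancy assumptions on $A$.
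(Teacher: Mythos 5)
Your proposal correctly identifies the transferred model structure on $\Alg_{\M\P}(\C(k))\cong\int_A\Mod^\P_A$ via Hinich's admissibility results and correctly observes that this model structure has the fibrational description one wants, so that once the integral (HP) model structure is known to exist the two must coincide. But the proposal then waves at the hypotheses of~\cite[\S 5]{HP} as ``properness or whatever mild side condition,'' and this is precisely where the content of the proof lives: the main theorem of~\cite[\S 5]{HP} requires that the functor $\Mod^\P\colon\Alg_\P\lrar\ModCat$ be \emph{proper and relative} in the sense of~\cite[\S 3]{HP}. Relativity -- that $\Mod^\P$ sends weak equivalences of $\P$-algebras to Quillen equivalences of module categories -- is not automatic and is not implied by admissibility of $\M\P$; it is a genuine invariance statement about base change of modules. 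The paper establishes it by factoring $\Mod^\P$ as the enveloping-algebra functor $\P^{(-)}(1)\colon\Alg_\P\lrar\Alg$ followed by $\LMod\colon\Alg\lrar\ModCat$, invoking Fresse~\cite[17.4.1.B]{fresse} (which needs all objects of $\C(k)$ to be cofibrant) for the first and~\cite[Theorem 6.3.10]{HP} for the second. Your proposal never addresses this step, so there is a real gap. Relatedly, you misattribute the role of the single-colored hypothesis: it is imposed so that Fresse's results on enveloping algebras apply, not to guarantee admissibility of $\M\P$ or of the $\P^A_1$'s.

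The properness check is also not the one you perform. What is needed is properness of the \emph{functor} $\Mod^\P$ in the sense of~\cite[\S 3]{HP}, a condition on the interaction of base change with (co)fibrations across fibers, not merely properness of $\C(k)$ or of the individual fiber model categories. The paper obtains left properness of $\Mod^\P$ from the fact that the projection $\Alg_{\M\P}\lrar\Alg_\P$ is right Quillen for the transferred structure, via~\cite[Corollary 5.0.13]{HP}, and then upgrades to full properness using that all objects of $\int_A\Mod^\P_A$ are fibrant in their fibers over $\C(k)$. None of this is captured by observing that $\C(k)$ is stable and hence proper. In short, your proposal sets up the framework correctly but omits the verification of the two hypotheses (relativity and properness of the functor) that the cited theorem actually requires, and these verifications are the substance of the argument.
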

\begin{proof}
We first show that the functor $\Mod^\P:\Alg_{\P} \lrar \ModCat$ which sends $A$ to $\Mod^{\P}_A$ is proper and relative in the sense of~\cite[\S 3]{HP}.
To see that $\Mod^\P$ is relative (i.e., that $\Mod^\P$ sends weak equivalences to Quillen equivalences), note that we may identify it with the composite
$$\xymatrix{
\Alg_{\P} \ar[r]^{\P^{(-)}(1)} & \Alg \ar[r]^-{\LMod} & \ModCat
}$$
where $\P^{(-)}(1)$ sends a $\P$-algebra to its enveloping algebra (i.e.~the algebra of $1$-ary operations of its enveloping operad), and $\LMod$ sends an associative algebra to its category of left modules. Since all objects in $\C(k)$ are cofibrant, it follows from \cite[17.4.1.B]{fresse} that the functor $\P^{(-)}(1)$ is relative. As stated in~\cite[Theorem 6.3.10]{HP}, the functor $\LMod:\Alg \lrar \ModCat$ is relative as well and so we may conclude that $\Mod^\P$ is relative. Now the projection $\Alg_{\M\P} = \int_A\Mod^{\P}_A \lrar \Alg_\P$ is right Quillen with respect to the transferred model structure, and hence~\cite[Corollary 5.0.13]{HP} implies that the functor $\Mod^\P$ is left proper, thus proper, since all object of $\int_{A\in\Alg^\P}\Mod^{\P}_A$ are fibrant in their respective fibers. The main theorem of~\cite[\S 5]{HP} now endows $\int_{A \in \Alg_\P}\Mod^\P_A$ with a model structure such that the projection $\int_{A \in \Alg_\P}\Mod^\P_A \lrar \Alg_\P$ is a model fibration. An inspection of the fibrations and trivial fibrations of the resulting model structure reveals that they are the same as those of the transferred model structure, and hence the two model structures must coincide.
\end{proof}

The right Quillen functors from Diagram \eqref{d:operadiccotangentadditive} are natural in $A$ and assemble to form a square of Quillen morphisms in the sense of~\cite{HP}. This results in a square of Quillen adjunctions over $\Alg_{\P}$
$$\xymatrix{
\T\Alg_\P\ar@<1ex>[d] \ar@<1ex>[r] & \int_{A\in \Alg_\P} \Sp(\Mod^{\P}_A)\ar@<1ex>[l]_-\sim \ar@<1ex>[d]\\
\Alg_{\P}^{[1]} \ar@<1ex>[r]^-{\Omega^\P} \ar@<1ex>[u]_{\dashv} & \int_{A\in \Alg_{\P}}\Mod^{\P}_A \ar@<1ex>[u]_\sim \ar@<1ex>[l]^-{\ltimes}_-{\upvdash}.
}$$
where the adjunctions marked by $\sim$ are Quillen equivalences.
We may now compose the bottom Quillen pair with the diagonal adjunction $\Alg^{\P} \adj (\Alg^{\P})^{[1]}$ to produce a Quillen pair
$$\xymatrix{
\Om^{\P}_{\int}:\Alg_{\P}\ar@<1ex>[r]^-{\Delta} & \Alg_{\P}^{[1]} \ar@<1ex>[r]^-{\Omega^\P}\ar@<1ex>[l]^-{\dom}_-{\upvdash} & \int_{A\in \Alg_{\P}}\Mod^{\P}_A  \ar@<1ex>[l]^-{\ltimes}_-{\upvdash}
}: \rtimes. $$ 
The value of $\Omega^\P_{\int}$ on a $\P$-algebra $A$ is simply the pair $(A,\Om_A) \in \int_{A\in \Alg_{\P}}\Mod^{\P}_A$, where $\Om_A$ is the $A$-module of K\"ahler differentials discussed above.
\begin{cor}
The image of the global cotangent complex $\LL\Sig^{\infty}_{\int}(A)$ of $A$ in $\int_{A\in \Alg_{\P}}\Mod^{\P}_A$ is weakly equivalent to $\LL\Om^\P_{\int}(A)$.
\end{cor}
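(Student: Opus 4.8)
The plan is to deduce this formally from the commuting square of Quillen adjunctions over $\Alg_\P$ displayed just before the statement, in exactly the way that Corollary~\ref{c:kahler} was deduced fibrewise from~\eqref{d:operadiccotangentadditive}. That square is assembled, via the machinery of~\cite{HP}, from the squares~\eqref{d:operadiccotangentadditive} indexed by $A\in\Alg_\P$; being a commuting square of Quillen morphisms, it induces a square of \emph{left} adjoints
$$\xymatrix@C=3.5pc{
\Alg_{\P}^{[1]}\ar[r]^-{\Om^\P}\ar[d]_-{\Sigma^{\infty}_+} & \int_{A\in\Alg_{\P}}\Mod^{\P}_A\ar[d]^-{\sim}\\
\T\Alg_\P\ar[r]^-{\sim} & \int_{A\in\Alg_{\P}}\Sp(\Mod^{\P}_A)
}$$
which commutes up to canonical isomorphism, the bottom and right functors being the left adjoints of the two Quillen equivalences marked $\sim$ in the original diagram.

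First I would pass to total left derived functors. All four functors above are left Quillen, so their left derived functors exist and the derived square still commutes up to a natural equivalence; the two Quillen equivalences induce equivalences of the underlying $\infty$-categories, so the derived bottom and right arrows are invertible. Next I would precompose with the diagonal functor $\diag\colon\Alg_\P\lrar\Alg_\P^{[1]}$, which is left Quillen (left adjoint to $\dom$, as recalled in~\S\ref{s:tangent}) and which sends a cofibrant algebra $A$ to the cofibrant object $\id_A$; hence for the composites $\Sig^{\infty}_{\int}=\Sigma^{\infty}_+\circ\diag$ and $\Om^{\P}_{\int}=\Om^\P\circ\diag$ the left derived functor of the composite is the composite of the left derived functors. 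Reading off the square, the equivalence $\T\Alg_\P\simeq\int_{A}\Sp(\Mod^{\P}_A)\simeq\int_{A}\Mod^{\P}_A$ then carries $\LL\Sig^{\infty}_{\int}(A)$ to $\LL\Om^{\P}_{\int}(A)$, which is the assertion of the corollary. As a consistency check, restricting this comparison to the fibre over a fixed fibrant--cofibrant $A$ recovers Corollary~\ref{c:kahler}: there $\LL\Sig^{\infty}_{\int}$ evaluated at $A$ is the cotangent complex $L_A$ and $\LL\Om^{\P}_{\int}(A)=(A,\LL\Om^{/A}(A))$.

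Since the whole argument is bookkeeping, I do not anticipate a genuine obstacle. The one point requiring care is to track the directions of all the adjoints so as to identify correctly the left adjoint of each of the two Quillen equivalences, and to ensure that the square produced in~\cite{HP} commutes up to an invertible $2$-cell of Quillen morphisms rather than merely up to some zig-zag of weak equivalences; but this is already guaranteed by the naturality in $A$ of~\eqref{d:operadiccotangentadditive}, which is precisely what was used to construct the square preceding the statement.
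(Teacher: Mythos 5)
Your proposal is correct and matches the paper's reasoning (the corollary is stated without an explicit proof precisely because it follows immediately from the displayed square of Quillen adjunctions after precomposing with the diagonal and passing to derived functors, exactly as you spell out). The bookkeeping about the directions of adjoints and the compatibility of derived functors with composition of left Quillen functors is precisely what closes the gap, and your consistency check against Corollary~\ref{c:kahler} is the right sanity test.
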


The cotangent complex classically appears in the setting of commutative (dg--)algebras, where it plays an important role in deformation theory. We will finish this section by sketching the relation between first order deformations of (dg-)algebras and small extensions, which in turn are controlled by the abstract cotangent complex and Quillen cohomology (as discussed in~\S\ref{s:quillen}).

Denote by $\CAlg=\CAlg(\C(k))$ the model category of commutative dg-algebras over $k$. Let $k[\eps]$ be the dg-algebra obtained from $k$ by adjoining an element $\eps$ of degree $0$ satisfying $\eps^2 =  0$ and $d(\eps) = 0$. Then $k[\eps] \cong k \rtimes k$ is a split square-zero extension of $k$ (see Definition~\eqref{d:sqz} and Example \ref{e:trivial-1}), and we may identify it with $\Om^{\infty}_+k \in \CAlg_{/k}$ where $k \in \Sp(\CAlg_{k//k}) \simeq \C(k)$ is considered as a chain complex concentrated in degree $0$ (see Proposition~\ref{p:cotangentvskaehler}). Indeed, the projection $k[\eps]\lrar k$ fits into a pullback square of commutative dg-algebras (over $k$)
\begin{equation}\label{e:split}\vcenter{\xymatrix{
k[\eps]= k \rtimes k \ar[r]\ar@<3ex>[d] & k  \ar[d]\\
k[0, 1]\rtimes k \ar[r] & k[1]\rtimes k\\
}}\end{equation}
where $k[0, 1]=\cone(k) = [k \lrar k]$ is the cone of $k$, consisting of two copies of $k$ in degrees $0$ and $1$ respectively. Note that 
this square is in fact homotopy Cartesian, because the bottom horizontal map is a fibration. 

Now let $R \in \CAlg^{\geq 0}$ be a commutative dg-algebra concentrated in non-negative degrees. A \textbf{first order deformation} of $R$ is a cofibrant commutative $k[\eps]$-dg-algebra $\ovl{R}$, which is concentrated in non-negative degrees, together with a weak equivalence $\eta:\ovl{R} \otimes_{k[\eps]}k \x{\simeq}{\lrar} R$. Let us denote $R' = \ovl{R} \otimes_{k[\eps]}k$. 

We shall now explain how one can consider first order deformations as small extensions. Since homotopy limits of dg-algebras are created by the projection to chain complexes and tensoring with $\ovl{R}$ preserves finite homotopy limits, it follows that the functor $(-) \otimes_{k[\eps]} \ovl{R}: \CAlg_{k[\eps]//k} \lrar \CAlg_{/R'}$ preserves homotopy Cartesian squares. Applying this functor to~\eqref{e:split} and using the weak equivalence $\eta: R' \lrar R$ we obtain a homotopy Cartesian square 
$$ \xymatrix{
\ovl{R} \ar[r]\ar[d] &  R \ar[d] \\
R'' \ar[r] & R[1] \rtimes R\\
}$$
in $\Alg_{/R}$, where we have denoted $R'' = (k[0,1]\rtimes k) \otimes_{k[\eps]} \ovl{R}$. Identifying the underlying $k[\eps]$-module of $k[0,1] \rtimes k$ with the cone of the $k[\eps]$-module map $k \lrar k[\eps]$ sending $1$ to $\eps$, we obtain an identification of the underlying $\ovl{R}$-module of $R''$ with the cone the $\ovl{R}$-module map $R' \lrar \ovl{R}$ sending $1$ to $\eps$. Identifying $R[1] \rtimes R \simeq \Om^{\infty}_+(R[1]) \in \CAlg_{/R}$, the right vertical map becomes the $0$-section of the structure map $\Om^{\infty}_+(R[1])\lrar R$ and the square exhibits $\ovl{R}$ as a small extension of $R''$ by the $R$-module $R$. We now observe that the map $R'' \lrar R$ is a weak equivalence in $\CAlg_{/R'}$ and hence we may in principle consider this data as a small extension of $R$ by itself. We note that when $R$ is cofibrant we may choose a homotopy inverse weak equivalence $R \lrar R''$ and use it to obtain an honest small extension of $R$, but of course in general we do not expect the corresponding Quillen cohomology class to be realizable as an actual map out of $R$.

The above discussion shows that any first order deformation of $R$ can be viewed as a small extension of $R$ by $R$. A fundamental theorem of deformation theory refines this result, asserting that first order deformations of $R$ in the above sense, considered up to a suitable notion of equivalence, are in fact in bijection with equivalence classes of small extensions of $R$ by $R$, and are hence classified by the Quillen cohomology group $\rH^1_Q(R,R)$. More generally, for every chain complex $M$ one may consider first order deformations of $R$ over the square-zero extension $\Om^{\infty}M = M \rtimes k$. These correspond to small extensions of $R$ by $R \otimes M$, and are classified by the Quillen cohomology group $\rH^1_Q(R,R \otimes M)$.

\subsection{The Hurewicz principle}\label{s:white}

The classical Hurewicz theorem asserts that a map $f: X \lrar Y$ of simply connected spaces is a weak equivalence if and only if it induces an isomorphism on cohomology groups with all possible coefficients . One can extend this theorem to non-simply connected spaces by including cohomology groups with \textbf{local coefficients}. In this case one obtains that a map $f: X \lrar Y$ is a weak equivalence if and only if it induces an equivalence on fundamental groupoids and an isomorphism on cohomology groups with all local coefficients. One way to prove this theorem is by applying obstruction theory along the Postnikov tower of $Y$. In this section we will study this procedure in the general context of spectral Quillen cohomology. We will identify sufficient conditions for a Hurewicz type theorem to hold in a general model category, and discuss three notable examples, namely spaces, simplicial algebras and simplicial categories.

Throughout this section let us fix a left proper combinatorial model category $\M$. Let $X$ be a fibrant object of $\M$ and let $M \in \Sp(\M_{X//X})$ be a fibrant $\Om$-spectrum object. 
Consider a lifting problem in a diagram in $\M_{/X}$ of the form
\begin{equation}\label{e:square-2}
\vcenter{\xymatrix{
A \ar_{f}[d]\ar[r] & X_{\alp} \ar^{p_\alp}[d] \ar[r] & \Om^\infty_+(0') \ar^{s_0'}[d] \\
B \ar@{-->}[ur]\ar^{g}[r] & X \ar^-{\alp}[r] & \Om^{\infty}_+(M[1]) \\
}}
\end{equation}
where $f: A \lrar B$ is a cofibration, $X_\alp \lrar X$ is a fibration and the right square exhibits $X_\alp$ as a small extension of $X$ by $M$ (see \S\ref{s:quillen}). Factorizing the map $0'\lrar M[1]$ as a weak equivalence, followed by a fibration $0''\lrar M[1]$, we find that the map $X_\alpha\lrar X$ is weakly equivalent (over $X$) to the (homotopy) pullback $X\times_{\Om^\infty_+(M[1])} \Om^\infty(0'')\lrar X$. Replacing $0'$ by $0''$ and $X_\alpha$ by this pullback, we may (and will) therefore assume that the map $0'\lrar M[1]$ is a fibration and that the right square is Cartesian (hence homotopy Cartesian). In that case, finding the desired lift $B\lrar X_\alpha$ is equivalent to finding a diagonal lift in the square
$$\vcenter{\xymatrix{
A \ar_{f}[d]\ar[r] & \Om^\infty_+(0') \ar^{s_0'}[d] \\
B \ar@{-->}[ur]\ar_-{\alpha g}[r] & \Om^{\infty}_+(M[1]).
}}$$
The above square in $\M_{/X}$ is equivalent by adjunction to a square in $\Sp(\M_{X//X})$ of the form
$$\xymatrix{
\Sig^\infty_+(A)\ar[d]_{\Sig^\infty_+(f)}\ar[r] & 0'\ar[d]\\
\Sig^\infty_+(B)\ar[r]\ar@{-->}[ru] & M[1].
}$$
In particular, we obtain a map from the pushout $\Sig^\infty_+(B)\coprod_{\Sig^\infty_+(B)}0'$ into $M[1]$. Since the left vertical map is a cofibration, $0'$ is a weak zero object and $\Sp(\M_{X//X})$ is left proper, this pushout is a model for the homotopy cofiber of the map $\Sig^\infty_+(A)\lrar \Sig^\infty_+(B)$. By definition, this is equivalent to the image $g_!L_{B/A}$ of the relative cotangent complex under cobase change along the map $g: B\lrar X$. Using again the left properness of $\Sp(\M_{X//X})$ we may conclude that the above lifting problem is equivalent to a lifting problem of the form
$$\xymatrix{
& 0'\ar[d]\\
g_!L_{B/A}\ar[r]_\beta \ar@{-->}[ru] & M[1]
}$$
We now observe that the bottom map $\beta$ (or, more precisely, its adjoint map $L_{B/A}\lrar g^*M[1]$) determines a class $[\beta]\in \rH_Q^1(B, A; M)$ in the \textbf{relative Quillen cohomology} of $B$ under $A$ with coefficients in $g^*M$. This element is trivial if and only if the map $g_!L_{B/A}\lrar M[1]$ is null-homotopic, i.e.\ if and only if there exists a dotted lift in the above diagram (equivalently, in Diagram~\eqref{e:square-2}). Furthermore, the entire \textbf{derived space of lifts} of the original diagram \eqref{e:square-2} can be identified with the derived space of sections of the fibration $0'\lrar M[1]$ over the map $\beta: g_!L_{B/A}\lrar M[1]$, i.e.\ the space of null-homotopies of the map $\beta$. In other words, the space of derived lifts is equivalent to the homotopy fiber
$$
\Map^{\der}(g_!L_{B/A}, 0')\times^h_{\Map^{\der}(g!L_{B/A}, M[1])}\{\beta\} \simeq \{0\}\times^h_{\Map(g_!L_{B/A}, M[1])} \{\beta\}.
$$
which can be identified as the space of paths from $\beta$ to the 0-map in the space $\Map^{\der}(g_!L_{B/A}, M[1])$. This space of paths is empty if $\beta$ and $0$ live in different path components (i.e. $[\beta]\neq 0$) and when $[\beta]=0$, it is a torsor over the loop space of $\Map^{\der}(g_!L_{B/A}, M[1])$ at the zero map. To sum up, the \textbf{obstruction} to a lift against a small extension is a certain natural class $[\beta] \in \rH^1_Q(B,A;g^*M)$ in the relative Quillen cohomology of $B$ under $A$. When $[\beta]=0$, a choice of null-homotopy for $\beta$ identifies the space of derived lifts is with the space 
$$
\Om\Map^{\der}(g_!L_{B/A}, M[1])\simeq \Map^{\der}(g_!L_{B/A}, M)\simeq \Map^{\der}(L_{B/A}, g^*M)
$$ 
whose $n$'th homotopy group is isomorphic to the $(-n)$'th relative Quillen cohomology group $\rH_Q^{-n}(B,A;g^*M)$. 
\begin{cor}\label{c:etale}
Let $\M$ be a left proper combinatorial model category and let $f: A \lrar B$ be a map whose relative cotangent complex vanishes. Let $p: Y \lrar X$ be a map which is an inverse homotopy limit of a tower of small extensions. Then the square
$$ \xymatrix{
\Map^{\der}(B,Y) \ar[r]\ar[d] & \Map^{\der}(A,Y) \ar[d] \\
\Map^{\der}(B,X) \ar[r] & \Map^{\der}(A,X) \\
}$$
is homotopy Cartesian.
\end{cor}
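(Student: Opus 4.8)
\emph{Proof proposal.}
The plan is to strip off one small extension at a time and then climb the tower. Recall first that a commuting square of spaces is homotopy Cartesian precisely when, for every point of the lower-left corner, the induced map on the homotopy fibers of the two vertical maps is a weak equivalence; and that for a fibration $q\colon Y\lrar X$ with fibrant $Y$ the homotopy fiber of $\Map^{\der}(Z,Y)\lrar\Map^{\der}(Z,X)$ over a map $g\colon Z\lrar X$ is the derived mapping space $\Map^{\der}_{\M_{/X}}(Z,Y)$, i.e.\ the derived space of lifts of $g$ along $q$. Applied to the square in the statement, these two facts reduce the assertion to: for every $g\colon B\lrar X$, restriction along $f$ is a weak equivalence $\Map^{\der}_{\M_{/X}}(B,Y)\lrar\Map^{\der}_{\M_{/X}}(A,Y)$. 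Taking homotopy fibers once more, now over a chosen lift $A\lrar Y$ of $gf$, identifies this in turn with the assertion that the derived space of lifts of a square of the shape~\eqref{e:square-2} is contractible (for all such $g$ and all such lifts of $gf$).

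I would first dispose of the case in which $p\colon Y\lrar X$ is a single small extension of $X$ by a fibrant $\Om$-spectrum $M\in\T_X\M$. This is exactly the situation analysed just before the statement: the derived space of lifts of~\eqref{e:square-2} is the space of null-homotopies of a map $\beta\colon g_!L_{B/A}\lrar M[1]$, the obstruction to its being non-empty is the class $[\beta]\in\rH^1_Q(B,A;g^*M)$, and when $[\beta]=0$ that space is equivalent to $\Om\Map^{\der}(g_!L_{B/A},M[1])$. By hypothesis $L_{B/A}\in\T_B\M$ is a weak zero object, so the adjunction equivalence $\Map^{\der}_{\T_X\M}(g_!L_{B/A},M[1])\simeq\Map^{\der}_{\T_B\M}(L_{B/A},g^*M[1])$ exhibits this mapping space as contractible; hence $[\beta]=0$ and the space of lifts is contractible. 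Running over all $g$ and all lifts of $gf$, the reduction of the first paragraph gives the claim for a single small extension.

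Next I would treat a tower $X=X_0\leftarrow X_1\leftarrow X_2\leftarrow\cdots$ in which every $X_{n+1}\lrar X_n$ is a small extension and $Y=\holim_n X_n$, with $p$ the canonical projection. I claim, by induction on $n$ and naturally in the tower, that $\Map^{\der}(B,X_n)\lrar\Map^{\der}(B,X)\times^h_{\Map^{\der}(A,X)}\Map^{\der}(A,X_n)$ is a weak equivalence. The case $n=0$ is immediate because one leg of the homotopy pullback is an identity. For the inductive step, the single-small-extension case applied to $X_{n+1}\lrar X_n$ shows that $\Map^{\der}(B,X_{n+1})\lrar\Map^{\der}(B,X_n)\times^h_{\Map^{\der}(A,X_n)}\Map^{\der}(A,X_{n+1})$ is a weak equivalence; substituting the inductive hypothesis for the factor $\Map^{\der}(B,X_n)$ and pasting homotopy pullbacks yields the statement for $n+1$. (At limit stages of a transfinite tower one argues in the same way, replacing the single-extension step by the fact that homotopy limits commute with homotopy pullbacks.) Finally, since $\Map^{\der}(B,-)$ and $\Map^{\der}(A,-)$ carry the homotopy limit of the tower to the homotopy limits of the corresponding towers of mapping spaces, and homotopy limits commute with the homotopy pullback on the right, passing to $\holim_n$ turns this family of equivalences into the required equivalence $\Map^{\der}(B,Y)\lrar\Map^{\der}(B,X)\times^h_{\Map^{\der}(A,X)}\Map^{\der}(A,Y)$.

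The step that I expect to need the most care is the double homotopy-fiber reduction of the first paragraph: choosing honest cofibrant and fibrant replacements so that the coherent-homotopy data parametrising a base point of the target corner becomes a strict lifting problem of the shape~\eqref{e:square-2}, and checking that this is compatible with the tower maps so that the inductive and homotopy-limit arguments of the last paragraph apply verbatim. Once that translation is in place, the only inputs are the obstruction-theoretic computation recalled above (with $L_{B/A}$ a weak zero object) and routine manipulations of homotopy (co)limits.
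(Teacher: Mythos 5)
Your proof is correct and follows essentially the same path as the paper's own argument, which simply asserts that one may reduce to a single small extension and then cites the obstruction-theory discussion preceding the corollary for the contractibility of the space of lifts in diagram~\eqref{e:square-2}. You have merely spelled out the details the paper leaves implicit: the translation from the homotopy Cartesian square to derived spaces of lifts via iterated homotopy fibers, the vanishing of the obstruction class coming from $L_{B/A}\simeq 0$, and the pasting-plus-holim induction along the tower.
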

\begin{proof}
Clearly it suffices to prove this for the case where $p$ is a small extension. We may assume that $p$ is a fibration between fibrant objects and $f$ is a cofibration.The desired result now follows from the fact that the square~\ref{e:square-2} admits a contractible space of lifts when the relative cotangent complex of $f$ vanishes, as explained above.
\end{proof}

\begin{cor}[The Hurewicz principle]\label{c:white}
Let $\L:\M \adj \N:\R$ be a Quillen adjunction with $\M$ left proper combinatorial, such that the following property holds: for every cofibrant $X \in \M$ the derived unit map $X \lrar \RR \R\L(X)$ is the homotopy limit of a tower of small extensions. Let $f: A \lrar B$ be a map. Then the following are equivalent:
\begin{enumerate}[(1)]
\item
$f$ is a weak equivalence.
\item
$\LL\L(f)$ is a weak equivalence and the relative cotangent complex $L_{B/A}$ of $f$ is a weak zero object, i.e.\ $f$ induces an isomorphism $\rH_Q^\bullet(B; M)\overset{\cong}{\lrar} \rH^\bullet_Q(A; f^*M)$ on Quillen cohomology with coefficients in any object $M\in \T_B\M$.
\end{enumerate}
\end{cor}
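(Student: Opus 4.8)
The plan is to prove the two implications separately, with the content all going into $(1)\Rightarrow$ both conditions of $(2)$ being trivial and the real work being $(2)\Rightarrow(1)$.

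First I would dispatch $(1)\Rightarrow(2)$. If $f$ is a weak equivalence then $\LL\L(f)$ is a weak equivalence since $\L$ is left Quillen, and the relative cotangent complex $L_{B/A}$ is a weak zero object by the remark following the definition of the relative cotangent complex (when $f$ is an equivalence, $\LL\Sig^{\infty}_+(f)\lrar L_B$ is an equivalence, so its homotopy cofiber is a weak zero object). The reformulation in terms of Quillen cohomology is then immediate from Corollary~\ref{c:relative}: $L_{B/A}$ being a weak zero object is equivalent to $\Map^{\der}(L_{B/A},\Sig^n M)$ being contractible for all $M \in \T_B\M$ and all $n$, which is exactly the statement that $f^*\colon \rH^\bullet_Q(B;M) \lrar \rH^\bullet_Q(A;f^*M)$ is an isomorphism (using that the relative Quillen cohomology sits in the long exact sequence associated to the cofiber sequence $\LL\Sig^{\infty}_+(f)\lrar L_B \lrar L_{B/A}$).

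The heart of the matter is $(2)\Rightarrow(1)$. Here I would use the hypothesis on $\L\dashv\R$ together with Corollary~\ref{c:etale}. Take $A,B$ cofibrant without loss of generality and factor $f$ as a cofibration followed by a trivial fibration; so assume $f$ is a cofibration. The assumption that $L_{B/A}$ is a weak zero object means exactly that the relative cotangent complex of $f$ vanishes, so Corollary~\ref{c:etale} applies to $f$: for any map $p\colon Y \lrar X$ which is an inverse homotopy limit of a tower of small extensions, the square of derived mapping spaces is homotopy Cartesian. Now apply this with the tower being the one exhibiting the derived unit $B \lrar \RR\R\L(B)$ (and likewise for $A$) as a homotopy limit of small extensions, which exists by the standing hypothesis on the adjunction. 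Concretely, I would contemplate the commuting square
$$\xymatrix{
A \ar[r]\ar[d]_f & \RR\R\L(A) \ar[d]^{\RR\R\L(f)} \\
B \ar[r] & \RR\R\L(B)
}$$
and argue that since $\LL\L(f)$ is a weak equivalence, the right-hand vertical map $\RR\R\L(f)$ is a weak equivalence; hence it suffices to show the top and bottom horizontal derived unit maps become ``equivalences after testing against $f$'', which is what Corollary~\ref{c:etale} delivers — more precisely, mapping the square into an arbitrary object and using that $L_{B/A}$ vanishes shows $f$ is orthogonal (in the derived sense) to every small extension, hence to every tower of small extensions, hence to the derived unit maps. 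The cleanest route: by Corollary~\ref{c:etale} applied to $f$ and to the tower for $B$, the natural map $\Map^{\der}(B,Z)\lrar \Map^{\der}(A,Z)$ is a weak equivalence for $Z = \RR\R\L(B)$; chasing the unit $B \lrar \RR\R\L(B)$ through this equivalence produces a homotopy section of $f$, and a standard two-out-of-three / retract argument upgrades this to $f$ being a weak equivalence.

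The main obstacle I anticipate is organizing the last step carefully: from ``$f$ is left orthogonal to towers of small extensions'' and ``$\LL\L(f)$ is an equivalence'' one must actually conclude $f$ is an equivalence, and this requires knowing that the derived unit maps for $A$ and $B$ fit into a commuting square with $f$ and $\RR\R\L(f)$, and that testing the whole square against all targets (or equivalently, running the argument of Corollary~\ref{c:etale} with $Y \lrar X$ the unit tower and then specializing the target) forces $f$ to induce an equivalence $\Map^{\der}(B,-)\x{\simeq}{\lrar}\Map^{\der}(A,-)$ on all objects receiving such a tower — in particular on $B$ itself via a fibrant replacement compatible with the tower. I would handle this by noting that the class of objects $Z$ for which $\Map^{\der}(B,Z)\lrar\Map^{\der}(A,Z)$ is an equivalence is closed under homotopy limits and contains every small extension target, hence contains $\RR\R\L(B)$; combined with $\RR\R\L(f)$ being an equivalence and naturality of the unit, Yoneda (in the homotopy category, or at the level of $\infty$-categories) then gives that $f$ is an equivalence. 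The only genuinely delicate point is making sure the unit tower for $B$ can be used as the ``$Y\lrar X$'' of Corollary~\ref{c:etale} with the relevant objects fibrant, which is exactly the content of the hypothesis we are allowed to assume.
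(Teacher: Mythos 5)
Your proposal correctly identifies the two key ingredients — Corollary~\ref{c:etale} applied to the unit tower, together with the fact that $\LL\L(f)$ being an equivalence makes the ``base'' of the tower work out by adjunction — but the way you deploy them has a real gap. You want to apply Corollary~\ref{c:etale} to the unit tower for $B$ (or $A$) and then claim that the class of $Z$ for which $\Map^{\der}(B,Z)\lrar \Map^{\der}(A,Z)$ is an equivalence ``contains every small extension target.'' That is not what Corollary~\ref{c:etale} gives you: it says the square is homotopy Cartesian, so it lets you \emph{descend} the equivalence from the base $X$ of a small extension to its total object $Y$; a small extension target is not automatically in the class without a base case. Writing that ``the natural map $\Map^{\der}(B,Z)\lrar \Map^{\der}(A,Z)$ is a weak equivalence for $Z=\RR\R\L(B)$'' as a consequence of Corollary~\ref{c:etale} conflates two distinct steps — that equivalence holds, but purely by adjunction from $\LL\L(f)$ being an equivalence, not from the cotangent-complex hypothesis; and chasing the unit of $B$ through it does not produce a homotopy inverse to $f$ by any ``standard retract argument'' without further work.

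The paper's argument is more direct and sidesteps this: to show $f$ is a weak equivalence, it suffices to show $\Map^{\der}(B,X)\lrar\Map^{\der}(A,X)$ is an equivalence for every fibrant-cofibrant $X\in\M$. For such an $X$, apply Corollary~\ref{c:etale} to the unit tower $X\lrar\RR\R\L(X)$ (not the tower for $B$). The bottom row $\Map^{\der}(B,\RR\R\L(X))\lrar\Map^{\der}(A,\RR\R\L(X))$ is an equivalence by adjunction because $\LL\L(f)$ is; the square is homotopy Cartesian by Corollary~\ref{c:etale} since $L_{B/A}$ vanishes; hence the top row is an equivalence, and one is done — no Yoneda-style retract argument is needed. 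Your variant can be patched (apply the argument both to $A$'s tower to get $r$ with $r f\simeq\id_A$, and to $B$'s tower to show $f r\simeq\id_B$ by uniqueness in $\Map^{\der}(B,B)\xrightarrow{\sim}\Map^{\der}(A,B)$), but as written the crucial deduction is left hanging.
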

\begin{rem}\label{r:refined}
More generally, let $\E$ be a class of objects in $\T\M$ and suppose that for any $X$ the unit map $X\lrar \RR\R\L(X)$ is the homotopy limit of a tower of small extensions, each of which is the base change of a map $\Om^\infty_+(0)\lrar \Om^\infty_+(E)$ with $E\in \E$. Then a map $f: A\lrar B$ is a weak equivalence if its image $\LL\L(f)$ is a weak equivalence and if $f$ induces isomorphisms on Quillen cohomology groups with coefficients from $\E$. 
\end{rem}
\begin{proof}
It is clear that (1) implies (2), so assume that (2) holds. To show that $f$ is a weak equivalence it will suffice to show that for any fibrant-cofibrant object $X \in \M$ the induced map
$$ \Map^{\der}_\M(B,X) \lrar \Map^{\der}_\M(A,X) $$
is a weak equivalence. Consider the commutative square
\begin{equation}\label{e:mapping-space}
\vcenter{\xymatrix{
\Map^{\der}(B,X) \ar[r]\ar[d] & \Map^{\der}(A,X) \ar[d] \\
\Map^{\der}(B, \RR \R\L(X)) \ar[r] & \Map^{\der}(A, \RR \R\L(X)). \\
}}
\end{equation}
The bottom horizontal map is a weak equivalence by adjunction, since $\LL \L(A) \lrar \LL\L(B)$ is assumed to be a weak equivalence. By Corollary~\ref{c:etale} the square is homotopy Cartesian, and hence the top horizontal map is a weak equivalence as well.
\end{proof}

\begin{example}[(The classical Hurewicz theorem)]\label{e:white-spaces}
Let $\M = \SS$ be the category of simplicial sets endowed with the Kan-Quillen model structure and let $L_1\M$ be the left Bousfield localization of $\M$ whose fibrant objects are the $1$-truncated Kan complexes. The unit map of the canonical adjunction $\M \adj L_1\M$ can be identified with the first Postnikov piece map $X \lrar P_1(X)$.
This map can be factored as an inverse limit of the Postnikov tower 
$$  \cdots \lrar P_n(X) \lrar \cdots \lrar P_2(X) \lrar P_1(X) .$$
Furthermore, for every $n \geq 1$ the map $P_{n+1}(X) \lrar P_n(X)$ is naturally a small extension of $P_n(X)$ by the Eilenberg-McLane spectrum object $\rH \ovl{K}(\pi_{n+1}(X),n+1) \in \Sp(\SS_{P_n(X)//P_n(X)})$ (see~\S\ref{s:spaces}), where $\pi_{n+1}(X)$ is considered as a local system of abelian groups on $P_n(X)$ .

It follows that the adjunction $\M \adj L_1\M$ satisfies the conditions of Corollary~\ref{c:white}, and so we may conclude that a map $f: X \lrar Y$ of spaces is an equivalence if and only if it induces an equivalence on first Postnikov pieces and an isomorphism on Quillen cohomology with arbitrary coefficients. By Remark \ref{r:refined}, it is in fact sufficient to require $f$ to induce an isomorphism on cohomology with local coefficients of abelian groups.
\end{example}

\begin{example}[(Hurewicz theorem for simplicial algebras)]\label{e:white-algebras}
Let $k$ be a field of characteristic zero and let $\M$ be the category of simplicial $k$-modules with its model structure transferred along the free-forgetful adjunction with simplicial sets. We note that $\M$ carries a natural symmetric monoidal structure given by levelwise tensor product of $k$-modules. Let $\P$ be a cofibrant operad in $\M$. In~\cite{GH00} it is shown that any $\P$-algebra $A$ admits a Postnikov tower 
$$  \cdots \lrar P_n(A) \lrar \cdots \lrar P_2(A) \lrar P_1(A) \lrar P_0(A) $$
in $\P$-algebras such that for any $n \geq 0$ the map $P_{n+1}(A) \lrar P_{n}(A)$ is a small extension. Let $L_0\Alg_\P(\M)$ be the left Bousfield localization of $\Alg_{\P}(\M)$ whose fibrant objects are the discrete simplicial $\P$-algebras. It follows that the adjunction $\Alg_{\P}(\M) \adj L_0\Alg_{\P}(\M)$ satisfies the conditions of Corollary~\ref{c:white} (note that $\Alg_{\P}(\M)$ is proper by \cite{rezk}), and so we may conclude that a map $f: A \lrar B$ of $\P$-algebras is a weak equivalence if and only if it induces an isomorphism of discrete algebras $P_0(A) \lrar P_0(B)$ and has a trivial relative cotangent complex, i.e., induces an isomorphism on Quillen cohomology with arbitrary coefficients. Since any operad in characteristic zero is homotopically sound (see \cite{hin-rectification}), the same result holds for any operad $\P$ in $\M$. 
\end{example}

\begin{example}[(Hurewicz theorem for simplicial categories)]\label{e:white-categories}
Let $\M = \Cat_{\Del}$ be the proper combinatorial model category of small simplicial categories. Given a fibrant simplicial category $\C$, let $P_n(\C) \in \Cat_\Del$ denote the ``homotopy $(n,1)$-category'' of $\C$ obtained by applying the functor $\cosk_n$ to every mapping object. Then $\C$ can be identified with the homotopy limit of the tower
$$  \cdots \lrar P_n(\C) \lrar \cdots \lrar P_2(\C) \lrar P_1(\C).$$
and the map $P_{n+1}(\C) \lrar P_{n}(\C)$ is a small extension for $n \geq 2$ (see~\cite[Proposition 3.2]{DKS86} and note Remark~\ref{r:rigid}). We remark that the above tower may differ from the Postnikov tower of the simplicial category $\C$, defined in terms of truncation. Since $\M$ is left proper and combinatorial we may consider the left Bousfield localization $L_2\M$ of $\M$ whose fibrant objects are the fibrant simplicial categories whose mapping objects are $1$-truncated (e.g., localize with respect to the maps $[1]_{S^n} \lrar [1]_{\ast}$ for $n \geq 2$, see Definition~\ref{d:1A}). Then we get that the adjunction $\M \adj L_2\M$ satisfies the conditions of Corollary~\ref{c:white}, and so we may conclude that a map $f: \C \lrar \D$ of simplicial categories is a weak equivalence if and only if it induces an isomorphism on the homotopy $(2,1)$-categories and an isomorphism on Quillen cohomology with arbitrary coefficients. 
\end{example}

\begin{rem}\label{r:spectral}
Let $\M$ be a left proper combinatorial model category and let
\begin{equation}\label{e:square-limit}
\vcenter{\xymatrix{
A \ar[r]\ar_{f}[d] & Y \ar[d] \\
B \ar@{-->}[ur] \ar^{g}[r] & X \\
}}
\end{equation}
be a square such that the map $Y \lrar X$ decomposes as a tower 
\begin{equation}\label{e:tower}
\cdots \lrar X_n \lrar \cdots \lrar X_0 = X
\end{equation}
where each $f_{n}: X_{n+1} \lrar X_n$ is a small extension with coefficients in $M_n \in \Sp(\M_{X_n//X_n})$. Then the space $Z$ of derived dotted lifts in~\ref{e:square-limit} can be written as an inverse homotopy limit $\holim_n Z_n$, where each $Z_n$ is the space of derived lifts of $A\lrar B$ against $X_n\lrar X$. The homotopy fibers of $Z_{n+1} \lrar Z_n$ are spaces of derived lifts in squares of the form
$$
\xymatrix{
A \ar[r]\ar_{f}[d] & X_{n+1} \ar[d] \\
B \ar@{-->}[ur] \ar^{g_n}[r] & X_n. \\
}$$
We may then use the Bousfield-Kan spectral sequence 
$$ E^{s,t}_r \Rightarrow \pi_{t-s}(Z) $$
to compute the homotopy groups of $Z$. The obstruction theory described above yields a description of the $E_1$-page of this spectral sequence as
$$ E^{s,t}_1 = \rH^{s-t}_Q(B,A;g_s^*M_s) .$$    
In particular, Quillen cohomology groups can be used as a computational tool to determine spaces of derived lifts (and in particular, mapping spaces in $\M$). For example, when the tower \eqref{e:tower} is one of the towers arising in Examples~\ref{e:white-spaces}, ~\ref{e:white-algebras} and~\ref{e:white-categories}, the spectrum objects $M_s$ are all Eilenberg-Maclane type spectra, constructed from the homotopy groups of the homotopy fibers of $Y\lrar X$. In that case, if the relative Quillen cohomology groups of $B$ over $A$ are non-trivial only in two consecutive degrees then the spectral sequence has no non-trivial differentials and collapses at the $E^1$-page. We will encounter such a phenomenon in Example~\ref{e:rezk}. 
\end{rem}

\section{Quillen cohomology of enriched categories}\label{s:enriched}
In this section we will turn our attention to the case where $\M$ is the model category $\Cat_{\bS}$ of categories enriched over a sufficiently nice symmetric monoidal (SM) model category $\bS$. We will begin in \S\ref{s:stabilization} where we will use the unstable comparison result of~\cite{part1} in order to identify the tangent category $\T_\C\Cat_{\bS}$ at a given fibrant enriched category $\C$ in terms of lifts against the canonical projection $\T\bS \lrar \bS$. In \S\ref{s:cotangent} we will use this identification to give explicit calculations of relative and absolute cotangent complexes. In particular, when $\bS$ is the category of chain complexes over a field, we obtain an identification of the associated Quillen cohomology in terms of the corresponding \textbf{Hochschild cohomology} of dg-categories, with a degree shift by one. Another notable example of interest is when $\bS$ is the category of simplicial sets, in which case $\Cat_{\bS}$ is a model for the theory of $\infty$-categories. We will discuss this example in detail in \S\ref{s:simp-categories}, where we will also obtain a simple description of the tangent category $\T_{\C}\Cat_{\infty}$ in terms of the \textbf{twisted arrow category} of $\C$. Examples and applications to classical problems such as detecting equivalences and splitting of homotopy idempotents are discussed at the end of that section.

\subsection{Stabilization of enriched categories}\label{s:stabilization}
Throughout this section let us fix a SM model category $\bS$ which is \textbf{excellent} in this sense of~\cite[Definition A.3.2.16]{Lur09} and such that every object in $\bS$ is cofibrant. Furthermore, we will fix the following two additional assumptions:
\begin{enumerate}
\item[(A1)]\label{i:diff}
$\bS$ is differentiable.
\item[(A2)]\label{i:small}
The unit object $1_{\bS}$ is \textbf{homotopy compact} in the sense that the functor $\pi_0\Map^{\der}_{\bS}(1_{\bS},-)$ sends filtered homotopy colimits to colimits of sets.
\end{enumerate}
By~\cite[Proposition A.3.2.4]{Lur09} (see also~\cite{BM13},~\cite{Mur15a}) there exists a combinatorial left proper model structure on the category $\Cat_{\bS}$ of $\bS$-enriched categories in which the weak equivalences are the Dwyer-Kan (DK) equivalences. Our goal in this section is to identify the tangent model category $\Sp((\Cat_{\bS})_{\C//\C})$ at a given $\bS$-enriched category $\C$.

\begin{rem}
In~\cite[Remark A.3.2.17]{Lur09} it is asserted that the axioms of an excellent model category imply that every object is cofibrant. However, it was observed elsewhere that this claim is not completely accurate, and so we have included this additional assumption explicitly. 
\end{rem}

\begin{rem}
Assumptions (A1) and (A2)
above are only used in order to establish that $\Cat_{\bS}$ is differentiable. The reader who so prefers can replace these two assumptions by a the single assumption that $\Cat_{\bS}$ is differentiable.
\end{rem}

\begin{define}\label{d:1A}
Let $\bS$ be as above. We will denote by $\ast \in \Cat_{\bS}$ the category with one object whose endomorphism object is $1_{\bS}$.
For $A \in \bS$ let $[1]_A \in \Cat_{\bS}$ denote the category with objects $0, 1$ and mapping spaces $\Map_{[1]_A}(0, 1)= A$, $\Map_{[1]_A}(1, 0)=\emptyset$ and $\Map_{[1]_A}(0, 0)=\Map_{[1]_A}(1, 1)=1_\bS$. 
\end{define}

\begin{rem}\label{r:gencofs}
The generating cofibrations for $\Cat_{\bS}$ are given by $\emptyset\lrar \ast$, as well as the maps
$
[1]_A\lrar [1]_B
$
where $A\lrar B$ is a generating cofibration of $\bS$. 
\end{rem}

\begin{define}[{cf.~\cite[1.5.4]{BM07}}]
Let $O$ be a set and let $W = O \times O$. Let $\P_{O}$ be the $W$-colored operad in $\bS$ whose algebras are the $\bS$-enriched categories with object set $O$ (also known as $(\bS,O)$-categories). More explicitly, $\P_{O}$ is the symmetrization of the non-symmetric operad in $\bS$ whose objects of $n$-ary operations are as follows: for $n \geq 1$ the object of $n$-ary operations from $(x_1,y_1),(x_2,y_2),...,(x_n,y_n)$ to $(x_\ast,y_\ast)$ is $1_{\bS}$ if $x_\ast=x_1,y_\ast=y_n$ and $y_i = x_{i+1}$ for $i=1,...,n-1$, and is initial otherwise. For $n=0$ the object of $0$-ary operations into $(x_\ast,y_\ast)$ is $1_{\bS}$ is $x_\ast=y_\ast$ and is initial otherwise.
\end{define}

\begin{rem}
Since $\P_{O}$ is the symmetrization of a non-symmetric operad it is automatically $\Sig$-cofibrant.
\end{rem}

By~\cite[Theorem 1.3]{Mur11} (see also erratum in~\cite{Mur15b}) the transferred model structure on $\Alg_{\P_O}$ exists. Considering $O$ as the discrete $\bS$-category with object set $O$ we obtain an adjunction
\begin{equation}\label{e:L}
\L:\Alg_{\P_O} \adj (\Cat_{\bS})_{O/}:\R
\end{equation}
where $\L$ interprets an $(\bS,O)$-category as an $\bS$-enriched category under $\O$, and $\R$ sends an $\bS$-enriched category $O \x{f}{\lrar} \C$ under $O$ to the $(\bS,O)$-category $\R(\C)$ whose mapping spaces are $\Map_{\R(\C)}(x, y)= \Map_{\C}(f(x),f(y))$. We now claim that $\L \dashv \R$ is a Quillen adjunction. Since $\L$ clearly preserves weak equivalences, it will suffices to check that $\L$ preserves cofibrations, or equivalently that $\R$ preserves trivial fibrations. The latter follows from the fact that a trivial fibration of enriched categories in the model structure of~\cite[Proposition A.3.2.4]{Lur09} always induces a trivial fibration on mapping objects.

\begin{rem}\label{r:left-proper}
Since $\L$ preserves and detects weak equivalences and $(\Cat_{\bS})_{O/}$ is left proper it follows that $\Alg_{\P_O}$ is left proper. In particular, $\P_O$ is stably admissible and the enveloping operad $\P_O^{\C}$ is stably admissible for any $\bS$-category with set of objects $O$.
\end{rem}
  
Since the unit of $\L \dashv \R$ is an isomorphism for every object and $\R$ preserves weak equivalences it follows that the derived unit of $\L \dashv \R$ is always a weak equivalence. In particular, the derived functor $\LL \L$ is derived fully-faithful. The following lemma identifies its essential image:

\begin{lem}
Let $f: O \lrar \C$ be an $\bS$-enriched category under $O$. Then the counit 
$ v_{\C}: \L(\R(\C)) \lrar \C $
(which is equivalent to the derived counit) is a weak equivalence if and only if $f$ is essentially surjective.
\end{lem}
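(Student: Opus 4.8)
The plan is to analyze the counit $v_\C\colon \L(\R(\C))\to\C$ directly on the level of mapping objects and objects, exploiting the fact that $\L$ is defined by ``interpreting an $(\bS,O)$-category as an $\bS$-category under $O$''. First I would note that, as an $\bS$-category, $\L(\R(\C))$ has object set $O$ and mapping objects $\Map_{\L(\R(\C))}(x,y)=\Map_{\R(\C)}(x,y)=\Map_\C(f(x),f(y))$, and that the counit $v_\C$ is precisely the functor which acts as $f$ on objects and as the \emph{identity} on each mapping object $\Map_\C(f(x),f(y))$. In particular $v_\C$ is automatically ``fully faithful'' in the strict enriched sense: it induces isomorphisms $\Map_{\L(\R(\C))}(x,y)\xrightarrow{\cong}\Map_\C(f(x),f(y))$ for all $x,y\in O$. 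Thus $v_\C$ is always homotopically fully faithful, and the only question is essential surjectivity.

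Next I would recall the characterization of DK-equivalences in $\Cat_{\bS}$ coming from the excellent model structure of~\cite[Proposition A.3.2.4]{Lur09}: a map of $\bS$-enriched categories is a weak equivalence if and only if it is homotopically fully faithful and essentially surjective, where essential surjectivity is measured after passing to the homotopy category $\h\C$ (the ordinary category obtained by applying $\pi_0\Map^{\der}_{\bS}(1_{\bS},-)$ to the mapping objects, or equivalently the homotopy category of the underlying simplicial category). Since we have already established that $v_\C$ is homotopically fully faithful, the equivalence of the two conditions is then immediate: $v_\C$ is a weak equivalence iff it is essentially surjective, and $v_\C$ is essentially surjective iff every object of $\C$ is equivalent (in $\h\C$, i.e.\ via a morphism which becomes an isomorphism in $\h\C$) to some $f(x)$ with $x\in O$. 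But this last condition says exactly that the functor $O\xrightarrow{f}\C$ is essentially surjective (in the homotopical sense), since the objects of $\L(\R(\C))$ are indexed by $O$ and $v_\C$ sends $x\mapsto f(x)$. One small point to address is that $v_\C$ being equivalent to the derived counit requires $\R(\C)$ to be a suitably cofibrant (or at least that $\L$ already preserves the relevant weak equivalence), which was essentially observed just above the lemma: $\L$ preserves weak equivalences, $\R$ preserves weak equivalences, and the ordinary unit is an isomorphism, so $v_\C$ computes the derived counit up to weak equivalence regardless of cofibrancy of $\R(\C)$.

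I would then package the two directions. If $f$ is essentially surjective, then by the above $v_\C$ is homotopically fully faithful and essentially surjective, hence a DK-equivalence. Conversely, if $v_\C$ is a weak equivalence, it is in particular essentially surjective on homotopy categories, and since $v_\C$ factors the map ``$f$ on objects'' through $\L(\R(\C))$ whose objects are $O$, the composite $O\to\C$ is essentially surjective.

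The main obstacle, such as it is, is purely bookkeeping: one must be careful about what ``essentially surjective'' means for a functor out of the \emph{discrete} $\bS$-category $O$ (it is automatic that distinct elements of $O$ need not map to inequivalent objects, so essential surjectivity is genuinely about hitting every equivalence class in $\h\C$), and one must confirm that the homotopy-category-level description of DK-equivalences in the excellent setting is indeed the one provided by~\cite{Lur09}, so that ``homotopically fully faithful $+$ essentially surjective $=$ weak equivalence'' is available. No serious computation is involved; the content is the observation that $\L\circ\R$ changes nothing but the object-indexing, so that $v_\C$ is tautologically fully faithful.
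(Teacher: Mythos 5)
Your proposal is correct and follows the paper's argument exactly: observe that $v_{\C}$ is (strictly) fully faithful because $\L\circ\R$ only re-indexes objects, and then reduce the DK-equivalence condition to essential surjectivity, noting that $v_{\C}$ and $f$ have the same essential image.
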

\begin{proof}
It is clear that the counit $v_{\C}:\L(\R(\C)) \lrar \C$ is always fully-faithful. The result now follows from the fact that $v_{\C}$ has the same essential image as $f: O \lrar \C$.
\end{proof}

Now let $\C$ be an $\bS$-enriched category and let $O = \Ob(\C)$ be its set of objects. We may naturally consider $\C$ as a category under the discrete category $O$. The free-forgetful Quillen adjunction $(\Cat_{\bS})_{O/} \adj \Cat_{\bS}$ then induces an equivalence of categories 
\begin{equation}\label{e:over-under-2}
(\Cat_{\bS})_{\C/} \x{\simeq}{\adj} ((\Cat_{\bS})_{O/})_{\C/}
\end{equation}
which furthermore identifies the slice model structures on both sides. Let $\P_{\C} \x{\df}{=} \P^{\R(\C)}_{O}$ be the enveloping operad of $\R(\C)$. We will denote by
$$ \L^{\C}: \Alg_{\P_{\C}} \adj (\Cat_{\bS})_{\C/} :\R^{\C} $$
the adjunction induced by $\L\dashv \R$ after identifying $\Alg_{\P_{\C}} \simeq (\Alg_{\P_O})_{\R(\C)/}$ and
$(\Cat_{\bS})_{\C/} \simeq ((\Cat_{\bS})_{O/})_{\C/}$. 
We then obtain an induced Quillen adjunction
\begin{equation}\label{e:sliceScat}  \L^{\C}_{\aug}: \Alg_{\P_{\C}}^{\aug} \adj (\Cat_{\bS})_{\C//\C} :\R^{\C}_{\aug} \end{equation}
and hence an induced Quillen adjunction
$$\xymatrix{
\L^{\C}_{\Sp} := \Sp(\L^{\C}_{\aug}): \Sp(\Alg_{\P_{\C}}^{\aug}) \ar@<1ex>[r] & \Sp((\Cat_{\bS})_{\C//\C}) \ar@<1ex>[l]_-{\upvdash}}: \Sp(\R^{\C}_{\aug}) =: \R^{\C}_{\Sp}.
$$

\begin{pro}\label{p:cat}
Let $\C$ be a fibrant $\bS$-category. Then the Quillen adjunction
$$ \xymatrix{
\L^{\C}_{\Sp} : \Sp(\Alg_{\P_{\C}}^{\aug}) \ar@<1ex>^-{\simeq}[r] & \Sp((\Cat_{\bS})_{\C//\C}) \ar@<1ex>[l]_-{\upvdash}}:  \R^{\C}_{\Sp} $$
is a Quillen equivalence.
\end{pro}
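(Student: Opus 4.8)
The plan is to reduce the claim to a statement about stabilizations of operadic algebras to which the comparison machinery of the first part of the excerpt applies. Recall that $\L^{\C}\dashv \R^{\C}$ is obtained from $\L\dashv \R$ by slicing, and that on the unsliced level the derived unit of $\L\dashv \R$ is always a weak equivalence while the derived counit is a weak equivalence precisely on the essentially surjective objects. The first step is to observe that for the \emph{sliced} adjunction $\L^{\C}\dashv \R^{\C}$ over $(\Cat_{\bS})_{\C/}$, the derived unit is still always an equivalence (slicing over $\C$ only restricts attention to a coreflective subcategory on which the unit was already invertible), and the derived counit is an equivalence on every object, since an object of $(\Cat_{\bS})_{\C/}$ is an $\bS$-category equipped with a functor \emph{from} $\C$, which is automatically essentially surjective on the image of the object set $O$; more precisely, $\L^{\C}\R^{\C}(\D\to\C\text{ under }\C)$ has the same full image as $\D$, and that image contains all objects of $\D$ reachable from $O$, which after slicing is everything. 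Hence $\L^{\C}\dashv \R^{\C}$ is already a Quillen equivalence (this is the crux; see below), and the same then holds after passing to the over-under categories, i.e. $\L^{\C}_{\aug}\dashv \R^{\C}_{\aug}$ of \eqref{e:sliceScat} is a Quillen equivalence.

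The second step is to pass from the pointed model categories to their stabilizations. The functor $\Sp(-)$ constructed in~\cite{part1} is homotopy-invariant: a Quillen equivalence of nice (combinatorial, left proper) pointed model categories induces a Quillen equivalence on the associated categories of spectrum objects. We must therefore check the hypotheses: $(\Cat_{\bS})_{\C//\C}$ is combinatorial and left proper, and $\Alg_{\P_{\C}}^{\aug}$ is combinatorial and left proper. The former holds because $\Cat_{\bS}$ is combinatorial and left proper by~\cite[Proposition A.3.2.4]{Lur09} together with our standing assumptions, and slicing and over-under constructions preserve these properties. The latter follows from Remark~\ref{r:left-proper}: $\Alg_{\P_O}$ is left proper (as $\L$ preserves and detects weak equivalences and $(\Cat_{\bS})_{O/}$ is left proper), hence so are its slices and over-under categories, and combinatoriality is automatic from admissibility of $\P_O$ and $\P_O^{\C}$. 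Moreover one needs the stable model structures themselves to exist on both sides, which is guaranteed by the differentiability assumption on $\Cat_{\bS}$ (assumptions A1, A2) propagated through the slice and the Quillen equivalence. Granting this, $\L^{\C}_{\Sp}=\Sp(\L^{\C}_{\aug})\dashv \Sp(\R^{\C}_{\aug})=\R^{\C}_{\Sp}$ is a Quillen equivalence, which is exactly the assertion.

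I expect the main obstacle to be the first step: verifying cleanly that the derived counit of the \emph{sliced} adjunction $\L^{\C}\dashv \R^{\C}$ is a weak equivalence on \emph{every} object of $(\Cat_{\bS})_{\C//\C}$, not merely on the essentially-surjective ones. The point is that once we are in $(\Cat_{\bS})_{\C/}$ (and a fortiori in $(\Cat_{\bS})_{\C//\C}$), the structural map $\C\to\D$ forces $\D$ to be ``generated by $O=\Ob(\C)$'' up to equivalence in the only sense that matters here, so the previous lemma's obstruction (non-essential-surjectivity) disappears. A careful way to see this is to note, as in the passage preceding \eqref{e:over-under-2}, that $(\Cat_{\bS})_{\C/}\simeq((\Cat_{\bS})_{O/})_{\C/}$ compatibly with model structures, and that $\L^{\C}\dashv\R^{\C}$ is identified with the adjunction $(\Alg_{\P_O})_{\R(\C)/}\adj((\Cat_{\bS})_{O/})_{\C/}$ induced by $\L\dashv\R$ via~\cite[Construction 3.1.4]{part1}; since the derived unit of $\L\dashv\R$ is an equivalence and the derived counit is an equivalence on objects of $(\Cat_{\bS})_{O/}$ admitting a map from the essentially-surjective object $\C$ (as any object under $\C$ does, the image being at least that of $\C$ which is all of $O$, hence all of $\D$ after slicing forces equality up to equivalence), the sliced adjunction is a Quillen equivalence. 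Once this is in hand, the remaining steps are formal applications of properties of the $\Sp(-)$ construction recorded in~\cite{part1}.
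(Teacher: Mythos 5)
Your proposed argument has a genuine gap at what you yourself identify as the crux: the sliced adjunction $\L^{\C}\dashv\R^{\C}$ is \emph{not} a Quillen equivalence, and the reason you give for the counit being an equivalence on all of $(\Cat_{\bS})_{\C/}$ is incorrect. An object of $(\Cat_{\bS})_{\C/}$ is just a map $\C\lrar\D$, and nothing forces this map to be essentially surjective. For a concrete counterexample take $\C = \ast$, $\D = \ast\coprod\ast$, with the structure map $\C\lrar\D$ including the first summand (and for the over-under variant, $\D\lrar\C$ the unique map). Then $\R^{\C}(\D)$ sees only the endomorphism object of the chosen basepoint, so $\L^{\C}\R^{\C}(\D)\simeq\ast$, and the counit $\ast\lrar\ast\coprod\ast$ is not a Dwyer--Kan equivalence. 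So the unstable comparison you want to push through $\Sp(-)$ is false, and the second half of your argument has nothing to act on.

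The paper's route is precisely designed to cope with this: the derived counit $\L^{\C}_{\aug}\R^{\C}_{\aug}(\D)\lrar\D$ is not an equivalence, but it \emph{becomes} one after a single loop. This is Lemma \ref{l:pullback}, which shows that $\C\times^h_{\L^{\C}\R^{\C}(\D)}\C\lrar\C\times^h_{\D}\C$ is always an equivalence --- intuitively, because the loop $\Om\D$ in $(\Cat_{\bS})_{\C//\C}$ is built from the fiber product $\C\times^h_{\D}\C$ and only sees objects of $\D$ in the essential image of $\C$. One then invokes the criterion \cite[Corollary 2.4.9]{part1}: a Quillen adjunction of pointed model categories whose derived counit becomes an equivalence after finitely many loopings, and whose target is differentiable, induces a Quillen equivalence on spectrum objects. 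This is where the preliminary Lemmas \ref{l:finitary}, \ref{l:local-pb} and Corollary \ref{c:diff} on the differentiability of $\Cat_{\bS}$ come in --- a hypothesis you correctly flag but treat as a formality, when in fact it is a substantive part of the proof. So the stabilization is not a decorative final step applied to an already-proved unstable equivalence; it is doing the real work of killing the obstruction coming from non-essentially-surjective counits.
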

The proof of Proposition~\ref{p:cat} will require knowing that $\Cat_{\bS}$ is differentiable, an issue we shall now address. Our first step is to verify the following:

\begin{lem}\label{l:finitary}
Weak equivalences in $\Cat_{\bS}$ are closed under sequential colimits.
\end{lem}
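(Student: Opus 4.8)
The plan is to combine the explicit description of sequential colimits in $\Cat_{\bS}$ with the characterization of Dwyer--Kan equivalences, checking the ``fully faithful'' and ``essentially surjective'' conditions on the colimit separately. First I would record that sequential (indeed all filtered) colimits in $\Cat_{\bS}$ are computed at the level of underlying $\bS$-enriched graphs: the forgetful functor $U\colon \Cat_{\bS}\to (\text{$\bS$-enriched graphs})$, sending an enriched category to the pair consisting of its object set and its $\bS$-valued hom-graph, is monadic with a \emph{finitary} monad --- the free $\bS$-category monad only involves finite tensor products of hom-objects along composable strings --- and hence creates filtered colimits. Concretely, for a sequential diagram $X_0\to X_1\to\cdots$ in $\Cat_{\bS}$ one gets $\Ob(\colim_n X_n)=\colim_n\Ob(X_n)$, and for objects $u,v$ represented at some stage $n_0$ one gets a natural isomorphism $\Map_{\colim_n X_n}(u,v)\cong \colim_{n\geq n_0}\Map_{X_n}(u_n,v_n)$ in $\bS$, with composition induced stagewise.

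Granting this, let $\alpha\colon X_\bullet\Rightarrow Y_\bullet$ be a natural transformation of sequential diagrams in $\Cat_{\bS}$ with each $\alpha_n$ a DK-equivalence, and write $\alpha_\infty\colon\colim_n X_n\to\colim_n Y_n$ for the induced map. For the fully faithful condition: given objects $u,v$ of $\colim_n X_n$, lift them to some stage $n_0$; by the formula above the map $\Map_{\colim X}(u,v)\to \Map_{\colim Y}(\alpha_\infty u,\alpha_\infty v)$ is the sequential colimit, over $n\geq n_0$, of the maps $\Map_{X_n}(u_n,v_n)\to \Map_{Y_n}(\alpha_n u_n,\alpha_n v_n)$, each of which is a weak equivalence in $\bS$ since $\alpha_n$ is a DK-equivalence. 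As $\bS$ is differentiable, its weak equivalences are closed under sequential colimits (one of the defining properties, cf.\ the setup of~\cite{part1}), so $\alpha_\infty$ induces a weak equivalence on every mapping object.

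For the essential surjectivity condition: an object of $\colim_n Y_n$ is the image of an object $y$ of some $Y_{n_0}$; since $\alpha_{n_0}$ is essentially surjective there is an object $x$ of $X_{n_0}$ and an isomorphism $\alpha_{n_0}(x)\cong y$ in the homotopy category $\Ho(Y_{n_0})$. Applying the functor $\Ho(Y_{n_0})\to \Ho(\colim_n Y_n)$ induced by the structure map, the image of $y$ becomes isomorphic in $\Ho(\colim_n Y_n)$ to $\alpha_\infty$ applied to the image of $x$, so $\Ho(\alpha_\infty)$ is essentially surjective. Together with the previous paragraph this shows $\alpha_\infty$ is a DK-equivalence, i.e.\ a weak equivalence in $\Cat_{\bS}$. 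The one step I expect to require genuine care --- and the main obstacle --- is the first: pinning down the computation of filtered colimits of $\bS$-categories on underlying graphs, in particular the bookkeeping forced by the fact that the object sets vary along the diagram; once that is in place everything else is formal, relying only on the definition of a DK-equivalence and the differentiability of $\bS$.
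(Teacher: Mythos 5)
Your proof is correct and follows essentially the same route as the paper: the same formula $\Map_{\colim_n X_n}(u,v)\cong\colim_{n\geq n_0}\Map_{X_n}(u_n,v_n)$ (which the paper asserts without the monadicity justification you supply), the same colimit-of-mapping-objects argument for homotopy fully faithfulness, and the same finite-stage lifting for essential surjectivity. One small correction: the closure of weak equivalences in $\bS$ under sequential (filtered) colimits is an axiom of \emph{excellence} in the sense of~\cite[Definition A.3.2.16]{Lur09}, not a consequence of differentiability, which is what the paper invokes at that step.
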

\begin{proof}
Let $\F: \NN \lrar \Cat_{\bS}$ be a functor and let $\C = \colim_{n \in \NN}\F(n)$. Then we have $\Ob(\C) \cong \colim_{n \in \NN}\Ob(\F(n))$ and for each $x,y \in \Ob(\C)$ there exists a minimal $n_0 \in \NN$ such that both $x$ and $y$ are in the image of $\F(n_0) \lrar \C$.  If we now choose $x_{n_0},y_{n_0} \in \F(n_0)$ whose images in $\C$ are $x$ and $y$ respectively then we have
\begin{equation}\label{e:map-colim}
\Map_{\C}(x,y) = \colim_{n \geq n_0} \Map_{\F(n)}(x_{n},y_{n})
\end{equation}
where for $n \geq n_0$ we have denoted by $x_{n},y_{n}$ the images of $x_{n_0},y_{n_0}$ respectively under the map $\F(n_0) \lrar \F(n)$. Let us now consider a levelwise weak equivalence $\vphi:\F \lrar \F'$ of functors $\NN \lrar \Cat_{\bS}$ and let $f:\C \lrar \C'$ be the induced map on colimits. We need to show that $f$ is a weak equivalence, i.e., essentially surjective and homotopy fully-faithful. The fact that $f$ is homotopy fully-faithful follows from formula~\ref{e:map-colim} since each $\vphi_n: \F(n) \lrar \F'(n)$ is homotopy fully-faithful and weak equivalences in $\bS$ are closed under sequential colimits (since $\bS$ is excellent). Let us now prove that $f$ is essentially surjective. Let $x' \in \C'$ be an object. Then there exists an $n \in \NN$ and an object $x'_n \in \F'(n)$ whose image in $\C$ is $x'$. Since $\vphi_n: \F(n) \lrar \F'(n)$ is essentially surjective there exists an object $x_n \in \F(n)$ such that $\vphi_n(x_n)$ is isomorphic to $x'_n$ in $\Ho(\F'(n))$. It follows that the image of $\vphi_n(x_n)$ in $\Ho(\C)$ is isomorphic to the image of $x'_n$, and hence to the image of $x'$, as desired.
\end{proof}

\begin{lem}\label{l:local-pb}
Let
\begin{equation}\label{e:square-cat} \vcenter{
\xymatrix{
\A \ar[r]\ar[d] & \B \ar^{\psi}[d] \\
\C \ar^{\vphi}[r] & \D \\
}}
\end{equation}
be a square in $\Cat_{\bS}$. Then~\ref{e:square-cat} is homotopy Cartesian if and only if it is weakly equivalent to a square which satisfies the following three properties:
\begin{enumerate}[(1)]
\item
For every $x,y \in \A$ the induced diagram on mapping spaces in homotopy Cartesian in $\bS$. 
\item
The induced square of object sets is Cartesian.
\item
Both $\Ho(\psi):\Ho(\B) \lrar \Ho(\D)$ and $\Ho(\vphi):\Ho(\C) \lrar \Ho(\D)$ are iso-fibrations.
\end{enumerate}
\end{lem}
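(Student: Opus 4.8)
The plan is to prove both implications by comparing the square~\eqref{e:square-cat} with the standard point-set model of a homotopy pullback in $\Cat_{\bS}$, relying throughout on the explicit description of fibrations in $\Cat_{\bS}$: a map is a fibration if and only if it is a \emph{local fibration} (i.e.\ a fibration on each mapping object) and induces an iso-fibration of ordinary categories on homotopy categories; in particular, fibrations of $\bS$-categories allow lifting of equivalences together with the homotopies witnessing their invertibility.

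For the ``only if'' direction I would argue as follows. Choose a fibrant object $\D'$ with a weak equivalence $\D\lrar\D'$, and factor the composites $\B\lrar\D'$ and $\C\lrar\D'$ as trivial cofibrations $\B\lrar\B'$, $\C\lrar\C'$ followed by fibrations $\psi'\colon\B'\lrar\D'$ and $\vphi'\colon\C'\lrar\D'$; then $\B'$ and $\C'$ are automatically fibrant. Since~\eqref{e:square-cat} is assumed homotopy Cartesian, it is levelwise weakly equivalent, via the canonical maps, to the square obtained by replacing $\A,\B,\C,\D$ with $\B'\times_{\D'}\C',\,\B',\,\C',\,\D'$, and this replacement square satisfies (1)--(3) by inspection: its mapping objects are the strict pullbacks $\Map_{\B'}(-,-)\times_{\Map_{\D'}(-,-)}\Map_{\C'}(-,-)$ formed along the fibrations in $\bS$ coming from the local fibrations $\psi',\vphi'$ (and all objects in sight are fibrant in $\bS$), hence are homotopy pullbacks, which is (1); its object set is by construction $\Ob(\B')\times_{\Ob(\D')}\Ob(\C')$, which is (2); and $\Ho(\psi'),\Ho(\vphi')$ are iso-fibrations since $\psi',\vphi'$ are fibrations in $\Cat_{\bS}$, which is (3).

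For the converse, since being homotopy Cartesian is invariant under levelwise weak equivalence, I would first reduce to the case where the square~\eqref{e:square-cat} itself satisfies (1)--(3). Forming the model $\Q:=\B'\times_{\D'}\C'$ of the homotopy pullback exactly as above, the commutativity of~\eqref{e:square-cat} yields a comparison functor $\A\lrar\Q$, and the goal becomes to show it is a Dwyer--Kan equivalence. Homotopy full-faithfulness is immediate from (1): for $x,y\in\A$ the induced map $\Map_{\A}(x,y)\lrar\Map_{\Q}(-,-)$ is the composite of the weak equivalence from $\Map_{\A}(x,y)$ to the homotopy pullback $\Map_{\B}(-,-)\times^h_{\Map_{\D}(-,-)}\Map_{\C}(-,-)$ provided by (1) with the weak equivalence identifying this with $\Map_{\B'}(-,-)\times^h_{\Map_{\D'}(-,-)}\Map_{\C'}(-,-)=\Map_{\Q}(-,-)$, the latter because $\B\lrar\B'$, $\C\lrar\C'$ and $\D\lrar\D'$ are homotopy fully faithful. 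For essential surjectivity, given an object $(b',c')$ of $\Q$ (so $\psi'(b')=\vphi'(c')$ in $\Ob(\D')$), I would use the essential surjectivity of $\B\lrar\B'$, $\C\lrar\C'$ and the fact that $\Ho(\D)\lrar\Ho(\D')$ is an equivalence of categories to produce $b\in\B$, $c\in\C$ and an isomorphism $\vphi(c)\cong\psi(b)$ in $\Ho(\D)$, then invoke the iso-fibration property of $\Ho(\vphi)$ from (3) to replace $c$ by $\tilde c\in\C$ with $\vphi(\tilde c)=\psi(b)$ on the nose and $\tilde c\cong c$ in $\Ho(\C)$. By (2), the pair $(b,\tilde c)$ is then literally an object of $\A$, and its image under $\A\lrar\Q$ should be isomorphic to $(b',c')$.

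I expect the main obstacle to be the bookkeeping behind this last claim: one has isomorphisms in $\Ho(\B')$ and $\Ho(\C')$ relating the images of $b$ and $\tilde c$ to $b'$ and $c'$, and these must be combined into a single isomorphism in $\Ho(\Q)=\Ho(\B'\times_{\D'}\C')$, which is genuinely not automatic because $\Ho(-)$ does not preserve the $\bS$-enriched pullback along $\vphi'$. The way around this is once more to use that $\vphi'$ is a fibration: an equivalence in $\B'$ together with a compatible target in $\C'$ and a chosen equivalence downstairs in $\D'$ can be lifted through $\vphi'$ to a matching equivalence in $\C'$, together with the homotopies witnessing invertibility, so that the two object-level equivalences do assemble to an equivalence of $(b,\tilde c)$ with $(b',c')$ inside $\Q$. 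A minor additional point, easily handled since $\Cat_{\bS}$ is left proper and combinatorial, is to arrange the fibrant and fibration replacements of $\B$, $\C$ and $\D$ compatibly enough that the comparison functor $\A\lrar\Q$ is genuinely defined.
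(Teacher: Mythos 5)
Your ``only if'' argument is essentially the same as the paper's and is fine (the paper is a bit quicker: it notes that \emph{any} homotopy Cartesian square already satisfies (1), so the replacement is only needed to arrange (2) and (3)). The problem is in the ``if'' direction, precisely at the point you yourself single out as ``the main obstacle.''

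The issue is that your fibrant/fibration replacements change the object sets, so condition (2) is lost after replacement. You are then forced to prove essential surjectivity of $\A\lrar\Q=\B'\times_{\D'}\C'$ by hand. Your strategy produces $b\in\B$, $\tilde c\in\C$ with $(b,\tilde c)\in\A$ together with isomorphisms $\mathrm{im}(b)\cong b'$ in $\Ho(\B')$ and $\mathrm{im}(\tilde c)\cong c'$ in $\Ho(\C')$, and you now need to merge these into a single isomorphism in $\Ho(\Q)$. Your proposed fix --- use that $\vphi'$ is a fibration to ``lift the downstairs equivalence to a matching equivalence in $\C'$'' --- does not close the gap: the iso-fibration property of $\Ho(\vphi')$ lets you lift an isomorphism in $\Ho(\D')$ starting at a prescribed source in $\Ho(\C')$, but it gives you \emph{no control over the other endpoint}, so it does not let you conclude that the lifted isomorphism lands at $c'$. (And even if the two homotopy classes of isomorphisms did agree after applying $\vphi'$ and $\psi'$, one still has to produce on-the-nose representatives in $\B'$ and $\C'$ agreeing in $\D'$, which is a further point-set problem about the local fibrations that your sketch does not address.) So this step is genuinely unproven as written.

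The paper avoids all of this by using a different replacement scheme that \emph{preserves object sets}, so that (2) survives. Concretely, applying the small object argument against the generating trivial cofibrations $[1]_A\lrar[1]_B$ (which are identities on objects) factors any map $\C\lrar\D$ as a weak equivalence which is a bijection on objects followed by a local fibration. One first uses this to make $\D$ fibrant without changing its objects, then factors $\psi$ and $\vphi$ the same way; since by (3) the maps $\Ho(\psi)$, $\Ho(\vphi)$ are iso-fibrations and since the first factor is a bijection on objects (hence an isomorphism of homotopy categories), the second factors remain iso-fibrations on $\Ho$, and being local fibrations into a fibrant target they are fibrations in $\Cat_{\bS}$ by~\cite[Theorem A.3.2.24]{Lur09}. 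The resulting replacement square still satisfies (1)--(3) \emph{with the same object sets}, and the comparison map $\A\lrar\C\times_{\D}\B$ is then a bijection on objects by (2) and homotopy fully faithful by (1), hence a weak equivalence. The essential-surjectivity bookkeeping you were fighting simply never arises. I would suggest rewriting the ``if'' direction along these lines rather than trying to repair the lifting argument.
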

\begin{proof}
We first prove the only if direction. Observe that any homotopy Cartesian square satisfies (1). If a square as in~\eqref{e:square-cat} is homotopy Cartesian then, up to weak equivalence, we may assume that $\vphi$ and $\psi$ are fibrations with fibrant codomain and that $\A$ is the actual pullback, in which case the square satisfies (2) and (3) by~\cite[Theorem A.3.2.24]{Lur09}.

We now prove the if direction. Every map $f: \C \lrar \D$ in $\Cat_{\bS}$ can be functorially factored as $\C \x{f'}{\lrar} \C' \x{f''}{\lrar} \D$ where $f'$ is a weak equivalence and an isomorphism on object sets and $f''$ induces fibrations on mapping objects. In order to obtain such a factorization, apply the small object argument with respect to the maps $[1]_A\lrar [1]_B$, where $A\lrar B$ is a generating trivial cofibration in $\bS$. This factors $f$ as a transfinite composition of pushouts of the trivial cofibrations $[1]_A\lrar [1]_B$ (which are identities on objects), followed by a map which induces fibrations on mapping objects. In particular, given a square as in~\eqref{e:square-cat} which satisfies (1)-(3) above we may replace it with a weakly equivalent square which satisfies (1)-(3) and such that $\D$ is furthermore fibrant in $\Cat_{\bS}$. We may therefore assume without loss of generality that $\D$ is fibrant.

We now observe that if $f: \C \lrar \D$ is a map such that $\Ho(f)$ is an iso-fibration and $\D$ is fibrant then we may functorially factor $f$ as $\C \x{f'}{\lrar} \C' \x{f''}{\lrar} \D$ where $f'$ is a weak equivalence and an isomorphism on object sets and $f''$ is a local fibration in the sense of~\cite[A.3.2.9]{Lur09} and hence a fibration in $\Cat_{\bS}$ by~\cite[Theorem A.3.2.24]{Lur09}. A square ~\ref{e:square-cat} satisfying (1)-(3) can therefore be replaced by a weakly equivalent square in which both $\vphi$ and $\psi$ are fibrations, without changing the objects of any of the categories appearing in~\eqref{e:square-cat}. The new square now satisfies (1)-(3) as well. We may thus assume without loss of generality that $\vphi$ and $\psi$ are already fibrations, in which case it will suffice to show that the induced map $\A \lrar \C \times_{\D} \B$ is a weak equivalence. But this map is homotopy fully-faithful and induces an isomorphism on objects by assumption, hence it is a weak equivalence.

\end{proof}

\begin{cor}\label{c:diff}
$\Cat_{\bS}$ is differentiable as soon as $\bS$ satisfies Assumptions (A1) and (A2) 
above.
\end{cor}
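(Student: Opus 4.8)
The goal is to verify that $\Cat_{\bS}$ --- already known to be combinatorial and left proper by the cited results --- is differentiable, which amounts to showing that sequential homotopy colimits in $\Cat_{\bS}$ commute with finite homotopy limits. Since finite homotopy limits are generated by homotopy pullbacks together with the terminal object, and the sequential homotopy colimit of the constant diagram at the terminal object is again the terminal object, it suffices to prove that the colimit of a sequence of homotopy Cartesian squares in $\Cat_{\bS}$ is again homotopy Cartesian. The foundational input here is Lemma~\ref{l:finitary}: since weak equivalences in $\Cat_{\bS}$ are closed under sequential colimits, the ordinary colimit functor $\Fun(\NN,\Cat_{\bS})\lrar\Cat_{\bS}$ is homotopical and hence computes the homotopy colimit, and moreover we are free to replace any sequential diagram by a levelwise weakly equivalent one without altering the homotopy type of its colimit.

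Accordingly, given a sequence $\mathcal{D}_\bullet$ of homotopy Cartesian squares, I would apply the functorial factorizations from the proof of Lemma~\ref{l:local-pb} levelwise to replace $\mathcal{D}_\bullet$ by a levelwise weakly equivalent sequence $\mathcal{D}'_\bullet$ in which every square is an honest pullback of fibrations over a fibrant $\bS$-category; by Lemma~\ref{l:finitary} this does not change the colimit up to weak equivalence, and by the proof of Lemma~\ref{l:local-pb} each square of $\mathcal{D}'_\bullet$ satisfies conditions (1)--(3) of that lemma. Since homotopy Cartesianness of a square is invariant under weak equivalence, it now suffices to check that the colimit square $\colim_n\mathcal{D}'_n$ again satisfies (1)--(3), whereupon Lemma~\ref{l:local-pb} concludes the proof.

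For condition (1): every pair of objects $x,y$ in any corner of $\colim_n\mathcal{D}'_n$ comes from a finite stage, and formula~\eqref{e:map-colim} identifies the relevant mapping objects of the colimit square with the sequential colimits (over the later stages) of the mapping objects of the $\mathcal{D}'_n$; since weak equivalences in $\bS$ are closed under sequential colimits, these compute homotopy colimits, and since each stage is a homotopy Cartesian square in $\bS$ and $\bS$ is differentiable by Assumption~\hyperref[i:diff]{A1}, so is the colimit. For condition (2): object sets of a colimit are the colimits of object sets, and filtered colimits commute with finite limits in $\Set$, so a sequential colimit of Cartesian squares of sets is Cartesian. For condition (3): one checks that $\Ho(-)$ commutes with sequential colimits of $\bS$-categories --- this is clear on objects, and on hom-sets it follows because $\Hom_{\Ho(\mathcal{E})}(x,y)$ is obtained by applying the functor $\pi_0\Map^{\der}_{\bS}(1_{\bS},-)$ to $\Map_{\mathcal{E}}(x,y)$, and this functor sends filtered homotopy colimits to colimits of sets by Assumption~\hyperref[i:small]{A2}; a sequential colimit of iso-fibrations of ordinary categories is then again an iso-fibration, since any isomorphism in the colimit is already an isomorphism at some finite stage, where a lift can be chosen and pushed forward.

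I expect the bookkeeping in condition (3) to be where the argument requires the most care: commuting $\Ho(-)$ past a sequential colimit of $\bS$-categories depends on path components of derived mapping objects commuting with filtered homotopy colimits, which is precisely the content of the homotopy-compactness Assumption~\hyperref[i:small]{A2}, while conditions (1) and (2) are comparatively formal --- (1) being a direct application of the differentiability of $\bS$ (Assumption~\hyperref[i:diff]{A1}) and (2) pure category theory. Combined with the reduction to homotopy pullbacks in the first paragraph, this yields the differentiability of $\Cat_{\bS}$.
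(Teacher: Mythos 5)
Your proposal follows essentially the same route as the paper's proof: reduce via Lemma~\ref{l:finitary} to showing $\colim_\NN$ preserves homotopy terminal objects and homotopy Cartesian squares, replace the sequence of squares levelwise (using functorial factorizations implicit in Lemma~\ref{l:local-pb}) by one satisfying conditions (1)--(3), then verify (1) via differentiability of $\bS$ and formula~\eqref{e:map-colim}, (2) via commuting filtered colimits past finite limits of sets, and (3) via homotopy compactness of $1_\bS$ together with closure of iso-fibrations in $\Cat$ under sequential colimits. The only cosmetic deviation is that the paper phrases the iso-fibration closure via right lifting against the map $\ast\lrar\E$ with $\ast$ and $\E$ compact, whereas you unpack the same fact directly; these are interchangeable.
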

\begin{proof}
In light of Lemma~\ref{l:finitary} it will suffice to show that the (relative) functor $\colim_{\NN}: (\Cat_{\bS})^{\NN} \lrar \Cat_{\bS}$ preserves  homotopy pullbacks and homotopy terminal objects. We first note that an $\bS$-enriched category $\C$ is homotopy terminal if and only if $\Map_{\C}(x,y)$ is homotopy terminal in $\bS$ for every $x,y \in \C$. Since $\bS$ is differentiable and weak equivalences in $\bS$ are closed under sequential colimits it follows that homotopy terminal objects are closed under sequential colimits. The same statement thus holds for $\Cat_{\bS}$ in view of formula~\ref{e:map-colim}.

We now need to show that $\colim_{\NN}$ maps homotopy Cartesian diagrams to homotopy Cartesian diagrams. By Lemma~\ref{l:local-pb} it will suffice to show that $\colim_{\NN}$ preserves those squares satisfying properties (1)-(3) of Lemma~\ref{l:local-pb}. Property (1) follows from formula~\ref{e:map-colim} and our assumption that $\bS$ is differentiable. Concerning property (2) of Lemma~\ref{l:local-pb}, preservation by $\colim_\NN$ follows from the fact that the functor $\C \mapsto \Ob(\C)$ preserves colimits and that sequential colimits commute with pullbacks in the category of sets. For property (3) we first observe that the homotopy category functor $\Ho: \Cat_{\bS} \lrar \Cat$ preserves sequential colimits, since $1_{\bS}$ is assumed to be homotopy compact. It will hence suffice to show that the class of iso-fibrations in $\Cat$ is closed under sequential colimits. Now note that the property of being an iso-fibration is simply the right lifting property with respect to the map $\eta:\ast \lrar \E$, where $\E$ is the category with two objects in which all hom sets are singletons. The desired result then follows from the fact that both $\ast$ and $\E$ are compact in $\Cat$.
\end{proof} 
Having proven that $\Cat_{\bS}$ is differentiable, we will prove Proposition~\ref{p:cat} using~\cite[Corollary 2.4.9]{part0}. To apply the latter corollary we need to verify that the derived counit of the induced Quillen adjunction $\L^{\C}_{\aug} : \Alg_{\P_{\C}}^{\aug} \adj (\Cat_{\bS})_{\C//\C} : \R^{\C}_{\aug}$ becomes an equivalence after looping finitely many times.
\begin{lem}\label{l:pullback}
Let $\C$ be a fibrant $\bS$-category and let $\C \x{f}{\lrar} \D \x{g}{\lrar} \C$ be an $\bS$-category over-under $\C$ such that $g$ is a fibration in $\Cat_{\bS}$. Let $\L^{\C}(\R^{\C}(\D)) \lrar \D$ be the counit map. Then the map of derived pullbacks
$$ \C \times^h_{\L^{\C}(\R^{\C}(\D))}\C \lrar \C \times^h_{\D} \C $$
is a weak equivalence in $(\Cat_{\bS})_{\C/}$. In particular, the induced map $\Om\L^{\C}_{\aug}(\R^{\C}_{\aug}(\D)) \lrar \Om\D$ is a weak equivalence in $(\Cat_{\bS})_{\C//\C}$.
\end{lem}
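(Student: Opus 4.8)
The plan is to deduce the statement from a general fact: the homotopy pullback of a cospan that factors through a fully faithful functor is unchanged by enlarging the target of that functor. Write $E$ for the underlying $\bS$-category of $\L^{\C}_{\aug}(\R^{\C}_{\aug}(\D))$, which coincides with $\L^{\C}(\R^{\C}(\D))$, and let $v\colon E\to\D$ be the counit; as noted in the statement, $v$ is equivalent to the derived counit because $\R^{\C}$ preserves weak equivalences and $\L^{\C}$ preserves all weak equivalences. Two facts will drive the argument. First, $v$ is always \emph{fully faithful} --- this is exactly the observation made in the proof of the essential-surjectivity lemma above, and the same reasoning applies to the adjunction $\L^{\C}\dashv\R^{\C}$. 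Second, the structure map $s\colon\C\to E$ of $E$ as an object of $(\Cat_{\bS})_{\C/}$ satisfies $v\circ s=f$, since $v$ is a morphism under $\C$ and the map $\C\to\D$ under $\C$ is $f$. Now $g\colon\D\to\C$ is a fibration and $\C$ is fibrant, so $\D$ is fibrant in $(\Cat_{\bS})_{/\C}$; consequently the loop objects $\Om\D$ and $\Om\L^{\C}_{\aug}(\R^{\C}_{\aug}(\D))$ in $(\Cat_{\bS})_{\C//\C}$ are modelled by the homotopy pullbacks $\C\times^{h}_{\D}\C$ and $\C\times^{h}_{E}\C$ of $f$ with itself, respectively of $s$ with itself, and the map $\Om v$ between them is precisely the comparison map of the first assertion. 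So it suffices to prove that this comparison map is a weak equivalence; the concluding ``in particular'' then follows at once, since weak equivalences in $(\Cat_{\bS})_{\C//\C}$ are detected on underlying $\bS$-categories.

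To prove the comparison map is a weak equivalence I would first factor $v$ in $(\Cat_{\bS})_{\C//\C}$ as $E\xrightarrow{\,j\,}E'\xrightarrow{\,q\,}\D$ with $j$ a trivial cofibration and $q$ a fibration. Then $q$ is again fully faithful: $j$ is a Dwyer--Kan equivalence, hence induces weak equivalences on all mapping objects and is essentially surjective, so full faithfulness of $v$ forces $q$ to be a weak equivalence on every mapping object. Moreover $E'$ is fibrant over $\C$ (the composite $E'\xrightarrow{q}\D\xrightarrow{g}\C$ is a fibration), and $\Om j\colon\Om E\to\Om E'$ is a weak equivalence. Next, factor the section $\sigma\colon\C\to E'$ over $\C$ as $\C\xrightarrow{\ \sim\ }P\twoheadrightarrow E'$. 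Because $P\to E'$ and $q$ are fibrations and $E',\D$ are fibrant, the strict pullbacks $P\times_{E'}P$ and $P\times_{\D}P$ compute $\Om E'$ and $\Om\D$, and $\Om q$ is the evident map $P\times_{E'}P\to P\times_{\D}P$. It remains to check that this map is a Dwyer--Kan equivalence. On mapping objects it is a weak equivalence, being the natural comparison between the homotopy pullbacks $\Map_{P}(u_{1},u_{2})\times^{h}_{\Map_{E'}(e_{1},e_{2})}\Map_{P}(u_{1}',u_{2}')$ and $\Map_{P}(u_{1},u_{2})\times^{h}_{\Map_{\D}(qe_{1},qe_{2})}\Map_{P}(u_{1}',u_{2}')$ along the weak equivalence $\Map_{E'}(e_{1},e_{2})\xrightarrow{\ \sim\ }\Map_{\D}(qe_{1},qe_{2})$. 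For essential surjectivity, given an object $(u,u')$ of $P\times_{\D}P$ whose images $e,e'\in E'$ satisfy $qe=qe'$, full faithfulness of $q$ yields an isomorphism $e\xrightarrow{\ \sim\ }e'$ in $\Ho(E')$ mapping to the identity in $\Ho(\D)$; lifting it along the iso-fibration $\Ho(P)\to\Ho(E')$ with source $u$ produces an isomorphism $(u,u')\cong(\tilde u,u')$ in $\Ho(P\times_{\D}P)$ whose target $(\tilde u,u')$ lies in $P\times_{E'}P$.

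I expect the essential-surjectivity step to be the only genuine obstacle: the counit $v$ is fully faithful but in general not injective on objects, so $\C\times^{h}_{E}\C$ and $\C\times^{h}_{\D}\C$ really do have different object sets, and one must use full faithfulness to identify, up to isomorphism in the homotopy category, the extra objects living over $\D$ with objects coming from $E$. The remaining points are bookkeeping: that the hypothesis on $g$ is exactly what makes the relevant homotopy pullbacks computable as strict pullbacks along fibrations into fibrant objects (one may alternatively recognize the comparison square as homotopy Cartesian via Lemma~\ref{l:local-pb}), and that passing through the factorization $E\to E'\to\D$ affects neither the loop objects nor the comparison map.
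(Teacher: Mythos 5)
Your proof is correct and shares the two key ingredients with the paper's — full faithfulness of the counit $v\colon E=\L^{\C}(\R^{\C}(\D))\to\D$, and computing homotopy pullbacks by strict pullbacks along fibrations into fibrant targets — but the decomposition is genuinely different. The paper factors $f\colon\C\to\D$ as a trivial cofibration $i\colon\C\to\wtl{\C}$ followed by a fibration $p\colon\wtl{\C}\to\D$, and then base-changes $p$ along $v$ to form $\wtl{\C}'\to\wtl{\C}$; this map is strictly fully faithful (being a base change of $v$) and essentially surjective because $i$ factors through it, hence a weak equivalence. Consequently $\wtl{\C}''=\wtl{\C}'\times_{E}\wtl{\C}'$ serves as a common model for both $\C\times^h_{E}\C$ and $\C\times^h_{\D}\C$, and the comparison map becomes tautologically a weak equivalence. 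You instead factor $v=q\circ j$ and the section $\sigma\colon\C\to E'$ through $P$, and check directly that $P\times_{E'}P\to P\times_{\D}P$ is a Dwyer--Kan equivalence. That route works, but the essential-surjectivity step is more delicate than your sketch indicates: after lifting along $\Ho(P)\to\Ho(E')$ you still need to produce a \emph{genuine} morphism in the strict pullback $P\times_{\D}P$, which requires homotoping the lift $\tilde\phi$ (via the homotopy lifting property of the local fibration $P\to\D$) so that its image in $\Map_{\D}$ is literally the identity, and then a further argument that the resulting morphism is invertible in $\Ho(P\times_{\D}P)$ — componentwise invertibility in a strict pullback of $\bS$-categories is not automatic, and one must appeal to the fact that the strict pullback is a homotopy pullback, where equivalences are detected componentwise. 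The paper's construction avoids this entirely by placing the essential-surjectivity check at the map $\wtl{\C}'\to\wtl{\C}$, where it follows immediately from the factorization of $i$. So: same conceptual backbone, a different and slightly more laborious execution, with one step that needs more care than you allotted it.
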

\begin{proof}
Let $\C \x{i}{\lrar} \wtl{\C} \x{p}{\lrar} \D$ be a factorization of $f$ into a trivial cofibration followed by a fibration and let $f': \C =\L^{\C}(\R^{\C}(\C))\lrar \L^{\C}(\R^{\C}(\D))$ be the structure map of $\L^{\C}(\R^{\C}(\D))$. We note that $\L^{\C}(\R^{\C}(\D)) \subseteq \D$ is nothing but the full subcategory of $\D$ spanned by the image of $f$ and that $f'$ is just the map $f$ with its codomain restricted. Let $\wtl{\C}'$ be the $\bS$-category sitting in the pullback square
$$ \xymatrix{
\wtl{\C}' \ar^{p'}[r]\ar[d]\pbc & \L^{\C}(\R^{\C}(\D)) \ar^{v}[d] \\
\wtl{\C} \ar^{p}[r] & \D. \\
}$$
Since the right vertical map is (strictly) fully-faithful it follows that the left vertical map is fully-faithful, and since the essentially surjective map $i: \C \lrar \wtl{\C}$ factors as $\C \x{i'}{\lrar} \wtl{\C}' \lrar \wtl{\C}$ it follows that $\wtl{\C}' \lrar \wtl{\C}$ is essentially surjective and hence a weak equivalence of $\bS$-categories. 
Now consider the extended diagram
$$ \xymatrix{
\wtl{\C}'' \ar[d]\ar[r] \pbc & \wtl{\C}' \ar^{p'}[d]\ar^{\simeq}[r] & \wtl{\C} \ar[d] \\
\wtl{\C}' \ar^-{p'}[r]\ar^{\simeq}[d] & \L^\C(\R^\C(\D)) \ar^{v}[d] \ar@{=}[r] & \L^\C(\R^\C(\D)) \ar^{v}[d] \\
\wtl{\C} \ar^{p}[r] & \D \ar@{=}[r] & \D \\
}$$
where the upper square is a pullback square. By our assumption $\D$ is fibrant and hence by~\cite[Theorem A.3.2.24]{Lur09} we get that $\L^{\C}(\R^\C(\D))$ is fibrant as well. Since $p$ and $p'$ are fibrations between fibrant objects it now follows that the top left square and the bottom left square are also homotopy Cartesian. Since the two right squares are homotopy Cartesian we may conclude that the external square is homotopy Cartesian, and so the desired result follows.
\end{proof}

We have now gathered enough tools to establish Proposition~\ref{p:cat}:
\begin{proof}[{Proof of Proposition~\ref{p:cat}}]
Apply~\cite[Corollary 2.4.9]{part0}. The required conditions are satisfied in light of Corollary~\ref{c:diff} and Lemma~\ref{l:pullback}.
\end{proof}

Having established Proposition~\ref{p:cat} we are now in a position to use the comparison of~\cite[Theorem 4.1.1]{part1} in order to compute the stabilization of $\Cat_{\C//\C}$ via the stabilization of $\Alg^{\P_\C}_{\aug}$. For this it will be useful to recall the tensor product of $\bS$-categories $\otimes: \Cat_{\bS} \times \Cat_{\bS} \lrar \Cat_{\bS}$. By definition, if $\D$ and $\E$ are $\bS$-categories then $\D \otimes \E$ is the $\bS$-category whose object set is the Cartesian product $\Ob(\D \otimes \E) \x{\df}{=} \Ob(\D) \times \Ob(\E)$ and such that for every $d,d' \in \D$ and $e,e'\in \E$ we have
$$ \Map_{\D \otimes \E}((d,e),(d',e')) \x{\df}{=} \Map_{\D}(d,e) \otimes \Map_{\D}(d',e') $$
Since $\P_\C$ is the symmetrization of a nonsymmetric operad, it is $\Sig$-cofibrant (as well as stably admissible, by Remark \ref{r:left-proper}). We therefore obtain from~\cite[Theorem 4.1.1]{part1} that $\Sp(\Alg^{\P_{\C}}_{\aug})$ is Quillen equivalent to $\Sp(\Alg^{(\P_{\C})_{\leq 1}}_{\aug})$. 

Unwinding the definitions, we see that the $\bS$-enriched category $(\P_{\C})_1$ (i.e.\ the enveloping category of $\C$) is the tensor product $\C^{\op} \otimes \C$, and that the $\C$-module $\C^{\op} \otimes \C \lrar \bS$ associated to $(\P_{\C})_0$ is just the mapping space functor $\Map_{\C}: \C^{\op} \otimes \C \lrar \bS$. Composing adjunction \eqref{e:sliceScat} with the augmented free-forgetful adjunction associated to $(\P_\C)_{\leq 1}\lrar \P_{\C}$, we obtain an adjunction
\begin{equation}\label{e:adjredcats}\vcenter{\xymatrix{
\F^{\C}_{\aug} : \Alg_{(\P_\C)_{\leq 1}}^{\aug}=\Fun(\C^{\op} \otimes \C,\bS)_{\Map_\C//\Map_\C} \ar@<1ex>[r] & (\Cat_{\bS})_{\C//\C} \ar@<1ex>[l]_-{\upvdash}:  \G^{\C}_{\aug}
}}\end{equation}
where $\G^{\C}_{\aug}$ sends a category $\C \x{f}{\lrar} \D \x{g}{\lrar} \C$ to the functor $\G^{\C}_{\aug}(\D)(x,y) = \Map_{\D}(f(x),f(y))$. From the above considerations, we can thus conclude the following:

\begin{thm}\label{t:comp-cat}
Let $\bS$ be as above and let $\C$ be a fibrant $\bS$-enriched category. Then the adjunction
$$ \xymatrix{
\F^{\C}_{\Sp} : \T_{\Map_\C}\Fun(\C^{\op} \otimes \C,\bS) \ar@<1ex>^-{\simeq}[r] & \T_\C\Cat_{\bS} \ar@<1ex>[l]_-{\upvdash}}:  \G^{\C}_{\Sp} $$
induced by~\eqref{e:adjredcats} is a Quillen equivalence. 
\end{thm}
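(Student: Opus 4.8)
The plan is to assemble the Quillen equivalence from the chain of intermediate comparisons established in this section, passing to stabilizations at the appropriate point. First I would observe that we have a composite of Quillen adjunctions at the level of over-under categories:
\begin{equation*}
\Fun(\C^{\op} \otimes \C,\bS)_{\Map_\C//\Map_\C} = \Alg_{(\P_\C)_{\leq 1}}^{\aug} \lrar \Alg_{\P_{\C}}^{\aug} \lrar (\Cat_{\bS})_{\C//\C},
\end{equation*}
where the first arrow is the augmented free-forgetful adjunction induced by the operad map $(\P_\C)_{\leq 1} \lrar \P_\C$, and the second is the adjunction $\L^\C_{\aug} \dashv \R^\C_{\aug}$ from~\eqref{e:sliceScat}. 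The composite is the adjunction $\F^\C_{\aug} \dashv \G^\C_{\aug}$ of~\eqref{e:adjredcats}, as noted just before the statement. Applying $\Sp(-)$ to this composite yields $\F^\C_{\Sp} \dashv \G^\C_{\Sp}$, and since $\Sp(-)$ sends Quillen equivalences to Quillen equivalences, it suffices to show each factor becomes a Quillen equivalence after stabilization.

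Next I would handle the two factors separately. For the second factor, Proposition~\ref{p:cat} states precisely that $\L^\C_{\Sp} \dashv \R^\C_{\Sp}$ is a Quillen equivalence $\Sp(\Alg_{\P_\C}^{\aug}) \simeq \Sp((\Cat_{\bS})_{\C//\C})$ — this is already proven, so nothing further is needed here. For the first factor, the point is that $\P_\C = \P_O^{\R(\C)}$ is $\Sigma$-cofibrant (being the enveloping operad of $\R(\C)$ over the symmetrization $\P_O$ of a nonsymmetric operad, see Remark~\ref{r:left-proper} and the paragraph preceding the statement) and stably admissible. Hence the comparison theorem~\cite[Theorem 4.2.1]{part1} applies and gives that $\Sp(\Alg_{\P_\C}^{\aug})$ is Quillen equivalent to $\Sp(\Alg_{(\P_\C)_{\leq 1}}^{\aug})$ via the stabilization of the augmented free-forgetful adjunction. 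One then identifies $\Alg_{(\P_\C)_{\leq 1}}^{\aug}$ with $\Fun(\C^{\op}\otimes\C,\bS)_{\Map_\C//\Map_\C}$ by unwinding the enveloping operad: its category of $1$-ary operations is the enveloping $\bS$-category $(\P_\C)_1 = \C^{\op}\otimes\C$, and the $0$-ary part corresponds to the module $\Map_\C \colon \C^{\op}\otimes\C \lrar \bS$, so that $\T_{\Map_\C}\Fun(\C^{\op}\otimes\C,\bS) = \Sp(\Alg_{(\P_\C)_{\leq 1}}^{\aug})$ by Remark~\ref{r:functor-cotangent} together with the definition of $\T_A\M$ as $\Sp(\M_{A//A})$. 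Composing the two Quillen equivalences gives the result, and the identification of the right adjoint $\G^\C_{\Sp}$ as the stabilization of $\G^\C_{\aug}$ is immediate from functoriality of $\Sp(-)$.

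The main obstacle is really already surmounted before the statement: it is the verification that $\Cat_{\bS}$ is differentiable (Corollary~\ref{c:diff}), which is what licenses the application of~\cite[Corollary 2.4.9]{part1} inside the proof of Proposition~\ref{p:cat}, and the careful identification of the enveloping operad $\P_\C$ and its truncation $(\P_\C)_{\leq 1}$ with honest functor categories. Given those, the proof of Theorem~\ref{t:comp-cat} itself is a short bookkeeping argument: cite Proposition~\ref{p:cat}, cite~\cite[Theorem 4.2.1]{part1}, and note that both are compatible with the composite adjunction~\eqref{e:adjredcats}. The one point demanding a little care is that the comparison theorem of~\cite{part1} is stated for stabilizations of augmented algebra categories and one must check its hypotheses ($\Sigma$-cofibrancy and stable admissibility of $\P_\C$) are met — but these were arranged in Remark~\ref{r:left-proper} and the remark that $\P_O$, being a symmetrization of a nonsymmetric operad, is automatically $\Sigma$-cofibrant.

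\begin{proof}
As observed in the paragraph preceding the statement, the adjunction $\F^\C_{\aug} \dashv \G^\C_{\aug}$ of~\eqref{e:adjredcats} is the composite of the Quillen adjunction $\L^\C_{\aug} \dashv \R^\C_{\aug}$ from~\eqref{e:sliceScat} with the augmented free-forgetful adjunction associated to the operad map $(\P_\C)_{\leq 1} \lrar \P_\C$, under the identification $\Alg_{(\P_\C)_{\leq 1}}^{\aug} = \Fun(\C^{\op}\otimes\C,\bS)_{\Map_\C//\Map_\C}$. Since $\Sp(-)$ takes a composite of Quillen adjunctions to the composite of the induced Quillen adjunctions on stabilizations, the adjunction $\F^\C_{\Sp} \dashv \G^\C_{\Sp}$ factors accordingly, and it suffices to prove that each factor induces a Quillen equivalence after stabilization.

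For the factor $\L^\C_{\aug} \dashv \R^\C_{\aug}$ this is exactly the content of Proposition~\ref{p:cat}, whose hypothesis that $\C$ be fibrant is in force. For the factor coming from $(\P_\C)_{\leq 1} \lrar \P_\C$, recall that $\P_\C = \P_O^{\R(\C)}$ is the enveloping operad of $\R(\C)$ over $\P_O$. Since $\P_O$ is the symmetrization of a nonsymmetric operad it is $\Sigma$-cofibrant, hence so is $\P_\C$, and by Remark~\ref{r:left-proper} the operad $\P_\C$ is stably admissible. Thus~\cite[Theorem 4.2.1]{part1} applies and shows that the augmented free-forgetful adjunction $\Alg_{(\P_\C)_{\leq 1}}^{\aug} \adj \Alg_{\P_\C}^{\aug}$ becomes a Quillen equivalence on stabilizations. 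Finally, unwinding the enveloping operad, its $\bS$-category of $1$-ary operations $(\P_\C)_1$ is the tensor product $\C^{\op}\otimes\C$ and the module classified by $(\P_\C)_0$ is the mapping space functor $\Map_\C \colon \C^{\op}\otimes\C \lrar \bS$; therefore $\Sp(\Alg_{(\P_\C)_{\leq 1}}^{\aug}) = \Sp\big(\Fun(\C^{\op}\otimes\C,\bS)_{\Map_\C//\Map_\C}\big) = \T_{\Map_\C}\Fun(\C^{\op}\otimes\C,\bS)$. Composing the two Quillen equivalences yields the asserted Quillen equivalence $\F^\C_{\Sp} \dashv \G^\C_{\Sp}$.
\end{proof}
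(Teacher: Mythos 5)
Your proof is correct and takes essentially the same approach as the paper: the paper likewise establishes the result by combining Proposition~\ref{p:cat} (the Quillen equivalence $\Sp(\Alg_{\P_\C}^{\aug}) \simeq \Sp((\Cat_\bS)_{\C//\C})$), the operadic comparison result~\cite[Theorem 4.2.1]{part1} applied to the $\Sigma$-cofibrant, stably admissible operad $\P_\C$, and the identification $\Alg_{(\P_\C)_{\leq 1}}^{\aug} \cong \Fun(\C^{\op}\otimes\C,\bS)_{\Map_\C//\Map_\C}$, and simply records the theorem as the conclusion of these considerations. The only very minor difference is that the paper asserts $\P_\C$ is itself a symmetrization of a nonsymmetric operad (hence immediately $\Sigma$-cofibrant), whereas you derive $\Sigma$-cofibrancy of $\P_\C = \P_O^{\R(\C)}$ from that of $\P_O$; both routes are fine.
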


\begin{rem}\label{r:DK}
When $\bS$ is the category of simplicial sets endowed with the Kan-Quillen model structure, a variant of Corollary~\ref{t:comp-cat} was described by Dwyer and Kan in~\cite[Proposition 6.3]{DK88} (without an explicit proof). This variant pertained to simplicial categories with a \textbf{fixed set of objects}, and considered the corresponding category of abelian group objects, rather than stabilization. We note that when the set of objects is not fixed, the analogue of the above theorem for abelian group objects is \textbf{false}. For example, if $\C$ is a simplicial category and $A$ is an abelian group then $A \times \C$ is naturally an abelian group object in $(\Cat_{\bS})_{/\C}$, but the associated functor $\C^{\op} \times \C \lrar \bS$ is the trivial abelian group object of $\Fun(\C^{\op} \otimes \C,\bS)_{/\Map_{\C}}$. One may consider this as an additional motivation to work with spectrum objects as opposed to abelian group objects.
\end{rem}

Since every object in the enriching model category $\bS$ is cofibrant, we have that $\bS$ is tractable. By~\cite[Corollary 3.2.2]{part0} we may conclude that the tangent model category $\T\bS$ is tensored and cotensored over $\bS$. Furthermore, by Remark~\ref{r:global-tensored} we may identify $\T_{\Map_{\C}}\Fun(\C^{\op} \otimes \C,\bS)$ with the category of $\bS$-enriched lifts
$$ \xymatrix{
& \T\bS \ar^{\pi}[d] \\
\C^{\op} \otimes \C \ar@{-->}[ur] \ar_-{\Map_{\C}}[r] & \bS \\
}$$
endowed with the projective model structure. To sum up, Theorem~\ref{t:comp-cat} can also be read as follows:
\begin{cor}\label{c:comp-lifts}
Let $\bS$ be as above and let $\C$ be a fibrant $\bS$-category. Then the tangent model category $\T_{\C}\Cat_{\bS}$ is Quillen equivalent to the model category $\Fun^{\bS}_{/\bS}(\C^{\op} \otimes \C,\T\bS)$ consisting of $\bS$-enriched functors $\C^{\op} \otimes \C \lrar \T\bS$ which sit above the mapping space functor $\Map_{\C}:\C^{\op} \otimes \C \lrar \bS$.
\end{cor}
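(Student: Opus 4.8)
The plan is to simply combine Theorem~\ref{t:comp-cat} with the identification of tangent bundles of enriched functor categories recalled in \S\ref{s:tangent}. Theorem~\ref{t:comp-cat} already gives a Quillen equivalence
$$ \F^{\C}_{\Sp} : \T_{\Map_\C}\Fun(\C^{\op} \otimes \C,\bS) \adj \T_\C\Cat_{\bS} :  \G^{\C}_{\Sp}, $$
so the only thing left to do is to re-express the source $\T_{\Map_\C}\Fun(\C^{\op} \otimes \C,\bS)$ as the category of lifts $\Fun^{\bS}_{/\bS}(\C^{\op}\otimes\C,\T\bS)$. This is exactly the content of Remark~\ref{r:global-tensored}, applied with $\M = \bS$ and $\I = \C^{\op}\otimes\C$ (a small $\bS$-enriched category): the equivalence~\eqref{e:equiv-functor} there specializes to a natural equivalence of categories
$$ \T_{\Map_\C}\Fun^{\bS}(\C^{\op}\otimes\C,\bS) \;\cong\; \Fun^{\bS}_{/\bS}(\C^{\op}\otimes\C,\T\bS), $$
and Remark~\ref{r:global-tensored} further tells us that under this transport of structure the two model structures agree (weak equivalences and fibrations being objectwise on both sides).

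First I would record that the hypotheses of Proposition~\ref{c:tangent-functor} and Remark~\ref{r:global-tensored} are met: $\bS$ is left proper and combinatorial (it is excellent, in particular combinatorial and left proper), and it is tractable since every object of $\bS$ is cofibrant, so by~\cite[Corollary 3.3.3]{part1} the tangent bundle $\T\bS$ is itself tensored and cotensored over $\bS$; this is what makes the $\bS$-enriched lift category $\Fun^{\bS}_{/\bS}(\C^{\op}\otimes\C,\T\bS)$ well-defined and endowed with its projective model structure. Then I would compose: Theorem~\ref{t:comp-cat} gives a zig-zag Quillen equivalence between $\T_\C\Cat_{\bS}$ and $\T_{\Map_\C}\Fun^{\bS}(\C^{\op}\otimes\C,\bS)$, and the equivalence of Remark~\ref{r:global-tensored} identifies the latter, as a model category, with $\Fun^{\bS}_{/\bS}(\C^{\op}\otimes\C,\T\bS)$. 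Concatenating, one obtains the asserted Quillen equivalence.

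There is essentially no obstacle here — the corollary is a reformulation, not a new theorem — so the only real content is bookkeeping: checking that the projective model structure on lifts obtained from Remark~\ref{r:global-tensored} is literally the one appearing in the statement, and that the enriched functor category $\Fun(\C^{\op}\otimes\C,\bS)$ used in Theorem~\ref{t:comp-cat} is the same as $\Fun^{\bS}(\C^{\op}\otimes\C,\bS)$ (it is, by construction of $(\P_\C)_{\leq 1}$ and the identification $\Alg_{(\P_\C)_{\leq 1}} \cong \Fun^{\bS}(\C^{\op}\otimes\C,\bS)$ used just before Theorem~\ref{t:comp-cat}). Accordingly the proof is a single paragraph: "Combine Theorem~\ref{t:comp-cat} with the identification $\T_{\Map_\C}\Fun^{\bS}(\C^{\op}\otimes\C,\bS)\cong\Fun^{\bS}_{/\bS}(\C^{\op}\otimes\C,\T\bS)$ of Remark~\ref{r:global-tensored}, noting that $\bS$ is tractable because all its objects are cofibrant, so that $\T\bS$ is tensored and cotensored over $\bS$ by~\cite[Corollary 3.3.3]{part1}."
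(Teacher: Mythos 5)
Your proposal matches the paper's own argument exactly: both deduce the corollary by applying Remark~\ref{r:global-tensored} (with $\I = \C^{\op}\otimes\C$ and $\M=\bS$) to rewrite $\T_{\Map_\C}\Fun^{\bS}(\C^{\op}\otimes\C,\bS)$ as $\Fun^{\bS}_{/\bS}(\C^{\op}\otimes\C,\T\bS)$, after noting that $\bS$ is tractable (all objects cofibrant), so that $\T\bS$ is $\bS$-tensored and cotensored by~\cite[Corollary 3.3.3]{part1}, and then composing with the Quillen equivalence of Theorem~\ref{t:comp-cat}. Nothing to add.
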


Let us now consider the special case where $\bS$ is \textbf{stable}. 
Combining Corollary~\ref{c:stable-functor-tangent} and Theorem~\ref{t:comp-cat} we obtain the following:
\begin{cor}\label{c:comp-stable}
Let $\bS$ be as above and assume in addition that $\bS$ is stable. Then for every fibrant $\bS$-enriched category $\C$ the tangent model category $\T_{\C}\Cat_{\bS}$ is naturally Quillen equivalent to the functor category $\Fun(\C^{\op} \otimes \C,\bS)$.
\end{cor}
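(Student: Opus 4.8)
The plan is to derive Corollary~\ref{c:comp-stable} purely formally, by concatenating two Quillen equivalences that have already been established. The first is Theorem~\ref{t:comp-cat}, which, for any fibrant $\bS$-category $\C$, identifies $\T_\C\Cat_{\bS}$ with the tangent model category $\T_{\Map_\C}\Fun(\C^{\op} \otimes \C, \bS)$ of the enriched functor category at the object $\Map_\C: \C^{\op} \otimes \C \lrar \bS$. The second input is Corollary~\ref{c:stable-functor-tangent}, applied with $\M = \bS$, $\I = \C^{\op} \otimes \C$, and $\F = \Map_\C$. To invoke that corollary I need to check its hypotheses: $\bS$ must be a left proper combinatorial stable model category tensored over a tractable SM model category, and the functor $\Map_\C$ must be fibrant (or $\bS$ right proper). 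The standing assumptions on $\bS$ (excellent, every object cofibrant, satisfying A1 and A2) already guarantee it is left proper and combinatorial; stability is the extra hypothesis of the corollary; tractability of $\bS$ over itself follows from every object being cofibrant (this is noted in the paragraph preceding Corollary~\ref{c:comp-lifts}). For fibrancy of $\Map_\C$ in the projective model structure on $\Fun^{\bS}(\C^{\op} \otimes \C, \bS)$, I would observe that $\Map_\C$ takes values in fibrant objects of $\bS$ because $\C$ is a fibrant $\bS$-category (every object of an excellent model category being cofibrant, "locally fibrant" is what fibrancy of $\C$ amounts to on mapping objects); in the projective model structure objectwise fibrancy is fibrancy.

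With both hypotheses verified, Corollary~\ref{c:stable-functor-tangent} gives a Quillen equivalence $\T_{\Map_\C}\Fun^{\bS}(\C^{\op} \otimes \C, \bS) \simeq \Fun^{\bS}(\C^{\op} \otimes \C, \bS)$, and composing with the Quillen equivalence of Theorem~\ref{t:comp-cat} yields the asserted Quillen equivalence $\T_\C\Cat_{\bS} \simeq \Fun(\C^{\op} \otimes \C, \bS)$. Here I should be slightly careful about the distinction between $\Fun^{\bS}$ (enriched functors) and $\Fun$ (ordinary functors) in the statement: since $\bS$ is stable, hence in particular pointed and enriched over itself, the category $\Fun(\C^{\op} \otimes \C, \bS)$ in the statement should be read as the $\bS$-enriched functor category $\Fun^{\bS}(\C^{\op} \otimes \C, \bS)$, matching the notation of Corollary~\ref{c:stable-functor-tangent}; I would add a clarifying remark if the ambient conventions leave any ambiguity. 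The naturality claim ("naturally Quillen equivalent") follows because both constituent equivalences are natural: Theorem~\ref{t:comp-cat} is induced by the explicit adjunction~\eqref{e:adjredcats}, and the equivalence of Corollary~\ref{c:stable-functor-tangent} is the post-composition with the stable analogue of $\Sig^{\infty}_{\int}$, which is natural by Remark~\ref{r:functor-cotangent}.

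Since the proof is a two-line composition, there is no serious obstacle; the only point requiring attention is bookkeeping the hypotheses, in particular confirming that the fibrancy of $\C$ as an $\bS$-category translates into the fibrancy of $\Map_\C$ needed to apply Corollary~\ref{c:stable-functor-tangent}, and matching the enriched-versus-ordinary functor category notation. I would therefore write the proof as simply: "Combine Theorem~\ref{t:comp-cat} with Corollary~\ref{c:stable-functor-tangent}, noting that $\Map_\C$ is fibrant in the projective model structure since $\C$ is a fibrant $\bS$-category and that $\bS$ is tractable because every object is cofibrant." The cotangent complex statement (that $L_\C$ corresponds to the desuspension of $\Map_\C$) is then immediate from the corresponding assertions in Theorem~\ref{t:comp-cat} together with the last sentence of Corollary~\ref{c:stable-functor-tangent}, and would be recorded separately as Corollary~\ref{c:cotangent-stable}.
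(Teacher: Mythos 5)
Your proposal is correct and takes essentially the same route as the paper, which simply cites ``Combining Corollary~\ref{c:stable-functor-tangent} and Theorem~\ref{t:comp-cat}'' as the proof. Your explicit verification that $\Map_\C$ is projectively fibrant (because a fibrant $\bS$-category has fibrant mapping objects) and that $\bS$ is tractable over itself is the right bookkeeping, and is left implicit in the paper.
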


\begin{rem}\label{r:bimodule}
A prime example of Corollary~\ref{c:comp-stable} is when $\bS$ is the category of chain complexes over a field, in which case $\Cat_{\bS}$ is the category of \textbf{dg-categories}. 
In this case we may phrase the above computation as follows: the tangent model category $\T_{\C}\Cat_{\bS}$ is Quillen equivalent to the category of \textbf{$\C$-bimodules}, also known as correspondences from $\C$ to itself.
\end{rem}

\subsection{The cotangent complex of enriched categories}\label{s:cotangent}

Throughout this section, we will assume that $\bS$ is an excellent model category with only cofibrant objects satisfying conditions (A1) and (A2) from the beginning of Section \ref{s:enriched}. Our goal in this section is to compute the cotangent complex 
$$ L_\C:=\LL\Sig^{\infty}_+(\C) \in \Sp((\Cat_{\bS})_{\C//\C}) ,$$ 
of an $\bS$-enriched category $\C$, or, more precisely, its image under the right Quillen equivalence $\G^{\C}_{\Sp}: \T_\C\Cat_{\bS} \x{\simeq}{\lrar} \T_{\Map_{\C}}\Fun(\C^{\op} \otimes \C,\bS)$ of Theorem~\ref{t:comp-cat}. 
\begin{pro}\label{p:conceptual}
Let $\C$ be a fibrant $\bS$-enriched category. Then there is a natural weak equivalence
$$ \theta_{\C}: L_{\Map_{\C}}[-1] \x{\simeq}{\lrar} \RR\G^{\C}_{\Sp}(L_{\C})  $$
in the model category $\T_{\Map_{\C}}\Fun(\C^{\op} \otimes \C,\bS)$. In other words, under the equivalence of Theorem \ref{t:comp-cat} we may identify the cotangent complex of $\C$ with the desuspension of the cotangent complex of $\Map_C \in \Fun(\C^{\op} \otimes \C,\bS)_{/\Map_{\C}}$.
\end{pro}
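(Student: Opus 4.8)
The plan is to transport the computation along the Quillen equivalence of Theorem~\ref{t:comp-cat} and to extract the desuspension from the cofiber sequence attached to the inclusion of the set of objects of $\C$. Since $\G^\C_{\Sp}$ is a right Quillen equivalence, $\RR\G^\C_{\Sp}$ is an exact equivalence of (stable) homotopy categories, so I may compute its value on $L_\C$ using any convenient presentation. I would use the object inclusion $i\colon O\hookrightarrow\C$, where $O=\Ob(\C)$ is regarded as a discrete $\bS$-category; this is essentially surjective (indeed surjective on objects), so $\L(\R(\C))\lrar\C$ is a weak equivalence. The definition of the relative cotangent complex gives a cofiber sequence
\[
\LL\Sig^{\infty}_+(i)\lrar L_\C\lrar L_{\C/O}
\]
in $\T_\C\Cat_{\bS}$, and it suffices to identify the images of the outer terms in $\T_{\Map_\C}\Fun(\C^{\op}\otimes\C,\bS)$. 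By Proposition~\ref{p:relative} the term $L_{\C/O}$ is the (absolute) cotangent complex of $\C$ in $(\Cat_{\bS})_{O/}$; transporting along the adjunction~\eqref{e:L} it becomes the cotangent complex of the $\P_O$-algebra $\R(\C)$, and the operadic comparison and K\"ahler differential computation of~\cite{part1} (the very ingredients used to prove Theorem~\ref{t:comp-cat}, cf.\ Corollary~\ref{c:kahler}) identify its image with the derived module of operadic K\"ahler differentials of $\R(\C)$ --- the ``arity $\ge 1$'' part of the deformation theory of $\C$, a specific free-resolution-style $\C$-bimodule.

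For the other term, $\LL\Sig^{\infty}_+(i)=i^{\Sp}_!L_O$, the cotangent complex $L_O$ of the discrete category $O$ is readily computed ($O$ being a coproduct of copies of $\ast$) and corresponds, under the same comparison, to the one-fold desuspension of the free $\C$-bimodule generated along the diagonal of $\C^{\op}\otimes\C$. Assembling, the transported cofiber sequence becomes the $(-1)$-shift of the two-term ``fundamental resolution''
\[
[\text{K\"ahler differentials}]\lrar[\text{free bimodule on the objects}]\lrar\Map_\C
\]
of the diagonal bimodule $\Map_\C$ inside $\Fun(\C^{\op}\otimes\C,\bS)$, i.e.\ of the bar-type resolution exhibiting $\Sig^{\infty}_{\int}\Map_\C=L_{\Map_\C}$ (Remark~\ref{r:functor-cotangent}) as the cofiber of its first map. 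Comparing cofiber sequences then yields the natural weak equivalence $L_{\Map_\C}[-1]\simeq\RR\G^\C_{\Sp}(L_\C)$. This is precisely the degree shift which, when $\bS$ is stable, makes the Quillen cohomology of a dg-category agree with its Hochschild cohomology up to one. Naturality of $\theta_\C$ follows from that of all the ingredients, after first replacing $\C$ by a fibrant--cofibrant model (harmless, since the cotangent complex is a weak-equivalence invariant).

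The main obstacle is the bookkeeping in this identification: the equivalence of Theorem~\ref{t:comp-cat} is built indirectly, through the truncated operad $(\P_\C)_{\le1}$ and the chain of Quillen equivalences of Proposition~\ref{p:cat}, so one must check that under this chain the cofiber sequence attached to $i$ really is the canonical two-term resolution of $\Map_\C$ with the expected connecting map, and that the identifications of the two outer terms (with the shifted free bimodule and with the K\"ahler differentials) are mutually compatible. A more hands-on alternative, avoiding the cofiber sequence, is to write down an explicit $\Om$-spectrum model for $L_\C$ in $(\Cat_{\bS})_{\C//\C}$, in the style of~\S\ref{s:spaces}, and apply $\G^\C_{\aug}$ levelwise; the desuspension then has to be read off from the mapping objects of the iterated loop--suspension objects, which is heavier but conceptually transparent.
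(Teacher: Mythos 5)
Your approach is genuinely different from the paper's. The paper argues in the other direction: it shows $\LL\F^\C_{\Sp}(L_{\Map_\C})\simeq L_\C[1]$ by factoring $\F^\C_{\aug}$ through $(\Cat_{\bS})_{[1]_{\bS}\otimes\C//[1]_{\bS}\otimes\C}$, identifying $\LL\F^\C_{\aug}(\Map_\C\coprod\Map_\C)\simeq\bigl[\ast\coprod_{[1]_{\bS}}\A\bigr]\otimes\C$, and so reducing everything to a ``universal'' computation in $(\Cat_{\bS})_{\ast//\ast}$. That universal step is the heart of the matter: one must show that the free $\bS$-category on one endomorphism, $\ast\coprod_{\ast\coprod\ast}[1]_{\bS}$, maps by a $(-1)$-cotruncated map to the walking equivalence $\ast\coprod_{\ast\coprod\ast}\E$ (a model for $\Sig(\ast\coprod\ast)$), and this is precisely where the \emph{invertibility hypothesis} on $\bS$ is invoked. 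That hypothesis never appears in your argument, which is a strong signal that something essential has been swept under the rug.

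The concrete gap is the claim that $L_O$ ``is readily computed.'' Computing $L_O$ for the discrete category $O=\Ob(\C)$, and identifying its image in $\Fun(\C^{\op}\otimes\C,\bS)$ as the desuspension of the free bimodule on the diagonal, is exactly the proposition you are trying to prove, specialized to discrete $\bS$-categories; the case $\C=\ast$ already carries all the difficulty. There is no independent shortcut: $L_\ast=\Sig^\infty(\ast\coprod\ast)$, and relating this suspension spectrum to spectrum objects in $\bS_{1_{\bS}//1_{\bS}}$ forces one to confront the once-suspended category $\Sig(\ast\coprod\ast)$, whose endomorphism object is the ``free invertible endomorphism,'' and one again needs invertibility in $\bS$ to compare it with $[1]_{\bS}$. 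So your outer-term identification is circular. A secondary issue: you invoke Corollary~\ref{c:kahler} to identify the transported $L_{\C/O}$ with operadic K\"ahler differentials, but that corollary (and Proposition~\ref{p:cotangentvskaehler}) is stated only for dg-model categories $\M$, not for a general excellent $\bS$; in the unstable setting the relation between cotangent complex and K\"ahler differentials is more delicate and is not available off the shelf. Finally, even granting both outer terms, you would still need to verify that the connecting maps in the two cofiber sequences agree, which you acknowledge but do not carry out. The cofiber-sequence scaffolding is a reasonable idea (indeed Corollaries~\ref{c:relative-2} and~\ref{c:famous-cofiber} in the paper exhibit exactly such a sequence a posteriori), but as written the proposal assumes a special case of the result in order to prove it.
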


\begin{cor}\label{c:conceptual}
Under the equivalence of Corollary~\ref{c:comp-lifts}, the image of the cotangent complex $L_{\C} \in \T_\C\Cat_{\bS}$ in $\Fun^{\bS}_{/\bS}(\C^{\op} \otimes \C,\T\bS)$ is weakly equivalent to the desuspension of the composite functor 
$$ \C^{\op} \otimes \C \x{\Map_{\C}}{\lrar} \bS \x{\Sig^{\infty}_{\int}}{\lrar} \T\bS .$$
\end{cor}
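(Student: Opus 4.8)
The plan is to deduce this corollary formally from Proposition~\ref{p:conceptual} together with the description of the cotangent complex of an enriched functor recorded in Remark~\ref{r:functor-cotangent}, since all of the genuine content has already been established there. Recall that the Quillen equivalence of Corollary~\ref{c:comp-lifts} is, by construction, the composite of the Quillen equivalence $\G^{\C}_{\Sp}$ of Theorem~\ref{t:comp-cat} with the equivalence of categories $\T_{\Map_{\C}}\Fun(\C^{\op}\otimes\C,\bS) \cong \Fun^{\bS}_{/\bS}(\C^{\op}\otimes\C,\T\bS)$ of Remark~\ref{r:global-tensored} (i.e.\ the equivalence~\eqref{e:equiv-functor} applied to $\I = \C^{\op}\otimes\C$, $\M = \bS$ and $\F = \Map_{\C}$). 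So I would chase $L_{\C}$ through these two steps in turn.

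First I would invoke Proposition~\ref{p:conceptual} to obtain a natural weak equivalence $\RR\G^{\C}_{\Sp}(L_{\C}) \simeq L_{\Map_{\C}}[-1]$ in $\T_{\Map_{\C}}\Fun(\C^{\op}\otimes\C,\bS)$, which reduces the problem to identifying the image of $L_{\Map_{\C}}[-1]$ under~\eqref{e:equiv-functor}. For the image of the undesuspended cotangent complex $L_{\Map_{\C}}$ I would quote Remark~\ref{r:functor-cotangent} (which applies since $\bS$ is left proper combinatorial, so that Corollary~\ref{c:tangent-functor} is available): under~\eqref{e:equiv-functor}, $L_{\Map_{\C}}$ corresponds to the composite $\C^{\op}\otimes\C \x{\Map_{\C}}{\lrar} \bS \x{\Sig^{\infty}_{\int}}{\lrar} \T\bS$. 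It then remains to observe that the desuspension functor $[-1]$ transports, under~\eqref{e:equiv-functor}, to the objectwise desuspension on the functor category: by Remark~\ref{r:global-tensored} the model structure on $\Fun^{\bS}_{/\bS}(\C^{\op}\otimes\C,\T\bS)$ is the projective one, and $\T\bS$ is stable, so loops — hence desuspension — in this tangent category are computed objectwise, i.e.\ by postcomposition with the desuspension on $\T\bS$. Combining the three identifications, $\RR\G^{\C}_{\Sp}(L_{\C})$ corresponds to the desuspension of the composite functor $\Sig^{\infty}_{\int}\circ\Map_{\C}$, which is the assertion.

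I do not anticipate a genuine obstacle here: the substantive point — the appearance of the degree shift $[-1]$ — is precisely the content of Proposition~\ref{p:conceptual}, which may be assumed. Within the present deduction the only things requiring (routine) attention are the bookkeeping of the two-step identification underlying Corollary~\ref{c:comp-lifts} and the compatibility of the (de)suspension conventions on the two sides of~\eqref{e:equiv-functor}; both are immediate once one recalls that the equivalence of Remark~\ref{r:global-tensored} is an equivalence over $\Fun^{\bS}(\C^{\op}\otimes\C,\bS)$ matching the respective projective model structures, and hence their induced stable structures.
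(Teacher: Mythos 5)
Your proposal is correct and follows essentially the same route as the paper, whose proof is simply ``Combine Proposition~\ref{p:conceptual} with Corollary~\ref{c:comp-lifts} and Remark~\ref{r:functor-cotangent}''; you spell out the same deduction in slightly more detail, including the (implicit) observation that desuspension is computed objectwise under the identification~\eqref{e:equiv-functor}.
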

\begin{proof}
Combine Proposition~\ref{p:conceptual} with Corollary~\ref{c:comp-lifts} and Remark~\ref{r:functor-cotangent}.
\end{proof}

Before proceeding to the proof of Proposition \ref{p:conceptual}, let us point out a few notable cases.
\begin{cor}\label{c:simp}
Let $\bS$ be the category of simplicial sets endowed with the Kan-Quillen model structure and let $\C$ be a fibrant simplicial category. Then the image of the cotangent complex $L_{\C} \in \T_{\C}\Cat_{\bS}$ in $\Fun^{\bS}_{/\bS}(\C^{\op} \times \C,\T\bS)$ is the functor $\C^{\op} \times \C \lrar \T \SS$ which associates to each $(x,y) \in \C^{\op} \times \C$ the parametrized spectrum over $\Map_{\C}(x,y)$ which is constant with value $\mathbb{S}[-1]$.
\end{cor}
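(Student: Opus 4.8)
The plan is to deduce Corollary~\ref{c:simp} directly from Corollary~\ref{c:conceptual} together with the explicit computation of the global cotangent complex functor $\Sig^{\infty}_{\int}:\SS\lrar \T\SS$ carried out in \S\ref{s:spaces}. By Corollary~\ref{c:conceptual}, the image of $L_{\C}$ in $\Fun^{\bS}_{/\bS}(\C^{\op}\otimes\C,\T\bS)$ is (weakly equivalent to) the desuspension of the composite $\C^{\op}\otimes\C \x{\Map_{\C}}{\lrar}\bS \x{\Sig^{\infty}_{\int}}{\lrar}\T\bS$, so the entire task is to identify this composite when $\bS=\SS$. By Remark~\ref{r:functor-cotangent}, post-composing with $\Sig^{\infty}_{\int}$ is computed objectwise, so it suffices to compute, for each pair $(x,y)\in\C^{\op}\times\C$, the object $\Sig^{\infty}_{\int}\big(\Map_{\C}(x,y)\big)\in\T\SS$, i.e.\ the global cotangent complex of the space $X:=\Map_{\C}(x,y)$.

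Now I would invoke the computation from \S\ref{s:spaces}: there it is shown (via \cite[Remark 2.4.7]{part1}) that for a space $X$ the cotangent complex $\Sig^{\infty}_+X=\Sig^{\infty}(X\coprod X)\in\T_X\SS=\Sp(\SS_{X//X})$ is stably equivalent to the $\Om$-spectrum $Z_{\bullet\bullet}$ with $Z_{n,n}\simeq X\times\bbS_{n,n}$, i.e.\ to the \textbf{constant sphere spectrum over $X$}. Under the identification of $\T_X\SS$ with parametrized spectra over $X$, this is precisely the parametrized spectrum which is fiberwise the sphere spectrum $\bbS$. Assembling these over all objects, the composite $\Sig^{\infty}_{\int}\circ\Map_{\C}$ sends $(x,y)$ to the constant sphere spectrum over $\Map_{\C}(x,y)$. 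Desuspending (which, fiberwise over each point, replaces $\bbS$ by $\bbS[-1]$, since desuspension in $\T\SS$ is computed objectwise under Corollary~\ref{c:tangent-functor} and Remark~\ref{r:functor-cotangent}) yields exactly the functor described in the statement: $(x,y)$ goes to the parametrized spectrum over $\Map_{\C}(x,y)$ which is constant with value $\bbS[-1]$.

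The main obstacle is essentially bookkeeping rather than a genuine difficulty: one must be careful that the equivalence of Corollary~\ref{c:comp-lifts} is an equivalence \emph{over} $\Fun^{\bS}(\C^{\op}\otimes\C,\bS)$, so that the fibrewise structure (``parametrized over $\Map_{\C}(x,y)$'') is the one recorded by the projection $\pi:\T\SS\lrar\SS$, and to check that the desuspension functor on $\T_{\Map_{\C}}\Fun(\C^{\op}\otimes\C,\SS)$ really is computed pointwise under the identification with $\Fun^{\bS}(\C^{\op}\otimes\C,\T\SS)$ — but this is immediate from Corollary~\ref{c:tangent-functor}, since suspension and desuspension in a projective functor category into a pointed model category are levelwise. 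Thus the proof is just: apply Corollary~\ref{c:conceptual}, then substitute the \S\ref{s:spaces} description of $\Sig^{\infty}_{\int}$ on spaces, then observe that desuspension is pointwise.

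\begin{proof}
By Corollary~\ref{c:conceptual}, the image of $L_{\C}$ in $\Fun^{\bS}_{/\bS}(\C^{\op}\times\C,\T\SS)$ is weakly equivalent to the desuspension of $\Sig^{\infty}_{\int}\circ\Map_{\C}:\C^{\op}\times\C\lrar\SS\lrar\T\SS$. By Remark~\ref{r:functor-cotangent} this composite is computed objectwise, so its value at $(x,y)$ is $\Sig^{\infty}_{\int}\big(\Map_{\C}(x,y)\big)=\LL\Sig^{\infty}_+\big(\Map_{\C}(x,y)\big)\in\T_{\Map_{\C}(x,y)}\SS$. As recalled in \S\ref{s:spaces} (using~\cite[Remark 2.4.7]{part1}), for any space $X$ the object $\Sig^{\infty}_+X=\Sig^{\infty}(X\coprod X)$ is stably equivalent to the $\Om$-spectrum $Z_{\bullet\bullet}$ with $Z_{n,n}\simeq X\times\bbS_{n,n}$, i.e.\ to the constant sphere spectrum over $X$. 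Hence $\Sig^{\infty}_{\int}\circ\Map_{\C}$ sends $(x,y)$ to the constant parametrized spectrum over $\Map_{\C}(x,y)$ with value $\bbS$. Finally, under the identification of Corollary~\ref{c:tangent-functor} the suspension and desuspension functors on $\T_{\Map_{\C}}\Fun(\C^{\op}\times\C,\SS)$ are computed objectwise and, fibrewise over each point of $\Map_{\C}(x,y)$, correspond to the suspension and desuspension of parametrized spectra. Desuspending therefore replaces the fibrewise value $\bbS$ by $\bbS[-1]$, yielding the functor which assigns to $(x,y)$ the parametrized spectrum over $\Map_{\C}(x,y)$ that is constant with value $\bbS[-1]$, as claimed.
\end{proof}
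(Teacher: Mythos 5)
Your proof is correct and follows exactly the route the paper intends: Corollary~\ref{c:simp} is an immediate consequence of Corollary~\ref{c:conceptual} together with the computation in \S\ref{s:spaces} that $\Sig^{\infty}_{+}X\in\T_X\SS$ is the constant sphere spectrum over $X$, and the paper gives no separate proof because these are exactly the two ingredients being invoked. Your bookkeeping about objectwise (de)suspension via Corollary~\ref{c:tangent-functor} and Remark~\ref{r:functor-cotangent} is the right justification for the last step.
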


When $\bS$ is stable, Corollary~\ref{c:comp-stable} identifies the tangent category $\T_{\C}\Cat_{\bS}$ with the category of functors $\C^{\op} \otimes \C \lrar \bS$. In this case, Proposition~\ref{p:conceptual} can be combined with Corollary~\ref{c:stable-functor-tangent} to give the following:
\begin{cor}\label{c:cotangent-stable}
Let $\bS$ and $\C$ be as above and assume in addition that $\bS$ is stable. Then the functor $\C^{\op} \otimes \C \lrar \bS$ associated to the cotangent complex $L_\C$ is the functor $(x,y) \mapsto \Map_{\C}(x,y)[-1]$.
\end{cor}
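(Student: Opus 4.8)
The plan is to obtain the statement by feeding Proposition~\ref{p:conceptual} into Corollary~\ref{c:stable-functor-tangent}, exactly as the sentence preceding the corollary indicates. Concretely, I would apply Corollary~\ref{c:stable-functor-tangent} with $\M = \bS$ — which is left proper combinatorial (being excellent) and stable by hypothesis, and is tensored over the tractable SM model category $\bS$ (tractability holds since every object of $\bS$ is cofibrant) — together with $\I = \C^{\op}\otimes\C$ and $\F = \Map_{\C}$, noting that $\Map_{\C}$ is fibrant because $\C$ is. This produces the Quillen equivalence $\T_{\Map_{\C}}\Fun(\C^{\op}\otimes\C,\bS) \simeq \Fun(\C^{\op}\otimes\C,\bS)$ underlying Corollary~\ref{c:comp-stable}, and moreover it identifies the cotangent complex $L_{\Map_{\C}}$ with the functor $\Map_{\C}$ itself.

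Next I would invoke Proposition~\ref{p:conceptual}: the right Quillen equivalence $\G^{\C}_{\Sp}$ of Theorem~\ref{t:comp-cat} carries $L_{\C}$ to $L_{\Map_{\C}}[-1]$ in $\T_{\Map_{\C}}\Fun(\C^{\op}\otimes\C,\bS)$. Transporting this identification across the equivalence of the previous paragraph, $L_{\C}$ corresponds to $\Map_{\C}[-1]$, where the shift is now taken in the stable model category $\Fun(\C^{\op}\otimes\C,\bS)$. Since weak equivalences, fibrations, and hence the whole stable structure of $\Fun(\C^{\op}\otimes\C,\bS)$ are detected objectwise, desuspension there is computed objectwise in $\bS$; therefore $\Map_{\C}[-1]$ is precisely the functor $(x,y)\mapsto \Map_{\C}(x,y)[-1]$, which is the asserted description of the cotangent complex under the equivalence of Corollary~\ref{c:comp-stable}.

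The only point needing a word of justification — and the nearest thing to an obstacle, though a mild one — is that the Quillen equivalence of Corollary~\ref{c:stable-functor-tangent} intertwines the desuspension functor of the tangent category with that of $\Fun(\C^{\op}\otimes\C,\bS)$. This is automatic: a Quillen equivalence between stable model categories induces an equivalence of the underlying stable $\infty$-categories, which is necessarily exact and hence commutes with loops and suspensions up to natural equivalence. With this in hand the corollary is just the concatenation of the two identifications above, and no further computation is required.
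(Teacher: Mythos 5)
Your proposal is correct and matches the paper's intended argument exactly: the sentence preceding the corollary tells the reader to "combine Proposition~\ref{p:conceptual} with Corollary~\ref{c:stable-functor-tangent}," and you carry out precisely that combination, verifying along the way the hypotheses of Corollary~\ref{c:stable-functor-tangent} (stability and tractability of $\bS$, fibrancy of $\Map_\C$) and correctly observing that the exactness of the induced equivalence of stable $\infty$-categories lets you transport the shift.
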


A special case of interest is when $\bS$ is the category of chain complexes over a field, in which case $\Cat_{\bS}$ is the model category of dg-categories:
\begin{cor}\label{c:hochschild}
The Quillen cohomology $\rH_Q^\bullet(\C, \F)$ of a dg-category $\C$ with coefficients in a bimodule $\F$ can be identified with the corresponding (shifted) \textbf{Hochschild cohomology} $\rH\rH^{\bullet+1}(\C, \F)$ (defined via the bar complex, see e.g.~\cite[5.4]{Kel}).
\end{cor}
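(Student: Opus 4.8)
The plan is to reduce the statement to Corollary~\ref{c:cotangent-stable} together with a comparison between the abstractly-defined Quillen cohomology and the classical Hochschild cohomology of dg-categories. First, observe that $\bS = \C(k)$ for a field $k$ is a stable excellent SM model category in which every object is cofibrant, and it satisfies (A1) and (A2); thus Corollary~\ref{c:comp-stable} applies and identifies $\T_{\C}\Cat_{\bS}$ with the functor category $\Fun(\C^{\op} \otimes \C, \bS)$, i.e.\ with $\C$-bimodules (Remark~\ref{r:bimodule}). Under this identification, Corollary~\ref{c:cotangent-stable} tells us that the cotangent complex $L_{\C}$ corresponds to the bimodule $(x,y) \mapsto \Map_{\C}(x,y)[-1]$, which is the shifted diagonal bimodule $\C[-1]$. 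Hence by Definition~\ref{d:Quillen-coh},
$$
\rH_Q^n(\C, \F) = \pi_0\Map^{\der}_{\Fun(\C^{\op}\otimes\C,\bS)}\big(\C[-1], \F[n]\big) \cong \pi_0\Map^{\der}_{\Fun(\C^{\op}\otimes\C,\bS)}\big(\C, \F[n+1]\big),
$$
where the last isomorphism uses that $\bS$ is stable so that shifting is invertible and the functor category is stable too.

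The second and main step is to identify $\pi_0\Map^{\der}(\C, \F[n+1])$, computed in the (stable) model category of $\C$-bimodules, with $\rH\rH^{n+1}(\C,\F)$. By definition, Hochschild cohomology of a dg-category with coefficients in a bimodule $\F$ is the derived $\C$-bimodule mapping complex $\RR\Hom_{\C\text{-}\C}(\C, \F)$, i.e.\ $\Ext$-groups of the diagonal bimodule into $\F$ in the derived category of $\C$-bimodules; concretely this is computed by the bar complex (cf.~\cite[5.4]{Kel}). Since $\Fun(\C^{\op}\otimes\C,\bS)$ with its projective model structure is a model for the derived category of $\C$-bimodules — its underlying $\infty$-category is the $\infty$-category of $\C$-bimodules, and derived mapping spaces compute $\RR\Hom$ — we get
$$
\pi_0\Map^{\der}_{\Fun(\C^{\op}\otimes\C,\bS)}\big(\C, \F[n+1]\big) \cong \rH^0\,\RR\Hom_{\C\text{-}\C}\big(\C,\F[n+1]\big) = \rH\rH^{n+1}(\C,\F),
$$
using the standard translation between homotopy groups of mapping spaces in a stable model category and $\Ext$-groups in its homotopy category. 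Combining with the first display yields $\rH_Q^n(\C,\F) \cong \rH\rH^{n+1}(\C,\F)$, as claimed.

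A few points require care. One should make sure the diagonal bimodule appearing as $L_{\C}$ under Corollary~\ref{c:cotangent-stable} is the same object (up to equivalence) as the one used to define Hochschild cohomology; since both are given by $(x,y)\mapsto \Map_{\C}(x,y)$ with its evident bimodule structure, and $\C$ is assumed fibrant so that these mapping objects are the homotopically correct ones, this is immediate. One should also note that $\Map^{\der}$ here denotes the derived hom-\emph{space}, so passing to $\pi_0$ and comparing with $\rH^0$ of the derived hom-\emph{complex} is just the usual Dold–Kan/stable translation; the higher Quillen cohomology groups $\rH_Q^n$ for all $n\in\ZZ$ match the full graded Hochschild cohomology $\rH\rH^{\bullet+1}$ by the same argument applied with arbitrary shifts. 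The main obstacle — really the only substantive input — is the appeal to Corollary~\ref{c:cotangent-stable} (hence ultimately Proposition~\ref{p:conceptual} and the comparison theorem of~\cite{part1}); once the cotangent complex is identified with the shifted diagonal bimodule, the passage to Hochschild cohomology is formal.
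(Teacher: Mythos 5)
Your proof is correct and takes the approach the paper intends: the paper leaves Corollary~\ref{c:hochschild} without an explicit argument, presenting it as a direct consequence of Corollary~\ref{c:cotangent-stable} (via Remark~\ref{r:bimodule}). You have filled in precisely the intended reasoning — identify $L_\C$ with the shifted diagonal bimodule, then recognize the derived bimodule mapping complex into $\F$ as the Hochschild complex computed by the bar resolution — and your side remarks (that $\C(k)$ over a field indeed has all objects cofibrant and satisfies the hypotheses, and that the diagonal bimodule is the same on both sides) address the only points where one might worry.
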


The proof of Proposition~\ref{p:conceptual} will require a few preliminaries. Let $\C$ be a fibrant $\bS$-category. Since the adjunction $\F^{\C}_{\Sp} \dashv \G^{\C}_{\Sp}$ of Theorem~\ref{t:comp-cat} is a Quillen equivalence it will suffice to construct a weak equivalence 
\begin{equation}\label{e:other-side}
\LL\F^{\C}_{\Sp}(L_{\Map_C}) \x{\simeq}{\lrar} L_{\C}[1] .
\end{equation}
To do this, we will make use of the tensor product of $\bS$-categories $\otimes: \Cat_{\bS} \times \Cat_{\bS} \lrar \Cat_{\bS}$. Recall that while $\otimes$ is a close symmetric monoidal product on $\Cat_{\bS}$, it is \textbf{not} compatible with the model structure. However, since every object in $\bS$ is cofibrant, the functor $(-) \otimes \C: \Cat_{\bS} \lrar \Cat_{\bS}$ does preserve weak equivalences and by~\cite[Theorem A.3.5.14]{Lur09} it also preserve homotopy colimits. The unit of $\otimes$ is the category $\ast \in \Cat_{\bS}$ which has a single object whose endomorphism object is $1_{\bS}$. To avoid confusion, we warn the reader that $\ast$ is generally not the terminal object of $\Cat_{\bS}$, unless $1_{\bS}$ is terminal in $\bS$.  
For any $\bS$-category $\C$, the functor
\begin{equation}\label{e:times-sp}
(-) \otimes \C : \Sp((\Cat_{\bS})_{\ast//\ast}) \lrar \Sp((\Cat_{\bS})_{\C//\C}) 
\end{equation}
sending $\ast \lrar X_{\bullet\bullet} \lrar \ast$ to $\C \lrar X_{\bullet\bullet} \otimes \C \lrar \C$ preserves levelwise weak equivalences and suspension spectra. The functor~\ref{e:times-sp} also preserves the following slightly more general class of equivalences:
\begin{define}\label{d:suspension}
For any model category $\M$, a \textbf{strong equivalence of suspension spectra} is a map $f: X \lrar Y$ in $\Sp(\M)$ between suspension spectra such that $f_{n,n}: X_{n,n} \lrar Y_{n,n}$ is a weak equivalence for all $n>>0$.
\end{define}
Now let $\ovl{L}_{\ast} = \ovl{\Sig}^{\infty}(\ast \coprod \ast) \simeq L_\ast \in \Sp((\Cat_{\bS})_{\ast//\ast})$ be a suspension spectrum model for the cotangent complex of $\ast$ such that $(\ovl{L}_\ast)_{0,0} = \ast \coprod \ast$ (see~\cite[Corollary 2.3.3]{part0}). Since $\ovl{L}_\ast\otimes \C$ is a suspension spectrum with $\C\coprod \C$ in degree $(0, 0)$, the adjoint map $L_\C=\Sig^\infty(\C\coprod \C)\lrar \ovl{L}_\ast\otimes \C$ is a stable weak equivalence by~\cite[Lemma 2.3.2]{part0}. We may therefore use $\ovl{L}_\ast \otimes \C$ as a model for $L_\C$. 

We shall now show that the spectrum $\LL\F^{\C}_{\Sp}(L_{\Map_C})$ appearing on~\eqref{e:other-side} can also be obtained from a suitable spectrum object in $(\Cat_{\bS})_{\ast//\ast}$ by tensoring with $\C$. To do this we will first find another way to describe the functor $\F^{\C}_{\aug}: \Fun(\C^{\op} \otimes \C,\bS)_{\Map_\C//\Map_{\C}}\lrar (\Cat_\bS)_{\C//\C}$ from which $\F^{\C}_{\Sp}$ is induced upon passing to spectrum objects. Let $[1]_{\bS} := [1]_{1_{\bS}}$ be as in Definition~\ref{d:1A}. The natural map $\ast \coprod \ast \lrar [1]_{\bS}$, which can be identified with $[1]_{\emptyset_{\bS}} \lrar [1]_{1_{\bS}}$, is a cofibration in $\Cat_{\bS}$, and in particular $[1]_{\bS}$ is cofibrant. 
Consider the Quillen adjunction
$$ \lam:\Fun(\C^{\op} \otimes \C,\bS)_{\Map_{\C}/} \adj (\Cat_{\bS})_{[1]_{\bS} \otimes \C/}: \rho $$
defined as follows. If $f: \C^{\op} \otimes \C \lrar \bS$ is a functor under $\Map_{\C}$ then $\lam(f)$ is the $\bS$-enriched category whose set of objects is $\{0, 1\}\times\Ob(\C)$ and whose mapping spaces are given by 
$$ \lam(f)((i,x),(j,y)) = \left\{\begin{matrix} \Map_{\C}(x,y) & i=j \\ f(x,y) & (i,j) = (0,1) \\ \emptyset_{\bS} & (i,j) = (1,0) \\ \end{matrix}\right. $$%
Composition of morphisms is defined using the functoriality of $f$ in $\C^{\op}\otimes\C$. This construction sends $\Map_\C$ to $ [1]_{\bS} \otimes \C$ and $\lam$ therefore sends a functor under $\Map_\C$ to a functor under $[1]_{\bS} \otimes \C$. In the other direction, if $[1]_{\bS} \otimes \C \x{\iota}{\lrar} \D$ is an object of $(\Cat_{\bS})_{[1]_{\bS} \otimes \C/}$ then $\rho(\iota)$ is given by
$$ \rho(\iota)(x,y) = \Map_{\D}(\iota(0,x),\iota(1,y))$$
which admits a natural map from $\Map_{\C}$. It follows immediately from this description that $\rho$ is a right Quillen functor.

Now consider the induced adjunction on augmented objects
$$ \lam_{\aug}:\Fun(\C^{\op} \otimes \C,\bS)_{\Map_{\C}//\Map_{\C}} \adj (\Cat_{\bS})_{[1]_{\bS} \otimes \C//[1]_{\bS} \otimes \C}: \rho_{\aug}. $$
Since both $\lam$ and $\rho$ preserve initial objects, the formulas for $\lam_{\aug}$ and $\rho_{\aug}$ are the same as those for $\lam$ and $\rho$. It follows that both $\lam_{\aug}$ and $\rho_{\aug}$ preserve weak equivalences between arbitrary objects, and thus can be applied without deriving.

To get from enriched categories over-under $[1]_{\bS}\otimes\C$ to enriched categories over-under $\C$, note that the map $[1]_{\bS} \lrar \ast$ induces a map $p: [1]_{\bS} \otimes \C  \lrar \C$, associated to which is an adjunction
$$ p^{\aug}_!: (\Cat_{\bS})_{[1]_{\bS} \otimes \C//[1]_{\bS} \otimes \C} \adj (\Cat_{\bS})_{\C//\C} :p^*_{\aug} .$$
Here the left adjoint $p^{\aug}_!$ sends an $\bS$-category $\D$ over-under $[1]_{\bS} \otimes \C$ to the $\bS$-category $\C \coprod_{[1]_{\bS} \otimes \C} \D$ over-under $\C$, and the right adjoint $p^*_{\aug}$ sends an $\bS$-category $\D$ over-under $\C$ to the $\bS$-category $([1]_{\bS} \otimes \C) \times_{\C} \D$ over-under $[1]_{\bS} \otimes \C$. The category $([1]_{\bS} \otimes \C) \times_{\C} \D$ has object set $\{0,1\} \times \Ob(\D)$ and for $x,y \in \C$ whose images in $\D$ are $x',y'$ respectively we have 
\begin{align*}
\Map_{([1]_{\bS} \otimes \C) \times_{\C} \D}((0,x'),(1,y')) & = \Map_{[1]_{\bS} \otimes \C}((0,x),(1,y)) \times_{\Map_{\C}(x,y)} \Map_\D(x',y')\\
&=\Map_\D(x',y') . 
\end{align*}
In particular, the value of the composite $\rho_{\aug}(p^*_{\aug}(\C \x{\iota}{\lrar} \D \lrar \C)) \in \Fun(\C^{\op} \otimes \C,\bS)_{\Map_{\C}//\Map_{\C}}$ is naturally isomorphic to $(x,y) \mapsto \Map_{\D}(\iota(x),\iota(y))$ (as functors over-under $\Map_\C$). In other words, the diagram of right Quillen functors
$$ \xymatrix{
\Fun(\C^{\op} \otimes \C,\bS)_{\Map_{\C}//\Map_{\C}} && (\Cat_{\bS})_{\C//\C} \ar_-{\G^{\C}_{\aug}}[ll]\ar^{p^*_{\aug}}[dl] \\
& (\Cat_{\bS})_{[1]_{\bS} \otimes \C//[1]_{\bS} \otimes \C} \ar^-{\rho_{\aug}}[ul] &\\
}$$
commutes up to a natural isomorphism. It follows that the corresponding diagram of left adjoints commutes as well, i.e., we can write our functor $\F^{\C}_{\aug}$ as a composition  $\F^{\C}_{\aug} \cong p^{\aug}_!\lam_{\aug}$. In particular, for every functor $f: \C^{\op} \otimes \C \lrar \bS$ over-under $\Map_\C$ we have that $\LL\F^{\C}_{\aug}(f) \simeq \LL p^{\aug}_!\lam_{\aug}(M)$. 

Having identified the left Quillen functor $\F^{\C}_{\aug}$ in these terms, we can express the left hand term $\LL\F^{\C}_{\Sp}(L_{\Map_\C})$ of \eqref{e:other-side} as follows:
\begin{lem}
There is an equivalence in $\Sp((\Cat_{\bS})_{\C//\C})$ of the form
$$
\LL\F^{\C}_{\Sp}(L_{\Map_\C}) \simeq \Sig^{\infty}\left(\ast \coprod_{[1]_{\bS}} \A\right) \otimes \C
$$
where $\A = [1]_{1_{\bS} \coprod 1_{\bS}}$ is considered as a category over-under $[1]_{\bS}$,  and $\Sig^{\infty}\left(\ast \coprod_{[1]_{\bS}} \A\right)$ is considered as a spectrum object in $(\Cat_{\bS})_{\ast//\ast})$.
\end{lem}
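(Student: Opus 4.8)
The plan is to exploit the factorization $\F^{\C}_{\aug}\cong p^{\aug}_!\lam_{\aug}$ obtained above, together with the naturality of the formation of spectrum objects with respect to left Quillen functors, in order to carry out the computation in two easy steps. Since $\lam_{\aug}$ preserves \emph{all} weak equivalences (so the same is true of $\Sp(\lam_{\aug})$) and commutes with $\Sig^{\infty}$, while $p^{\aug}_!$ is left Quillen and $\Sp(p^{\aug}_!)$ commutes with $\Sig^{\infty}$, it suffices to trace through these two functors the object representing $L_{\Map_{\C}}$. Exactly as in the discussion preceding the statement (cf.\ the identification of $L_{\C}$ with $\ovl{L}_\ast\otimes\C$ via~\cite[Lemma~2.3.1]{part1}), the cotangent complex $L_{\Map_{\C}}=\LL\Sig^{\infty}_+(\id_{\Map_{\C}})$ is represented, up to a strong equivalence of suspension spectra (Definition~\ref{d:suspension}), by $\Sig^{\infty}(Q)$, where $Q$ is the object $\big(\Map_{\C}\to\Map_{\C}\coprod\Map_{\C}\to\Map_{\C}\big)$ of $\Fun(\C^{\op}\otimes\C,\bS)_{\Map_{\C}//\Map_{\C}}$, the right-hand map being the fold map and the left-hand map the inclusion of the first summand.

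First I would compute $\lam_{\aug}(Q)$ by hand. Unwinding the formula for $\lam$ and Definition~\ref{d:1A}, the $\bS$-category $\lam(\Map_{\C}\coprod\Map_{\C})$ has object set $\{0,1\}\times\Ob(\C)$, with mapping objects $\Map\big((i,x),(i,y)\big)=\Map_{\C}(x,y)$, $\Map\big((0,x),(1,y)\big)=\Map_{\C}(x,y)\coprod\Map_{\C}(x,y)$ and $\Map\big((1,x),(0,y)\big)=\emptyset_{\bS}$. Comparing this with the tensor product $\A\otimes\C$ for $\A=[1]_{1_{\bS}\coprod 1_{\bS}}$ and using that $\otimes$ distributes over coproducts in $\bS$, one obtains a natural isomorphism $\lam(\Map_{\C}\coprod\Map_{\C})\cong\A\otimes\C$ respecting composition; tracking the over/under maps (the fold $\A\to[1]_{\bS}$ on one side, and on the other the inclusion $[1]_{\bS}\hookrightarrow\A$ of the first summand) identifies $\lam_{\aug}(Q)$ with $\A\otimes\C$ regarded as a category over-under $[1]_{\bS}\otimes\C=\lam(\Map_{\C})$. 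Hence $\Sp(\lam_{\aug})(L_{\Map_{\C}})$ is strongly equivalent, as a suspension spectrum, to $\Sig^{\infty}(\A\otimes\C)$ in $\Sp\big((\Cat_{\bS})_{[1]_{\bS}\otimes\C//[1]_{\bS}\otimes\C}\big)$.

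Next I would apply $\LL\Sp(p^{\aug}_!)$. Since $\Sp(p^{\aug}_!)\circ\Sig^{\infty}=\Sig^{\infty}\circ p^{\aug}_!$ and both $\Sig^{\infty}$ and $p^{\aug}_!$ are left Quillen, we get $\LL\Sp(p^{\aug}_!)\big(\Sig^{\infty}(\A\otimes\C)\big)\simeq\Sig^{\infty}\big(\LL p^{\aug}_!(\A\otimes\C)\big)$. Now $p^{\aug}_!(-)=\C\coprod_{[1]_{\bS}\otimes\C}(-)$; writing $\C=\ast\otimes\C$, recognising $[1]_{\bS}\otimes\C$ as the value of $(-)\otimes\C$ on $[1]_{\bS}$, and using that $(-)\otimes\C$ is a left adjoint which moreover preserves weak equivalences and homotopy colimits (\cite[Theorem~A.3.5.14]{Lur09}), it follows that $\LL p^{\aug}_!(\A\otimes\C)\simeq\big(\ast\coprod^{h}_{[1]_{\bS}}\A\big)\otimes\C$. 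The inclusion $[1]_{\bS}\hookrightarrow\A$ is a cofibration in $\Cat_{\bS}$: it is a pushout of the cofibration $\ast\coprod\ast=[1]_{\emptyset_{\bS}}\hookrightarrow[1]_{1_{\bS}}=[1]_{\bS}$ along itself, exhibiting $\A=[1]_{\bS}\coprod_{\ast\coprod\ast}[1]_{\bS}$; consequently the homotopy pushout above is computed by the strict pushout $\ast\coprod_{[1]_{\bS}}\A$. Re-inserting $\Sig^{\infty}$ and using that the functor $(-)\otimes\C\colon\Sp((\Cat_{\bS})_{\ast//\ast})\to\Sp((\Cat_{\bS})_{\C//\C})$ preserves suspension spectra then gives $\LL\F^{\C}_{\Sp}(L_{\Map_{\C}})\simeq\Sig^{\infty}\big(\ast\coprod_{[1]_{\bS}}\A\big)\otimes\C$, as claimed. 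The point requiring genuine care --- and the main obstacle in the argument --- is the homotopy-theoretic bookkeeping of this last step: the objects $\Map_{\C}$, $\A\otimes\C$ and $\ast\coprod_{[1]_{\bS}}\A$ need not be cofibrant in the relevant model categories, so one has to verify that they nonetheless compute the correct derived functors up to strong equivalence of suspension spectra, and this is precisely what the cofibrancy of $[1]_{\bS}\hookrightarrow\A$, the homotopy-invariance of $(-)\otimes\C$, and~\cite[Lemma~2.3.1]{part1} are there to guarantee.
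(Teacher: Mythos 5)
Your proof is correct and takes essentially the same route as the paper's: both factor $\F^{\C}_{\aug}\cong p^{\aug}_!\lam_{\aug}$, identify $\lam_{\aug}(\Map_\C\coprod\Map_\C)$ with $\A\otimes\C$, compute $\LL p^{\aug}_!$ as the (homotopy) pushout $\C\coprod^{h}_{[1]_{\bS}\otimes\C}(\A\otimes\C)\simeq\bigl(\ast\coprod_{[1]_{\bS}}\A\bigr)\otimes\C$ using that $(-)\otimes\C$ preserves homotopy colimits and that $[1]_{\bS}\hookrightarrow\A$ is a cofibration, and then pass through $\Sig^\infty$ using that left Quillen functors commute with the formation of suspension spectra. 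You simply spell out the identification $\lam(\Map_\C\coprod\Map_\C)\cong\A\otimes\C$ and the cofibrancy of $[1]_{\bS}\hookrightarrow\A$ in more detail than the paper does.
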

\begin{proof}
Before taking suspension spectra, we may compute at the level of augmented objects:
\begin{align*}
\LL\F^{\C}_{\aug}(\Map_{\C} \coprod \Map_{\C}) & \simeq \LL p^{\aug}_!\lam_{\aug}(\Map_{\C} \coprod \Map_{\C})\\
& \simeq \LL p^{\aug}_!(\A \otimes \C) \simeq \C \coprod^h_{[1]_{\bS} \otimes \C} \left[\A \otimes \C\right] \simeq \left[\ast \coprod_{[1]_{\bS}} \A\right] \otimes \C
\end{align*}
The last equivalence follows from the preservation of homotopy colimits by $(-) \otimes \C$ and the fact that $[1]_{\bS} \lrar \A$ is a cofibration. Since left Quillen functors commute with taking suspension spectra, we conclude that
\begin{align}\label{e:times-2}
\LL\F^{\C}_{\Sp}(L_{\Map_\C}) & \simeq \LL\F^{\C}_{\Sp}(\Sig^{\infty}(\Map_{\C} \coprod \Map_{\C}))\nonumber \\
& \simeq \Sig^{\infty}(\LL\F^{\C}_{\aug}(\Map_{\C} \coprod \Map_{\C})) \simeq \Sig^{\infty}\left(\left[\ast \coprod_{[1]_{\bS}} \A\right] \otimes \C\right) \simeq \Sig^{\infty}\left(\ast \coprod_{[1]_{\bS}} \A\right) \otimes \C
\end{align}
as asserted.
\end{proof}
Rather than working with $\Sig^{\infty}\left(\ast \coprod_{[1]_{\bS}} \A\right)$, we may apply~\cite[Corollary 2.3.3]{part0} and consider a suspension spectrum model $\ovl{\Sig}^{\infty}\left(\ast \coprod_{[1]_{\bS}} \A\right) \in \Sp(\Cat_{\ast//\ast})$ whose degree $(0,0)$ object is the cofibrant object $\ast \coprod_{[1]_{\bS}} \A$. As before, such a choice of a suspension spectrum model induces a stable weak equivalence $\Sig^{\infty}\left(\left[\ast \coprod_{[1]_{\bS}} \A\right] \otimes \C\right) \lrar \ovl{\Sig}^{\infty}\left(\ast \coprod_{[1]_{\bS}} \A\right) \otimes \C$.
In particular, we have a stable equivalence
$$ \ovl{\Sig}^{\infty}\left(\ast \coprod_{[1]_{\bS}} \A\right) \otimes \C \simeq \LL\F^{\C}_{\Sp}(L_{\Map_\C}). $$
We are now in the position to prove Proposition~\ref{p:conceptual}.
\begin{proof}[{Proof of Proposition~\ref{p:conceptual}}]
Having written both sides of~\eqref{e:other-side} as the image of a spectrum object in $(\Cat_{\bS})_{\ast//\ast}$ under tensoring with $\C$, we see that in order to construct the natural equivalence of the form~\eqref{e:other-side} it will suffice to construct a natural strong equivalence of suspension spectra (see Definition~\ref{d:suspension})
$$ \ovl{\Sig}^{\infty}\left(\ast \coprod_{[1]_{\bS}} \A\right) \x{\simeq}{\lrar} \ovl{L}_\ast[1] .$$
As $\A \cong [1]_{\bS} \coprod_{\ast \coprod \ast} [1]_{\bS}$ we get that
$$ \ast \coprod_{[1]_{\bS}} \A = \ast \coprod_{[1]_{\bS}} [1]_{\bS} \coprod_{\ast \coprod \ast} [1]_{\bS} \cong \ast \coprod_{\ast \coprod \ast} [1]_{\bS} .$$
Let $[1]^\sim_{\bS} \in \Cat_{\bS}$ be the $\bS$-enriched category with objects $0$ and $1$ and all mapping spaces equal to $1_{\bS} \in \bS$. Let $[1]_{\bS} \x{\eta}{\hrar} \E \x{\simeq}{\twoheadrightarrow} [1]^\sim_{\bS}$ be a factorization of the natural map $[1]_{\bS} \lrar [1]^\sim_{\bS}$ into a cofibration followed by a trivial fibration. Since the map $\ast \coprod \ast \lrar [1]_{\bS}$ is a cofibration we get that the map $\ast \coprod \ast \lrar \E$ is a cofibration. Because $\Cat_{\bS}$ is left proper and $[1]_{\bS}^\sim\simeq \ast$, we have that
$ \Sig(\ast \coprod \ast) \simeq \ast \coprod_{\ast \coprod \ast} \E .$
The maps $\ast \coprod \ast \lrar [1]_{\bS} \x{\eta}{\lrar} \E \x{\simeq}{\lrar} \ast$ now induce a map 
\begin{equation}\label{e:nu}
\nu: \ast\coprod_{[1]_{\bS}}\A \cong \ast \coprod_{\ast \coprod \ast} [1]_{\bS} \lrar \ast \coprod_{\ast \coprod \ast} \E \simeq \Sig(\ast \coprod \ast)
\end{equation}
in $(\Cat_{\bS})_{\ast//\ast}$. We note that the $\bS$-enriched category $\ast \coprod_{\ast \coprod \ast} [1]_{\bS}$ can be identified with the free $\bS$-enriched category generated by a single object and single endomorphism of that object, while $\ast \coprod_{\ast \coprod \ast} \E$ is the free $\bS$-enriched category generated by a single object and single \textbf{self-equivalence} of that object. The map $\nu$ is the natural map between these two universal objects. Since $\ovl{L}_\ast[1] \simeq \ovl{\Sig}^{\infty}(\Sig(\ast \coprod \ast))$, it will now suffice to show that the map
$$ \ovl{\Sig}^{\infty}(\nu): \ovl{\Sig}^{\infty}\left(\ast \coprod_{\ast \coprod \ast} [1]_{\bS}\right) \lrar \ovl{\Sig}^{\infty}\left(\ast \coprod_{\ast \coprod \ast} \E\right) $$
is a strong equivalence of suspension spectra (see Definition~\ref{d:suspension}). In fact, we will show that $\nu$ becomes a weak equivalence after a single suspension, or, equivalently, that for every $\D \in (\Cat_{\bS})_{\ast//\ast}$ the induced map
\begin{equation}\label{e:map}
\nu^*: \Map^{\der}_{(\Cat_{\bS})_{\ast//\ast}}\left(\ast \coprod_{\ast \coprod \ast} \E,\D\right) \lrar \Map^{\der}_{(\Cat_\bS)_{\ast//\ast}}\left(\ast \coprod_{\ast \coprod \ast} [1]_{\bS},\D\right) 
\end{equation} 
of pointed spaces becomes a weak equivalence after looping. For this it will suffice to show that~\eqref{e:map} is a $(-1)$-truncated map of spaces (after forgetting the base point), i.e., that each of its homotopy fibers is either empty or contractible. In other words, we will show that the map $\ast \coprod_{\ast \coprod \ast} [1]_{\bS}\lrar \ast \coprod_{\ast \coprod \ast} \E$ is \textbf{$(-1)$-cotruncated}. To this end, observe that the latter map is the homotopy cobase change in $(\Cat_{\bS})_{\ast//\ast}$ of the map $[1]_{\bS} \coprod \ast \lrar \E \coprod \ast$, and so it will suffice to show that the map $\eta: [1]_{\bS} \lrar \E$ is $(-1)$-cotruncated in $(\Cat_{\bS})_{/\ast}$. Since $\eta$ is a cofibration between cofibrant objects this is equivalent to the assertion that the fold map $\E\coprod_{[1]_{\bS}} \E\lrar \E$ is a weak equivalence in $(\Cat_{\bS})_{/\ast}$, or, equivalently, that any of the two canonical maps $\E \lrar \E \coprod_{1_{\bS}} \E$ is a weak equivalence in $(\Cat_{\bS})_{/\ast}$ (or in $\Cat_{\bS}$). But this now follows from the \textbf{invertibility hypothesis} assumed on $\bS$ (see~\cite[Definition A.3.2.16]{Lur09}) since $\eta$ classifies a morphism of $\E$ which is invertible in $\Ho(\E)$ (see~\cite[Remark A.3.2.14]{Lur09}).
\end{proof}

It will be useful to record the following enhanced version of Proposition~\ref{p:conceptual}, which allows one to compute relative cotangent complexes as well. We first note that the Quillen equivalence of Theorem~\ref{t:comp-cat} is natural in $\C$. Indeed, if $f: \C \lrar \D$ is a map of $\bS$-enriched categories and $\vphi: \C^{\op} \otimes \C \lrar \D^{\op} \otimes \D$ is the induced map then we have a commutative square of Quillen adjunctions
$$ \xymatrix{
(\Cat_{\bS})_{\C//\C} \ar@<-1ex>[d]_-{f_!}\ar@<-1ex>[r]_-{\G^{\C}_{\aug}}  &
\Fun(\C^{\op} \otimes \C,\bS)_{\Map_{\C}//\Map_{\C}} \ar@<-1ex>[l]_-{\F^{\C}_{\aug}}\ar@<-5.2ex>[d]_-{\vphi_!} \\
(\Cat_{\bS})_{\D//\D}\ar@<-1ex>[u]^-{\dashv}_{f^*}\ar@<-1ex>[r]_-{\G^{\D}_{\aug}} &
\Fun(\D^{\op} \otimes \D,\bS)_{\Map_{\D}//\Map_{\D}}\ar@<-1ex>[l]_-{\F^{\D}_{\aug}}\ar@<3.2ex>[u]^-{\dashv}_{\vphi^*} 
}$$
Here $f_! \dashv f^*$, as in \S\ref{s:tangent}, is the adjunction induced on over-under objects by the identity adjunction of $\Cat_{\bS}$ (see~\eqref{e:over-under-2}), and $\vphi_! \dashv \vphi^*$ is the adjunction induced on over-under objects by the restriction-left Kan extension adjunction $\Fun(\C^{\op} \otimes \C,\bS) \adj \Fun(\D^{\op} \otimes \D,\bS)$. 
Applying the stabilization functor we obtain a commutative diagram of Quillen adjunctions
\begin{equation}\label{e:square-tangent}
\xymatrix{
\T_\C\Cat_{\bS}\ar@<-1ex>[d]_-{f^{\Sp}_!}\ar@<-1ex>[r]_-{\G^{\C}_{\Sp}}^-{\simeq}  &
\T_{\Map_\C}\Fun(\C^{\op} \otimes \C,\bS) \ar@<-1ex>[l]_-{\F^{\C}_{\Sp}}\ar@<-1ex>[d]_-{\vphi^{\Sp}_!} \\
\T_\D\Cat_{\bS}\ar@<-1ex>[u]^-{\dashv}_{f^*_{\Sp}}\ar@<-1ex>[r]_-{\G^{\D}_{\Sp}}^-{\simeq} &
\T_{\Map_\D}\Fun(\D^{\op} \otimes \D,\bS)\ar@<-1ex>[l]_-{\F^{\D}_{\Sp}}\ar@<-1ex>[u]^-{\dashv}_{\vphi^*_{\Sp}} 
}
\end{equation}
where the horizontal Quillen adjunctions are the Quillen equivalences of Theorem~\ref{t:comp-cat} associated to $\C$ and $\D$ respectively. We then have the following generalization of Proposition~\ref{p:conceptual}:
\begin{cor}\label{c:enhanced}
Let $f: \C \lrar \D$ be a map of $\bS$-enriched categories.  
Then there is a natural weak equivalence
$$ \theta_{f}:\LL\vphi^{\Sp}_!(L_{\Map_\C}[-1]) \x{\simeq}{\lrar} \G^{\D}_{\Sp}\LL f_!^{\Sp}(L_{\C}) $$
in the model category $\T_{\Map_{\D}}\Fun(\D^{\op} \otimes \D,\bS)$.
\end{cor}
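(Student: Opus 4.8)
The plan is to run the same argument used for Proposition~\ref{p:conceptual} through the naturality square~\eqref{e:square-tangent}, tracking the maps $f$ and $\vphi$ explicitly. The key observation is that the identification $\F^{\C}_{\aug} \cong p^{\aug}_!\lam_{\aug}$, which expresses the comparison functor for $\C$ as a composite through enriched categories over-under $[1]_{\bS} \otimes \C$, is itself natural in $\C$. Concretely, if $f:\C\lrar\D$ is our map and $\vphi:\C^{\op}\otimes\C \lrar \D^{\op}\otimes\D$ the induced functor, then the commutative square relating $\lam$ for $\C$ and $\D$ comes from the tensoring $(-)\otimes f : [1]_{\bS}\otimes\C \lrar [1]_{\bS}\otimes\D$, and the square relating the two $p^{\aug}_!$'s comes from the obvious map of pushout diagrams $\big(\C \leftarrow [1]_{\bS}\otimes\C \to -\big) \Rightarrow \big(\D \leftarrow [1]_{\bS}\otimes\D \to -\big)$. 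Composing, we get a natural identification $\LL f_!^{\Sp}\circ\LL\F^{\C}_{\Sp} \simeq \LL\F^{\D}_{\Sp}\circ\LL\vphi_!^{\Sp}$ of left adjoints — which is exactly the commutativity of the left square in~\eqref{e:square-tangent} read through adjunction.

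With that in hand, I would proceed as follows. First, using the absolute comparison~\eqref{e:other-side} for $\C$ (as established in the proof of Proposition~\ref{p:conceptual}), namely $\LL\F^{\C}_{\Sp}(L_{\Map_\C})\simeq L_{\C}[1]$, together with the naturality of this comparison, write
\[
\LL f_!^{\Sp}(L_{\C})[1] \simeq \LL f_!^{\Sp}\LL\F^{\C}_{\Sp}(L_{\Map_\C}) \simeq \LL\F^{\D}_{\Sp}\LL\vphi_!^{\Sp}(L_{\Map_\C}).
\]
Second, since $\G^{\D}_{\Sp}$ is the right adjoint in a Quillen equivalence (Theorem~\ref{t:comp-cat} applied to $\D$), applying $\RR\G^{\D}_{\Sp}$ to both sides and using the (derived) unit/counit of that equivalence gives
\[
\RR\G^{\D}_{\Sp}\LL f_!^{\Sp}(L_{\C})[1] \simeq \LL\vphi_!^{\Sp}(L_{\Map_\C}),
\]
and desuspending once yields the claimed weak equivalence $\theta_f$. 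The only genuine content beyond bookkeeping is the naturality of the strong equivalence of suspension spectra $\ovl{\Sig}^{\infty}(\ast\coprod_{[1]_{\bS}}\A)\x{\simeq}{\lrar}\ovl{L}_\ast[1]$ constructed in the proof of Proposition~\ref{p:conceptual}; but that map was built entirely from the universal maps $\ast\coprod\ast\lrar [1]_{\bS}\x{\eta}{\lrar}\E\x{\simeq}{\lrar}\ast$ in $\Cat_{\bS}$, which do not involve $\C$ at all, so after tensoring with $\C$ resp.\ $\D$ and mapping along $f$ the relevant square commutes on the nose (before deriving), and hence commutes up to coherent homotopy after applying $\ovl{\Sig}^{\infty}$ and passing to stabilizations.

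I would organize the write-up so that the proof reads: (i) recall $\F^{\C}_{\aug}\cong p^{\aug}_!\lam_{\aug}$ and check it is natural in $\C$, giving the commuting square of left Quillen functors whose stabilization is the left square of~\eqref{e:square-tangent}; (ii) invoke the naturality of the suspension-spectrum identification from Proposition~\ref{p:conceptual}, so that $\theta_{\C}$ and $\theta_{\D}$ fit into a commuting square with $\LL f_!^{\Sp}$ and $\LL\vphi_!^{\Sp}$; (iii) conclude by transporting along the Quillen equivalence $\F^{\D}_{\Sp}\dashv\G^{\D}_{\Sp}$. I expect the main obstacle to be purely expository rather than mathematical: verifying that the several adjunction squares ($\lam$ vs.\ $\vphi$, $p^{\aug}_!$ vs.\ $f_!$, and the over-under adjunctions of~\cite[Construction 3.1.4]{part1}) paste together coherently, i.e.\ that the mate of the pasted square is the pasted square of mates. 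Once that coherence is granted, $\theta_f$ is obtained by the same chain of equivalences as $\theta_{\C}$, simply carried along the natural transformation induced by $f$, with no new homotopical input required. In particular, taking $f = \id_{\C}$ recovers Proposition~\ref{p:conceptual}, and the homotopy cofiber of $\theta_f$ along $\theta_{\D}$ computes the image of the \emph{relative} cotangent complex $L_{\D/\C}$ as the desuspension of $\operatorname{hocofib}[\LL\vphi_!^{\Sp}(L_{\Map_\C})\lrar L_{\Map_\D}]$ under the equivalence of Corollary~\ref{c:comp-lifts}.
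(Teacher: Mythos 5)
Your argument matches the paper's proof step for step: apply $\LL f^{\Sp}_!$ to the adjoint weak equivalence $\theta^{\ad}_\C\colon \LL\F^\C_{\Sp}(L_{\Map_\C})\to L_\C[1]$ of Proposition~\ref{p:conceptual}, commute $\LL f^{\Sp}_!\LL\F^\C_{\Sp}$ past the square~\eqref{e:square-tangent} to get $\LL\F^\D_{\Sp}\LL\vphi^{\Sp}_!$, and adjoin back through the Quillen equivalence $\F^\D_{\Sp}\dashv\G^\D_{\Sp}$. One caveat worth flagging: the naturality-in-$\C$ of the suspension-spectrum identification, which you describe as the ``only genuine content beyond bookkeeping,'' is not actually required here --- the chain only uses the already-fixed morphism $\theta^{\ad}_\C$ for the given $\C$ together with the commutativity of~\eqref{e:square-tangent}, and the latter the paper obtains immediately from the explicit formulas for the right adjoints $\G^\C_{\aug},\G^\D_{\aug},f^*,\vphi^*$ (both composites send an object of $(\Cat_\bS)_{\D//\D}$ to the same pullback of mapping spaces) rather than via a detour through $p^{\aug}_!\lam_{\aug}$.
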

\begin{proof} 
By Proposition~\ref{p:conceptual} we have a natural weak equivalence $\theta_\C:L_{\Map_\C}[-1] \x{\simeq}{\lrar} \G^{\C}_{\Sp}(L_{\C})$, and since $\F^{\C}_{\Sp} \dashv \G^{\C}_{\Sp}$ is a Quillen equivalence we may consider instead the adjoint weak equivalence $\theta^{\ad}_{\C}:\LL\F^{\C}_{\Sp}(L_{\Map_\C}) \x{\simeq}{\lrar} L_{\C}[1]$. Using the commutativity of~\eqref{e:square-tangent} we obtain a natural weak equivalence
$$\xymatrix@C=3pc{ 
\LL\F^{\D}_{\Sp}\LL\vphi_!^{\Sp}(L_{\Map_\C}) \simeq \LL f^{\Sp}_!\LL\F^{\C}_{\Sp}(L_{\Map_\C}) \ar[r]^-{\LL f^{\Sp}_!\theta^{\ad}_{\C}}_-{\simeq} & \LL f^{\Sp}_!(L_\C[1])}$$
Using the fact that $\F^{\D}_{\Sp} \dashv \G^{\D}_{\Sp}$ is a Quillen equivalence and $\G^{\D}_{\Sp}$ preserves weak equivalences we then obtain an adjoint equivalence
$$ \theta_{f} : \LL \vphi_!^{\Sp}(L_{\Map_\C}[-1]) \x{\simeq}{\lrar} \G^{\D}_{\Sp}\LL f_!^{\Sp}(L_{\C})$$
as desired.
\end{proof}

\begin{cor}\label{c:relative-2}
Let $f: \C \lrar \D$ be a map of $\bS$-enriched categories.  
Then there is a natural homotopy cofiber sequence
$$\G^{\D}_{\Sp}(L_{\D/\C})\lrar  \LL\vphi^{\Sp}_!(L_{\Map_\C}) \lrar L_{\Map_\D}$$
in the model category $\T_{\Map_\D}\Fun(\D^{\op} \otimes \D,\bS)$.
\end{cor}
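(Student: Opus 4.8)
The plan is to transport the defining homotopy cofiber sequence of the relative cotangent complex $L_{\D/\C}$ across the Quillen equivalence of Theorem~\ref{t:comp-cat} and then rotate. Unwinding the definition of the relative cotangent complex from \S\ref{s:tangent}, the object $L_{\D/\C}$ sits in a natural homotopy cofiber sequence
$$ \LL f^{\Sp}_!(L_\C) \lrar L_\D \lrar L_{\D/\C} $$
in $\T_\D\Cat_{\bS}$, the first term being $\LL\Sig^{\infty}_+(f)$, which is identified with $\LL f^{\Sp}_!(L_\C)$. Since $\T_\D\Cat_{\bS}$ and $\T_{\Map_\D}\Fun(\D^{\op} \otimes \D,\bS)$ are both stable model categories and $\G^{\D}_{\Sp}$ is a right Quillen equivalence, the derived functor $\RR\G^{\D}_{\Sp}$ is an exact equivalence of the associated homotopy categories; in particular it preserves homotopy (co)fiber sequences, so we obtain a natural homotopy cofiber sequence
$$ \RR\G^{\D}_{\Sp}\LL f^{\Sp}_!(L_\C) \lrar \RR\G^{\D}_{\Sp}(L_\D) \lrar \RR\G^{\D}_{\Sp}(L_{\D/\C}) $$
in $\T_{\Map_\D}\Fun(\D^{\op} \otimes \D,\bS)$.

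The next step is to identify the first two terms using results already established. Proposition~\ref{p:conceptual}, applied to $\D$, provides a natural equivalence $\RR\G^{\D}_{\Sp}(L_\D) \simeq L_{\Map_\D}[-1]$, and Corollary~\ref{c:enhanced} provides a natural equivalence $\RR\G^{\D}_{\Sp}\LL f^{\Sp}_!(L_\C) \simeq \LL\vphi^{\Sp}_!(L_{\Map_\C}[-1])$, which we rewrite as $\LL\vphi^{\Sp}_!(L_{\Map_\C})[-1]$ since $\LL\vphi^{\Sp}_!$ is exact and hence commutes with desuspension. Substituting these in, the cofiber sequence above becomes a natural homotopy cofiber sequence
$$ \LL\vphi^{\Sp}_!(L_{\Map_\C})[-1] \lrar L_{\Map_\D}[-1] \lrar \RR\G^{\D}_{\Sp}(L_{\D/\C}). $$
Rotating this triangle once to the left and then suspending the whole sequence once yields exactly
$$ \G^{\D}_{\Sp}(L_{\D/\C}) \lrar \LL\vphi^{\Sp}_!(L_{\Map_\C}) \lrar L_{\Map_\D}, $$
which is the asserted sequence.

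The only delicate point, and the main obstacle, is verifying that the two identifications above are compatible with the connecting map of the triangle: one must check that under $\theta_\D$ of Proposition~\ref{p:conceptual} and $\theta_f$ of Corollary~\ref{c:enhanced} the map $\RR\G^{\D}_{\Sp}$ of the structure map $\LL f^{\Sp}_!(L_\C) \lrar L_\D$ corresponds, up to homotopy, to the canonical comparison map $\LL\vphi^{\Sp}_!(L_{\Map_\C})[-1] \lrar L_{\Map_\D}[-1]$, so that its cofiber really is $\RR\G^{\D}_{\Sp}(L_{\D/\C})$ with the correct map. By the construction of $\theta_f$ out of $\theta_\C$ through the commutative square of Quillen adjunctions~\eqref{e:square-tangent}, this reduces to the naturality of the comparison equivalence $\theta_{(-)}$ of Proposition~\ref{p:conceptual} in the enriched category $\C$ — i.e.\ to the homotopy commutativity of the square relating $\theta_\C$ and $\theta_\D$ along $f$. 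This naturality is essentially immediate from the construction, since $\theta_\C$ was obtained from the map $\nu$ of~\eqref{e:nu} simply by tensoring with $\C$, a construction manifestly functorial in $\C$; granting it, the remainder is formal triangulated-category manipulation.
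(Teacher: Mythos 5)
Your argument is correct and runs along essentially the same lines as the paper's: both proofs transport the defining triangle $\LL f^{\Sp}_!(L_\C)\rightarrow L_\D\rightarrow L_{\D/\C}$ across the stable Quillen equivalence $\G^\D_{\Sp}$, identify the first two terms via Proposition~\ref{p:conceptual} and Corollary~\ref{c:enhanced}, and rotate/shift. The compatibility of connecting maps that you flag is indeed the content packaged into Corollary~\ref{c:enhanced} (through the commuting square~\eqref{e:square-tangent}), so your resolution matches what the paper leaves implicit.
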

\begin{proof}
By Corollary \ref{c:relative} the middle term of the above sequence can be identified with the $\G^{\D}_{\Sp}\LL \vphi_!^{\Sp}(L_{\C}[1])$, while the last term is given by $\G^{\D}_{\Sp}(L_{\D}[1])$ by Proposition \ref{p:conceptual}. This identifies the above sequence with the image of the cofiber sequence $L_{\D/\C}\lrar \LL \vphi_!^{\Sp}(L_{\C}[1])\lrar L_{\D}[1]$ under the equivalence $\G^{\D}_{\Sp}$.
\end{proof}

Let us now exploit Corollary~\ref{c:relative-2} to compute the cotangent complex of \textbf{associative algebras} in $\bS$.
The functor $\L: \Alg(\bS)\lrar (\Cat_{\bS})_{\ast/}$ of~\eqref{e:L} sends an associative algebra object $A$ to the pointed $\bS$-enriched category consisting of a single object whose endomorphism algebra is $A$. Adopting a similar notation as in Section \ref{s:groups} let us denote this pointed category by $\BB A_{\ast} = \L(A) \in (\Cat_{\bS})_{\ast/}$ and denote by $\BB A$ the underlying unpointed category of $\BB A_{\ast}$. By the commutation of $\Sig^{\infty}_+$ with left Quillen functors we may deduce that $L_A$ and $L_{\BB A_{\ast}}$ have equivalent images in $\T_{\BB A}\Cat_{\bS}$, and by Proposition~\ref{p:relative} this image can be identified with the relative cotangent complex of the base point inclusion $\ast \lrar \BB A$. 

We note that $\Fun(\BB A^{\op} \otimes \BB A,\bS) \cong \BiMod_A(\bS)$ can simply be identified with the category of $A$-bimodules. Let us denote the underlying $A$-bimodule of $A$ by ${}_A A_{A}$ and the underlying left (resp. right) $A$-module of $A$ by ${}_A A$ (resp. $A_{A}$). Using Corollary~\ref{c:comp-lifts} we may identify the tangent model category $\T_A\Alg(\bS)$ with the fiber of $\BiMod_A(\T\bS) \lrar \BiMod_A(\bS)$ over the $A$-bimodule ${}_A A_{A}$. Applying Corollary~\ref{c:relative-2} to the inclusion $g:\ast \lrar \BB A$ we now obtain the following corollary:
\begin{cor}\label{c:famous-cofiber}
Let $A$ be an associative algebra object in $\bS$. There exists a natural homotopy cofiber sequence 
\begin{equation}\label{e:cofiber-matan-2}
\G^{\BB A}_{\Sp}L_A \lrar \LL\Sig^{\infty}_{+}({}_A A \otimes A_A) \lrar \LL\Sig^{\infty}_{+}({}_A A_A)
\end{equation}
in the model category $\Sp(\BiMod_A(\bS)_{A//A})$.
\end{cor}
\begin{proof}
Let $\vphi: \ast \lrar \BB A^{\op} \otimes \BB A$ be the map induced by $\ast \lrar \BB A$. Then $\vphi_!: \bS \lrar \BiMod_A$ is just the free $A$-bimodule functor, and hence $\vphi_!(1_{\bS}) \cong {}_A A \otimes A_A$. The result is now revealed as a particular case of Corollary~\ref{c:relative-2}.
\end{proof}

\begin{rem}
When $\bS$ is \textbf{stable}, the cofiber sequence~\ref{e:cofiber-matan-2} can be written as
\begin{equation}\label{e:cofiber-matan}
L_A \lrar A^{\op} \otimes A \lrar A
\end{equation}
where we view all objects as $A$-bimodules. This is the $n=1$ case of the cofiber sequence appearing in~\cite[Theorem 7.3.5.1]{Lur14} and in~\cite[Theorem 1.1]{Fra13}. When tensored with the $A$-bimodule $A$ one obtained a long exact sequence relating the Quillen cohomology and \textbf{Hochschild cohomology} of $A$.
\end{rem}

\begin{example}
When $\bS = \C(k)$ is the category of chain complexes over a field $k$, $A$ is a discrete algebra and $M$ is a discrete $A$-bimodule, the cofiber sequence~\eqref{e:cofiber-matan} identifies the Quillen cohomology groups $\rH^n_Q(A,M)$ for $n \geq 1$ with the Hochschild cohomology group $\rH\rH^{n+1}(A,M)$. For $n=0$ we obtain instead a surjective map $f_0:\rH^0_Q(A,M) \lrar \rH\rH^1(A,M)$. Unwinding the definitions we see that $\rH^0_Q(A,M)$ is the group of derivations $A \lrar M$, $\rH\rH^1(A,M)$ is the group of derivations modulo the inner derivations, and $f_0$ is the natural map between these two types of data. 
\end{example}

\begin{example}\label{e:monoid}
Let $\bS$ be the category of simplicial sets with the Kan-Quillen model structure. Then associative algebras in $\bS$ are the same as simplicial monoids. As explained above we may identify the tangent model category $\T_A\Alg(\bS)$ at a given monoid $A$ with the fiber of $\BiMod_A(\T\bS) \lrar \BiMod_A(\bS)$ over the $A$-bimodule $\ovl{A}$. 
Unwinding the definitions we may describe such objects as parametrized spectra $\{Z_a\}_{a \in A}$ over $A$, together with a suitably compatible collections of maps $Z_a \lrar Z_{bac}$ for $b,c \in A$. In other words, we may consider objects in $\T_A\Alg(\bS)$ as \textbf{$(A^{\op} \times A)$-equivariant parametrized spectra over $A$} (in the naive sense). 
Under this identification, the right most term in~\eqref{e:cofiber-matan-2} is the constant sphere spectrum $a \mapsto \Sig^{\infty}_+(\{a\}) = \mathbb{S}$ and the middle term is the parametrized spectrum $a \mapsto \Sig^{\infty}_+(m^{-1}(a))$ where $m^{-1}(a)$ denotes the homotopy fiber of the multiplication map $m: A \times A \lrar A$ over a point $a \in A$. We may thus identify the cotangent complex of $A$ with the equivariant family $\{Z_a\}_{a \in A}$ in which $Z_a$ is given by the homotopy fiber of the map of spectra $\Sig^{\infty}_+(m^{-1}(a)) \lrar \Sig^{\infty}_+(\{a\})$.  
One can also think of $Z_a$ as the ``coreduced'' suspension spectrum of the homotopy fiber $m^{-1}(a)$. 
\end{example}

\subsection{Simplicial categories and $\infty$-categories}\label{s:simp-categories}
In this subsection we will consider in further detail the example where $\bS$ is the category of simplicial sets endowed with the Kan-Quillen model structure. In this case $\Cat_{\bS}$ is a model for the theory of $\infty$-categories, for which a well-developed theory is available, notably in the setting of Joyal's model structure on simplicial sets. We will use this theory to give a simplified description of the tangent $\infty$-category $\T_{\C}\Cat_\infty$ at a fixed $\infty$-category $\C$ in terms of functors out of its \textbf{twisted arrow category}. As an application, we show how this description can be used to give an obstruction theory for splitting homotopy idempotents.   

Recall from~\cite{Lur09} that we have a Quillen equivalence
$$ \fC: \Set_{\Del} \adj \Cat_{\bS}: \rN $$
where $\Set_{\Del}$ is the category of simplicial sets endowed with the Joyal model structure. 
If $\C$ is a fibrant simplicial category, then the counit map $\eps: \fC(\rN\C) \lrar \C$ is a weak equivalence, in which case the natural map $\eps':\fC(\rN\C^{\op} \times \rN\C) \lrar \C^{\op} \times \C$ is a weak equivalence as well. The straightening and unstraightening functors of~\cite[\S 2.2]{Lur09} then give a Quillen equivalence
$$ \St_{\eps'}: (\Set_{\Del})^{\cov}_{/\rN\C{}^{\op} \times \rN\C} \adj \Fun(\C^{\op} \times \C, \bS) : \Un_{\eps'} $$
where $(\Set_{\Del})^{\cov}_{/\rN\C^{\op} \times \rN\C}$ is the category of simplicial sets over $\rN\C^{\op} \times \rN\C$ endowed with the covariant model structure (see~\cite[\S 2]{Lur09}). Let $\Tw(\rN\C)$ be the twisted arrow category of $\rN\C$, equipped with its canonical left fibration $m:\Tw(\rN\C) \lrar \rN\C^{\op} \times \rN\C$ (see~\cite[Construction 5.2.1.1, Proposition 5.2.1.3]{Lur14}, and note that we are using the opposite convention of loc.cit.). We then have a weak equivalence $\beta: \St_{\eps'}(m) \x{\simeq}{\lrar} \Map_{\C}$ (see \cite[Proposition 5.2.1.11]{Lur14}). It follows that the induced adjunction
$$ (\St_{\eps'})_{\beta//\beta}: ((\Set_{\Del})^{\cov}_{/\rN\C^{\op} \times \rN\C})_{m//m} \adj \Fun(\C^{\op} \times \C, \bS)_{\Map_{\C}//\Map_{\C}} : (\Un_{\eps'})_{\beta//\beta} $$
is a Quillen equivalence as well. 
Theorem~\ref{t:comp-cat} now implies that the tangent model category $\T_\C\Cat_{\bS}$ is Quillen equivalent to $\Sp((\Set_{\Del}^{\cov}{}_{/\rN\C^{\op} \times \rN\C})_{m//m})$. Let us now consider the category $(\Set_{\Del})^{\cov}_{/\Tw(\rN\C)}$ of simplicial sets over $\Tw(\rN\C)$ endowed with the covariant model structure. The left Quillen functor $(\Set_{\Del})^{\cov}_{/\Tw(\rN\C)} \lrar (\Set_{\Del})^{\cov}_{/\rN\C^{\op} \times \rN\C}$ postcomposing with $m$ naturally lifts to a left Quillen functor 
$$ m_!:(\Set_{\Del})^{\cov}_{/\Tw(\rN\C)} \lrar \left((\Set_{\Del})^{\cov}_{/\rN\C^{\op} \times \rN\C}\right)_{/m} .$$  
Furthermore, $m_!$ is an equivalence on the underlying categories, which is in fact a left Quillen equivalence. Indeed, both model structures have the same cofibrations and since every object is cofibrant we see that $m_!$ preserves weak equivalences. We may therefore consider $m_!$ as a left Bousfield localization functor. Since $m_!$ detects weak equivalences between fibrant objects by~\cite[Remark 2.2.3.3]{Lur09}, it follows that this left Bousfield localization must be an equivalence. Using the sequence of Quillen equivalences
$$ (\Set_{\Del})^{\cov}{}_{/\Tw(\rN\C)} \x{\simeq}{\adj} \left((\Set_{\Del})^{\cov}_{/\rN\C^{\op} \times \rN\C}\right)_{/m} \x{\simeq}{\adj} \Fun(\C^{\op} \times \C, \bS)_{/\Map_{\C}} $$
we conclude from Theorem~\ref{t:comp-cat} that the tangent model category $\T_\C\Cat_{\bS}$ is Quillen equivalent to $\Sp\left(\left(\Set_{\Del}^{\cov}{}_{/\Tw(\rN\C)}\right)_{\ast}\right)$. Furthermore, Proposition~\ref{p:conceptual} implies that the image of the cotangent complex $L_\C$ in the model category $\Sp\left(\left(\Set_{\Del}^{\cov}{}_{/\Tw(\rN\C)}\right)_{\ast}\right)$ is weakly equivalent to the shifted cotangent complex $L_{\Tw(\rN \C)}[-1]$ of the object $\Tw(\rN \C)$, considered as a (final) object of the covariant model category $(\Set_{\Del})^{\cov}{}_{/\Tw(\rN\C)}$. Since $(\Set_{\Del}^{\cov}{}_{/\Tw(\rN\C)})_{\infty}$ is equivalent to the $\infty$-category of functors from $\Tw(\rN\C)$ to the $\infty$-category $\bS_{\infty}$ of spaces, the above considerations can be summarized by the following corollary (taking into account Corollary~\ref{c:conceptual}):
\begin{cor}\label{c:twisted-arrow}
Let $\C$ be a fibrant simplicial category. Then the underlying $\infty$-category of $\T_{\C}\Cat_{\bS}$ is equivalent to the $\infty$-category of functors
$$ \Tw(\rN\C) \lrar \Sp(\bS_{\infty}) = \Spectra $$
from the twisted arrow category of $\rN\C$ to the $\infty$-category of spectra. The cotangent complex of $\C$ is identified with the constant functor $\Tw(\rN\C) \lrar \Spectra$ on the desuspension $\mathbb{S}[-1]$ of the sphere spectrum. 
\end{cor}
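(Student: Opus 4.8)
\textbf{Proof proposal for Corollary~\ref{c:twisted-arrow}.}

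The plan is to chain together the general comparison result Theorem~\ref{t:comp-cat} with the straightening/unstraightening machinery of~\cite[\S 2.2]{Lur09}, and then to port Proposition~\ref{p:conceptual} and Corollary~\ref{c:conceptual} through this chain. First I would fix a fibrant simplicial category $\C$ and recall that, since $\fC \dashv \rN$ is a Quillen equivalence, the counit $\eps\colon\fC(\rN\C)\lrar\C$ is a weak equivalence, hence so is $\eps'\colon\fC(\rN\C^{\op}\times\rN\C)\lrar\C^{\op}\times\C$. This gives a Quillen equivalence $\St_{\eps'}\dashv\Un_{\eps'}$ between $(\Set_{\Del})^{\cov}_{/\rN\C^{\op}\times\rN\C}$ (covariant model structure) and $\Fun(\C^{\op}\times\C,\bS)$. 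Next I would invoke~\cite[Proposition 5.2.1.11]{Lur14}, which identifies the straightening $\St_{\eps'}(m)$ of the canonical left fibration $m\colon\Tw(\rN\C)\lrar\rN\C^{\op}\times\rN\C$ with $\Map_{\C}$ up to a canonical weak equivalence $\beta$. Passing to over-under (augmented) objects, and using that $\St/\Un$ commutes suitably with slicing (e.g. \cite[Lemma 3.1.7]{part1}), yields a Quillen equivalence $((\Set_{\Del})^{\cov}_{/\rN\C^{\op}\times\rN\C})_{m//m}\simeq\Fun(\C^{\op}\times\C,\bS)_{\Map_{\C}//\Map_{\C}}$. Combining with Theorem~\ref{t:comp-cat} identifies $\T_{\C}\Cat_{\bS}$ with $\Sp((\Set_{\Del}^{\cov}{}_{/\rN\C^{\op}\times\rN\C})_{m//m})$.

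The second step is to absorb $m$ into the base, trading slices over $\rN\C^{\op}\times\rN\C$ for slices over $\Tw(\rN\C)$. Here I would use the left Quillen functor $m_!\colon(\Set_{\Del})^{\cov}_{/\Tw(\rN\C)}\lrar((\Set_{\Del})^{\cov}_{/\rN\C^{\op}\times\rN\C})_{/m}$, which is an isomorphism of underlying categories and preserves cofibrations (all objects cofibrant) and weak equivalences, hence is a left Bousfield localization; since it detects weak equivalences between fibrant objects by~\cite[Remark 2.2.3.3]{Lur09}, it is in fact a Quillen equivalence. Chaining, $\T_{\C}\Cat_{\bS}$ is Quillen equivalent to $\Sp((\Set_{\Del}^{\cov}{}_{/\Tw(\rN\C)})_{\ast})$, whose underlying $\infty$-category — since the covariant model structure over a fixed base models the functor $\infty$-category into spaces — is $\Fun(\Tw(\rN\C),\Sp(\bS_{\infty}))=\Fun(\Tw(\rN\C),\Spectra)$. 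This proves the first sentence of the corollary.

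For the identification of the cotangent complex, I would transport Proposition~\ref{p:conceptual} along the above chain: it says $\RR\G^{\C}_{\Sp}(L_{\C})\simeq L_{\Map_{\C}}[-1]$ in $\T_{\Map_{\C}}\Fun(\C^{\op}\times\C,\bS)$. Under the Quillen equivalences just constructed, $L_{\Map_{\C}}$ corresponds to the cotangent complex of $\Tw(\rN\C)$ viewed as the final object of $(\Set_{\Del})^{\cov}{}_{/\Tw(\rN\C)}$; by Corollary~\ref{c:stable-functor-tangent} applied after stabilization (or directly by Remark~\ref{r:functor-cotangent} together with Corollary~\ref{c:conceptual}), the cotangent complex of the terminal functor $\Tw(\rN\C)\lrar\bS_{\infty}$ with value $\ast$ corresponds, under $\Sig^{\infty}_{\int}$, to the constant functor with value the sphere spectrum $\mathbb{S}$. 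Desuspending gives the constant functor on $\mathbb{S}[-1]$, as claimed.

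The main obstacle I anticipate is bookkeeping the compatibility of the straightening functor with the over-under (augmented) constructions and with stabilization: one must check that $(\St_{\eps'})_{\beta//\beta}$ really is a Quillen equivalence and that it matches the square-zero/suspension-spectrum structures so that Proposition~\ref{p:conceptual} transports without a sign or shift error, and that the ``localization is trivial'' argument for $m_!$ genuinely applies to the covariant model structure over $\Tw(\rN\C)$ rather than just over a point. None of these steps is conceptually deep, but each is the kind of place where a careless identification could drop the crucial degree shift by $[-1]$; keeping track of that shift through every equivalence is the real content of the argument.
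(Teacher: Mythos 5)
Your proposal is correct and follows essentially the same route as the paper: chaining Theorem~\ref{t:comp-cat} with the straightening equivalence over $\rN\C^{\op}\times\rN\C$, identifying $\St_{\eps'}(m)$ with $\Map_{\C}$ via~\cite[Proposition~5.2.1.11]{Lur14}, trading the base for $\Tw(\rN\C)$ using the localization argument for $m_!$, and then transporting Proposition~\ref{p:conceptual} to identify the cotangent complex with the shifted constant sphere functor. The technical concerns you flag (compatibility of straightening with over-under objects and tracking the $[-1]$ shift) are exactly the points the paper's prose also addresses.
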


\begin{cor}\label{c:twisted-arrow-quillen}
Let $\F: \Tw(\rN\C) \lrar \Spectra$ be a functor and let $M_{\F} \in \Sp((\Cat_{\bS})_{\C//\C})$ be the corresponding object under the equivalence of Corollary~\ref{c:twisted-arrow}. Then the Quillen cohomology group $\rH^n_Q(\C;M_{\F})$ is naturally isomorphic to the $(-n-1)$'th homotopy group of the spectrum $\lim\F$. In particular, if $\C$ is a discrete category and $\F$ is a diagram of Eilenberg-MacLane spectra corresponding to a functor $\F': \Tw(\C) \lrar \Ab$, then the Quillen cohomology group $\rH^n_Q(\C;M_{\F})$ is naturally isomorphic to the $(n+1)$'th derived functor $\lim^{n+1}\F'$.
\end{cor}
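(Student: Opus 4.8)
The plan is to combine Corollary~\ref{c:twisted-arrow} with the general formula for Quillen cohomology from Definition~\ref{d:Quillen-coh} and then translate everything into the $\infty$-category $\Fun(\Tw(\rN\C),\Spectra)$. By Corollary~\ref{c:twisted-arrow}, the equivalence $\T_\C\Cat_{\bS} \simeq \Fun(\Tw(\rN\C),\Spectra)$ carries the cotangent complex $L_\C$ to the constant functor $\uline{\mathbb{S}[-1]}$ with value $\mathbb{S}[-1]$, and it carries $M_\F$ to $\F$. Therefore the derived mapping spectrum computing Quillen cohomology becomes
$$
\Map^{\der}(L_\C,\Sig^n M_\F) \simeq \Map_{\Fun(\Tw(\rN\C),\Spectra)}\bigl(\uline{\mathbb{S}[-1]},\F[n]\bigr).
$$
Now I would use the standard adjunction between the constant-diagram functor $\Spectra \to \Fun(\Tw(\rN\C),\Spectra)$ and the limit functor: since $\uline{\mathbb{S}[-1]}$ is the constant diagram on $\mathbb{S}[-1]$ and $\mathbb{S}$ is the unit of $\Spectra$, we get
$$
\Map_{\Fun(\Tw(\rN\C),\Spectra)}\bigl(\uline{\mathbb{S}[-1]},\F[n]\bigr) \simeq \Map_{\Spectra}\bigl(\mathbb{S}[-1],\lim\F[n]\bigr) \simeq \Map_{\Spectra}\bigl(\mathbb{S},\lim\F[n+1]\bigr) = \Om^{\infty}\bigl((\lim\F)[n+1]\bigr).
$$
Taking $\pi_0$ yields $\rH^n_Q(\C;M_\F) \cong \pi_0\bigl((\lim\F)[n+1]\bigr) \cong \pi_{-n-1}(\lim\F)$, which is the first assertion.

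For the second assertion I would specialize to the case where $\C$ is a discrete (ordinary) category, so that $\Tw(\rN\C) \simeq \rN(\Tw(\C))$ is the nerve of the ordinary twisted arrow category $\Tw(\C)$, and $\F$ is obtained by postcomposing a functor $\F'\colon \Tw(\C) \to \Ab$ with the Eilenberg--MacLane functor $H\colon \Ab \to \Spectra$. Here I would invoke the standard identification of the limit of a diagram of Eilenberg--MacLane spectra: $\pi_{-m}(\lim (H\circ \F')) \cong \lim{}^m \F'$ for $m \geq 0$ and this vanishes for $m<0$, where $\lim^m$ denotes the $m$'th right-derived functor of the (ordinary) limit $\lim\colon \Fun(\Tw(\C),\Ab) \to \Ab$ --- this is just the statement that the homotopy limit of a diagram of abelian groups (viewed as discrete spectra) computes $\R\lim$, whose homotopy groups are the derived functors $\lim^m$. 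Combining this with the first part (with $m = n+1$) gives $\rH^n_Q(\C;M_\F) \cong \pi_{-n-1}(\lim(H\circ\F')) \cong \lim^{n+1}\F'$, as claimed.

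The main obstacle --- really the only nontrivial point --- is making sure the identification of $\uline{\mathbb{S}[-1]}$ as a constant diagram is compatible enough with the adjunction that the mapping-spectrum computation goes through cleanly; this requires knowing that under the equivalence of Corollary~\ref{c:twisted-arrow} the constant-diagram functor on the right corresponds to something sensible on the left, but since Corollary~\ref{c:twisted-arrow} already asserts $L_\C$ \emph{is} the constant functor $\uline{\mathbb{S}[-1]}$, this is handed to us and the rest is formal. One should also be slightly careful with the sign/shift bookkeeping in passing from $\mathbb{S}[-1]$ to $\mathbb{S}$ and from $\F[n]$ to $\F[n+1]$, and with the indexing convention relating $\pi_{-m}$ of a homotopy limit to $\lim^m$; but these are routine.
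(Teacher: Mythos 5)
Your proof is correct and follows essentially the same route as the paper's: identify $\rH^n_Q(\C;M_\F)=\pi_0\Map^{\der}(L_\C,M_\F[n])$, translate via Corollary~\ref{c:twisted-arrow} to a mapping spectrum out of the constant diagram $\ovl{\mathbb{S}}[-1]$ in $\Fun(\Tw(\rN\C),\Spectra)$, apply the constant-diagram/limit adjunction, and read off $\pi_{-n-1}\lim\F$. The only cosmetic difference is that the paper moves the shift onto the source ($\ovl{\mathbb{S}}[-n-1]$) while you move it onto $\lim\F$, which is the same thing; the Eilenberg--MacLane specialization via $\pi_{-m}\R\lim = \lim^m$ is likewise standard and matches the paper's intent.
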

\begin{proof}
By definition we have $\rH^n_Q(\C;M_{\F}) = \pi_0\Map^{\der}_{\T_\C\Cat_{\bS}}(L_{\C},M_{\F}[n])$. By Corollary~\ref{c:twisted-arrow} this can be identified with
$$ \pi_0\Map^{\der}_{\Fun(\Tw(\rN\C),\Sp(\bS_{\infty}))}(\ovl{\mathbb{S}}[-1],\F[n]) \simeq $$
$$ \simeq \pi_0\Map^{\der}_{\Fun(\Tw(\rN\C),\Sp(\bS_{\infty}))}(\ovl{\mathbb{S}}[-n-1],\F) \cong \pi_{-n-1}\lim\F $$
where $\ovl{\mathbb{S}}$ denotes the constant diagram with value the sphere spectrum.
\end{proof}



\begin{rem}\label{r:beamer}
Given a diagram $M: \Tw(\C) \lrar \Sp$ the $\infty$-category $\Om^{\infty}(M)$ can be described informally as the $\infty$-category whose 
\begin{itemize}
\item
objects are pairs $(X,\eta)$ with $X \in \C$ and $\eta$ is a map $\eta:\SS[-1] \lrar M(\Id_X)$. 
\item
Maps from $(X,\eta)$ to $(X',\eta')$ are pairs $(f,H)$ where $f:X \lrar X'$ is a map in $\C$ and $H$ is a homotopy between the two resulting maps $f_*\eta,f^*\eta': \SS[-1] \lrar M(f)$. 
\end{itemize}
Similarly, given an $\alp \in \Map(\SS[-1],\lim_{\Tw(\C)}M[1]) \simeq \Map_{\C}(\C,\Om^{\infty}M[1])$, we can describe the small extension $p_\alp:\C_\alp \lrar \C$ corresponding to $\alp$ as follows: the objects of $\C_\alp$ are pairs $(X,\eta)$ with $X \in \C$ and $\eta$ is a null-homotopy of the $\Id_X$ component $\alp_{\Id_X}:\SS[-1] \lrar M(\Id_X)[1]$ of $\alp$. Maps from $(X,\eta)$ to $(X',\eta')$ are pairs $(f,H)$ where $f:X \lrar X'$ is a map in $\C$ and $H$ is a homotopy between the two resulting null homotopies $f_*\eta,f^*\eta'$ of $\alp_f: \SS[-1] \lrar M(f)[1]$.
\end{rem}

\begin{example}\label{e:joost-2}
Let $\C$ be a stable $\infty$-category. Consider the functor given informally by
$$
M:\Tw(\C\times \C) \lrar \Sp; \hspace{4pt} [(X,Y) \lrar (Z,W)] \mapsto \uline{\Map}(Y,Z)$$
where $\uline{\smash{\Map}}(-,-)$ denotes the canonical enrichment of $\C$ is spectra. Using Remark~\ref{r:beamer} we may identify $\Om^{\infty}(M) \lrar \C \times \C$ with the projection $\pi:\D \lrar \C \times \C$ where $\D$ is the $\infty$-category of fiber sequences in $\C$ and $\pi(X \lrar E \lrar Y) = (X,Y)$. 

Similarly, if we let $\E$ denote the $\infty$-category of Cartesian squares in $\C$, then the functor $\E \lrar \C^{\Del^2}$ given by restricting along the inclusion $\Del^2 \subseteq \Del^1 \times \Del^1$ is a small extension. The coefficient object of this small extension is the functor 
$$
N: \Tw(\C^{\Del^2}) \lrar \Sp; \hspace{4pt} \vcenter{\xymatrix@1@C=1.5pc@R=1.1pc{(A\ar[r]\ar[d] & B\ar[r]^f\ar[d] & C\ar[d])\\ (X\ar[r]_g & Y\ar[r] & Z)}}
\mapsto \uline{\Map}(\cof(f),\fib(g)).
$$
The class $\alp$ is given by the shifted section which sends $(A \lrar B \x{f}{\lrar} C) \lrar (X \x{g}{\lrar} Y \lrar Z)$ to the composite
$\cof(f)[-1] \simeq \fib(f) \lrar B \lrar Y \lrar \cof(g) \simeq \fib(g)[1]$.
\end{example}

\begin{rem}
When $\C$ is a simplicial category of the form $\BB A$ for a fibrant simplicial monoid $A$ the twisted arrow category $\Tw(\rN \BB A)$ can be loosely described as the $\infty$-category whose objects are the points $a \in A$ and such that morphisms from $a$ to $a'$ are given by a pair of points $b,c$ and a path from $bac$ to $a'$ in $A$. In this case we may identify functors from $\Tw(\rN \BB A)$ to spectra as $(A^{\op} \times A)$-equivariant parametrized spectra over $A$ (see Example~\ref{e:monoid}). In the special case where $A = G$ is a \textbf{simplicial group} the $\infty$-category $\rN \BB G$ is a Kan complex and the projection $\Tw(\rN\BB G) \lrar \rN\BB G$ is an equivalence. We may then identify functors $\Tw(\rN\BB G) \lrar \Spectra$ with (naive) $G$-equivariant spectra.   
Though this is consistent with the computation of \S\ref{s:groups}, we warn the reader that the equivalence between the tangent category at $G$ and $G$-spectra obtained in this way differs from the corresponding equivalence obtained in \S\ref{s:groups} by a shift. Indeed, given a spectrum object in $\sGr_{G//G}$ we may generate from it either a parametrized spectrum over the classifying space of $G$ using the functor $\ovl{W}$, or a parametrized spectrum over the underlying space of $G$, by using the forgetful functor $\sGr \lrar \bS$. Identifying the forgetful functor with the objectwise loop of $\ovl{W}$ we see that the fibers of the latter, which is used in the comparison above, are the shifts of the fibers of the former, on which the comparison of~\S\ref{s:groups} is based.
In particular, when $A=G$ is a simplicial group the cofiber sequence~\eqref{e:cofiber-matan-2} reduces to a shift of the cofiber sequence~\eqref{e:cofiber-3} (see also Remark~\ref{r:central}).
\end{rem}

Now let $f: \C \lrar \D$ be a map of simplicial categories and let $\gam: \Tw(\rN \C) \lrar \Tw(\rN\D)$ and $\vphi: \C^{\op} \times \C \lrar \D^{\op} \times \D$ be the induced maps. Then we obtain a commutative diagram of left Quillen functors
$$ \xymatrix{
\Fun(\fC(\Tw(\rN\C)), \bS)\ar[d]_{\fC(\gamma)_!} & (\Set_{\Del})^{\cov}_{/\Tw(\rN\C)} \ar[l]^-\simeq_-{\St}\ar_{\gam_!}[d]\ar^-{\simeq}[r] & \Fun(\C^{\op} \times \C,\bS)_{/\Map_{\C}} \ar_-{\vphi_!}[d] \\
\Fun(\fC(\Tw(\rN\D)), \bS) & (\Set_{\Del})^{\cov}_{/\Tw(\rN\D)} \ar^-{\simeq}[r]\ar[l]_-\simeq^-{\St} & \Fun(\D^{\op} \times \D,\bS)_{/\Map_{\D}} \\
}$$
Here $\fC(\gam)_!$ is the left Kan extension functor, $\vphi_!$ is the functor induced by left Kan extension on over objects and $\gam_!$ is given by post-composing with $\gam$. In particular, $\LL\vphi_!\Map_\C \in \Fun(\D^{\op} \times \D,\bS)_{/\Map_{\D}}$ is weakly equivalent to the image of $\gam \in (\Set_{\Del})^{\cov}_{/\Tw(\rN\D)}$ under the bottom right horizontal equivalence, while $\Map_\D$ is weakly equivalent to the image of $\Id_{\Tw(\N\D)}$. Using this, the cofiber sequence of Corollary~\ref{c:relative-2} can be identified with the cofiber sequence in $\Sp((\Set_{\Del})^{\cov}_{/\Tw(\rN\D)})$ of the form
\begin{equation}\label{d:cofseq}
L'_{\D/\C} \lrar \Sig^{\infty}_+(\gam) \lrar \Sig^{\infty}_+(\Id_{\Tw(\N\D)}) 
\end{equation}
where $L'_{\D/\C}$ is the image of $L_{\D/\C}$ under the equivalence of Corollary~\ref{c:twisted-arrow}. We may therefore conclude the following:
\begin{cor}\label{c:coinitial}
Let $f: \C \lrar \D$ be a map of fibrant simplicial categories such that the induced map $\gam:\Tw(\rN\C) \lrar\Tw(\rN\D)$ is coinitial. Then the relative cotangent complex of $f$ vanishes.
\end{cor}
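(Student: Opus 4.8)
The strategy is to feed the hypothesis directly into the cofiber sequence~\eqref{d:cofseq}. Recall that under the chain of Quillen equivalences relating $\T_{\D}\Cat_{\bS}$ to $\Sp\big((\Set_{\Del})^{\cov}_{/\Tw(\rN\D)}\big)$, the relative cotangent complex $L_{\D/\C}$ is carried to the object $L'_{\D/\C}$ sitting in the homotopy cofiber sequence
$$ L'_{\D/\C}\lrar \Sig^{\infty}_+(\gam)\lrar \Sig^{\infty}_+(\Id_{\Tw(\rN\D)}) $$
of~\eqref{d:cofseq}. Since these Quillen equivalences reflect weak zero objects, the relative cotangent complex of $f$ vanishes as soon as $L'_{\D/\C}$ is a weak zero object, which (reading off the cofiber sequence) happens exactly when the second map is a stable weak equivalence. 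The functor $\LL\Sig^{\infty}_+$ carries weak equivalences of the covariant model category $(\Set_{\Del})^{\cov}_{/\Tw(\rN\D)}$ to stable weak equivalences, and in that model category every object is cofibrant, so it suffices to prove that the structure map $\gam\colon\big(\Tw(\rN\C),\gam\big)\lrar\big(\Tw(\rN\D),\Id\big)$ is a weak equivalence in $(\Set_{\Del})^{\cov}_{/\Tw(\rN\D)}$; in other words, that $\gam$ is weakly equivalent to the terminal object of this covariant model category.

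This last assertion is precisely a reformulation of coinitiality. By definition $\gam$ is coinitial (i.e.\ $\gam^{\op}$ is cofinal), and by Joyal's characterization of cofinal maps (see~\cite[\S 4.1]{Lur09}) this means that for every object $d$ of $\Tw(\rN\D)$ the comma simplicial set $\Tw(\rN\C)\times_{\Tw(\rN\D)}\Tw(\rN\D)_{/d}$ is weakly contractible; equivalently, the covariant straightening of $\gam$ over $\Tw(\rN\D)$ is the constant functor with value the point, which is exactly to say that $\gam$ is weakly equivalent to $\Id_{\Tw(\rN\D)}$ in $(\Set_{\Del})^{\cov}_{/\Tw(\rN\D)}$. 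Plugging this in finishes the proof: $\LL\Sig^{\infty}_+(\gam)\to\LL\Sig^{\infty}_+(\Id_{\Tw(\rN\D)})$ is a stable equivalence, hence $L'_{\D/\C}$, and therefore $L_{\D/\C}$, is a weak zero object.

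The only point demanding care — and the one I would want to double-check carefully — is the bookkeeping of variance: one must be sure that the covariant model structure appearing here (the one built into Theorem~\ref{t:comp-cat} and Corollary~\ref{c:twisted-arrow}) is indeed the model structure for which \emph{coinitial} maps, rather than cofinal ones, become weak equivalences to the terminal object. This can be fixed once and for all on the test case $\{0\}\hookrightarrow\Delta^1$: the inclusion of an initial object is left anodyne, hence a covariant weak equivalence over $\Delta^1$, while it is coinitial and not cofinal — so coinitiality is the condition matching a covariant equivalence to the terminal object, as used above. Everything else (the cofiber sequence~\eqref{d:cofseq}, the fact that derived functors preserve weak equivalences, and that Quillen equivalences reflect weak zero objects) is already in place, so once the variance is settled the argument is routine.
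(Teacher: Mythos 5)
Your proof is correct and follows the same route as the paper: the authors state the corollary immediately after exhibiting the cofiber sequence~\eqref{d:cofseq}, precisely because coinitiality of $\gam$ is by definition (Remark~\ref{r:coinitial}) the statement that $\gam$ is equivalent to the terminal object $\Id_{\Tw(\rN\D)}$ in $(\Set_{\Del})^{\cov}_{/\Tw(\rN\D)}$, so the second map in~\eqref{d:cofseq} is a stable equivalence and $L'_{\D/\C}$ is a weak zero object. Your variance sanity check via $\{0\}\hookrightarrow\Delta^1$ is a reasonable precaution but is already settled by Remark~\ref{r:coinitial}, which pins down ``coinitial'' as exactly the covariant condition.
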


\begin{rem}\label{r:coinitial}
Recall that a map $p: X \lrar Y$ of simplicial sets is said to be \textbf{coinitial} if $p^{\op}$ is cofinal, i.e., if $p$ is equivalent to the terminal object in $(\Set_{\Del})^{\cov}_{/Y}$ (cf.~\cite[Definition 4.1.1.1]{Lur09}). This notion appears in the literature under various names, including \textit{right cofinal}, and \textit{initial}. By the $\infty$-categorical Quillen theorem A (see, e.g., \cite[Theorem 4.1.3.1]{Lur09}) a map $p: X \lrar Y$ where $Y$ is an $\infty$-category is coinitial if and only if for every object $y \in Y$ the simplicial set $X \times_{Y} Y_{/y}$ is weakly contractible.
\end{rem}

\begin{rem}
The cofiber sequence of~\eqref{d:cofseq} can also be straightened to obtain a cofiber sequence of functors $\fC(\Tw(\rN\D))\lrar \Sp(\bS_\ast)$ of the form
\begin{equation}\label{d:cofseq-2}
L''_{\D/\C} \lrar \fC(\gamma)_!(\ovl{\mathbb{S}})\lrar \ovl{\mathbb{S}}
\end{equation}
where $L''_{\D/\C}$ is the straightening of the object $L'_{\D/\C}$ appearing in~\eqref{d:cofseq} and $\ovl{\mathbb{S}}$ is the constant diagram on the sphere spectrum. Corollary~\ref{c:coinitial} can be seen in this context by using the straightened $\infty$-categorical Quillen theorem A (see Remark~\ref{r:coinitial}), namely, the fact that a map is coinitial if and only if the left Kan extension of the constant diagram is weakly constant.
\end{rem}
 
\begin{example}[Detecting equivalences]\label{e:joost}
Let $[1]_{\bS} = [1]_{\Del^0}$ (see Definition~\ref{d:1A}), let $[1]_{\bS}^{\sim}$ be the simplicial category with two objects $0,1$ and such that all mapping spaces are $\Del^0$ and let $[1]_{\bS} \lrar \E \lrar [1]_{\bS}^{\sim}$ be a factorization of the natural map $[1]_{\bS} \lrar [1]_{\bS}^{\sim}$ into a cofibration followed by a trivial fibration. Then the twisted arrow category of $\rN([1]_{\bS}) = \Del^1$ is the ``cospan category'' $\ast \lrar \ast \llar \ast$ (and is hence weakly contractible) and the twisted arrow category of $\rN\E$ is categorically equivalent to $\Del^0$ (and is hence ``strongly'' contractible). It then follows from the $\infty$-categorical Quillen theorem A (see Remark~\ref{r:coinitial}) that the induced map $\Tw(\Del^1) \lrar \Tw(\rN\E)$ is coinitial, and hence the map $[1]_{\bS} \lrar \E$ has a trivial relative cotangent complex by Corollary~\ref{c:coinitial}. Note that functors $[1]_{\bS} \lrar \C$ correspond to morphisms in $\C$, and that such a functor extends to $\E$ up-to-homotopy if and only if the corresponding morphism is invertible. As in Example~\ref{e:white-categories} let $P_2(\C)$ be the homotopy $(2,1)$-category of $\C$, so that the map $\C \lrar P_2(\C)$ can be decomposed as a tower of small extensions. Given a commutative square
\begin{equation}\label{e:square-5}
\vcenter{\xymatrix{
[1]_{\bS} \ar[r]\ar[d] & \C \ar[d] \\
\E \ar[r]\ar@{-->}[ur] & P_2(\C) \\
}}\end{equation}
Corollary~\ref{c:etale} implies that~\eqref{e:square-5} has a contractible space of derived lifts. In particular, this yields an obstruction theoretic proof of the (well-known) fact that a morphism in $\C$ is invertible if and only if it is invertible in the homotopy $(2,1)$-category of $\C$. We expect that a similar result can be obtained concerning the question of when a morphism in an $(\infty,2)$-category admits an adjoint. 
\end{example}

\begin{rem}
Example~\ref{e:joost} implies, in particular, that any localization map
$$ \C \lrar \C[W^{-1}] $$
of simplicial categories (or $\infty$-categories) has a trivial relative cotangent complex. Indeed, such a map can be obtained as an iterated pushout of the map $[1]_{\bS} \lrar \E$.
\end{rem}

The following example is inspired by ideas of Charles Rezk (see~\cite{rezk-mo}):
\begin{example}[Splitting of homotopy idempotents]\label{e:rezk}
Let $\Idem$ be the category with one object $x_0 \in \Idem$ and one non-identity morphism $f: x_0 \lrar x_0$ such that $f \circ f = f$. If $\C$ is a simplicial category then a derived map $\Idem \lrar \C$ corresponds to an object $x \in \C$ equipped with a \textbf{homotopy coherent idempotent} $x \lrar x$ (see~\cite[\S 4.4.5]{Lur09}). The twisted arrow category $\Tw(\Idem)$ admits the following explicit description: if we denote by $M = \End_{\Idem}(x_0) = \{1,f\}$, then $\Tw(\Idem)$ is the category with two objects $\ovl{1},\ovl{f} \in \Tw(\Idem)$ (corresponding to the morphisms $1,f$ of $\Idem$), and such that $\End_{\Tw(\Idem)}(\ovl{f}) = M \times M$, $\End_{\Tw(\Idem)}(\ovl{1}) = 1$ and $\Hom_{\Tw(\Idem)}(\ovl{1},\ovl{f}) = M \times M \setminus \{(1,1)\}$. If $\F: \Tw(\Idem) \lrar \Ab$ is a functor from the twisted arrow category to abelian groups then $\F(\ovl{f})$ is an $M \times M$-module and we may canonically decompose it as 
$$\F(\ovl{f}) = A_{0,0} \oplus A_{0,1} \oplus A_{1,0} \oplus A_{1,1}$$
such that the action of $(f,1)$ and $(1,f)$ on $A_{\lam,\mu}$ is given by multiplication by $\lam,\mu \in \{0,1\}$ respectively. Given an element $b \in \F(\ovl{1})$ the maps $(f,1),(1,f),(f,f):\ovl{1} \lrar \ovl{f}$ 
send $b$ into $A_{1,0} \oplus A_{1,1}$, $A_{0,1} \oplus A_{1,1}$ and $A_{1,1}$, respectively, and the composition rule of $\Tw(\Idem)$  translates into the condition that the projection of the image of $b$ to $A_{1,1}$ should be the same in all three cases. In particular, we may identify the category $\Fun(\Tw(\Idem),\Ab)$ with the category of tuples $(B,A_{0,0},A_{0,1},A_{1,0}, A_{1,1},g_{0,1},g_{1,0},g_{1,1})$ where $g_{i,j}$ is a map from $B$ to $A_{i,j}$. Under this equivalence the functor $\lim: \Fun(\Tw(\Idem),\Ab) \lrar \Ab$ takes a tuple as above to 
$$\Ker[(g_{0,1},g_{1,0}): B \lrar A_{0,1} \oplus A_{1,0}] .$$ 
It follows that the total derived functor of $\lim$ takes a tuple as above to the \textbf{chain complex} $B \lrar A_{0,1} \oplus A_{1,0}$ where $B$ sits in degree $0$ and $A_{0,1} \oplus A_{1,0}$ sits in degree $-1$. In particular, the higher derived functors $\lim^n$ vanish for $n \neq 0,1$. By Corollary~\ref{c:twisted-arrow-quillen} we may conclude that if $M \in \Sp((\Cat_{\bS})_{\Idem//\Idem})$ is an object corresponding to a functor $\Tw(\Idem) \lrar \Ab$ then the Quillen cohomology groups $\rH^n_Q(\Idem;M)$ vanish for $n \neq -1,0$. This observation has the following concrete consequence: recall that if $\C$ is a simpicial category and $P_2(\C)$ is the homotopy $(2,1)$-category of $\C$ then a derived map $\Idem \lrar P_2(\C)$ corresponds to an object $x \in P_2(\C)$ together with a map $f': x \lrar x$ and a homtopy $h: f' \circ f' \Rightarrow f'$ such that the diagram 
\begin{equation}\label{e:coherent}
\vcenter{\xymatrix{
f' \circ f' \circ f' \ar^{h \circ f'}[r]\ar_{f' \circ h}[d] & f' \circ f' \ar^{h}[d] \\
f' \circ f' \ar^{h}[r] & f' \\
}}
\end{equation}
commutes in the groupoid $\Map_{P_2(\C)}(x,x)$. Given such a ``partially coherent'' homotopy idempotent $\F: \Idem \lrar P_2(\C)$, the \textbf{derived space of lifts} $Z = \Map^{\der}_{/P_2(\C)}(\Idem,\C)$ in the diagram
$$ \xymatrix{
& \C \ar[d] \\
\Idem \ar@{-->}[ur] \ar[r] & P_2(\C) \\
}$$
can be considered as the space of fully coherent structures one can put on $\F$, extending the partial coherent structure we started with. When $\rN\C$ is idempotent complete (see~\cite[\S 4.4.5]{Lur09}) we can also identify $Z$ with the space of \textbf{splittings} of the homotopy idempotent $\F$ (i.e., retract diagrams in $\rN\C$ whose induced idempotent is $\F$). Factoring the map $\pi: \C \lrar P_2(\C)$ as a sequence of small extensions as in Example~\ref{e:white-categories} one may use the spectral sequence of Remark~\ref{r:spectral} to compute the homotopy groups of $Z$. By the above computation it follows that if $\F$ is a functor from $\Tw(\Idem)$ to spectra such that $\F(\ovl{1})$ and $\F(\ovl{f})$ are spectra which have trivial homotopy groups in dimension $\neq n$ then $\rH^k_Q(\Idem,\F)$ is trivial for $k \neq -n,-n-1$. Since the coefficient spectra appearing in the factorization of $\pi$ are exactly of this form this implies that the spectral sequence cannot support any non-trivial differentials for degree reasons, and hence collapses at the $E^1$-page. 

The coefficient spectra appearing in the $E_1$-term are all $2$-connective, so that all terms in $E^1$ only contribute to $\pi_n(Z)$ for $n \geq 2$, with only two terms contributing to each $\pi_n$. We conclude that the spectral sequence converges and that $Z$ is simply connected. The higher homotopy groups of $Z$ can be written down explicitly in terms of the Quillen cohomology groups of $\Idem$ with coefficients in the homotopy groups of the homotopy fibers of $\pi_*:\Map_{\C}(x,x) \lrar \Map_{P_2(\C)}(x,x)$ over the images of $1,f \in \End_{\Idem}(x_0)$ in $\Map_{P_2(\C)}(x,x)$ (and these Quillen cohomology groups themselves admit a very simple description, see above). In particular, every such partially coherent idempotent can be made fully coherent and  every two fully coherent refinements are equivalent (even more, since $Z$ is simply-connected every self-equivalence of a fully coherent refinement which induces the identity on the partial coherencies is equivalent to the identity). This also means that a homotopy idempotent $\Idem \lrar \Ho(\C) = P_1(\C)$ can be made coherent if and only if it lifts to $P_2(\C)$, i.e., if and only if $h$ can be chosen so that~\eqref{e:coherent} commutes.
\end{example}

\end{document}